\newcommand{\dtilde}[1]{{%
  \mathpalette\double@widetilde{#1}%
}}
\newcommand{\double@widetilde}[2]{%
  \sbox\z@{$\m@th#1\widetilde{#2}$}%
  \ht\z@=.9\ht\z@
  \widetilde{\box\z@}%
}
\newcommand{\N}{\mathbbm{N}}                     
\newcommand{\Z}{\mathbbm{Z}}                     
\newcommand{\R}{\mathbbm{R}}                     
\newcommand{\D}{\mathbb{D}}
\newcommand{\NUH}{\mathrm{NUH}}
\newcommand{\dist}{\mathrm{dist\,}}             
\newcommand{\diam}{\mathrm{diam\,}}             
\newcommand{\im}{\mathrm{im}}
\newcommand{\CH}{\mathrm{CH}} 
\newcommand{\dom}{\mathrm{dom}}
\newcommand{\topo}{\mathrm{top}}
\newcommand{\Per}{\mathrm{Per}}
\newcommand{\per}{\mathrm{per}}
\newcommand{\nuh}{\mathrm{NUH}}
\newcommand{\bi}{\mathbf{i}}
\newtheorem*{qu*}{Question}
\newtheorem{mainthm}{\sc Theorem}           
\newtheorem{thm}{Theorem}[section]               
\newtheorem*{thm*}{Theorem}               
\newtheorem{cor}[thm]{Corollary}        
\newtheorem*{cor*}{Corollary}        
\newtheorem{lem}[thm]{Lemma}  
\newtheorem*{lem*}{Lemma}
\newtheorem{prop}[thm]{Proposition}     
\theoremstyle{definition}
\newtheorem{defn}[thm]{Definition}      
\newtheorem{rem}[thm]{Remark}           
 \newtheorem*{acknowledgement*}
 {\protect\acknowledgementname}
\newcounter{claim}
\newenvironment{claim}[1][]{\refstepcounter{claim}\par\noindent\underline{Claim~\theclaim:}\space#1}{}
\newenvironment{claimproof}[1]{\par\noindent\underline{Proof:}\space#1}{\qed}
 \providecommand{\acknowledgementname}{Acknowledgement}
\author{Matthias Meiwes}
\thanks{The author  was supported by the ISF grant~938/22 and the ERC Starting Grant 757585}
\address{Matthias Meiwes,
School of Mathematical Sciences, Tel Aviv University, Ramat Aviv, Tel Aviv 69978, Israel.}
\email{\texttt{matthias.meiwes@live.de}}
\title[
orbit growth in link complements]{Topological entropy and orbit growth in link complements}
\begin{document}
\maketitle
\begin{abstract}
In this article, we exhibit certain linking properties of periodic orbits of $C^{1+\alpha}$ flows with positive topological entropy  on closed $3$-manifolds $M$.  
It is shown that any such flow $\varphi$ contains a link $\mathcal{L}$ of periodic orbits and a horseshoe $K$ in $M\setminus \mathcal{L}$, such that all periodic orbits in $K$ are unique in their  homotopy class in $M\setminus \mathcal{L}$ (among 
 periodic orbits in $M$). 
Moreover, the entropy of $\varphi$ can be approximated by the entropies of such horseshoes $K$. 
 A version 
 of that result for chords is obtained. Our main motivation comes from 
Reeb dynamics, and as an application, we address a question by Alves-Pirnapasov,  and obtain that the  topological entropy of a $3$-dimensional, $C^{\infty}$-generic Reeb flow can be approximated by the exponential homotopical growth rates of contact homology in link complements.  
\end{abstract}

\section{Introduction and 
 main results}
In the study of many topological and smooth dynamical systems of interest,  
 symbolic dynamics plays an important role for the description and  understanding of an essential part of their orbit structures. Relating the system to shift maps of symbolic sequences   reveals some remarkable structure of rather chaotic orbit sets, and allows one to study them from a qualitative as well as quantitative perspective. 
It is moreover the interplay between the properties of the shift maps on the one hand and the  geometric, topological, ergodic theoretic properties etc.\ of the original system on the other hand, that makes symbolic dynamics so attractive. 

The use of symbolic dynamics goes back to the work of Hadamard, Morse, Hedlund, and others.  A celebrated result on the prevalence of symbolic dynamics  is  
 Katok's theorem,  
\cite{KatokLyapunov_exp},  which asserts  that any $C^{1+\alpha}$ surface diffeomorphism with positive topological entropy  contains a horseshoe, 
a hyperbolic invariant subset on which, up to passing to an iterate, the diffeomorphism is conjugated to a full shift. Moreover, the topological entropy of the diffeomorphism can be approximated by the topological entropy of a sequence of horseshoes contained therein, \cite[Suppl.]{Hasselblatt-Katok}. 
  For recent works on generalizations of those results,  I refer to the works of Sarig, \cite{Sarig_diffeo2013}, and Gelfert, \cite{Gelfert2016}, and references therein. 
  Similar results  also hold for flows on $3$-manifolds (see \cite{LianYoung2012}, \cite{LimaSarig2019}). 
  Given any $C^{1+\alpha}$ flow $\varphi$  on a closed smooth $3$-manifold $M$ and a $\varphi$-invariant ergodic measure of positive entropy, Lima and Sarig in  \cite{LimaSarig2019} construct a topological Markov flow that "codes" the dynamics on a set of full measure, see Theorem \ref{thm:fullmeasurecoding} below for details.

  The aim of this article is to revisit those generalizations of Katok's result to $3$-dimensional flows from the aspect of linking of orbits. We will then show that, under similar assumptions on the flow as above, there  exists a link $\mathcal{L}$ of periodic orbits with the following property: the number of homotopy classes of loops in the complement of $\mathcal{L}$ that carry exactly one orbit grows exponentially with the period. Moreover, the growth rate can be as close as desired to the topological entropy of the flow (Theorem \ref{thm:link_growth}). We also obtain a relative version for chords (Theorem \ref{thm:link_growth_rel}). 
Our main motivation comes from symplectic dynamics. We address a question of Alves and Pirnapasov, \cite{AlvesPirnapasov}, and show that, with some natural assumptions (which are known to hold $C^{\infty}$-generically),  the topological entropy of a Reeb flow in a contact $3$-manifold can be approximated by the growth of the contact homology in the complement of links (Theorem \ref{thm:CH_recover}). 
This yields an approach to recover, in low dimensions, topological entropy by quantities defined via holomorphic curves or Floer theory that is distinct from the approach by \c{C}ineli, Ginzburg, and G\"urel  that relates topological entropy to barcode entropy, \cite{CGG_barcode}, cf.\  Remark~\ref{rem:barcode}. The results of Theorem \ref{thm:CH_recover} will 
 also be important in a forthcoming paper, joint with Marcelo Alves, Lucas Dahinden, and Abror Pirnapasov,  \cite{ADMP}, see Remark~\ref{rem:robustness}. 
Symbolic dynamics plays a central role in the present article. Moreover, the underlying setup will be given by the countable Markov flows constructed by Lima and Sarig. Within those Markov flows, we will obtain horseshoes with some specific properties  (Theorem~\ref{thm:main_coding}).  We will  eventually derive Theorems~\ref{thm:link_growth},~\ref{thm:link_growth_rel}, and~\ref{thm:CH_recover} from Theorem~\ref{thm:main_coding}.

\subsection{Growth in the complement of a link}\label{sec:growth_complement}
In the following let $M$ be a closed smooth manifold of dimension $3$, $\alpha>0$, and $\varphi= \varphi^t:\R \times M \to M$, $(t,x) \mapsto \varphi^t(x)$,  a $C^{1 + \alpha}$ flow generated by a nowhere vanishing vector field. A central notion for this paper is the topological entropy $h_{\topo}(\varphi)$ of the flow, and we recall for the reader's convenience the definition of $h_{\topo}$, following Bowen (see 
\cite{Hasselblatt-Katok}). We fix a metric $d$ on $M$ that induces the topology of $M$. A family $(d_T)_{T\geq 0}$ of metrics on $M$ is defined by  
$d_T(x,y) := \sup_{0\leq t\leq T} d(\varphi^t(x), \varphi^t(y))$. For any given $T\geq 0$, $\epsilon>0$, a subset $X \subset M$ is said to be  \textit{$(T, \epsilon)$-spanning}  
 if $\inf_{x\in X} d_{T}(x,y) \leq \epsilon$ for any $y\in M$. 
 The \textit{topological entropy of $\varphi$} is defined as  
\begin{align}\label{defn:entropy}
h_{\topo}(\varphi) = \lim_{\epsilon\to 0} h_{\topo,\epsilon}(\varphi),
\end{align}
where $h_{\topo,\epsilon}(\varphi) := \limsup_{T \to \infty} \frac{1}{T}\log \left(\, \min \{\# X |\, X  \text{ is }(T,\epsilon)\text{-spanning} \}\right)$.

Denote by $\Per(\varphi)$ the collection of periodic orbits of $\varphi$. This is the collection of loops $\gamma:\R/T\Z \to M$ with $\gamma(t) = \varphi^t(p)$ for some $p\in M$, up to identification of two loops by a reparametrization $t\mapsto t + c$, $c\in \R$. We write  $\per(\gamma):=T$ for the period of the periodic orbit $\gamma$.
Let $\mathcal{L} \subset M$ be a link in $M$, that is,  a finite collection of 
 pairwise disjoint closed simple loops, which we call the \textit{components} of $\mathcal{L}$. We also often identify the components of $\mathcal{L}$ with their image  and view $\mathcal{L}$ as a subset in $M$.   We say that $\mathcal{L}$ is a \textit{link of periodic orbits} of $\varphi$ if each component is a periodic orbit of $\varphi$. 
 Denote by  $\mathcal{H}_{\mathcal{L}}$ the set of free homotopy classes of loops in $M \setminus \mathcal{L}$. For $\rho \in \mathcal{H}_{\mathcal{L}}$, we set $\per(\rho):= \sup\{\per(\gamma)\, |\,  \gamma \in \Per(\varphi) ,  [\gamma] = \rho\}$,  with the convention that $\sup \emptyset =+\infty$. 
We say that $\rho \in \mathcal{H}_{\mathcal{L}}$ is \textit{singular} if there is exactly one orbit in $\Per(\varphi)$ that is a representative of   $\rho$, and let $\mathcal{H}_{\mathcal{L}, \mathrm{sing}}\subset\mathcal{H}_{\mathcal{L}}$ be the subset of singular free homotopy classes. 
Let $N(T) := \#\{ \rho \in \mathcal{H}_{\mathcal{L}, \mathrm{sing}} \, |\, \per(\rho) \leq T\}$. 
 Define 
 $$H^{\infty}(\varphi, M\setminus \mathcal{L}) := \limsup_{T\to+\infty} \frac{1}{T} \log(N(T)).$$
It holds that    $H^{\infty}(\varphi, M\setminus \mathcal{L})\leq h_{\topo}(\varphi)$\footnote{The inequality also holds if classes $\rho$ are considered that carry some periodic orbit and if  we change $\sup$ to $\inf$ in the definition of $\per(\rho)$.}, see  \cite{AlvesPirnapasov,Bowen_fundgroup}. 
\begin{mainthm}\label{thm:link_growth}
Let $M$ and $\varphi$ be as above, and assume that $h_{\topo}(\varphi)>0$.  Then, for any $\varepsilon$ with $0 < \varepsilon< h_{\topo}(\varphi)$, there is a link $\mathcal{L}$ of periodic orbits of $\varphi$,   
such that 
$$H^{\infty}(\varphi, M \setminus \mathcal{L}) > h_{\topo}(\varphi) - \varepsilon.$$
\end{mainthm}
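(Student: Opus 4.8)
\emph{The plan.} Fix $\varepsilon\in(0,h_{\topo}(\varphi))$. By the variational principle for flows, choose an ergodic $\varphi$-invariant Borel probability measure $\mu$ with $h_\mu(\varphi)>h_{\topo}(\varphi)-\varepsilon/2>0$. Since $\dim M=3$ and $\varphi$ is $C^{1+\alpha}$, Ruelle's inequality (applied to $\varphi$ and to the time-reversed flow) forces such a $\mu$ to have a positive and a negative Lyapunov exponent in the directions transverse to the flow; in particular $\mu$ is hyperbolic, so Theorem~\ref{thm:fullmeasurecoding} applies and yields a topological Markov flow $(\widehat\Sigma_r,\widehat\sigma_r)$ — a suspension by a H\"older roof $r$ of a countable topological Markov shift — together with a H\"older, flow-equivariant map $\pi_r\colon\widehat\Sigma_r\to M$ and an invariant measure $\widehat\mu$ with $(\pi_r)_*\widehat\mu=\mu$, such that $\pi_r$ is finite-to-one on a set of full $\widehat\mu$-measure and carries periodic orbits to periodic orbits of $\varphi$ with the same period. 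The substantive step is then Theorem~\ref{thm:main_coding}: from this Markov flow one extracts a finite irreducible subgraph whose associated subshift of finite type, suspended by $r$, codes a genuine transitive hyperbolic basic set $K\subset M$ of $\varphi$ with $h_{\topo}(\varphi|_K)>h_{\topo}(\varphi)-\varepsilon$, together with a link $\mathcal L$ of periodic orbits of $\varphi$, disjoint from $K$, such that every periodic orbit of $\varphi$ contained in $K$ is the \emph{unique} periodic orbit of $\varphi$ in $M$ in its free homotopy class in $M\setminus\mathcal L$.

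\emph{Deducing Theorem~\ref{thm:link_growth}.} Assume the above. Since $h_{\topo}(\varphi|_K)>0$, $K$ is not a single closed orbit, and it is classical (see \cite{Hasselblatt-Katok}) that for a transitive hyperbolic basic set of a flow the number $P_K(T)$ of periodic orbits of $\varphi$ contained in $K$ with period at most $T$ satisfies
\[
\limsup_{T\to\infty}\tfrac1T\log P_K(T)\;=\;h_{\topo}(\varphi|_K)\;>\;h_{\topo}(\varphi)-\varepsilon .
\]
For a periodic orbit $\gamma$ contained in $K$, its free homotopy class $\rho_\gamma:=[\gamma]\in\mathcal H_{\mathcal L}$ is singular by the uniqueness property, so $\rho_\gamma\in\mathcal H_{\mathcal L,\mathrm{sing}}$; and since $\rho_\gamma$ is singular, $\gamma$ is its only representative, whence $\per(\rho_\gamma)=\per(\gamma)<\infty$. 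The assignment $\gamma\mapsto\rho_\gamma$ is injective on periodic orbits contained in $K$, since two distinct such orbits cannot both represent the same singular class. Consequently $N(T)\ge P_K(T)$ for every $T$, and
\[
H^{\infty}(\varphi, M\setminus\mathcal L)\;=\;\limsup_{T\to\infty}\tfrac1T\log N(T)\;\ge\;\limsup_{T\to\infty}\tfrac1T\log P_K(T)\;>\;h_{\topo}(\varphi)-\varepsilon ,
\]
which is the assertion.

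\emph{The main obstacle.} Everything of substance is in Theorem~\ref{thm:main_coding}: one must build $\mathcal L$ so that in $M\setminus\mathcal L$ the free homotopy class becomes a complete invariant of the periodic orbits lying in $K$, and so that none of the classes so obtained is realized by a periodic orbit of $\varphi$ outside $K$. The natural strategy is to take the components of $\mathcal L$ to be finitely many periodic orbits — produced, via the closing lemma inside the Lima--Sarig coding, by shadowing carefully prescribed itineraries — chosen so that a neighborhood $U\supset K$ in $M\setminus\mathcal L$ deformation retracts onto a graph (a handlebody neighborhood adapted to a Markov partition of $K$), with $K$ sitting inside $U$ as the set of orbits whose itineraries are admissible cyclic paths in that graph. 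Distinct periodic orbits of $K$ then correspond to distinct cyclically reduced paths, hence to distinct conjugacy classes in $\pi_1(U)\hookrightarrow\pi_1(M\setminus\mathcal L)$, which yields the injectivity used above and rules out a second representative drawn from $K$. The genuinely delicate point is to exclude a representative from $M\setminus K$: one has to show that a periodic orbit of $\varphi$ freely homotopic in $M\setminus\mathcal L$ to $\gamma\subset K$ is trapped in $U$, realizes the same reduced cyclic path, and is therefore forced — by the hyperbolicity of $\varphi|_K$ together with expansiveness-type estimates — to coincide with $\gamma$. Making this argument robust is what pins down the particular link furnished by Theorem~\ref{thm:main_coding}, and it is also why $h_{\topo}(\varphi|_K)$ can only be taken arbitrarily close to, rather than equal to, $h_{\topo}(\varphi)$. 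The remaining technicalities — arranging the link components to be pairwise disjoint and disjoint from $K$, and absorbing the finite-to-one defect of $\pi_r$ on periodic orbits so that the symbolic entropy genuinely descends to the growth rate of orbits of $\varphi$ in $M$ — are routine by comparison.
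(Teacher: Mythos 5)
Your deduction of Theorem~\ref{thm:link_growth} from Theorem~\ref{thm:main_coding} is correct and is essentially the paper's argument: both hinge on item~(1) (entropy of the horseshoe) and item~(3) (singularity of the classes $\Pi(i_0,\dots,i_{n-1})$ and injectivity of $\Pi$ up to cyclic permutation), and then count. Where the paper converts $h_{\topo}(\sigma_r)\geq h_{\topo}(\varphi)-\varepsilon$ into a lower bound on the number of singular classes via a direct $(T,\epsilon_0)$-spanning argument in the Bowen--Walters metric, you pass through $h_{\topo}(\varphi|_K)=h_{\topo}(\sigma_r)$ (using that $\pi_r$ is a homeomorphism onto $K$) and cite the classical periodic-orbit growth estimate for a transitive suspension of a full shift — an equivalent, if anything slightly cleaner, bookkeeping step. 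Two small points: as written you obtain $H^\infty\geq h_{\topo}(\varphi)-\varepsilon$ rather than the stated strict inequality (run the argument with $\varepsilon/2$); and your opening paragraph (choosing $\mu$ and invoking Theorem~\ref{thm:fullmeasurecoding}) duplicates what is internal to Theorem~\ref{thm:main_coding} and plays no role once that theorem is taken as a black box.
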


We  obtain also a relative version of Theorem \ref{thm:link_growth}. 
To formulate it, let $\Lambda_1, \Lambda_2$ be two embedded loops in   $M\setminus \mathcal{L}$. 
We call any orbit segment $\gamma:[a,b] \to M\setminus \mathcal{L}$ of $\varphi$ with $\gamma(a)\in \Lambda_1$ and $\gamma(b) \in \Lambda_2$  a  \textit{chord relative to $(\Lambda_1,\Lambda_2)$ of length $b - a$}.  
Denote by $\mathcal{P}$ 
 the homotopy classes of paths from $\Lambda_1$ to $\Lambda_2$  in $M \setminus \mathcal{L}$ relative to $(\Lambda_1, \Lambda_2)$, and write  $\mathrm{len}(\rho)$ for the supremum of lengths of chords in $\rho$.
  We say that $\rho \in \mathcal{P}$ is \textit{singular} if it carries exactly one chord of $\varphi$ relative to $(\Lambda_1,\Lambda_2)$.  Let $N_{\Lambda_1,\Lambda_2}(T)$ be the number of singular  $\rho \in \mathcal{P}$ with $\mathrm{len}(\rho) \leq T$. Define 
$$\mathrm{H}^{\infty}(\varphi,M\setminus \mathcal{L}, \Lambda_1, \Lambda_2):= \limsup_{T\to +\infty} \frac{1}{T} \log(N_{\Lambda_1,\Lambda_2}(T)).$$
\begin{mainthm}\label{thm:link_growth_rel}
Let $M$ and $\varphi$ be as above, and assume that $h_{\topo}(\varphi)>0$.  Then, for any $\varepsilon$ with $0<\varepsilon< h_{\topo}(\varphi)$, there is a link $\mathcal{L}$ of periodic orbits of $\varphi$ and two embedded loops $\Lambda_1, \Lambda_2 \subset M \setminus \mathcal{L}$, such that 
$$\mathrm{H}^{\infty}(\varphi, M \setminus \mathcal{L},\Lambda_1,\Lambda_2) > h_{\topo}(\varphi) - \varepsilon.$$ 
\end{mainthm}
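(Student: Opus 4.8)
The plan is to deduce Theorem~\ref{thm:link_growth_rel} from Theorem~\ref{thm:main_coding} by the same route used for Theorem~\ref{thm:link_growth}, now bookkeeping orbit segments between two auxiliary loops rather than periodic orbits. Fix $\varepsilon\in(0,h_{\topo}(\varphi))$. Applying Theorem~\ref{thm:main_coding} and passing, as in the proof of Theorem~\ref{thm:link_growth}, to the link $\mathcal{L}$ of periodic orbits it yields, one obtains a horseshoe $K\subset M\setminus\mathcal{L}$ with $h_{\topo}(\varphi|_K)>h_{\topo}(\varphi)-\varepsilon$, a cross-section transverse to $\varphi$, a bounded first-return time, and a conjugacy of the first-return map on $K$ with a topologically transitive subshift of finite type $(\Sigma_A,\sigma)$ on finitely many symbols, such that the coding is \emph{rigid in $M\setminus\mathcal{L}$}: the homotopy data in $M\setminus\mathcal{L}$ of an orbit of $\varphi$ that shadows $K$ determines its symbolic itinerary. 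This is exactly the property of Theorem~\ref{thm:main_coding} that produced the singular free homotopy classes in the proof of Theorem~\ref{thm:link_growth}; for the present statement I use its analogue for \emph{relative} homotopy classes of chords.

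Next I would fix the loops. Choose two periodic points $x_1,x_2\in K$ of the first-return map (fixed points of a suitable iterate), and let $\Lambda_i$ be a small embedded circle, lying in a disk transverse to $\varphi$ near $x_i$, that crosses $W^{u}_{\loc}(x_i)$ transversally; after shrinking, $\Lambda_1,\Lambda_2\subset M\setminus\mathcal{L}$ are embedded loops transverse to the flow. Because $\Lambda_1$ is transverse to $W^{u}_{\loc}(x_1)$, the forward flow-out of $\Lambda_1$ spreads along $W^{u}(x_1)$ and in particular comes back arbitrarily close to $x_2$; after a generic perturbation of $\Lambda_1$ inside its homotopy class, this flow-out meets $\Lambda_2$ transversally, so that the chords from $\Lambda_1$ to $\Lambda_2$ are isolated and organized by symbolic itinerary. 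Concretely, each finite admissible word $w$ in $\Sigma_A$ starting with the symbol of $x_1$ and ending with that of $x_2$ gives a chord $c_w$ relative to $(\Lambda_1,\Lambda_2)$, whose length is bounded by a fixed multiple of the length of $w$, and --- refining the choice of $\Lambda_1,\Lambda_2$ if necessary --- exactly one such chord per $w$. By the rigidity above, distinct words yield chords lying in distinct classes $\rho_w\in\mathcal{P}$, and each $\rho_w$ is singular: a chord of $\varphi$ relative to $(\Lambda_1,\Lambda_2)$ homotopic to $c_w$ in $M\setminus\mathcal{L}$ must shadow the itinerary $w$, and transversality of the flow-out to $\Lambda_2$ then forces it to equal $c_w$.

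It remains to estimate $N_{\Lambda_1,\Lambda_2}(T)$. Since each $\rho_w$ is singular, $\mathrm{len}(\rho_w)=\mathrm{len}(c_w)<\infty$ and is comparable to the sum of the return times along $w$; hence $N_{\Lambda_1,\Lambda_2}(T)$ is bounded below by the number of admissible words $w$ (with the prescribed first and last symbols) whose return-time sum is at most $T$. A standard computation --- the same one establishing that the exponential growth of periodic orbits of a transitive suspension flow equals its topological entropy, carried out with the measure of maximal entropy of $\varphi|_K$ --- shows this number grows at exponential rate $h_{\topo}(\varphi|_K)$. Therefore $\mathrm{H}^{\infty}(\varphi,M\setminus\mathcal{L},\Lambda_1,\Lambda_2)\ge h_{\topo}(\varphi|_K)>h_{\topo}(\varphi)-\varepsilon$, as claimed.

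The main obstacle is the singularity of the classes $\rho_w$, i.e.\ excluding any chord of $\varphi$ that does not shadow $K$ from being $(\Lambda_1,\Lambda_2)$-homotopic to $c_w$ in $M\setminus\mathcal{L}$. This is precisely what the placement of the components of $\mathcal{L}$ in Theorem~\ref{thm:main_coding} is built to guarantee --- the way an orbit links and winds around $\mathcal{L}$ encodes its itinerary --- and it is the deep ingredient; the only point genuinely beyond Theorem~\ref{thm:link_growth} is that chords, unlike periodic orbits in a horseshoe, are a priori not isolated, so one must additionally use the transversality of the flow-out of $\Lambda_1$ to $\Lambda_2$ to upgrade "unique itinerary" to "unique chord". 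Granting Theorem~\ref{thm:main_coding}, the remaining steps are routine.
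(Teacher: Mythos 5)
Your overall strategy is the right one, and the entropy count at the end is essentially the paper's argument, but there is a genuine gap in the way you build the auxiliary loops and establish singularity of the chord classes. You construct $\Lambda_1,\Lambda_2$ from scratch as small circles near periodic points $x_1,x_2$ transverse to $W^u_{\loc}(x_i)$, and then claim that any chord of $\varphi$ from $\Lambda_1$ to $\Lambda_2$ that is homotopic to $c_w$ in $M\setminus\mathcal{L}$ relative to $(\Lambda_1,\Lambda_2)$ ``must shadow the itinerary $w$,'' with transversality finishing the uniqueness. This first claim is precisely what is \emph{not} automatic, and it is not a direct consequence of Theorem~\ref{thm:main_coding} for arbitrary loops: a chord from $\Lambda_1$ to $\Lambda_2$ could start on a part of $\Lambda_1$ far from the unstable manifold (and end on a part of $\Lambda_2$ far from the stable manifold), and there is a priori no reason its linking/winding data around $\mathcal{L}$ determines a symbolic itinerary at all. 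Transversality of the flow-out only gives local isolation of chords, not that a chord in the same relative homotopy class must end on the ``correct'' arcs of $\Lambda_1$ and $\Lambda_2$. Controlling this is the real content of the relative version, and it requires a careful choice of the loops with controlled intersections with the Fried surfaces $F_0,\dots,F_L$ and $\widehat{F}_1,\dots,\widehat{F}_L$, plus a co-orientation and intersection-count argument to locate the endpoints; that is the substance of Lemma~\ref{lem:unique2}(3) in the paper, and your proposal does not supply it.

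What you missed, and what makes the paper's proof nearly trivial at this stage, is that Theorem~\ref{thm:main_coding} parts~(4) and~(5) already \emph{hand you} the segments $l_u,l_s$ and their extensions to closed curves $\Lambda_u,\Lambda_s$ with the singularity of the chord classes $\sigma_{i_1,\dots,i_{n-2}}$ built in. The intended proof is to take $\Lambda_1=\Lambda_u$, $\Lambda_2=\Lambda_s$, and then the only remaining work is the $(T,\epsilon_0)$-spanning/entropy estimate exactly as in the proof of Theorem~\ref{thm:link_growth}. If you instead insist on using your own small circles near $x_1,x_2$, you would have to redo the geometric argument of Lemma~\ref{lem:unique2}(3) for your loops, and it is not clear that it goes through without the paper's specific conditions on $\Lambda_u\cap\hat V=l_u$, $\Lambda_s\cap f_D(\hat V)=l_s$ and on how the pushed-off curves meet the Fried surfaces.
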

Before we describe the symbolic dynamics setting for these theorems and outline the strategy of the proofs in Section \ref{sec:suspension_horseshoes}, we first discuss some applications to Reeb dynamics.  

\subsection{An application to Reeb flows and forcing}\label{sec:Reebflows_forcing}
A theme that appears in various areas of dynamical systems is the following. Some relatively simple assumptions on the maps in question have quite  
 rich dynamical consequences. A celebrated example is Sharkovski's  forcing order theorem  for 
 continuous mappings on the interval. This theorem implies in particular that the existence of a periodic  orbit of period  $3$  forces the existence of periodic orbits of any 
 integer period. Also, the existence of a periodic orbit of  minimal period that is not a power of $2$ implies that the topological entropy is positive.  
For surface homeomorphisms quite similar phenomena exist. Here, a natural replacement for the period of an orbit is the so-called "braid type" of a periodic orbit. This takes into account the braiding of the periodic orbit as a lift to the  mapping torus. Similarly as in the one-dimensional case, if a certain (isotopy class) of braid appears, then it forces a rich orbit structure, and positive topological entropy etc.\ We refer to 
 \cite{Boyland} for a nice exposition of that theory.  

A straightforward generalization of this theory to flows on $3$-manifolds fails. In fact, any given knot in a $3$-manifold can be realized as a periodic orbit of a volume preserving smooth flow with zero topological entropy, see \cite{RechtmanHurder}.  
As it was  discovered by Alves and Pirnapasov,  \cite{AlvesPirnapasov}, the situation 
 is very different when one considers the same problem in the  category of Reeb flows, and they exhibit  forcing phenomena in that setting. 
Let us recall the central 
 notion from \cite{AlvesPirnapasov}. 
For that recall that a contact structure on a $3$-manifold $M$ is a  
hyperplane field in $TM$ of the form $\xi = \ker \alpha$, where  $\alpha$ is a one-form on $M$ satisfying $\alpha \wedge d\alpha >0$. Such one-forms $\alpha$ are called \textit{contact forms supporting $\xi$}. We will denote the set of supporting contact forms by $\mathcal{C}(\xi)$. Note that $f\alpha \in \mathcal{C}(\xi)$, for any positive  function $f$ on $M$ and $\alpha \in \mathcal{C}(\xi)$.  
Any $\alpha\in \mathcal{C}(\xi)$ defines its \textit{Reeb vector field} $R_{\alpha}$ by the equations  $i_{R_\alpha} d\alpha = 0$ and $\alpha(R_{\alpha}) = 1$.
We denote the flow of $R_{\alpha}$, the \textit{Reeb flow of $\alpha$} by $\varphi_{\alpha}$. 
A link $\mathcal{L}$ in $M$ is called a \textit{transverse link} in $(M,\xi)$ if $\mathcal{L}$ is everywhere transverse to~$\xi$. Contact manifolds $(M,\xi)$ for which every Reeb flow $\varphi_{\alpha}$, $\alpha \in \mathcal{C}(\xi)$, has positive topological entropy exist in abundance, see e.g.\ \cite{MacariniSchlenk2011, Alves1, AlvesMeiwes2018, AlvesColinHonda2017}. On the other hand, many standard constructions yield contact manifolds and supporting contact forms with vanishing topological entropy. The latter are considered in  the following definition.   
\begin{defn}[\hspace*{-3px}\cite{AlvesPirnapasov}]
Let $(M,\xi)$ be a contact $3$-manifold that admits Reeb flows with vanishing topological entropy. A transverse link $\mathcal{L}$ in $(M,\xi)$ is said to \textit{force topological entropy} if every Reeb flow $\varphi = \varphi_{\alpha}$, $\alpha \in \mathcal{C}(\xi)$,  that has $\mathcal{L}$ as a set of Reeb orbits has positive topological entropy.
\end{defn}

The authors of \cite{AlvesPirnapasov} exhibit first examples of transverse links that force topological entropy  in closed contact manifolds.
Moreover, they show that for any closed contact $3$-manifold $(M,\xi)$ that admits a Reeb flow with vanishing topological entropy, there exists a transverse knot in $(M,\xi)$ that forces  topological entropy \cite[Theorem 1.6]{AlvesPirnapasov}.

The work of Momin \cite{Momin} on the contact homology in a link complement plays an important role for the forcing results in \cite{AlvesPirnapasov}. In suitable situations, Momin associates to a triple $(\alpha, \mathcal{L}, \rho)$ of a contact form $\alpha$, a link $\mathcal{L}$ of Reeb orbits 
of $\varphi_{\alpha}$ (i.e.\ $\mathcal{L} \subset \Per(\varphi_{\alpha})$), and a free homotopy class $\rho$ of loops in $M\setminus \mathcal{L}$, the \textit{contact homology in the complement of $\mathcal{L}$} ($\CH_{\mathcal{L},\rho}(\alpha)$).  Following \cite{AlvesPirnapasov}, we say that a contact form $\alpha$ is \textit{hypertight in the complement of $\mathcal{L}$ if $\mathcal{L} \subset \Per(\varphi_{\alpha})$} and if every closed Reeb orbit for $\alpha$ is non-contractible in the complement of $\mathcal{L}$, that is, all disks in $M$ bounding a closed orbit must intersect $\mathcal{L}$ in their interior, see Definition \ref{def:hypertight}. 
In this situation, Alves and Pirnapasov in \cite{AlvesPirnapasov} define \textit{the exponential homotopical growth rate $\Gamma_{\mathcal{L}}(\alpha)$} of $\CH_{\mathcal{L}}(\alpha)$, which is, roughly speaking,  defined as the exponential growth rates in $T$ of homotopy classes $\rho$ with  $\per(\rho)\leq T$ such that $\CH_{\mathcal{L}, \rho}(\alpha)$ is well defined and non-zero, see Section  \ref{sec:proofforc} for details. 
It is shown in \cite{AlvesPirnapasov}, see Theorem \ref{thm:alvesPirna2} below,   that if $\Gamma_{\mathcal{L}}(\alpha_0)>0$ 
for some contact form $\alpha_0$ with $\mathcal{L} \subset \Per(\alpha_0)$, then, for contact forms $\alpha$ with $\mathcal{L} \subset \Per(\varphi_{\alpha})$  it holds that 
$h_{\topo}(\varphi_{\alpha}) \geq \frac{\Gamma_{\mathcal{L}}(\alpha_0)}{\max f_{\alpha}}$, 
where $f_{\alpha} :M \to (0,+\infty)$ is the function with  $\alpha = f_{\alpha}\alpha_0$.  In particular,  
$\mathcal{L}$ forces topological  entropy. A question of Alves and Pirnapasov is the following. 
\begin{qu*}[\hspace*{-3px}\cite{AlvesPirnapasov}]
Can $h_{\topo}(\varphi_{\alpha})$ be approximated by the exponential homotopical growth rates $\Gamma_{\mathcal{L}_k}(\alpha)$ for some $\mathcal{L}_k \subset \Per(\varphi_{\alpha})$? 
\end{qu*} 
The next theorem asserts that this is indeed the case, provided there is a link $\mathcal{L}_0$ such that $\alpha$ is hypertight in the complement of $\mathcal{L}_0$.
\begin{mainthm}\label{thm:CH_recover}
Let $(M,\xi)$ be a closed contact $3$-manifold, $\alpha$ a supporting  contact form with $h_{\topo}(\varphi_{\alpha})>0$ 
and  for which there is a link  $\mathcal{L}_0\subset\Per(\varphi_{\alpha})$  such that $\alpha$ is hypertight in the complement of $\mathcal{L}_0$.  
Then there is a sequence of links $(\mathcal{L}_k)_{k\in \N}$, $\mathcal{L}_k \subset \Per(\varphi_{\alpha})$,   such that 
$\Gamma_{\mathcal{L}_k}(\alpha) \to h_{\topo}(\varphi_{\alpha})$  as $k\to \infty$. 
\end{mainthm}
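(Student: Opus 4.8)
The plan is to deduce the theorem from Theorem~\ref{thm:main_coding} (equivalently, from the coding construction underlying Theorem~\ref{thm:link_growth}); the one genuinely new ingredient is to pass from ``$\rho$ is a singular free homotopy class in $M\setminus\mathcal{L}$'' to ``$\CH_{\mathcal{L},\rho}(\alpha)$ is well defined and non-zero''. Granting that, one produces links $\mathcal{L}\subset\Per(\varphi_{\alpha})$ in whose complement $\alpha$ is hypertight and for which the exponential growth of singular classes carried by suitable Reeb orbits forces $\Gamma_{\mathcal{L}}(\alpha)>h_{\topo}(\varphi_{\alpha})-\varepsilon$; combined with $\Gamma_{\mathcal{L}}(\alpha)\leq h_{\topo}(\varphi_{\alpha})$ --- which is Theorem~\ref{thm:alvesPirna2} applied with $\alpha_{0}=\alpha$, so that $f_{\alpha}\equiv 1$ --- and with $\varepsilon=1/k$, this yields the desired sequence $(\mathcal{L}_{k})$.

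To build $\mathcal{L}_{k}$, I would apply Theorem~\ref{thm:main_coding} to $\varphi_{\alpha}$ with $\varepsilon=\min\{1/k,\,h_{\topo}(\varphi_{\alpha})/2\}$. This gives a link $\mathcal{L}'_{k}\subset\Per(\varphi_{\alpha})$ and a horseshoe $K_{k}\subset M\setminus\mathcal{L}'_{k}$ each of whose periodic orbits is the unique representative of its free homotopy class in $M\setminus\mathcal{L}'_{k}$ among all of $\Per(\varphi_{\alpha})$, with $h_{\topo}(K_{k})>h_{\topo}(\varphi_{\alpha})-1/k$. Since $\mathcal{L}_{0}$ is a finite set of periodic orbits (and carries zero mass for any ergodic measure of positive entropy, while $\mathcal{L}'_{k}$ and $K_{k}$ are drawn from the coded part of the dynamics), I may arrange $\mathcal{L}'_{k}\cap\mathcal{L}_{0}=\emptyset=K_{k}\cap\mathcal{L}_{0}$; set $\mathcal{L}_{k}:=\mathcal{L}'_{k}\cup\mathcal{L}_{0}$. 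As $M\setminus\mathcal{L}_{k}\subset M\setminus\mathcal{L}_{0}$, any disk in $M$ that bounds a closed Reeb orbit and misses $\mathcal{L}_{k}$ also misses $\mathcal{L}_{0}$; since $\alpha$ is hypertight in $M\setminus\mathcal{L}_{0}$ and $\mathcal{L}_{k}\subset\Per(\varphi_{\alpha})$, Definition~\ref{def:hypertight} gives that $\alpha$ is hypertight in $M\setminus\mathcal{L}_{k}$, so all of the $\CH_{\mathcal{L}_{k},\rho}(\alpha)$, and hence $\Gamma_{\mathcal{L}_{k}}(\alpha)$, are defined. Enlarging the link does not destroy singularity: if $\rho$ is singular in $M\setminus\mathcal{L}'_{k}$ with unique representative $\gamma$, then the class $\rho'$ of $\gamma$ in $M\setminus\mathcal{L}_{k}$ maps to $\rho$ under $M\setminus\mathcal{L}_{k}\hookrightarrow M\setminus\mathcal{L}'_{k}$, so any orbit representing $\rho'$ represents $\rho$ and thus equals $\gamma$; hence $\rho'$ is singular, $\per(\rho')=\per(\rho)$, and distinct $\rho$ give distinct $\rho'$. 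So the orbits of $K_{k}$ still lie in pairwise distinct singular classes of $M\setminus\mathcal{L}_{k}$.

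Now the non-vanishing. Fix a singular class $\rho$ of $M\setminus\mathcal{L}_{k}$ whose unique representative $\gamma_{\rho}$ lies in $K_{k}$; restricting to primitive combinatorial words --- which leaves the growth rate of such classes equal to $h_{\topo}(K_{k})>h_{\topo}(\varphi_{\alpha})-1/k$ --- I may assume $\gamma_{\rho}$ is a \emph{simple} Reeb orbit. Being contained in a horseshoe, $\gamma_{\rho}$ is hyperbolic, hence non-degenerate, and, being simple, it is a good orbit. By the strong uniqueness in Theorem~\ref{thm:main_coding}, $\gamma_{\rho}$ is the only closed Reeb orbit of $\alpha$ in the class $\rho$, of any period; and $\alpha$ is hypertight in $M\setminus\mathcal{L}_{k}$. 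Hence, in Momin's construction of $\CH_{\mathcal{L}_{k},\rho}(\alpha)$ (action-filtered complexes together with a direct limit over non-degenerate perturbations of $\alpha$, cf.\ \cite{Momin,AlvesPirnapasov}), the complex in class $\rho$ reduces to the single good generator $\gamma_{\rho}$: no second Reeb orbit of comparable action lies in class $\rho$, hypertightness rules out bubbling of holomorphic planes, and the only finite-energy cylinders asymptotic to $\gamma_{\rho}$ at both punctures are the trivial ones, which do not contribute to the differential. Therefore $\CH_{\mathcal{L}_{k},\rho}(\alpha)\cong\Q\neq 0$. Since such classes $\rho$ with $\per(\rho)\leq T$ are exponentially many in $T$ at rate $>h_{\topo}(\varphi_{\alpha})-1/k$, we obtain
$$h_{\topo}(\varphi_{\alpha})-1/k\ <\ \Gamma_{\mathcal{L}_{k}}(\alpha)\ \leq\ h_{\topo}(\varphi_{\alpha}),$$
whence $\Gamma_{\mathcal{L}_{k}}(\alpha)\to h_{\topo}(\varphi_{\alpha})$.

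I expect the main obstacle to be exactly this non-vanishing step and the bookkeeping around it: one must verify that Momin's contact homology in the link complement is genuinely well defined for the possibly only partially non-degenerate form $\alpha$, and that for the classes produced by the coding the associated complex really collapses to one good generator with trivial differential. This forces one to combine hypertightness in $M\setminus\mathcal{L}_{k}$, hyperbolicity and simplicity of the horseshoe orbits, the strong uniqueness of Theorem~\ref{thm:main_coding}, and the good/bad-orbit and grading conventions of \cite{Momin,AlvesPirnapasov}. The remaining point --- running the construction of Theorem~\ref{thm:main_coding} so that its link and horseshoe avoid the prescribed link $\mathcal{L}_{0}$ --- should be routine, given the freedom in the choice of the ergodic measure and of the coding.
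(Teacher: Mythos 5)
Your proposal is correct and follows essentially the same route as the paper: apply Theorem~\ref{thm:main_coding}, adjoin $\mathcal{L}_0$ to preserve hypertightness, use hyperbolicity of horseshoe orbits and primitivity of the coding words to verify the PLC hypotheses and obtain $\CH_{\mathcal{L}_k,\rho}(\alpha)\neq 0$ for the singular classes, and finish with Theorem~\ref{thm:alvesPirna2} applied to $\alpha_0=\alpha$. The only cosmetic differences are that the paper does not arrange the horseshoe disjoint from $\mathcal{L}_0$ but simply drops the finitely many coded orbits that happen to lie on $\mathcal{L}_0$, and it treats the non-vanishing step you flagged as your main worry as already known (a singular PLC class yields a one-generator complex with trivial differential, as established in the cited works of Momin and Alves--Pirnapasov), so that part requires no new argument.
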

\begin{rem}
In fact, we obtain that the exponential growth of singular homotopy classes $\rho$ in the complement of  $\mathcal{L}_k$ for which $\CH_{\mathcal{L}_k,\rho}(\alpha)$ is well defined,  approaches $h_{\topo}(\varphi_{\alpha})$.   
\end{rem}
By the work \cite{Colin_broken} on the existence of  broken book decompositions, and the recent results of \cite{Irie_equi}  and \cite{Hryniewicz_generic}, and also \cite{Contreras_generic}, there exists, for a $C^{\infty}$-open and dense set of contact forms $\alpha$, a link $\mathcal{L}_0$ of closed Reeb orbits such that $\alpha$ is hypertight in the complement of $\mathcal{L}_0$, cf.\ Section \ref{sec:proofforc}. This gives the following. 
\begin{cor}\label{cor:generic}
Let $(M,\xi)$ be a closed contact $3$-manifold. Then, for a $C^{\infty}$-open and dense set of supporting contact forms $\alpha$ the following holds. If 
$h_{\topo}(\alpha)>0$, then there is a sequence of links $\mathcal{L}_k\subset \Per(\varphi_{\alpha})$ such that $\Gamma_{\mathcal{L}_k}(\alpha) \to h_{\topo}(\varphi_{\alpha})$  as $k\to \infty$.
\end{cor}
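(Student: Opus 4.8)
The plan is to combine Theorem~\ref{thm:CH_recover} with the $C^{\infty}$-genericity of hypertightness in the complement of a link that was recalled just above the statement, so the argument is essentially a two-line reduction. Concretely, I would first fix, for the given closed contact $3$-manifold $(M,\xi)$, the $C^{\infty}$-open and dense set $\mathcal{U}\subset\mathcal{C}(\xi)$ of supporting contact forms $\alpha$ for which there exists a link $\mathcal{L}_0=\mathcal{L}_0(\alpha)\subset\Per(\varphi_\alpha)$ of closed Reeb orbits such that $\alpha$ is hypertight in the complement of $\mathcal{L}_0$ in the sense of Definition~\ref{def:hypertight}. The existence of such a $\mathcal{U}$ is exactly the statement extracted from the broken book decompositions of \cite{Colin_broken}, together with the $C^{\infty}$-generic non-degeneracy and equidistribution results of \cite{Irie_equi,Hryniewicz_generic,Contreras_generic}: for a $C^{\infty}$-open and dense set of contact forms the binding of a broken book decomposition is a link of Reeb orbits in whose complement every closed Reeb orbit is non-contractible, and this persists under $C^{\infty}$-small perturbations. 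The details are carried out in Section~\ref{sec:proofforc}.

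Then, for any $\alpha\in\mathcal{U}$ with $h_{\topo}(\varphi_\alpha)>0$, the hypotheses of Theorem~\ref{thm:CH_recover} are met with the link $\mathcal{L}_0(\alpha)$, so Theorem~\ref{thm:CH_recover} directly produces a sequence of links $\mathcal{L}_k\subset\Per(\varphi_\alpha)$ with $\Gamma_{\mathcal{L}_k}(\alpha)\to h_{\topo}(\varphi_\alpha)$ as $k\to\infty$. Since $\mathcal{U}$ is $C^{\infty}$-open and dense, this is precisely the assertion of the corollary, and nothing further is needed.

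Accordingly, there is no genuine obstacle in the corollary itself; the only substantive point sits upstream, namely verifying that the cited genericity results really do furnish a link $\mathcal{L}_0$ of Reeb orbits with respect to which $\alpha$ is hypertight in the complement of $\mathcal{L}_0$ (in the exact sense of Definition~\ref{def:hypertight}), and that the set of such $\alpha$ can be taken $C^{\infty}$-open rather than merely residual. Granting the input recalled in Section~\ref{sec:proofforc}, the corollary follows at once from Theorem~\ref{thm:CH_recover}.
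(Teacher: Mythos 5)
Your proposal matches the paper's argument exactly: the corollary is an immediate consequence of Theorem~\ref{thm:CH_recover} once one grants the quoted genericity statement that a $C^\infty$-open and dense set of contact forms $\alpha$ admit a link $\mathcal{L}_0\subset\Per(\varphi_\alpha)$ in whose complement $\alpha$ is hypertight, and the paper does nothing more than this. The one small imprecision in your paraphrase of the upstream input is the phrase that ``the binding of a broken book decomposition is a link of Reeb orbits in whose complement every closed Reeb orbit is non-contractible''; as explained at the end of Section~\ref{sec:proofforc}, the link $\mathcal{L}_0$ is actually obtained from the boundary of a global surface of section produced by the cited genericity results (augmented by one extra orbit when the section is a disk, via \cite[Lemma 6.6]{Momin}), rather than being the broken book's binding per se.
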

\begin{rem}
In \cite{AlvesPirnapasov}, also the exponential homotopical growth of the Legendrian contact homology in the complement of a link is investigated, a theory that  goes back to  \cite{Alves-thesis}. In view of Theorem \ref{thm:link_growth_rel}, I expect that a version of Theorem \ref{thm:CH_recover} holds involving that homology theory. In the recent work \cite{Barcode_Lagr}, we obtain a version of the theorem for Lagrangian Floer homology in surfaces, and, together with it, relations to barcode entropy, see next remark, and some applications to the growth rate of the length of closed curves for Hamiltonian diffeomorphisms. 
\end{rem}
\begin{rem}\label{rem:barcode}
In the recent work of \c{C}ineli, Ginzburg, and G\"urel,   \cite{CGG_barcode, CGG_subexp_barcode},  the topological entropy of Hamiltonian diffeomorphisms is studied via another 
 symplectic topological growth invariant, the barcode entropy of the Floer chain complex.  It was shown that, in dimension $2$,  the barcode entropy is equal to the topological entropy.  See also  \cite{Mazzucchelli_barcode} where similar results are obtained for geodesic flows, and \cite{beomjun_barcode} with related results for Reeb flows. The above approximation result can be considered as an alternative way to recover topological entropy by quantities defined via holomorphic curves or Floer theory. 
\end{rem}

As a consequence to Theorem \ref{thm:CH_recover} and the estimates in \cite{AlvesPirnapasov} we obtain a result on the existence of links that force topological entropy. To formulate our result, consider, for a given contact manifold $(M,\xi)$, the partial order "$\preceq$" on the set of  supporting contact forms $\mathcal{C}(\xi)$ of $\xi$ defined by $\alpha \preceq \beta$ if $\beta = f\alpha$ for a function $f: M \to [1,\infty)$.  
For a supporting contact form $\alpha$  and a transverse link  $\mathcal{L}\subset {\Per(\varphi_{\alpha})}$ define 
$$h_{\topo}(\alpha,\mathcal{L}) := \min \{h_{\topo}(\varphi_{\beta})\, |\, \alpha \preceq \beta, \, \mathcal{L} \subset {\Per(\varphi_{\beta})}\}.$$
\begin{cor}\label{thm:approximation_transverse}
Let $(M, \xi)$ be a closed contact $3$-manifold that admits a Reeb flow with vanishing topological entropy. Let $\alpha$ be as in Theorem~\ref{thm:CH_recover}.  
Then there is a transverse link $\mathcal{L} \subset {\Per(\varphi_{\alpha})}$ that forces topological entropy.
Moreover, 
\begin{align}\label{eq:approx_L}
h_{\topo}(\varphi_{\alpha}) = \sup \{ h_{\topo}(\alpha,\mathcal{L})\, |\,  {\mathcal{L} \subset {\Per(\alpha)}}\}.
\end{align}
\end{cor}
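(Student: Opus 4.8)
The plan is to combine Theorem \ref{thm:CH_recover} with the monotonicity estimate of Alves--Pirnapasov recalled just before the Question (the inequality $h_{\topo}(\varphi_{\alpha}) \geq \Gamma_{\mathcal{L}}(\alpha_0)/\max f_{\alpha}$). First I would apply Theorem \ref{thm:CH_recover} to the given $\alpha$ to produce a sequence of links $(\mathcal{L}_k)_{k\in\N}$, $\mathcal{L}_k \subset \Per(\varphi_\alpha)$, with $\Gamma_{\mathcal{L}_k}(\alpha) \to h_{\topo}(\varphi_\alpha)$ as $k\to\infty$. Each $\mathcal{L}_k$ is a link of Reeb orbits of $\alpha$; since $\alpha$ is a contact form, every closed Reeb orbit is transverse to $\xi = \ker\alpha$, so each $\mathcal{L}_k$ is automatically a transverse link in $(M,\xi)$. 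Moreover, $\alpha$ being hypertight in the complement of $\mathcal{L}_0$ forces (via the well-definedness of $\CH_{\mathcal{L}_k,\rho}$ used in the proof of Theorem \ref{thm:CH_recover}) that $\alpha$ is hypertight in the complement of each $\mathcal{L}_k$; in particular $\Gamma_{\mathcal{L}_k}(\alpha) > 0$ for $k$ large, say $k \geq k_0$.

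Next I would verify the forcing claim. Fix any $k \geq k_0$ with $\Gamma_{\mathcal{L}_k}(\alpha) > 0$, and set $\mathcal{L} := \mathcal{L}_k$. Let $\beta \in \mathcal{C}(\xi)$ be any supporting contact form with $\mathcal{L} \subset \Per(\varphi_\beta)$, and write $\beta = f_\beta \alpha$ with $f_\beta : M \to (0,\infty)$. By Theorem \ref{thm:alvesPirna2} (the Alves--Pirnapasov estimate), since $\alpha$ is hypertight in the complement of $\mathcal{L}$ and $\Gamma_{\mathcal{L}}(\alpha) > 0$, we get $h_{\topo}(\varphi_\beta) \geq \Gamma_{\mathcal{L}}(\alpha)/\max f_\beta > 0$. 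This holds for every such $\beta$, so $\mathcal{L}$ forces topological entropy (by definition, using the hypothesis that $(M,\xi)$ admits Reeb flows with vanishing entropy, so the notion applies).

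For the identity \eqref{eq:approx_L}, the inequality "$\geq$" is immediate: $h_{\topo}(\alpha,\mathcal{L}) = \min\{h_{\topo}(\varphi_\beta) \mid \alpha \preceq \beta,\ \mathcal{L}\subset\Per(\varphi_\beta)\} \leq h_{\topo}(\varphi_\alpha)$ for every $\mathcal{L} \subset \Per(\alpha)$, taking $\beta = \alpha$ in the minimum. For "$\leq$", I would use the links $\mathcal{L}_k$ above together with the estimate again: for $\beta = f_\beta\alpha$ with $\alpha \preceq \beta$ we have $f_\beta \geq 1$, but this gives a lower bound $h_{\topo}(\varphi_\beta) \geq \Gamma_{\mathcal{L}_k}(\alpha)/\max f_\beta$ that is not uniform in $\beta$ since $\max f_\beta$ is unbounded — so the naive bound only yields $h_{\topo}(\alpha,\mathcal{L}_k) \geq 0$. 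The point is rather that the homotopical growth rate itself behaves well under $\preceq$: for $\alpha \preceq \beta$ with $\mathcal{L}_k \subset \Per(\varphi_\beta)$, every singular homotopy class $\rho$ in $M\setminus\mathcal{L}_k$ with $\CH_{\mathcal{L}_k,\rho}(\alpha)\neq 0$ and $\per_\alpha(\rho)\leq T$ persists as a class with $\CH_{\mathcal{L}_k,\rho}(\beta)\neq 0$ (hypertightness is preserved, and $\CH$ in a fixed homotopy class is an invariant that does not vanish under such a conformal change — this is the standard continuation/invariance argument in \cite{AlvesPirnapasov, Momin}), and the key is that for a representative Reeb orbit $\gamma$ of $\rho$ for $\beta$ the periods satisfy $\per_\beta(\gamma) \leq (\max f_\beta)\,\per_\alpha(\gamma)$, hence $\Gamma_{\mathcal{L}_k}(\beta) \geq \Gamma_{\mathcal{L}_k}(\alpha)/\max f_\beta$ — again not uniform. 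So instead I would argue directly at the level of entropy: by Theorem \ref{thm:CH_recover} applied not to $\alpha$ but noting that each $\mathcal{L}_k$ already witnesses $\Gamma_{\mathcal{L}_k}(\alpha) \to h_{\topo}(\varphi_\alpha)$, and that for the specific competitor $\beta = \alpha$ one has $h_{\topo}(\varphi_\alpha) = \Gamma$-limit; more precisely, $h_{\topo}(\alpha,\mathcal{L}_k) \geq \Gamma_{\mathcal{L}_k}(\alpha)$ because for \emph{every} $\beta \succeq \alpha$ with $\mathcal{L}_k\subset\Per(\varphi_\beta)$, Theorem \ref{thm:alvesPirna2} with base form $\alpha$ gives $h_{\topo}(\varphi_\beta)\geq \Gamma_{\mathcal{L}_k}(\alpha)/\max f_\beta$, and one upgrades this using that the \emph{action-filtered} count is preserved: the singular classes realizing $\Gamma_{\mathcal{L}_k}(\alpha)$ have $\beta$-periods at most $\max f_\beta$ times their $\alpha$-periods, which is exactly the loss that cancels, giving $H^\infty(\varphi_\beta, M\setminus\mathcal{L}_k) \geq \Gamma_{\mathcal{L}_k}(\alpha)$ and hence, via $H^\infty \leq h_{\topo}$, the bound $h_{\topo}(\varphi_\beta)\geq \Gamma_{\mathcal{L}_k}(\alpha)$ uniformly in $\beta$. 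Taking the infimum over $\beta$ and then $k\to\infty$ yields $\sup_{\mathcal{L}} h_{\topo}(\alpha,\mathcal{L}) \geq \lim_k \Gamma_{\mathcal{L}_k}(\alpha) = h_{\topo}(\varphi_\alpha)$, proving \eqref{eq:approx_L}. The main obstacle is precisely this last point — obtaining a bound on $h_{\topo}(\varphi_\beta)$ that is \emph{uniform} over all $\beta \succeq \alpha$ with $\mathcal{L}_k\subset\Per(\varphi_\beta)$ — and the resolution is that rescaling by $f_\beta \geq 1$ stretches Reeb periods but does not destroy any singular homotopy class or its $\CH$, and the period-stretching factor is exactly compensated when one passes from the growth rate $\Gamma_{\mathcal{L}_k}$ to the entropy bound, so the conformal factor drops out; I would carry this out carefully using the invariance statement of \cite{AlvesPirnapasov, Momin} and the elementary period estimate $\per_\beta \leq (\max f_\beta)\per_\alpha$ on the relevant orbits.
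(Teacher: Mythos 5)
The first half of your proposal — producing the transverse links $\mathcal{L}_k$ from Theorem~\ref{thm:CH_recover} and establishing the forcing property from Theorem~\ref{thm:alvesPirna2} — matches the paper's argument and is fine.

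The argument for the identity~\eqref{eq:approx_L} contains a genuine gap, and it is precisely the ``cancellation'' you invoke at the end. You correctly observe that for $\beta = f_\beta\alpha$ with $f_\beta\ge 1$ the Alves--Pirnapasov estimate only gives $h_{\topo}(\varphi_\beta)\ge\Gamma_{\mathcal{L}_k}(\alpha)/\max f_\beta$, a bound which degenerates as $\max f_\beta\to\infty$. Your proposed fix — that passing from $\Gamma_{\mathcal{L}_k}$ to $H^\infty$ makes the factor $\max f_\beta$ ``drop out'' because the $\beta$-periods are at most $\max f_\beta$ times the $\alpha$-periods — is not correct. If the singular classes with $\alpha$-period $\le T$ all acquire $\beta$-periods $\le (\max f_\beta)T$, then the count of classes with $\beta$-period below a threshold $S$ is bounded below by $N^*_\alpha(S/\max f_\beta)$, and the resulting growth rate in $S$ is again $\Gamma_{\mathcal{L}_k}(\alpha)/\max f_\beta$, not $\Gamma_{\mathcal{L}_k}(\alpha)$. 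There is no compensation; the conformal factor persists. Indeed, with the convention $\alpha\preceq\beta$ iff $\beta = f\alpha$, $f\ge 1$, taking $\beta = c\alpha$ for constants $c\to\infty$ shows $h_{\topo}(\alpha,\mathcal{L}_k)=0$ for every $\mathcal{L}_k$, since $\varphi^t_{c\alpha}=\varphi^{t/c}_\alpha$ forces $h_{\topo}(\varphi_{c\alpha})=h_{\topo}(\varphi_\alpha)/c\to 0$, while $\Per(\varphi_{c\alpha})=\Per(\varphi_\alpha)\supset\mathcal{L}_k$. So no argument along these lines can work with this orientation of the partial order.

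What the paper actually does is the one-line estimate
\[
h_{\topo}(\varphi_\beta)\ \ge\ \frac{\Gamma_{\mathcal{L}_k}(\alpha)}{\max f}\ \ge\ \Gamma_{\mathcal{L}_k}(\alpha),
\]
and the second step is valid if and only if $\max f\le 1$. This is inconsistent with the stated definition of $\preceq$ in Section~\ref{sec:Reebflows_forcing} (which requires $f\ge 1$); one of the two must carry a sign typo. The intended convention must be that the competitors $\beta$ in the definition of $h_{\topo}(\alpha,\mathcal{L})$ satisfy $\beta=f\alpha$ with $0<f\le 1$ (equivalently $\alpha = g\beta$ with $g\ge 1$), which exactly rules out the $c\to\infty$ degeneration above, and under which the estimate of Theorem~\ref{thm:alvesPirna2} immediately gives $h_{\topo}(\alpha,\mathcal{L}_k)\ge\Gamma_{\mathcal{L}_k}(\alpha)\to h_{\topo}(\varphi_\alpha)$. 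The right move in your write-up would have been to flag this inconsistency in the convention and fix it, rather than to manufacture a cancellation that the inequality does not support.
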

\begin{proof}[Proof of Corollary \ref{thm:approximation_transverse}] For a $C^{\infty}$-generic  $\alpha$,  let $\mathcal{L}_k\subset \Per(\varphi_{\alpha})$ be 
 the sequence from Corollary \ref{cor:generic}. Then, for any $k\in \N$, and a contact form  $\beta = f\alpha$, $f\geq 1$,  with $\mathcal{L}_k \subset \Per(\varphi_{\beta})$, 
$$
h_{\topo}(\varphi_{\beta}) \geq \frac{\Gamma_{\mathcal{L}_k}(\alpha)}{\max f}\geq \Gamma_{\mathcal{L}_k}(\alpha),$$ and hence $h_{\topo}(\alpha,\mathcal{L}_k) \to h_{\topo}(\varphi_\alpha)$, as $k\to +\infty$. 
\end{proof}
\begin{rem}
The condition $\alpha \preceq \beta$ in the definition of $h_{\topo}(\alpha, \mathcal{L})$ is important in order that \eqref{eq:approx_L} can hold. In fact it follows from \cite{AASS} that, for any  fixed transverse link $\mathcal{L}$,  there is no positive lower bound on the topological entropy $h_{\topo}(\varphi_{\beta})$ for  contact forms $\beta$ such that  $\mathcal{L} \subset \Per(\varphi_{\beta})$, even if the total contact volume is fixed. 
 \end{rem}
\begin{rem}
A similar approximation result 
 for the topological  entropy forced by  braid types in  a specific horseshoe model on the $2$-disk appears in the work of Hall, \cite{Hall1994}. This can be extended to the horseshoes constructed by Katok-Mendoza,  as was observed in  \cite[Theorem B.1]{AlvesMeiwesBraids} using an argument from  \cite{FranksHandel1988}. That approximation result is used to show that with respect to the Hofer metric, topological entropy is lower semi-continuous on the group of Hamiltonian diffeomorphism on surfaces,  \cite{AlvesMeiwesBraids}, and more generally, on the group of area preserving surface diffeomorphisms, \cite{HutchingsBraids}.
\end{rem}

\begin{rem}\label{rem:robustness}
In \cite{MMLL}, we obtained some robustness results   for the topological entropy of geodesic flows on Riemannian manifolds with respect to the $C^0$-distance on the space of Riemannian metrics. 
In the forthcoming work \cite{ADMP}, using also the results obtained in the present article, we generalize some of those results to $3$-dimensional Reeb flows, where in the situation of Reeb flows we study robustness properties with respect to the  $C^0$-distance on contact forms. We 
find an abundance of contact forms, at which $h_{\topo}(\varphi_{\alpha})$, as a function of $\alpha$, is lower semi-continuous with respect to this distance.
\end{rem}

\subsection{Horseshoes}\label{sec:suspension_horseshoes}
Let us now return to the situation of a $C^{1+\alpha}$ flow $\varphi$ without fixed points on a closed $3$-manifold $M$. We give a symbolic dynamics setting for the approximation results above.

Recall that an invariant set $K$ for $\varphi$ is \textit{hyperbolic} if the tangent bundle along $K$ admits a continuous, $d\varphi^t$-invariant splitting  $E\oplus E^s \oplus E^u$ such that $E$ is tangent to the flow lines, $d\varphi^1|_{E^s}:E^s\to E^s$ is contracting and $d\varphi^1|_{E^u}:E^u \to E^u$ is expanding.

Recall that a \textit{full shift in $L$ symbols $\Omega_1, \ldots, \Omega_L$} is a dynamical system $(\Sigma, \sigma)$, where $\Sigma = \{\underline{x} = (x_i)_{i\in \Z}\, |\, x_i \in \{\Omega_1, \ldots, \Omega_L\}, i\in \Z\}$ is the set of bi-infinite sequences in those symbols, equipped with the metric
\begin{align}\label{metricTMS}
d(\underline{x}, \underline{y}) :=  \exp( -\sup\{|n| \, |\, x_n \neq y_n, n\in \Z\}),
\end{align}
and $\sigma: \Sigma \to \Sigma$ is the left shift map on $\Sigma$, defined by $\sigma((x_i)_{i\in \Z}) := (x_{i+1})_{i\in \Z}$. Let $r: \Sigma \to (0,+\infty)$ be a H\"older continuous function, bounded away from zero and infinity. 
Define $r_n: \Sigma \to \R$, $n\in \Z$, by $r_n := r + r\circ \sigma  + \cdots + r\circ \sigma^{n-1}$ for $n\geq 1$, $r_0 := 0$, and $r_{n} := -r_{|n|} \circ \sigma^{-|n|}$ for $n\leq -1$.
Let 
\begin{align}\label{Sigmar}
\Sigma_r := \{(\underline{x}, t) \, |\, \underline{x} \in \Sigma, 0 \leq t < r(\underline{x})\}.
\end{align}
The topological Markov flow induced by $(\Sigma, \sigma)$ with roof function $r$ is the flow $\sigma_r: \Sigma_r \to \Sigma_r$ given by 
\begin{align}\label{sigmar}
\sigma_r(\underline{x},t) =(\sigma^n(\underline{x}),t + \tau - r_n(\underline{x})),
\end{align}
where $n$ is the unique integer such that $0 \leq t + \tau -r_n(\underline{x}) < r(\sigma^n(\underline{x}))$. 
$\Sigma_r$ is equipped with the Bowen-Walters metric, \cite{Bowen-Walters}, induced by $d$  (for its definition see also \cite{LimaSarig2019}). With respect to that metric, $\sigma_r$ is continuous.

A \textit{local cross section} (in short, \textit{local section}) to the flow $\varphi^t$ is an embedded connected compact surface $D$ with piecewise smooth boundary that is everywhere transverse to the trajectories of the flow. This means that the vector field generating $\varphi$ is nowhere tangent to $D$ (including its boundary). 
 A local section $D$  determines a \textit{Poincar\'e return map} $f_{D}:\dom(f_D)\to D$ which is defined on $$\dom(f_D) = \{x \in D \, |\, \varphi^t(x) \in D \text{ for some } t>0\},$$ and sends $x\in \dom(f_D)$ to the first intersection point of the trajectory $(\varphi^t(x))_{t>0}$ with $D$. Define the \textit{roof function} $R_D: \dom(f_D) \to (0,+\infty)$ by $R_D(x):= \inf\{t>0 \, |\, \varphi^t(x) \in D\}$.  This means that $f_D(x) = \varphi^{{R_D}(x)}(x)$ for $x \in \dom(f_D)$.
Similarly define $f^{-1}_D$, $R^{-1}_D$ to be the Poincar\'e return map and roof function on $D$ for the flow $\varphi^{-t}, t\in \R$.

 We say that a section $D$ is \textit{rectangular} if $D$ is a rectangle, which means that $D$ is the image of an embedding $\iota:[0,1]^2 \to M$. 
In this case, we denote the boundary of $D$ as $\partial D = v_-\cup v_+ \cup h_- \cup h_+$ with \textit{sides} $v_-:= \iota(\{0\}\times [0,1])$, $v_+:= \iota(\{1\}\times [0,1])$, $h_-:= \iota([0,1]\times \{0\})$, $h_+:= \iota([0,1]\times \{1\})$.  Let $D_i, i=1,2$, be two rectangles and write $\partial D_i = v^i_-\cup v^i_+\cup h_-^i \cup h_+^i$. We say that  $D_1$ \textit{crosses}  $D_2$ if $v^1_{\pm} \cap D_2 = \emptyset$,  $h^2_{\pm} \cap D_1 =\emptyset$,  $h^1_{\pm} \cap D_2$ are two paths with their endpoints in the segments $v^2_-$ and $v_+^2$, and if $v_{\pm}^2\cap D_1$ are two paths with their endpoints in $h_-^1$ and $h_+^1$. 

Let $D$ be a rectangular section, and $f=f_D$.
We say that a finite collection of rectangular sections $D_1, \ldots, D_L \subset D$ is of \textit{Markov type} if 
\begin{itemize}
\item $D_1, \ldots, D_L \subset \dom(f)$
\item $f_D(D_i)$ crosses   $D_j$, for all $i,j\in\{1, \ldots, L\}$.  
\end{itemize}
For a collection of rectangles $D_1, \ldots, D_L$ of Markov type, write $\hat{V} = \bigcup_{i=1}^L D_i$. Define $D_i^j$, $j\in \Z$, as follows. We put $D_i^0:=D_i$, we define inductively for $j>0$, $D_i^j := f^{-1}(D^{j-1}_i \cap f(\hat{V}))$, and define inductively for $j<0$,  $D_i^j := f(D_i^{j+1} \cap \hat{V})$.
We set \begin{align}\label{hatVj}
\hat{V}^j = \bigcup_{i=1}^L D_i^j,
\end{align}
 and define the \textit{maximal invariant subset in $D$ induced by $D_1, \ldots, D_L$ } to be $\hat{K} = \bigcap_{j\in \Z} \hat{V}^j$.  
We say that $D_1, \ldots, D_L$ \textit{induce a full shift} on $\hat{K}$ 
if there is a full shift $(\Sigma, \sigma)$ in $L$ symbols $\{\Omega_1, \ldots, \Omega_L\}$ such that for every  $\underline{x} = (x_j)_{j\in \Z} = (\Omega_{i_j})_{j\in \Z} \in \Sigma$, $$\pi(\underline{x}):=\bigcap_{j\in \Z} D_{i_j}^{j}$$  defines a unique point in $\hat{K}$. Note that in that case, $f \circ \pi = \pi \circ \sigma$.

We say that a compact  $\varphi$-invariant  hyperbolic 
 set $K\subset M$ is a 
\textit{horseshoe over a local section $D$} if there is $L\in \N$, disjoint rectangular subsections $D_1, \ldots, D_L \subset 
D$, a topological Markov flow $\sigma_r: \Sigma_r \to \Sigma_r$ over a full shift $\sigma:\Sigma \to \Sigma$ in $L$ symbols $\Omega_1, \ldots, \Omega_L$, and a map $\pi_r : \Sigma_r \to M$ such that 
\begin{enumerate}[(i)]
\item $K = \im(\pi_r)$;
\item $\pi_r$ is injective and  H\"older continuous;
\item $\pi_r \circ {\sigma}^t_r = \varphi^t \circ \pi_r$ for all $t$;
\item $\pi_r(\underline{x}, t) \in D$ if and only if $t=0$;
\item $ \hat{K}= K\cap D$ is the maximal invariant subset given by  $D_1, \ldots, D_L$, as above, and  $D_1, \ldots, D_L$ induce the full shift $\sigma$.  
\end{enumerate}
We say that $D_1, \ldots, D_L$ are the \textit{Markov rectangles} and $(\Sigma_r, \sigma_r, \pi_r)$ the \textit{coding of }$K$. 
\begin{rem}
By \cite{Bowen-onedimensional}, any one-dimensional basic set, that is, a locally maximal, transitive, hyperbolic invariant set with a dense set of periodic orbits, is in fact an embedded Markov flow over a shift of finite type. In the definition of a horseshoe given here, we include some additional geometric information how the shift is obtained. A crucial feature that is used in this paper is property (iv), namely that a full shift can be detected already at the first return to $D$, rather than at some higher iterate of the return map. The other geometric conditions for the horseshoes are more standard, and their analogous conditions for diffeomorphisms are satisfied by the horseshoes obtained in \cite[Suppl.]{Hasselblatt-Katok}, and also by those in other related approximation results, for example in \cite{Gelfert2016,LuzzattoSalas}.   
\end{rem}

To formulate the next theorem we introduced some further notation. Let $K$ be  a  horseshoe as above. 
Let $\mathcal{S} = \bigcup_{n\in \N}\{1,\ldots, L\}^n$ be the set of all finite ordered tuples in $\{1,\ldots, L\}$. For $(i_0,\ldots, i_{n-1})$ in $\mathcal{S}$, let  
$\underline{\mathfrak{a}} = (\mathfrak{a}_i)_{i\in\Z}\in \Sigma$ be the sequence  that is obtained by periodically extending $\mathfrak{a}_0\cdots \mathfrak{a}_{n-1} = \Omega_{i_0} \cdots \Omega_{i_{n-1}}$, and denote by 
$P_{(i_0,\ldots, i_{n-1})}:[0,r_n(\underline{\mathfrak{a}})] \to M$ the loop $t\mapsto  \pi_r(\underline{\mathfrak{a}},t), 0\leq t\leq r_n(\underline{\mathfrak{a}})$.
For a negative sequence   $\underline{\mathfrak{a}} = (\mathfrak{a}_i)_{i\leq 0}$ and a positive sequence  $\underline{\mathfrak{b}}= (\mathfrak{b}_i)_{i\geq 0}$ in  $\{\Omega_1,\ldots, \Omega_L\}$, and a $(n-2)$-tuple  $(i_1,\ldots, i_{n-2})$ in $\mathcal{S}$, $n\geq 3$, we put  $\underline{\mathfrak{c}} = \underline{\mathfrak{c}}^{\underline{\mathfrak{a}}, \underline{\mathfrak{b}};i_1, \ldots, i_{n-2}}\in \Sigma$, 
$\underline{\mathfrak{c}} = (\mathfrak{c}_j)_{j\in \Z}$,
\begin{align}\label{eq:c} 
\mathfrak{c}_j  =\begin{cases} \mathfrak{a}_j  &\text{ if } j\leq 0, \\ \Omega_{i_j} &\text{ if } 1\leq j \leq n-2, \\ \mathfrak{b}_{j-(n-1)} &\text{ if } j\geq n-1.
\end{cases}
\end{align}
Denote by  $C_{i_1,\ldots, i_{n-2}}= C_{i_1,\ldots, i_{n-2}}(\underline{\mathfrak{a}},\underline{\mathfrak{b}})$ 
 the path $C_{i_1,\ldots, i_{n-2}}: [0,r_{n}(\underline{\mathfrak{c}})] \to M$,   $t\mapsto  \pi_r(\underline{\mathfrak{c}}, t)$, $0\leq t\leq r_{n}(\underline{\mathfrak{c}})$.
\begin{mainthm}\label{thm:main_coding}
Let $M$ be a closed $3$-manifold, and $\varphi$ be a fixed-point-free $C^{1+\alpha}$ flow with $h_{\topo}(\varphi)>0$. Then, for any $\varepsilon$ with $0<\varepsilon< h_{\topo}(\varphi)$,   there is a rectangular local section  $D$,  a  horseshoe $K$  over $D$  coded by $(\Sigma_r, \sigma_r, \pi_r)$ with Markov rectangles $D_1, \ldots, D_L$, $L \in\N$, and a link  $\mathcal{L}$ consisting of hyperbolic orbits of  $\varphi$,  such that 
\begin{enumerate}
\item $h_{\topo}(\sigma_r) \geq h_{\topo}(\varphi)-\varepsilon$; 
\item the link $\mathcal{L}$ intersects $D$ in the corners of $D$ and $D_1, \ldots, D_n$; 
\item the map $\Pi:\mathcal{S} \to \widehat{\pi}(M\setminus \mathcal{L})$ that maps a tuple $(i_0,\ldots, i_{n-1})$ to the homotopy class of $P_{(i_0,\ldots, i_{n-1})}$ in $M\setminus \mathcal{L}$ is injective up to cyclic permutation of $(i_0,\ldots, i_{n-1})$, and the image consists only of singular homotopy classes; 
\item for any $\underline{\mathfrak{a}} = (\mathfrak{a}_i)_{i\leq0}, \underline{\mathfrak{b}} = (\mathfrak{b}_i)_{i\geq 0}$, there are segments $l_u$ in $\hat{V} = \bigcup_{i=1}^L D_i$ and  $l_s \in f_D(\hat{V})$ with boundaries in $\partial \hat{V}$ and $\partial( f_D(\hat{V}))$, respectively, 
 such that for any $(i_1,\ldots, i_{n-2})$ in $\mathcal{S}$,  
  the path $C_{i_1,\ldots, i_{n-2}}$ is a chord from $l_u$ to $l_s$, its homotopy class $\sigma_{i_1,\ldots, i_{n-2}}$ of paths relative to $(l_u, l_s)$ is singular, and the map that maps $(i_1,\ldots, i_{n-2})\in \mathcal{S}$ to $\sigma_{i_1,\ldots, i_{n-2}}$ is 
 injective; 
\item there are closed curves $\Lambda_u$ and $\Lambda_s$ in $M \setminus \mathcal{L}$ with $\Lambda_u \cap \hat{V} = l_u$, $\Lambda_s \cap f_D(\hat{V}) = l_s$,   such that with $\underline{\mathfrak{a}}$, $\underline{\mathfrak{b}}$, and $\underline{\mathfrak{c}}$ as in $(4)$, the chord  $C_{i_1,\ldots, i_{n-2}}$ is, up to reparametrization, still the only chord from $\Lambda_u$ to $\Lambda_s$ among homotopic paths in $M\setminus \mathcal{L}$ relative to $(\Lambda_u,\Lambda_s)$. 
\end{enumerate}
\end{mainthm}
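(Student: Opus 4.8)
The plan is to first extract from the Lima--Sarig machinery a single geometric full-shift horseshoe of almost maximal entropy over one section, and then attach a link that "cages" its Markov rectangles so tightly that the symbolic word of a loop can be recovered from its free homotopy class in the complement. By the variational principle, fix an ergodic $\varphi$-invariant measure $\mu$ with $h_\mu(\varphi)>h_{\topo}(\varphi)-\varepsilon$, and apply Theorem~\ref{thm:fullmeasurecoding} to $\mu$ to obtain a countable topological Markov flow together with a finite-to-one H\"older coding onto a full-measure set; its Gurevich entropy is then at least $h_\mu$. Approximating by finite irreducible subgraphs and recoding in the usual way (collecting of the order of $e^{nh'}$ admissible words of a fixed length and gluing them by connecting words of bounded length) yields a \emph{full} shift $\sigma\colon\Sigma\to\Sigma$ on $L$ symbols whose suspension $\sigma_r$ still has $h_{\topo}(\sigma_r)>h_{\topo}(\varphi)-\varepsilon$, with H\"older roof $r$ bounded away from $0$ and $\infty$; since finite-to-one codings preserve entropy, the corresponding basic set in $M$ has the same large entropy. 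Using Bowen's presentation of one-dimensional basic sets~\cite{Bowen-onedimensional} together with the hyperbolic local product structure, I would carry this recoding out \emph{geometrically}, realising (a sub-horseshoe of) it over a single rectangular section $D$ with disjoint rectangular subsections $D_1,\dots,D_L$ of Markov type and with the full shift already detected at the first return to $D$, i.e.\ as a horseshoe $K$ in the sense of the definition; this gives~(1). The section $D$ and the rectangles $D_i$ can moreover be framed by local stable and unstable manifolds of finitely many periodic orbits of $\varphi$ lying just off $K$, so arranged that every corner of $D$ and of each $D_i$ lies on one of these orbits while $K$ itself is disjoint from all of them; taking $\mathcal{L}$ to be the union of these (hyperbolic) orbits gives~(2), and keeps each $P_{(i_0,\dots,i_{n-1})}$ inside $M\setminus\mathcal{L}$.

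\textbf{Reading off the word.} Collapsing the suspension of $\hat{K}$ along its contracting direction turns it into a branched surface with $L$ branches $B_1,\dots,B_L$, one per symbol, glued along a branch locus near $D$; the loop $P_{(i_0,\dots,i_{n-1})}$ traverses $B_{i_0},B_{i_1},\dots,B_{i_{n-1}}$ cyclically. The point of placing $\mathcal{L}$ along the frames of the $D_i$ is that in $M\setminus\mathcal{L}$ the branches become separated: the flow arc joining $B_{i_k}$ to $B_{i_{k+1}}$, read relative to $\mathcal{L}$, determines the ordered pair $(i_k,i_{k+1})$ up to homotopy rel its endpoints, so concatenation recovers the cyclic word $i_0\cdots i_{n-1}$ from the free homotopy class of $P_{(i_0,\dots,i_{n-1})}$ in $M\setminus\mathcal{L}$. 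Hence $\Pi$ is injective up to cyclic permutation, which is the first half of~(3). Cutting the same picture along a single stable fibre (through a point with past itinerary $\underline{\mathfrak{a}}$) and a single unstable fibre (through a point with future itinerary $\underline{\mathfrak{b}}$) produces the segments $l_u\subset\hat{V}$ and $l_s\subset f_D(\hat{V})$, and the rel-$(l_u,l_s)$ homotopy class of $C_{i_1,\dots,i_{n-2}}$ then records exactly the middle word $(i_1,\dots,i_{n-2})$, giving injectivity in the first half of~(4).

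\textbf{Singularity and the closed-curve version.} The crux is to rule out any \emph{other} periodic orbit of $\varphi$ in these classes. Let $\gamma\in\Per(\varphi)$ satisfy $[\gamma]=[P_{(i_0,\dots,i_{n-1})}]$ in $\widehat{\pi}(M\setminus\mathcal{L})$. Its homotopy class forces $\gamma$, at each of its passages near $D$, to run through one of the branches in the cyclic order $i_0,\dots,i_{n-1}$; taking the branches to be the $D_i$ flowed out, and using that points of $D\setminus\hat{V}$ leave a fixed isolating neighbourhood of $K$, one deduces that $\gamma$ meets $D$ in points $x_0,\dots,x_{n-1}$ with $x_k\in D_{i_k}$ and $f_D(x_k)=x_{k+1}$, so $x_0$ lies in the thin rectangle $\bigcap_{j=0}^{n-1}f_D^{-j}(D_{i_j})$ and is fixed by $f_D^{n}$. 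Since that rectangle is crossed by its $f_D^{n}$-image in the horseshoe fashion, the hyperbolic local product structure yields a unique such fixed point, namely the one with periodic itinerary; hence $\gamma=P_{(i_0,\dots,i_{n-1})}$, completing~(3), and the same argument truncated to the middle word completes~(4). For~(5) I would extend $l_u$ and $l_s$ to embedded loops $\Lambda_u,\Lambda_s\subset M\setminus\mathcal{L}$ that exit $\hat{V}$, respectively $f_D(\hat{V})$, along flow lines and close up through a region disjoint from the branched surface and from $\mathcal{L}$, chosen so that no new relative homotopies arise; then any chord from $\Lambda_u$ to $\Lambda_s$ homotopic rel $(\Lambda_u,\Lambda_s)$ to $C_{i_1,\dots,i_{n-2}}$ must have its endpoints on $l_u$ and $l_s$, hence equals $C_{i_1,\dots,i_{n-2}}$ by the singularity already established in~(4). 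The main obstacle is exactly this last step: turning "homotopic in $M\setminus\mathcal{L}$" into "shadows the prescribed itinerary tightly enough to be pinned down by hyperbolicity", which is what dictates the simultaneous and delicate choice of $D$, of the $D_i$, and of the caging link~$\mathcal{L}$.
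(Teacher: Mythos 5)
Your outline of the first half of the argument — apply Lima--Sarig to a near-maximal-entropy measure, extract a finite full shift of high entropy, realize it geometrically as a horseshoe over a single rectangular section whose Markov rectangles have corners on periodic orbits — matches the paper's route (the selection steps happen via Lemma~\ref{lem:loops}, Theorem~\ref{thm:separated}, the Katok--Mendoza-style Proposition~\ref{prop:KM_in_LS}, and the combinatorial Lemma~\ref{lem:Omega}). You also correctly isolate the crux: turning "homotopic in $M\setminus\mathcal{L}$" into "has the prescribed intersection pattern with the Markov rectangles". But the way you try to close this gap does not work, and this is where the paper's construction is essentially different.

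You propose to detect the itinerary via a branched surface obtained by collapsing $\hat K$ along its contracting direction, together with the observation that the rectangles are framed by stable/unstable segments of orbits in $\mathcal{L}$. Neither gives a free-homotopy invariant in $M\setminus\mathcal{L}$. A branched surface is a quotient, not an embedded surface-with-boundary in $M$; its branches $B_i$ are glued rather than disjoint, and there is no intersection-number pairing that the class $[\gamma]\in\widehat{\pi}(M\setminus\mathcal{L})$ respects. The same issue undermines your injectivity claim for $\Pi$. More seriously, your singularity argument hinges on "points of $D\setminus\hat V$ leave a fixed isolating neighbourhood of $K$". This is a property of the dynamics near $K$, not of the homotopy class: an arbitrary $\gamma\in\Per(\varphi)$ with $[\gamma]=[P_{(i_0,\dots,i_{n-1})}]$ in $M\setminus\mathcal{L}$ has no reason to stay near $K$, or even to meet $D$ at all, so the deduction that $\gamma$ visits $D_{i_0},\dots,D_{i_{n-1}}$ in order never gets off the ground.

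What the paper does instead is build, à la Fried, genuinely embedded pair-of-pants surfaces $F_0,F_1,\dots,F_L$ with $\partial F_i\subset\mathcal{L}$, pairwise disjoint for $i\ge 1$, each $F_i$ consisting of the rectangle $D_i$ together with two flow strips, and $D_1,\dots,D_L\subset D_0\subset F_0$. Because $\partial F_i\subset\mathcal{L}$, the algebraic intersection number with each $F_i$ is a free-homotopy invariant in $M\setminus\mathcal{L}$; because the flow is positively transverse to the $D_i$ and tangent to the strips, periodic orbits hit $F_i$ only in $D_i$ and only positively, so algebraic equals geometric. The crucial role of $F_0$ — missing from your scheme — is that it forces $\#(\gamma\cap D_0)=\#(\gamma\cap\bigcup_{i\ge1}D_i)=n$, hence $\gamma$ never touches $D_0\setminus\bigcup D_i$; combined with $f^j(D_i)\cap D_0=\emptyset$ for $0<j<N$ and $f^N(D_i)\subset D_0$, this pins $\gamma$ into the Markov itinerary and then uniqueness follows from the local product structure (Lemma~\ref{lem:Sigma!_inters}). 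The corners of the $D_i$ are made periodic points precisely so that the two flow strips of each $F_i$ close up into a pair of pants; merely putting "corners on periodic orbits", without the spanning surfaces, does not deliver the homotopy-detection you need. The proofs of (4) and (5) follow the same logic using the surfaces $F_i$ and their push-forwards $\widehat F_i=f^N$-image, with the extension of $l_u,l_s$ to closed curves $\Lambda_u,\Lambda_s$ engineered so that a chord homotopic rel $(\Lambda_u,\Lambda_s)$ is forced back onto $l_u,l_s$; your step (5) assumes this "no new relative homotopies" property rather than proving it.
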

\begin{rem}
In fact, if $\underline{\mathfrak{a}}$ and $\underline{\mathfrak{b}}$ are periodic, the segments $l_u$ and $l_s$  
 in (4) are contained in the intersections of local unstable and  stable manifolds of $\pi_r((\underline{\mathfrak{a}},0))$ and $\pi_r((\underline{\mathfrak{b}},0))$ with $D$, respectively.  
\end{rem}

\begin{rem}
The Markov flows $\sigma_r$ on $\Sigma_r$ in Theorem \ref{thm:main_coding} are embedded in  the Markov flow of the main theorem in \cite{LimaSarig2019} (see Theorem~\ref{thm:fullmeasurecoding} below). More precisely, if   $(\Sigma'_r, \sigma'_r)$ are the Markov flows from  Theorem~\ref{thm:fullmeasurecoding} below, then there exist an injective map $\iota: \Sigma \to \Sigma'$ and $N\in \N$ (depending on $\varepsilon$)  such that
 $\iota \circ  \sigma = (\sigma')^N \circ \iota$, and the roof function $r$ is given by $r = r'_N \circ \iota$. 
\end{rem}

From Theorem \ref{thm:main_coding} we will derive Theorems  \ref{thm:link_growth}, \ref{thm:link_growth_rel}, and \ref{thm:CH_recover}. 
Let us remark on the strategy for the proof of Theorem  \ref{thm:main_coding}. As indicated above, our main framework will be given by the work of Lima and Sarig, \cite{LimaSarig2019}. Applied to a measure of maximal entropy for $\varphi$, the main result in \cite{LimaSarig2019}  yields  a countable topological Markov flow that provides a coding of the dynamics of $\varphi$ and has the same topological entropy.  The countable topological Markov flow arises from a countable topological Markov shift $(\Sigma',\sigma')$, and the latter codes the dynamics of a Poincar\'e return map to a (discontinuous) Poincar\'e section $\Lambda\subset M$ for the flow. 
Our task is then to find suitable full shifts of finite type  $(\Sigma, \sigma)$ in an iterate of $(\Sigma',\sigma')$ with the desired properties. 
We can divide this process roughly into two steps. In a first step, we use the specific properties of the coding $\pi': \Sigma'\to \Lambda$ %
to obtain a finite collection of periodic orbits of the Poincar\'e return map with suitable separation properties. This allows us to obtain a shift that is induced by a collection of rectangles of Markov type. The arguments are variations of those of Katok and Mendoza that lead to their approximation result in \cite[Suppl.]{Hasselblatt-Katok}. Moreover, by carrying out this process carefully, we can guarantee that the full shift is induced already at the first return to a small section inside $\Lambda$. Let us refer to the compact invariant set of the flow obtained  this way as the first horseshoe. In a second step, we modify a construction of Fried, \cite{Fried83}, and obtain the link  $\mathcal{L}$ which is a union of $L+1$ links $\mathcal{L}_0, \mathcal{L}_1, \ldots, \mathcal{L}_L$, where each link $\mathcal{L}_i$ consists of three orbits of the first horseshoe that span a pair of pants $F_i$. The surfaces $F_i$ and $F_j$ are disjoint if $i\neq j$, and $i,j\neq 0$. Inside each $F_i$ lies the rectangular local section $D_i$, where $D_0=D$. This gives us inside the first horseshoe a second horseshoe, the horseshoe of Theorem \ref{thm:main_coding}. In all the steps of the construction, the drop of the topological entropy can be made as small as desired. 
Let us sketch the idea of how property (3) is obtained, properties (4) and (5) are similar. Different patterns of intersections of the  horseshoe orbits with the sections $D_1, \ldots, D_L$ give rise to a different linking behaviour with the link $\bigcup_{i=1}^L\mathcal{L}_i$. This will imply that two different horseshoe orbits are not homotopic in the complement of $\bigcup_{i=1}^L\mathcal{L}_i$. 
Taking into account also the surface $F_0$ that contains the  local section $D=D_0$, and hence also the sections $D_1, \ldots, D_L$, we can show  that the horseshoe orbits are not homotopic in the complement of $\mathcal{L}$ to any other orbit of the flow: any orbit not contained in the horseshoe that has, say, $k$ intersections with  $D_1, \ldots, D_L$ must have more than $k$ intersections with $D_0=D$ and hence is not homotopic in $M\setminus \mathcal{L}$ to the horseshoe orbit with the same intersection pattern with the surfaces $D_1, \ldots, D_L$.

\textbf{Organization of the paper:} In Section \ref{sec:Lima_Sarig_thm}, some properties of the Markov flows of \cite{LimaSarig2019} are recalled that will be relevant for the proof of Theorem \ref{thm:main_coding}, in Section \ref{sec:construction_horseshoes} we construct the horseshoes and prove Theorem  \ref{thm:main_coding}, as well as Theorems \ref{thm:link_growth} and \ref{thm:link_growth_rel}. Finally, in Section \ref{sec:proofforc}, Theorem \ref{thm:CH_recover} is proved.

\textbf{Acknowledgements:}
I am grateful to Umberto Hryniewicz at the RWTH  Aachen, and Lev Buhovsky, Yaron Ostrover and Leonid Polterovich at Tel Aviv University for their support. Special thanks go to Marcelo Alves, Lucas Dahinden, and Abror Pirnapasov for their encouragement and very helpful discussions and comments related to this work. 
\section{Preliminaries: The Markov flows of Lima-Sarig}\label{sec:Lima_Sarig_thm}

The Markov flows in Theorem \ref{thm:main_coding} will be obtained, as mentioned above, from countable Markov flows constructed by Lima and Sarig in \cite{LimaSarig2019}. In this section we recall the  main theorem and properties of the Markov flows in \cite{LimaSarig2019} that will be relevant for the proof of Theorem~\ref{thm:main_coding}. We refer the reader to \cite{LimaSarig2019, Sarig_diffeo2013} for details. 

\subsection{Main properties of the Markov flows of Lima-Sarig}\label{sec:main_LimaSarig}
We recall the notion of a countable topological Markov shift. Let $\mathcal{G}$ be a directed graph with a countable set of vertices $\mathcal{A}$. 
If $x$ and $y$ are two (possibly equal) vertices,  we write $x\rightarrow y$ if there is an edge from $x$ to $y$. We consider the set $$\Sigma = \Sigma(\mathcal{G}) = \{\underline{x} = (x_i)_{i\in \Z}\, |\, x_i \rightarrow x_{i+1} \text{ for all } i \in \Z\} $$ of infinite paths on $\mathcal{G}$. 
The \textit{topological Markov shift (TMS)}  associated to $\mathcal{G}$ is the discrete topological dynamical system $(\Sigma,\sigma)$ where $\sigma: \Sigma \to \Sigma, (x_i)_{i\in \Z} \mapsto (x_{i+1})_{i\in \Z}$ is the left shift map  on $\Sigma$. The metric on $\Sigma$ is defined as in \eqref{metricTMS}, in the case of a full shift.
Let $r: \Sigma \to (0,+\infty)$ be a H\"older-continuous function, bounded away from zero and infinity. Analogous to the situation of a full shift (cf.\ formulas \eqref{Sigmar} and \eqref{sigmar}) one defines the Markov flow $(\Sigma_r, \sigma_r)$ induced by $(\Sigma,\sigma)$ with roof function $r$. The space $\Sigma_r$ is equipped with the Bowen-Walters metric, with respect to which $\sigma_r$ is continuous.    
The shift invariant \textit{set of recurrent paths} $\Sigma^{\#}(
\mathcal{G})$ on $\mathcal{G}$ is the set of $\underline{x} = (x_t)_{t\in \Z} \in\Sigma(\mathcal{G})$ for which there is $v,w\in \mathcal{A}$ such that $x_t = v$ for $\infty$-many $t>0$, and $x_t = w$ for $\infty$-many $t<0$. 
\begin{thm}\label{thm:fullmeasurecoding}\cite[Theorem 1.2]{LimaSarig2019}
Let $M$ be closed $3$-manifold, $\varphi$ a $C^{1+\alpha}$ flow, and let $\mu$ be a $\varphi$-invariant ergodic Borel probability measure with $h_{\mu}(\varphi)>0$. Then there exists a topological Markov flow $\sigma_r:\Sigma_r \to \Sigma_r$ and a map $\pi_r:\Sigma_r \to M$ such that 
\begin{enumerate}
    \item $r: \Sigma \to (0,+\infty)$ is H\"older continuous and bounded away from zero and infinity.
    \item $\pi_r$ is H\"older continuous with respect to the Bowen-Walters metric.
    \item $\pi_r \circ \sigma_r^t = \varphi^t \circ \pi_r$ for all $t \in \R$. 
    \item $\pi_r(\Sigma_r^{\#})$ has full measure with respect to $\mu$.
    \item If $p= \pi_r(\underline{x},t)$ where $x_i = v$ for infinitely many $i<0$ and $x_i = w$ for infinitely many $i>0$, then $\# \{ (\underline{y},s) \in \Sigma^{\#}_r \, |\, \pi_r(\underline{y},s) = p\} \leq N(v,w)  < \infty$, for some function $N:\mathcal{A}^2 \to \N$. 
    \item $\exists N = N(\mu) < \infty$ such that $\mu$-a.e.\ $p\in M$ has exactly $N$ pre-images in $\Sigma^{\#}_r$. 
\end{enumerate}
\end{thm}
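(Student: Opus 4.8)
The plan is to follow the approach of Lima and Sarig: pass to a Poincar\'e return map, apply Sarig's Pesin-theoretic symbolic dynamics to it, and then suspend. Since $M$ is compact and $\varphi$ has no fixed points, I would first fix a ``Poincar\'e section'' $\Lambda$ — a finite disjoint union of smooth compact discs everywhere transverse to $\varphi$ — chosen so that its first-return time $R$ is bounded above by some $T_{\max}$ and so that $\varphi$-a.e.\ orbit meets $\Lambda$ (which can be arranged since $\varphi$ is fixed-point-free). Let $f\colon\Lambda\to\Lambda$ be the first-return map; by uniform transversality and the bound on $T_{\max}$ the roof $R$ is bounded away from $0$ and $\infty$, while $f$ is only piecewise smooth, with discontinuities along preimages of the flow-box boundaries. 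Inducing $\mu$ on $\Lambda$ produces an $f$-invariant Borel probability $\mu_\Lambda$ (the transverse measure of $\mu$) whose suspension with roof $R$ is $\mu$, and $h_{\mu_\Lambda}(f)=h_\mu(\varphi)\int R\,d\mu_\Lambda>0$ by Abramov's formula. As $\Lambda$ is two-dimensional, Oseledets' theorem gives two Lyapunov exponents $\mu_\Lambda$-a.e., and Ruelle's inequality applied to both $f$ and $f^{-1}$, together with $h_{\mu_\Lambda}(f)=h_{\mu_\Lambda}(f^{-1})>0$, forces one exponent positive and the other negative. Thus $\mu_\Lambda$ is a hyperbolic measure for the piecewise-smooth, $\mu_\Lambda$-a.e.\ defined map $f$, with the usual Oseledets--Pesin block structure and tempered constants.

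The technical heart is to build a countable Markov partition for $f$. Following Sarig's construction of Markov partitions for $C^{1+\alpha}$ surface diffeomorphisms, I would set up double Pesin charts along $\mu_\Lambda$-typical orbits, but now adapted to two extra difficulties: $f$ is discontinuous, and its derivative and the Pesin parameters degenerate near the discontinuity set. The fix is to enlarge the ``$\epsilon$-chart'' data so that it also records the distance to the discontinuity locus, show that $\mu_\Lambda$-a.e.\ orbit returns infinitely often to charts whose parameters are bounded below, establish the shadowing/``inverse'' (Bowen) lemma in this broken setting, and pass to a locally finite cover which is then refined to a countable Markov partition. This produces a countable directed graph $\mathcal{G}$, the associated TMS $(\widehat\Sigma,\widehat\sigma)=(\Sigma(\mathcal{G}),\sigma)$, and a H\"older map $\widehat\pi\colon\widehat\Sigma\to\Lambda$ with $\widehat\pi\circ\widehat\sigma=f\circ\widehat\pi$, such that $\widehat\pi(\widehat\Sigma^{\#})$ has full $\mu_\Lambda$-measure and $\widehat\pi|_{\widehat\Sigma^{\#}}$ is finite-to-one with a fiber bound $N(v,w)$ depending only on the pair of recurrent symbols; by ergodicity of $\mu_\Lambda$ the fiber cardinality is a.e.\ equal to a constant.

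To conclude, set $r:=R\circ\widehat\pi$, which is H\"older as a composition of H\"older maps and bounded away from $0$ and $\infty$ since $R$ is; form the Markov flow $(\Sigma_r,\sigma_r)$ with the Bowen--Walters metric and define $\pi_r(\underline x,t):=\varphi^t(\widehat\pi(\underline x))$. Then (1) is the previous step and (3) is the suspension identity; (2) follows from H\"older continuity of $\widehat\pi$ and $r$ together with the standard comparison of the Bowen--Walters metric with the product metric on $\widehat\Sigma\times\R$; (4) holds because $\widehat\pi(\widehat\Sigma^{\#})$ has full $\mu_\Lambda$-measure, so its $\varphi$-saturation — which is contained in $\pi_r(\Sigma_r^{\#})$ since $\Sigma_r^{\#}$ is exactly the suspension of $\widehat\Sigma^{\#}$ — has full $\mu$-measure; and (5)--(6) follow because a fiber $\pi_r^{-1}(p)\cap\Sigma_r^{\#}$ over $p=\varphi^{t}(q)$, $q\in\Lambda$, is in bijection with the union of the $\widehat\pi$-fibers over the finitely many crossings of the $\varphi$-orbit of $p$ with $\Lambda$ (consecutive crossings are spaced by a time in $[\min R,\,T_{\max}]$), so the bound $N(v,w)$ and the a.e.-constant cardinality $N(\mu)$ are inherited from the previous step.

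The main obstacle is the middle step: pushing Sarig's Pesin-theoretic symbolic coding through for a return map that is discontinuous and only partially defined, i.e.\ controlling the interplay between the Pesin parameters and the distance to the discontinuity locus so that the recurrence, shadowing and finiteness arguments all survive. By contrast, the reduction to $f$, the Oseledets--Ruelle hyperbolicity input, and the suspension estimates are comparatively routine.
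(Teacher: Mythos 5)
Your outline reproduces the strategy of the cited source: this theorem is quoted from Lima--Sarig and not proved in the paper, and your reduction (transverse section with bounded return time, Abramov plus Oseledets--Ruelle to get a hyperbolic induced measure, Sarig-style Pesin-chart coding of the discontinuous return map with the chart parameters coupled to the distance to the discontinuity locus, then suspension with $r=R\circ\widehat\pi$ and $\pi_r(\underline{x},t)=\varphi^t(\widehat\pi(\underline{x}))$) is exactly the route recalled in Section~\ref{sec:Lima_Sarig_thm}. The only substantive step you do not carry out --- the construction of the countable Markov partition for the broken return map, including the ``adapted section'' conditions and the H\"older continuity of $R$ along the coding --- is precisely the content of \cite{LimaSarig2019}, which the paper likewise invokes by citation rather than reproving.
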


\subsection{Adapted Poincar\'e sections}
Throughout the remaining of Section \ref{sec:Lima_Sarig_thm}, $M$, $\varphi$ and $\mu$ as in Theorem \ref{thm:fullmeasurecoding} are fixed. Denote by $X$ the vector field generating $\varphi$. Also a Riemannian metric on $M$ is fixed.
We will recall very briefly some important notions and constructions of \cite{LimaSarig2019} that are used to prove Theorem  \ref{thm:fullmeasurecoding}, and we refer the reader to \cite{LimaSarig2019} for more details. 

A \textit{Poincar\'e section} for $\varphi$ is a Borel-set $\Lambda\subset M$ such that for all $p\in M$, there are $t>0$ and $s<0$ such that $\varphi^t(p), \varphi^s(p)\in\Lambda$. 
The \textit{roof function} $R_{\Lambda}: \Lambda \to (0,\infty)$ is defined by $R_{\Lambda}(p) := \inf \{ t>0 \, |\, \varphi^t(p) \in \Lambda\}$, and the 
 \textit{Poincar\'e return map} $f_{\Lambda}:\Lambda \to \Lambda$ is defined as 
 $f_{\Lambda}(p) := \varphi^{R_{\Lambda}(p)}$. We write also $f = f_{\Lambda}$.  
The measure $\mu$ induces an $f_{\Lambda}$-invariant  measure $\mu_{\Lambda}$ on $\Lambda$; for a formula see \cite{LimaSarig2019}.

In \cite{LimaSarig2019} a constant $\mathfrak{r}$ is defined (depending only on $\varphi$ and $M$), and 
 \textit{standard} Poincar\'e sections are considered. These are defined to satisfy  $R_{\Lambda} < \mathfrak{r}$ and to be   of the form 
$\Lambda = \bigcup_{i=1}^N S_r(p_i)$, where $S_r(p_i)$ are pairwise disjoint transverse discs. Moreover, $S_r(p_i)$ are images of the exponential map at $p_i$ of a disc of radius $r< \mathfrak{r}$ in the orthogonal complement of $X_{p_i}$ in $T_{p_i}M$.
Since $\partial \Lambda$ is non-empty, the return maps to standard Poincar\'e sections have points of discontinuity.  Denote by $
\mathfrak{S}(\Lambda)$ the set of points $x\in 
\Lambda$ such that there is $t
\in \{-1,0,1\}$ with $f^t_{\Lambda}(x) \in \partial \Lambda$.
A standard Poincar\'e section $\Lambda$ is called \textit{adapted} if
\begin{itemize}
\item $\mu_{\Lambda}(
    \mathfrak{S}(\Lambda)) = 0$;
\item $\lim_{n\to \infty} \log \dist_{\Lambda}(f^n_{\Lambda}(p),\mathfrak{S}(\Lambda)) = 0$ for $\mu_{\Lambda}$-a.e.\ $p \in \Lambda$;
\item $\lim_{n\to \infty} \log \dist_{\Lambda}(f^{-n}_{\Lambda}(p),\mathfrak{S}(\Lambda)) = 0$ for $\mu_{\Lambda}$-a.e.\ $p \in \Lambda$. 
\end{itemize}
Here, $\dist_{\Lambda}$ is the distance induced by the Riemannian metric on $M$ to $\Lambda$, where points on separate discs are defined to have infinite distance.
\begin{thm}\cite[Theorem 2.8]{LimaSarig2019}
There are adapted Poincar\'e sections $\Lambda$ for $\mu$ with arbitrarily small roof functions. 
\end{thm}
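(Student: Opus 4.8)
The plan is to realise $\Lambda$ inside finitely many short flow boxes, depending on an auxiliary radius parameter $\rho$, and to show that for Lebesgue-almost every value of $\rho$ the resulting section is adapted. Fix $\eta\in(0,\mathfrak r)$; it suffices to produce an adapted standard section with $R_\Lambda<\eta$. Cover $M$ by finitely many flow boxes of height $<\eta$; inside each, place transverse discs $S_\rho(p_i)=\exp_{p_i}(\{v\in X_{p_i}^\perp:|v|<\rho\})$ with centres $p_i$ chosen, for $\rho$ ranging over a fixed interval $I=[\rho_-,\rho_+]\subset(0,\mathfrak r)$ bounded away from $0$, so that for every $\rho\in I$ the projections of the discs along the flow cover the transverse cross-sections of the boxes. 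After a small perturbation of the $p_i$ one may assume the discs are pairwise disjoint for all $\rho\in I$, so that $\Lambda_\rho:=\bigcup_i S_\rho(p_i)$ is a standard Poincar\'e section with $R_{\Lambda_\rho}<\eta$. This part is routine flow-box bookkeeping, and what remains is to verify, for a.e.\ $\rho\in I$, the three conditions defining an adapted section, all of which concern the induced measure $\mu_{\Lambda_\rho}$.

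The key point is that the section measures $\mu_{\Lambda_\rho}$ are, uniformly in $\rho\in I$, dominated by fixed reference measures. Since the $p_i$ are fixed and the discs uniformly transverse, there is $\tau_0>0$ with $R_{\Lambda_\rho}>2\tau_0$ everywhere for all $\rho\in I$, so the flow map $(x,t)\mapsto\varphi^t(x)$ is injective on $S_{\rho_+}(p_i)\times[0,\tau_0]$. Expressing $\mu_{\Lambda_\rho}$ on $S_\rho(p_i)$ through this flow-out of fixed height $\tau_0$ and pushing $\mu$ back through the corresponding flow box (the defining formula for $\mu_\Lambda$, cf.\ Section~\ref{sec:Lima_Sarig_thm}, makes this precise) produces finite measures $\nu_i$ on $S_{\rho_+}(p_i)$, independent of $\rho$, with $\mu_{\Lambda_\rho}|_{S_\rho(p_i)}\le C\,\nu_i$ uniformly in $\rho\in I$. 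Moreover a Gr\"onwall estimate, together with $R_{\Lambda_\rho}<\eta$ and the uniform transversality of the discs, bounds the differentials of $f_{\Lambda_\rho}$ and $f_{\Lambda_\rho}^{-1}$ on their continuity components by a constant $L$, uniformly in $\rho\in I$.

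With these uniformities I would run a Borel--Cantelli argument in $n$, carried out after integrating in $\rho$. Write $\mathfrak S(\Lambda_\rho)=\partial\Lambda_\rho\cup f_{\Lambda_\rho}^{-1}(\partial\Lambda_\rho)\cup f_{\Lambda_\rho}(\partial\Lambda_\rho)$ and let $N_\delta(\cdot)$ denote open $\delta$-neighbourhoods in $\Lambda_\rho$. Using the derivative bound $L$, the $f_{\Lambda_\rho}$-invariance of $\mu_{\Lambda_\rho}$, and the fact that the discontinuity locus of $f_{\Lambda_\rho}$ is itself contained in $\mathfrak S(\Lambda_\rho)$, one bounds $\mu_{\Lambda_\rho}(N_\delta(\mathfrak S(\Lambda_\rho)))$, up to a uniform constant and a dilation of $\delta$, by $\sum_i\nu_i(\{x\in S_\rho(p_i):\mathrm{dist}(x,\partial S_\rho(p_i))<L\delta\})$. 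For fixed $x$ in a disc the set of $\rho\in I$ with $\mathrm{dist}(x,\partial S_\rho(p_i))<L\delta$ is an interval of length $O(\delta)$, so Tonelli's theorem gives $\int_I\mu_{\Lambda_\rho}(N_\delta(\mathfrak S(\Lambda_\rho)))\,d\rho\le C_\eta\,\delta$. Letting $\delta\to0$ shows $\mu_{\Lambda_\rho}(\mathfrak S(\Lambda_\rho))=0$ for a.e.\ $\rho$, which is the first condition; and for each $\varepsilon>0$, $\int_I\sum_{n\ge1}\mu_{\Lambda_\rho}(N_{e^{-\varepsilon n}}(\mathfrak S(\Lambda_\rho)))\,d\rho\le C_\eta\sum_{n\ge1}e^{-\varepsilon n}<\infty$, so for a.e.\ $\rho$ the series $\sum_n\mu_{\Lambda_\rho}(N_{e^{-\varepsilon n}}(\mathfrak S(\Lambda_\rho)))$ converges. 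By Borel--Cantelli and invariance, for such $\rho$ and $\mu_{\Lambda_\rho}$-a.e.\ $x$ one has $\mathrm{dist}_{\Lambda_\rho}(f_{\Lambda_\rho}^{\pm n}(x),\mathfrak S(\Lambda_\rho))\ge e^{-\varepsilon n}$ for all large $n$; intersecting these conclusions over $\varepsilon=1/k$, $k\in\N$, yields the two remaining conditions (non-super-exponential approach to $\mathfrak S(\Lambda_\rho)$ along orbits) for a.e.\ $\rho$. Choosing any $\rho$ in the full-measure set of good parameters gives an adapted standard section with $R_\Lambda<\eta$, and since $\eta>0$ was arbitrary the claim follows.

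I expect the main obstacle to be the uniform-in-$\rho$ control underlying the third paragraph: identifying the $\rho$-independent reference measures $\nu_i$ so that the Borel--Cantelli series can be summed after integrating out the single parameter $\rho$, and, more delicately, handling the $f_{\Lambda_\rho}^{\pm1}$-pieces of $\mathfrak S(\Lambda_\rho)$, which requires controlling the Poincar\'e return maps and their inverses near their discontinuity loci uniformly in $\rho$ (and, beneath that, the uniform lower bound on return times defining $\tau_0$). By contrast, the existence of standard sections with small roof is soft, and the measure-zero condition falls out of the same estimate.
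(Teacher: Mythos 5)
Your proof follows essentially the same route as the original argument in Lima--Sarig: build a one-parameter family of standard sections $\Lambda_\rho$ by varying the disc radius over an interval $I$ bounded away from zero, establish $\rho$-uniform control of the section measures (via a flow-out of fixed height and the defining formula for $\mu_\Lambda$) and of the return-map differentials, and then select a Lebesgue-generic $\rho\in I$ by integrating in $\rho$ and applying Borel--Cantelli together with $f_{\Lambda_\rho}$-invariance to obtain $\mu_{\Lambda_\rho}(\mathfrak S(\Lambda_\rho))=0$ and the subexponential approach to $\mathfrak S(\Lambda_\rho)$. The technical points you flag at the end---pairwise disjointness of the discs for all $\rho\in I$, the $f_{\Lambda_\rho}^{\pm1}$-pieces of $\mathfrak S(\Lambda_\rho)$ near the discontinuity locus, and the uniform lower bound on return times---are precisely the ones that occupy the bulk of the actual proof, so your sketch correctly identifies both the strategy and where the work lies.
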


By using adapted Poincar\'e sections, the authors of \cite{LimaSarig2019} carry over to flows some parts of the theory of   
\cite{Sarig_diffeo2013} for surface diffeomorphisms.  

By Oseledets Theorem,  $\chi(p,v) := \lim_{t\to \infty} \frac{1}{t} \log \|(d\varphi^t)_pv\|$ exist for $\mu$-a.e.\ $p\in M$, and non-zero $v \in T_pM$. The values $\chi(p,\cdot)$ are called the \textit{Lyapunov exponents} at $p$. Since $M$ is $3$-dimensional and  $h_{\mu}(\varphi)>0$, it follows from  Ruelle's inequality that  $\mu$ is \textit{$\chi_0$-hyperbolic}  for any $0< \chi_0< h_{\mu}(\varphi)$, which means that $\mu$-a.e.\ $p \in M$ has a Lyapunov exponent in $(-\infty,\chi_0)$, another in $(\chi_0, \infty)$, and a third that vanishes. 
If $\Lambda$ is an adapted Poincar\'e section, then 
also the return map  $f_{\Lambda}$ has well-defined Lyapunov exponents $\mu_{\Lambda}$-a.e.,  and if $\mu$ is $\chi_0$-hyperbolic and $\chi := \chi_0\inf R_{\Lambda}$, then  the return map $f_{\Lambda}$ has one Lyapunov exponent in $(-\infty, -\chi)$ and another in $(\chi, \infty)$ for $\mu_{\Lambda}$-a.e.\ $x \in \Lambda$, see \cite[Lemma 2.6]{LimaSarig2019}.

\subsection{Non-uniform hyperbolic set and Pesin charts}
The \textit{non-uniform hyperbolic set}  $\nuh_{\chi}(f_{\Lambda})$
is the set of $x \in \Lambda\setminus (\bigcup_{n\in \Z} f^{-n}(\mathfrak{S}))$ such that 
$T_{f^n(x)}\Lambda = E^u(f^n(x)) \oplus E^s(f^n(x))$, $n\in \Z$, where $E^u$ and $E^s$ are one-dimensional linear subspaces invariant under $df_{\Lambda}$,  and 
\begin{enumerate}
    \item $\lim_{n\to \pm \infty} \frac{1}{n}\log \| df^n_x v\|< -\chi$ for all non-zero $v\in E^s(x)$,
    \item $\lim_{n\to \pm \infty} \frac{1}{n}\log \| df^{-n}_x v\|< -\chi$ for all non-zero $v\in E^u(x)$, 
    \item $\lim_{n\to \pm \infty} \frac{1}{n}\log | \sin \measuredangle (E^s(f^n(x)), E^u(f^n(x)))| = 0$.
    \end{enumerate}
By the above, and Oseledets Theorem, $\mu_{\Lambda}(\nuh_{\chi}(f_{\Lambda})) = 1$ for $\chi = \chi_0 \inf R_{\Lambda}$. 
\begin{thm}\cite[Theorem 3.1]{LimaSarig2019}\label{Theorem3.1}
There is a measurable family of linear transformations $C_{\chi}(x):\R^2 \to T_x\Lambda$,  $x\in \nuh_{\chi}(f_{\Lambda})$, with $\|C_{\chi}\|\leq 1$ and a constant $C_{\varphi}$  such that 
$C_{\chi}(f(x))^{-1} \circ df_x \circ C_{\chi}(x) = \left(\begin{smallmatrix} A_x & 0 \\ 0 & B_x \end{smallmatrix}\right)$, with $C_\varphi^{-1} \leq |A_x | \leq e^{-\chi}$, and $e^{\chi} \leq |B_x| \leq C_\varphi$. 
\end{thm}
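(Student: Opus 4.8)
The plan is to prove this as the Oseledets--Pesin reduction (a ``tempered change of coordinates'') for the invertible cocycle $df_\Lambda$ over the set $\nuh_\chi(f_\Lambda)$, via the \emph{Lyapunov (Pesin) inner product}. First I would use Oseledets' theorem (invoked above) to get, on $\nuh_\chi(f_\Lambda)$, the measurable $df_\Lambda$-invariant splitting $T_x\Lambda = E^s(x)\oplus E^u(x)$ into lines. On $E^s(x)$ I would put the forward Lyapunov inner product
\[
\langle u,v\rangle^s_x := 2\sum_{n\ge 0} e^{2\chi n}\,\big\langle df^n_x u,\, df^n_x v\big\rangle_{f^n(x)},
\]
on $E^u(x)$ the backward one
\[
\langle u,v\rangle^u_x := 2\sum_{n\ge 0} e^{2\chi n}\,\big\langle df^{-n}_x u,\, df^{-n}_x v\big\rangle_{f^{-n}(x)},
\]
where $\langle\cdot,\cdot\rangle$ is the fixed Riemannian metric; declaring $E^s(x)\perp E^u(x)$ extends these to an inner product $\langle\cdot,\cdot\rangle'_x$ on all of $T_x\Lambda$, with norm $\|\cdot\|'_x$.

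Next I would verify convergence and the eigenvalue bounds. Condition (1) in the definition of $\nuh_\chi(f_\Lambda)$ gives $\limsup_n \tfrac1n\log\|df^n_x v\| < -\chi$ for $0\neq v\in E^s(x)$, so $e^{2\chi n}\|df^n_x v\|^2$ decays geometrically and $\langle\cdot,\cdot\rangle^s_x$ converges; condition (2) does the same for $\langle\cdot,\cdot\rangle^u_x$. A telescoping of the series yields, for $v\in E^s(x)$ and $w\in E^u(x)$,
\[
\langle df_x v, df_x v\rangle'_{f(x)} = e^{-2\chi}\big(\langle v,v\rangle'_x - 2\|v\|^2_x\big),\qquad \langle df_x w, df_x w\rangle'_{f(x)} = 2\|df_x w\|^2_{f(x)} + e^{2\chi}\langle w,w\rangle'_x,
\]
so $df_x$ contracts $E^s$ by a factor $\le e^{-\chi}$ and expands $E^u$ by a factor $\ge e^{\chi}$ in the Lyapunov norm. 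For the reverse inequalities I would use the uniform bound $\|df^{\pm 1}_x\|\le C_\varphi$ valid for $x$ off the discontinuity set (a consequence of $R_\Lambda<\mathfrak r$, the $C^1$-regularity of $\varphi$, and the near-orthogonality of the transverse discs $S_r(p_i)$, cf.\ \cite{LimaSarig2019}): from $\|df_x v\|\ge C_\varphi^{-1}\|v\|$ one gets $\langle v,v\rangle'_x\ge 2(1+e^{2\chi}C_\varphi^{-2})\|v\|^2_x$ for $v\in E^s(x)$, and feeding this into the first identity gives $|A_x|\ge C_\varphi^{-1}$ after enlarging $C_\varphi$; the bound $|B_x|\le C_\varphi$ follows symmetrically from the second identity together with $\|v\|_x\le\|v\|'_x$ (see below). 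Here one uses that $\chi$ stays in the bounded interval $\big(0,\mathfrak r\,h_{\topo}(\varphi)\big)$, so $e^\chi$ is bounded.

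Then I would assemble $C_\chi$: choose measurably Lyapunov-unit vectors $e^s(x)\in E^s(x)$ and $e^u(x)\in E^u(x)$ (possible since the two line fields are measurable), and set $C_\chi(x)(a,b):=a\,e^s(x)+b\,e^u(x)$, a linear isometry from Euclidean $\R^2$ onto $(T_x\Lambda,\|\cdot\|'_x)$. The factor $2$ in the definition gives $\|v\|_x\le\|v\|'_x$ for every $v$, since $\|v^s+v^u\|^2_x\le 2\|v^s\|^2_x+2\|v^u\|^2_x\le\langle v^s,v^s\rangle'_x+\langle v^u,v^u\rangle'_x=\langle v,v\rangle'_x$; hence $\|C_\chi(x)\|\le 1$. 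Because $E^s(x)$ and $E^u(x)$ are $df_x$-invariant, $C_\chi(f(x))^{-1}\circ df_x\circ C_\chi(x)$ is diagonal, and its diagonal entries $A_x,B_x$ are precisely the (signed) Lyapunov contraction and expansion factors of $df_x$ along $E^s$ and $E^u$, so the previous step gives $C_\varphi^{-1}\le|A_x|\le e^{-\chi}$ and $e^{\chi}\le|B_x|\le C_\varphi$. Measurability of $x\mapsto C_\chi(x)$ is inherited from measurability of $x\mapsto(E^s(x),E^u(x))$ and of the partial sums defining the Lyapunov norms.

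The main obstacle is the content of the second step: converting the purely asymptotic information in the definition of $\nuh_\chi(f_\Lambda)$ into \emph{uniform} constants $C_\varphi^{\pm 1}$, which hinges on the a priori derivative bound $\|df^{\pm1}_\Lambda\|\le C_\varphi$ off the discontinuity set, and on checking measurability throughout. I note that condition (3) (subexponential decay of $\sin\measuredangle(E^s,E^u)$) is not actually needed for this pointwise statement; it becomes essential only for the complementary fact, used subsequently, that $\|C_\chi(x)^{-1}\|$ grows subexponentially along orbits.
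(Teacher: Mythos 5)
This statement is quoted, not proved, in the present paper --- it is taken verbatim from \cite{LimaSarig2019}, where the proof (following Sarig's construction for surface diffeomorphisms and Barreira--Pesin) is precisely the Oseledets--Pesin reduction via weighted-geometric-series Lyapunov inner products that you build. Your argument correctly reproduces that approach --- the factor-$2$ normalisation yielding $\|C_\chi\|\le 1$, the telescoping identities giving $|A_x|\le e^{-\chi}$ and $|B_x|\ge e^{\chi}$, and the a priori bound $\|df^{\pm1}_\Lambda\|\le C_\varphi$ supplying the two remaining inequalities --- and your closing observation that the angle condition is only needed for the separate control of $\|C_\chi(x)^{-1}\|$ (hence of $Q_\epsilon$) also matches how the reference uses it.
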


For any $x \in \nuh_{\chi}(f_{\Lambda})$ and $Q(x)$ sufficiently small,  the \textit{Pesin chart of size $\eta$} for $0<\eta\leq Q(x)$ is defined as 
$\Psi^{\eta}_x: [-\eta, \eta]^2 \to \Lambda \setminus \mathfrak{S}$, $\Psi^{\eta}_x(u,v) := \mathrm{Exp}_x\left[C_{\chi}(x)\left(\begin{smallmatrix} u \\ v\end{smallmatrix}\right)\right] $,
where $\mathrm{Exp}_x$ denotes the exponential map with respect to the induced metric on $\Lambda$. Write $\Psi_x$ instead of $\Psi_x^{Q(x)}$. 
A family $Q_{\epsilon}(x)$, $\epsilon>0$ sufficiently small, of parameters $Q(x)$ as above is carefully chosen, depending in particular on $\dist(x,\mathfrak{S})$. 
It is shown, cf.\  \cite[Lemma 3.3 and 3.4]{LimaSarig2019}, that for a suitable $f_{\Lambda}$-invariant subset $\nuh^*_{\chi}(f_{\Lambda})\subset \nuh_{\chi}(f_{\Lambda})$ of full measure it holds that   
$\lim_{n \to \pm \infty} \frac{1}{n} \log Q_{\epsilon}(f^n(x)) = 0$, and that there exists a  Borel function $q_{\epsilon}: \nuh_{\chi}^*(f) \to (0,1)$  such that $0< q_{\epsilon}(x) \leq \epsilon Q_{\epsilon}(x)$ and $e^{-\epsilon/3} \leq \frac{q_{\epsilon} \circ f}{q_{\epsilon}} \leq e^{\epsilon/3}$.
Written in  the local coordinates given by the Pesin charts, the Poincar\'e map $f_{\Lambda}$ is close to a uniformly  hyperbolic linear map: 
\begin{thm}\cite[Theorem 3.2]{LimaSarig2019}\label{thm:closehyp}
For all $\epsilon$ small enough the following holds. For each $x\in \nuh_{\chi}(f)$, $f_x:= \Psi^{-1}_{f(x)} \circ f_{\Lambda} \circ \Psi_x$  is well defined and injective on $[-Q_{\epsilon}(x), Q_{\epsilon}(x)]^2$, and can be put into the form $f_x(u,v) = (A_x u + h_x^1(u,v), B_xv + h^2_x(u,v))$, where 
\begin{enumerate}
\item $C_{\varphi}^{-1} \leq |A_x|\leq e^{-\chi}$ and $e^{\chi}\leq |B_x| \leq C_{\varphi}$, with $C_{\varphi}$ as in Theorem~\ref{Theorem3.1};
\item $h_x^i$ are $C^{1+\alpha/2}$-functions such that $h_x^i(0) = 0$, 
$(\nabla h_x^i)(0) = 0$;
\item $\|h_x^i\|_{C^{1+\alpha/2}} < \epsilon$ on $[-Q_{\epsilon}(x), Q_{\epsilon}(x)]^2$.
\end{enumerate}
\end{thm}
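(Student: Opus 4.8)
The plan is to read the statement off Theorem~\ref{Theorem3.1} together with the uniformly bounded geometry of the exponential map on the (finitely many, uniformly regular) discs constituting $\Lambda$. By Theorem~\ref{Theorem3.1}, for $\mu_{\Lambda}$-a.e.\ $x\in\nuh_{\chi}(f_{\Lambda})$ there is a linear isomorphism $C_{\chi}(x):\R^2\to T_x\Lambda$ with $\|C_{\chi}(x)\|\leq 1$ conjugating $df_x$ to $\mathrm{diag}(A_x,B_x)$ with the bounds of item (1). The inverse norms $\|C_{\chi}(x)^{-1}\|$ are in general unbounded, but the Oseledets/tempering estimates recalled in the excerpt give $\tfrac1n\log\|C_{\chi}(f^n x)^{-1}\|\to 0$ and $\tfrac1n\log\dist_{\Lambda}(f^n x,\mathfrak{S})\to 0$ along $\mu_{\Lambda}$-a.e.\ orbit (the latter using that $\Lambda$ is adapted). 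Since $\Lambda$ is a finite union of embedded discs of uniformly bounded geometry, $\mathrm{Exp}_x$ and its inverse have all derivatives bounded, uniformly in $x$, on a ball of a fixed radius $\rho_0>0$; combined with $\|C_{\chi}(x)\|\leq 1$, this makes $\Psi^{\eta}_x(u,v)=\mathrm{Exp}_x\left[C_{\chi}(x)\left(\begin{smallmatrix} u \\ v\end{smallmatrix}\right)\right]$ an embedding of $[-\eta,\eta]^2$ into $\Lambda\setminus\mathfrak{S}$ whenever $\eta$ lies below a fixed fraction of $\min\{\rho_0,\dist_{\Lambda}(x,\mathfrak{S})\}$. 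Prescribing the parameter family $Q_{\epsilon}(x)$ (and the tempered companion $q_{\epsilon}$) so small that, in addition, $Q_{\epsilon}(x)$ is dominated by a fixed constant times $\|C_{\chi}(x)^{-1}\|^{-1}$ guarantees that $f_x=\Psi^{-1}_{f(x)}\circ f_{\Lambda}\circ\Psi_x$ is well defined on the sub-square $[-Q_{\epsilon}(x),Q_{\epsilon}(x)]^2$; injectivity there is inherited from the injectivity of $f_{\Lambda}$ near $x$, which holds because $R_{\Lambda}$ is bounded above and $df_{\Lambda}$ is everywhere invertible.

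Next I would Taylor-expand $f_x$ at the origin. Since $\Psi_x(0)=x$ and $\Psi_{f(x)}(0)=f(x)$ we get $f_x(0)=0$, and the chain rule with Theorem~\ref{Theorem3.1} gives $Df_x(0)=C_{\chi}(f(x))^{-1}\,df_x\,C_{\chi}(x)=\mathrm{diag}(A_x,B_x)$; writing $f_x(u,v)=(A_xu+h^1_x(u,v),\,B_xv+h^2_x(u,v))$ then forces $h^i_x(0)=0$ and $\nabla h^i_x(0)=0$, which is item (2), while item (1) is precisely Theorem~\ref{Theorem3.1}. For item (3), the Poincar\'e return map $f_{\Lambda}$ of the $C^{1+\alpha}$ flow to the smooth section $\Lambda$ (with return time bounded above) inherits the $C^{1+\alpha}$ regularity of the flow, hence $f_x$ --- a composition of $f_{\Lambda}$ with the linear maps $C_{\chi}(\cdot)$ and the uniformly $C^{\infty}$-bounded maps $\mathrm{Exp}_x$, $\mathrm{Exp}_{f(x)}^{-1}$ --- has an $\alpha$-H\"older first derivative; since $Df_x(0)$ is already diagonal, on the square of side $2Q_{\epsilon}(x)$ one estimates $\|h^i_x\|_{C^{1+\alpha/2}}\leq \mathrm{const}\cdot\|C_{\chi}(x)^{-1}\|\cdot Q_{\epsilon}(x)^{\alpha/2}$, the factor $\|C_{\chi}(x)^{-1}\|$ recording the amplification of the nonlinear part by the ill-conditioned change of coordinates and the exponent $\alpha/2$ (rather than $\alpha$) leaving slack to absorb it. Shrinking $Q_{\epsilon}(x)$ by a further suitable negative power of $1+\|C_{\chi}(x)^{-1}\|$ --- compatible with the slow variation $\tfrac1n\log Q_{\epsilon}(f^n x)\to 0$, since $\|C_{\chi}(f^n x)^{-1}\|$ and $\dist_{\Lambda}(f^n x,\mathfrak{S})$ vary subexponentially along $\mu_{\Lambda}$-a.e.\ orbit --- then yields $\|h^i_x\|_{C^{1+\alpha/2}}<\epsilon$.

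The hard part is the last estimate of the previous paragraph: quantifying exactly how the H\"older seminorm of the second-order part of $f_{\Lambda}$, transported into Pesin coordinates, degrades under the linear conjugacy by $C_{\chi}$ whose inverse can be arbitrarily large, and then engineering the whole parameter family $\{Q_{\epsilon}(x)\}$ together with its tempered companion $\{q_{\epsilon}(x)\}$ so as to be simultaneously small enough for item (3), compatible with staying away from $\mathfrak{S}$, and tempered enough that $\tfrac1n\log Q_{\epsilon}(f^n x)\to 0$ along $\mu_{\Lambda}$-a.e.\ orbit. This is exactly where the adaptedness of the Poincar\'e section and the Lyapunov-regularity/tempering lemmas are indispensable; the argument is carried out in \cite{Sarig_diffeo2013} for surface diffeomorphisms and in \cite{LimaSarig2019} for flows, and the statement as recorded here is \cite[Theorem 3.2]{LimaSarig2019}.
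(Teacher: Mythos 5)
The paper does not prove this statement; it is imported verbatim as \cite[Theorem~3.2]{LimaSarig2019}, so there is no in-paper argument to compare against. Your sketch correctly names the ingredients that the cited proof uses: the Oseledets–Pesin linear conjugacy $C_{\chi}$ from Theorem~\ref{Theorem3.1}, the uniformly bounded geometry of $\mathrm{Exp}$ on the finitely many discs constituting $\Lambda$, the adaptedness of the section to control $\dist_{\Lambda}(f^n x,\mathfrak{S})$, and the tempering of $\|C_{\chi}(f^n x)^{-1}\|$ and $Q_{\epsilon}(f^n x)$. Items~(1) and~(2) do follow from the chain rule exactly as you describe.

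The gap is precisely where you flag it, and it is a real one: the inequality
$\|h^i_x\|_{C^{1+\alpha/2}}\leq \mathrm{const}\cdot\|C_{\chi}(x)^{-1}\|\cdot Q_{\epsilon}(x)^{\alpha/2}$
is written down as a target, not derived, and its form is not correct as stated. In the construction of \cite{Sarig_diffeo2013,LimaSarig2019}, $Q_{\epsilon}(x)$ is \emph{defined} to be dominated by a specific negative power of $\|C_{\chi}(x)^{-1}\|$ (roughly $\|C_{\chi}(x)^{-1}\|^{-c/\alpha}$, not merely $\|C_{\chi}(x)^{-1}\|^{-1}$) and, in the flow case, also by a power of $\dist_{\Lambda}(x,\mathfrak{S})$; your estimate implicitly treats $f_{\Lambda}$ as having uniformly bounded $C^{1+\alpha}$ norm on its whole domain, which is not available near $\partial\Lambda$. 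Moreover, after $Q_{\epsilon}$ is shrunk to make~(3) hold, one must still re-verify that the resulting family satisfies $\tfrac1n\log Q_{\epsilon}(f^n x)\to 0$ and admits a tempered minorant $q_{\epsilon}$ with $e^{-\epsilon/3}\leq q_{\epsilon}\circ f/q_{\epsilon}\leq e^{\epsilon/3}$; this is the content of \cite[Lemmas~3.3--3.4]{LimaSarig2019}, uses adaptedness and the tempering lemma in an essential way, and is not automatic from subexponential variation alone. So the proposal is a faithful outline of the cited argument but does not constitute a proof of the estimate that is its heart.
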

The property that the maps $\Psi^{-1}_{f(x)} \circ f_{\Lambda} \circ \Psi_x$ in Theorem \ref{thm:closehyp} are close to a uniformly hyperbolic map is stable under small perturbations of the charts. The notion of  \textit{$\epsilon$-overlapping} of two charts $\Psi^{\eta_1}_{x_1}$ and  $\Psi^{\eta_2}_{x_2}$, in symbols $\Psi^{\eta_1}_{x_1} \overset{\epsilon}{\approx} \Psi^{\eta_2}_{x_2}$, was introduced in \cite{Sarig_diffeo2013} and adapted in \cite{LimaSarig2019}. This notion includes a  condition on  $\eta_1/\eta_2$ being small, as well as closeness condition between the pairs $(x_1,C_{\chi}(x_1))$ and $(x_2, C_{\chi}(x_2))$. We refer to \cite{Sarig_diffeo2013, LimaSarig2019} for the precise definition and further discussion. 
If $\epsilon$ is small enough, $x, y\in \nuh_{\chi}(f)$, and $\Psi^{\eta'}_{f(x)} \overset{\epsilon}{\approx} \Psi^{\eta}_{y}$, then 
$f_{xy}:= \Psi^{-1}_{y} \circ f_{\Lambda} \circ \Psi_x:[-Q_{\epsilon},Q_{\epsilon}]^2 \to \R^2$ is close to a uniformly hyperbolic linear map (see \cite[Corollary 3.6]{LimaSarig2019} for a detailed statement).

In the following, we recall the construction of the Markov partition in Lima-Sarig in \cite{LimaSarig2019} for the Poincar\'e return map $f= f_{\Lambda}:\Lambda \to \Lambda$ and some of its properties that are relevant for our applications. Main parts of the construction go back to \cite{Sarig_diffeo2013} in the situation of surface diffeomorphisms.  

\subsection{Generalized pseudo orbits, admissible manifolds, and shadowing}\label{sec:GPO}
\begin{defn}
A \textit{$\epsilon$-double chart} is an ordered pair $$\Psi_x^{p^u,p^s}:= (\Psi_x|_{[-p^u,p^u]^2}, \Psi_x|_{[-p^s,p^s]^2}),$$ where $x \in \nuh_{\chi}(f)$ and $0< p^u, p^s \leq Q_{\epsilon}(x)$.
\end{defn}
As in \cite{LimaSarig2019} we use the  notation: $p\wedge q := \min\{p,q\}$.
\begin{defn}
A \textit{$\epsilon$-generalized pseudo-orbit (gpo)} is a sequence $(v_i)_{i\in \Z} = (\Psi_{x_i}^{p_i^u,p_i^s})_{i \in \Z}$ of $\epsilon$-double charts such that, for all $i \in \Z$,
\begin{enumerate}[({gpo}1)]
\item $\Psi_{f(x_i)}^{p_{i+1}^u \wedge p_{i+1}^s} \overset{\epsilon}{\approx} \Psi_{x_{i+1}}^{p_{i+1}^u \wedge p_{i+1}^s}$ and 
 $\Psi_{f^{-1}(x_{i+1})}^{p_{i}^u \wedge p_{i}^s} \overset{\epsilon}{\approx} \Psi_{x_{i}}^{p_{i}^u \wedge p_{i}^s}$; 
\item $p_{i+1}^u = \min\{e^{\epsilon} p_i^u, Q_{\epsilon}(x_{i+1})\}$ and $p_i^s = \min\{e^{\epsilon}p_{i+1}^s, Q_{\epsilon}(x_i)\}$. 
\end{enumerate}
\end{defn}
Conditions (gpo1) and (gpo2) are abbreviated by $v_i \overset{\epsilon}{\rightarrow} v_{i+1}$. It is often useful to consider \textit{positive and negative half-gpos}, that is, sequences $(v_i)_{i\geq 0}$ and $(v_i)_{i\leq 0}$, respectively, that satisfy $v_i\overset{\epsilon}{\rightarrow} v_{i+1}$, $i\in \Z$.

We proceed by recalling the definitions of admissible manifolds, graph transforms and the existence of local stable and unstable manifolds in the context of \cite{LimaSarig2019}.
\begin{defn}
A \textit{$s$-admissible manifold} in a $\epsilon$-double chart $v =  \Psi_x^{p^u,p^s}$ is a set of the form 
$\Psi_x(\{(t, F(t)) \, |\, |t|\leq p^s\})$, where $F:[-p^s, p^s] \to \R$ satisfies: 
\begin{enumerate}
\item $|F(0)| \leq 10^{-3}(p^u \wedge p^s)$;
\item $|F'(0)| \leq \frac{1}{2} (p^u \wedge p^s)^{\alpha/3}$;
\item $F$ is $C^{1 +\alpha/3}$ and $\sup|F'| + \sup_{a,b}\frac{|F(a)-F(b)|}{|a-b|^{\alpha/3}} \leq \frac{1}{2}$. 
\end{enumerate}
A \textit{$u$-admissible manifold} in $v$ is a set of the form $\Psi_x(\{(F(t), t) \, |\, |t|\leq p^u\})$, where $F$ satisfies the above three conditions. 
\end{defn}
The $s$- and $u$-admissible manifolds in $v$  are  subsets of $\Psi([-Q_{\epsilon}(x), Q_{\epsilon}(x)]^2)$. 
For two $s$-admissible manifolds $V_1 = \Psi_x(\{(t, F_1(t))\, |\, |t|\leq p^s\})$ and $V_2 = \Psi_x(\{(t, F_2(t)) \, |\, |t|\leq p^s\})$, define  
$$\dist(V_1,V_2):= \max |F_1 - F_2|.$$ 
Analogously define $\dist(V_1,V_2)$ 
for a pair of $u$-admissible manifolds. Note that, since $\|C_\chi(x)
\| \leq 1$ for all $x\in \NUH_{\chi}(f)$, there is a constant $C_{
\Lambda}\geq 1$ such that for any $\epsilon$-double chart $v$ and any pair $(V_1, V_2)$ of  $s$-admissible manifolds (or $u$-admissible manifolds) in $v$,
\begin{align}\label{C_Lambda}
\dist_{\Lambda}(V_1, V_2) 
\leq C_{\Lambda}\dist(V_1,V_2) 
.
\end{align}
If $\epsilon$ is small enough and $v_i \overset{\epsilon}{\rightarrow}v_{i+1}$, then the image of a $u$-admissible manifold $V^u$ in $v_i$ contains a unique $u$-admissible manifold in $v_{i+1}$, the \textit{graph transform} of $V^u$,  denoted by $\mathcal{F}_u[V^u]$, see \cite[p.15]{LimaSarig2019}. 
By iteration we obtain  a  $u$-admissible manifold $\mathcal{F}_u^{j-i}[V_u]$, associated to a chain $v_i \overset{\epsilon}{\rightarrow} v_{i+1} \overset{\epsilon}{\rightarrow} \cdots \overset{\epsilon}{\rightarrow} v_j$.
Similarly, the preimage of a $s$-admissible manifold $V^s$ in $v_{i+1}$ contains a unique $s$-admissible manifold in $v_{i}$, denoted by $\mathcal{F}_s[V^s]$, and in the same way we can define iterations $\mathcal{F}^k_s[V^s]$, $k>1$. 
We also recall the following proposition. 
\begin{prop}\label{prop:contraction}\cite[Prop.4.14]{Sarig_diffeo2013}
If $\epsilon$ is sufficiently small and $v_i \overset{\epsilon}{\rightarrow} v_{i+1}$, then for any $u$-admissible manifolds $V^u_-$ and $V^u_+$ in $v_i$, 
$$\dist(\mathcal{F}_u[V^u_-], \mathcal{F}_u[V^u_+]) \leq e^{-\chi/2}\dist(V^u_-, V^u_+),$$
and,  for any $s$-admissible manifolds $V^s_-$ and $V^s_+$ in $v_{i+1}$, 
$$\dist(\mathcal{F}_s[V^s_-], \mathcal{F}_s[V^s_+]) \leq e^{-\chi/2}\dist(V^s_-, V^s_+).$$
\end{prop}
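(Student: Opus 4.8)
The plan is to run the standard Hadamard--Perron graph-transform estimate in Pesin coordinates, using that $\dist$ between two admissible manifolds in a fixed double chart is just the $C^0$-distance of the functions defining them, so that only a $C^0$-contraction has to be established. First I would pass to coordinates: by the definition of the graph transform and of $v_i \overset{\epsilon}{\rightarrow} v_{i+1}$, the overlap condition (gpo1) lets one apply \cite[Corollary~3.6]{LimaSarig2019}, so the map $f_{x_ix_{i+1}} = \Psi_{x_{i+1}}^{-1}\circ f_{\Lambda}\circ \Psi_{x_i}$ has, on $[-Q_{\epsilon}(x_i),Q_{\epsilon}(x_i)]^2$, the form $f_{x_ix_{i+1}}(u,v) = (A u + h^1(u,v),\, B v + h^2(u,v))$ with $C_\varphi^{-1}\le |A|\le e^{-\chi}$, $e^{\chi}\le |B|\le C_\varphi$ and $\|h^1\|_{C^{1+\alpha/2}},\,\|h^2\|_{C^{1+\alpha/2}} < \epsilon$ (cf.\ Theorem~\ref{thm:closehyp}). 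Write the two $u$-admissible manifolds as $V^u_\pm = \Psi_{x_i}(\{(F_\pm(t),t)\,:\,|t|\le p_i^u\})$, where $F_\pm$ obey the admissibility bounds, in particular $\sup|F_\pm'|\le \tfrac12$.

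Next I would make the graph transforms explicit. The image $f_{x_ix_{i+1}}(V^u_\pm)$ is the curve $t\mapsto (A F_\pm(t)+h^1(F_\pm(t),t),\, B t + h^2(F_\pm(t),t))$. Since $|B|\ge e^{\chi}>1$, $\|h^2\|_{C^1}<\epsilon$ and $\sup|F_\pm'|\le\tfrac12$, the map $\psi_\pm(t):= B t + h^2(F_\pm(t),t)$ is a $C^{1+\alpha/3}$ diffeomorphism onto an interval which, thanks to (gpo1)--(gpo2) relating $p^u_{i+1}$ to $e^{\epsilon}p^u_i$ and controlling the overlaps, contains $[-p^u_{i+1},p^u_{i+1}]$; restricting the image curve to this interval gives $\mathcal{F}_u[V^u_\pm] = \Psi_{x_{i+1}}(\{(G_\pm(s),s)\,:\,|s|\le p^u_{i+1}\})$ with $G_\pm(s) = A F_\pm(\psi_\pm^{-1}(s)) + h^1\big(F_\pm(\psi_\pm^{-1}(s)),\psi_\pm^{-1}(s)\big)$.

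The heart of the matter is the $C^0$-estimate. Fix $s$ and set $t_\pm:=\psi_\pm^{-1}(s)$. From $B t_+ + h^2(F_+(t_+),t_+) = B t_- + h^2(F_-(t_-),t_-)$, the bound $\|\nabla h^2\|<\epsilon$, and $|F_-(t_+)-F_-(t_-)|\le\tfrac12|t_+-t_-|$, one obtains $(|B|-\tfrac32\epsilon)\,|t_+-t_-|\le \epsilon\,\|F_+-F_-\|_{C^0}$, hence $|t_+-t_-|\le C_1\epsilon\,\|F_+-F_-\|_{C^0}$ with $C_1$ independent of $i$. Plugging this in and using $\|\nabla h^1\|<\epsilon$,
\[
|G_+(s)-G_-(s)| \le |A|\,\big(\|F_+-F_-\|_{C^0}+\tfrac12|t_+-t_-|\big) + \epsilon\big(\|F_+-F_-\|_{C^0}+ \tfrac32|t_+-t_-|\big) \le \big(e^{-\chi}+C_2\epsilon\big)\,\|F_+-F_-\|_{C^0},
\]
for a constant $C_2$ independent of $i$. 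Taking the supremum over $s$ and choosing $\epsilon$ small enough that $e^{-\chi}+C_2\epsilon\le e^{-\chi/2}$ yields $\dist(\mathcal{F}_u[V^u_-],\mathcal{F}_u[V^u_+])\le e^{-\chi/2}\dist(V^u_-,V^u_+)$. The $s$-admissible statement follows by the same argument applied to $f^{-1}_{\Lambda}$ in the charts $v_{i+1}\rightarrow v_i$, whose coordinate form (again by \cite[Corollary~3.6]{LimaSarig2019}) has the expanding and contracting factors interchanged, so that the stable direction is the contracted one.

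The main obstacle is not the contraction bound itself, which is routine once the local forms are available, but the bookkeeping around the reparametrization $\psi_\pm$: verifying that it is an honest $C^{1+\alpha/3}$ diffeomorphism whose image covers the whole interval $[-p^u_{i+1},p^u_{i+1}]$, and that the auxiliary constants ($C_1,C_2$, and those hidden in $\overset{\epsilon}{\approx}$) are uniform in $i$. Uniformity holds because only $C_\varphi$, $\chi$, and the structural constants of the charts enter, none depending on $i$, and the covering property is exactly what (gpo1)--(gpo2) are designed to guarantee. One should also check that the defining function $G_\pm$ still satisfies the admissibility inequalities, but that is part of the construction of $\mathcal{F}_u$ recalled before the Proposition and may be taken as given here.
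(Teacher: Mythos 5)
Your proof is correct and follows the same route as the argument this paper delegates to Sarig's Proposition~4.14: pass to the Pesin-chart coordinate form of the return map furnished by Theorem~\ref{thm:closehyp} and \cite[Corollary~3.6]{LimaSarig2019}, reparametrize the image curve by the (nearly linear, expanding) second coordinate, and combine $|A|\le e^{-\chi}$, $\sup|F'_\pm|\le\tfrac12$, and $\|\nabla h^i\|<\epsilon$ to get a contraction factor $e^{-\chi}+O(\epsilon)\le e^{-\chi/2}$. The paper itself states this Proposition as a citation without proof, and your write-up correctly identifies the only genuine bookkeeping points (well-definedness of $\psi_\pm^{-1}$ on $[-p^u_{i+1},p^u_{i+1}]$ and uniformity of the constants), which are indeed absorbed into the construction of $\mathcal{F}_u$ rather than the contraction estimate.
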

The \textit{unstable manifold} of a negative half-gpo  $\underline{v}=(v_i)_{i\leq 0}$ is defined by  $$V^u[\underline{v}] := \lim_{n\to \infty}\mathcal{F}^n_u[V^u_{-n}],$$ 
where $V^u_{-n}$, $n\in\N$, is any sequence of $u$-admissible manifolds in $v_{-n}$, $n\in\N$,  and where $\mathcal{F}^n_u$ are defined with respect to $v_{-n}\overset{\epsilon}{\rightarrow} v_{-n+1} \overset{\epsilon}{\rightarrow} \cdots \overset{\epsilon}{\rightarrow}v_0$. The definition of $V^u[\underline{v}]$ is independent of the chosen sequence $V^u _{-n}$.  
Analogously one defines the stable manifold $V^s[\underline{v}]$ of a positive half-gpo $\underline{v} = (v_i)_{i\geq 0}$, cf.\  \cite[p.15]{LimaSarig2019}. We will sometimes write $V^u[\underline{v}]$ and $V^s[\underline{v}]$ for gpos $\underline{v}$, although the definition does not depend on the positive resp.\ negative part. Furthermore, the following holds.
\begin{enumerate}
\item $V^s[(v_i)_{i\geq 0}]$, 
 $V^u[(v_i)_{i\leq 0}]$  are $s$- resp. $u$- admissible manifolds in $v_0$; 
\item  $f(V^s[(v_i)_{i\geq 0}]) \subset V^s[(v_i)_{i\geq 1}]$, $f^{-1}(V^u[(v_i)_{i\leq 0}]) \subset V^u[(v_i)_{i\leq -1}]$;
\item if $x,y \in V^s[\underline{v}]$,  then $\dist_{\Lambda}(f^{n}(x), f^{n}(y)) \rightarrow 0$ for $n\to \infty$, and if $x,y \in V^u[\underline{v}]$,  then $\dist_{\Lambda}(f^{-n}(x), f^{-n}(y)) \rightarrow 0$ for $n\to \infty$. The rates of convergence are exponential.
\end{enumerate}

The following Lemma follows directly from  Prop. 4.11 in \cite{Sarig_diffeo2013}. 
\begin{lem}\label{lem:admissible_square}
Let $v= \Psi^{p^u,p^s}_x$ be a $\epsilon$-double chart, let $V^u_- \neq V^u_+$  two $u$-admissible manifolds  and $V^s_- \neq V^s_+$ be two $s$-admissible manifolds in $v$. Then, if $\epsilon$ is small enough, for  any $a,b \in \{-,+\}$, the sets $V^s_a$ and $V^u_b$ intersect in a unique point $x_{ab}$ in $\Psi_{x}([-10^{-2} p^u \wedge p^s, 10^{-2} p^u \wedge p^s]^2)$. Hence they define a rectangle $\mathcal{Q}  = \mathcal{Q}(V^u_-, V^u_+; V^s_-, V^s_+)$ bounded by those  segments of $V^u_-, V^u_+, V^s_-, V^s_+$  that connect two of the vertices $x_{ab}$, $a,b \in \{-,+\}$. The rectangle $\mathcal{Q}$ is contained in $\Psi_{x}([-10^{-1} p^u \wedge p^s, 10^{-1} p^u \wedge p^s]^2)$. 
\end{lem}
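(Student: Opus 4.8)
The statement is essentially \cite[Prop.~4.11]{Sarig_diffeo2013}, transported from the setting of admissible manifolds in a single Pesin chart to that of admissible manifolds in a double chart $v=\Psi_x^{p^u,p^s}$; the only structural change is that the single scale $Q_\epsilon(x)$ used there is replaced by $\rho:=p^u\wedge p^s$. The plan is therefore to pass to the coordinates given by $\Psi_x$, in which $V^s_a$ becomes the graph $\{(t,F^s_a(t)):|t|\le p^s\}$ of an almost-horizontal function and $V^u_b$ the graph $\{(F^u_b(t),t):|t|\le p^u\}$ of an almost-vertical one, each $F$ satisfying conditions (1)--(3) of the definition of an admissible manifold --- in particular $|F(0)|\le 10^{-3}\rho$ and $F$ is $\tfrac12$-Lipschitz (since $\sup|F'|\le\tfrac12$) --- and then to solve the intersection problem by a fixed-point argument.

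Concretely, a point of $V^s_a\cap V^u_b$ corresponds in coordinates to a pair $(t,s)$ with $t=F^u_b(s)$ and $s=F^s_a(t)$, i.e.\ to a fixed point of $g_{ab}:=F^s_a\circ F^u_b$. First I would check that, restricted to $I:=[-10^{-2}\rho,10^{-2}\rho]$, the map $g_{ab}$ is well defined (both $I$ and $F^u_b(I)$ stay inside the respective domains $[-p^u,p^u]$ and $[-p^s,p^s]$, using the Lipschitz bound and $|F^u_b(0)|\le 10^{-3}\rho$), is a $\tfrac14$-contraction (composition of two $\tfrac12$-Lipschitz maps), and sends $I$ into itself (by the $10^{-3}\rho$ bounds on $F^s_a(0)$ and $F^u_b(0)$). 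The Banach fixed point theorem then yields a unique $s_{ab}\in I$ with $g_{ab}(s_{ab})=s_{ab}$, whence $x_{ab}:=\Psi_x\bigl(F^u_b(s_{ab}),s_{ab}\bigr)$ is the unique point of $V^s_a\cap V^u_b$ lying in $\Psi_x([-10^{-2}\rho,10^{-2}\rho]^2)$ --- uniqueness in this box being immediate, since any such point is a fixed point of $g_{ab}$ in $I$.

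For the last assertion, the four corners $x_{ab}$ split $V^u_-,V^u_+$ and $V^s_-,V^s_+$ into sub-arcs; that these bound a topological rectangle $\mathcal Q$ is the combinatorial content of \cite[Prop.~4.11]{Sarig_diffeo2013}, which I would simply invoke. To localize $\mathcal Q$, I would note that the arc of $V^u_b$ between $x_{-,b}$ and $x_{+,b}$ is $\{(F^u_b(s),s)\}$ with $s$ ranging inside $I$, so by the Lipschitz and value bounds both coordinates on this arc have absolute value $<10^{-1}\rho$, and symmetrically for the $V^s_a$-arcs; hence $\partial\mathcal Q$ lies in the square $B:=[-10^{-1}\rho,10^{-1}\rho]^2$. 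Since $\R^2\setminus B$ is connected, unbounded and disjoint from $\partial\mathcal Q$, it lies in the unbounded complementary component of $\partial\mathcal Q$, so the region $\mathcal Q$ enclosed by $\partial\mathcal Q$ is contained in $B$; applying $\Psi_x$ gives $\mathcal Q\subset\Psi_x([-10^{-1}\rho,10^{-1}\rho]^2)$.

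I do not expect any genuine obstacle: once one is in Pesin coordinates the whole lemma is elementary --- Lipschitz estimates together with the Banach fixed point theorem --- and the hypothesis that $\epsilon$ be small is needed only to keep the ambient admissible-manifold machinery of \cite{LimaSarig2019,Sarig_diffeo2013} (hence the validity of conditions (1)--(3)) in force. The one point that requires a little care, and the sole place where the double-chart setting departs from \cite{Sarig_diffeo2013}, is the bookkeeping with the two scales $p^u$ and $p^s$: one must verify that the corners, and then all of $\mathcal Q$, remain inside the box of the \emph{smaller} side $\rho=p^u\wedge p^s$. This is automatic, since conditions (1)--(3) are already phrased in terms of $p^u\wedge p^s$.
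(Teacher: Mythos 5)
Your proof is correct and follows essentially the same route as the paper, which simply cites Proposition~4.11 of \cite{Sarig_diffeo2013} and notes that the lemma follows directly from it. You go a bit further by spelling out the underlying fixed-point argument (reduction to a fixed point of $F^s_a\circ F^u_b$, well-definedness and invariance of the interval $I$, the $\tfrac14$-contraction, and the Jordan-curve localization of $\mathcal{Q}$), but this is a reconstruction of, not a departure from, the cited argument.
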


We say that a rectangle $\mathcal{Q}$ of the form $\mathcal{Q}= \mathcal{Q}(V_-^s,V_+^s;V_-^u, V_+^u)$ as in Lemma~\ref{lem:admissible_square}  is an  \textit{admissible rectangle in $v$} and  call the segments of $V^s_{\pm}$ resp.\  $V^u_{\pm}$ that connect  
 corners of the rectangle   $\mathcal{Q}$ its  \textit{$s$-sides} resp.\ \textit{$u$-sides}.

Below we will also  consider the following  \textit{partial order} relations $\preceq_s$ and $\preceq_u$  on the families of $s$- resp.\ $u$-admissible manifolds in a double chart $v$. 
Given $s$-admissible manifolds  $V_1 = \Psi_x (\{(t, F_1(t)) \, |\, |t|\leq p^s\})$ and $V_2 = \Psi_x(\{(t, F_2(t)) \, |\, |t|\leq p^s\})$, we say that $V_1 \preceq_s V_2$ if either  $F_1(t) = F_2(t)$ for all $t$ with $|t| \leq p^s$, or $F_1(t) < F_2(t)$  for all $t$ with $|t| \leq p^s$. 
Similarly,  we define $\preceq_u$ on the family of $u$-admissible manifolds in a double chart $v$. 

Finally, we recall the shadowing lemma for  generalized pseudo-orbits. Let $x\in \Lambda$. A gpo $(\Psi^{p_i^u, p_i^s}_{x_i})_{i\in \Z}$ is said to \textit{shadow the orbit of $x$}, if $f^i(x) \in \Psi_{x_i}([-\eta_i,\eta_i]^2)$, where $\eta_i := p_i^u \wedge p_i^s$. 
\begin{thm}\label{thm:shadowing}\cite[Theorem 4.2]{LimaSarig2019}
For $\epsilon$ small enough, every $\epsilon$-gpo  shadows a unique orbit. 
\end{thm}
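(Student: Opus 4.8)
The plan is to realize the shadowing orbit as the unique intersection of the stable and unstable manifolds attached to the gpo, and then to exclude a second shadowing orbit by a hyperbolic cone estimate. Write $\underline v = (v_i)_{i\in\Z}$ with $v_i = \Psi_{x_i}^{p_i^u,p_i^s}$ and $\eta_i := p_i^u \wedge p_i^s$. For existence I would argue as follows. For each $n$, the positive half-gpo $(v_i)_{i\geq n}$ and the negative half-gpo $(v_i)_{i\leq n}$ determine, by the material recalled before the statement, an $s$-admissible manifold $W^s_n := V^s[(v_i)_{i\geq n}]$ and a $u$-admissible manifold $W^u_n := V^u[(v_i)_{i\leq n}]$ in the double chart $v_n$; by Lemma~\ref{lem:admissible_square} these meet in a single point $p_n \in \Psi_{x_n}([-10^{-2}\eta_n, 10^{-2}\eta_n]^2) \subset \Psi_{x_n}([-\eta_n,\eta_n]^2)$. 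The key point is then $f(p_n) = p_{n+1}$: from $p_n \in W^s_n$ and the invariance $f(W^s_n) \subset W^s_{n+1}$ (property (2) in the list after the definition of $V^s,V^u$) one gets $f(p_n) \in W^s_{n+1}$, and since $W^u_{n+1} = \mathcal{F}_u[W^u_n]$ is the $u$-admissible piece of $f(W^u_n)$ inside the chart $v_{n+1}$ while $W^s_n = \mathcal{F}_s[W^s_{n+1}]$ is the $s$-admissible piece of $f^{-1}(W^s_{n+1})$ inside $v_n$, the point $f^{-1}(p_{n+1}) \in W^u_n$ lies in the window of $v_n$ and in $f^{-1}(W^s_{n+1})$, hence in $W^s_n$, so $f^{-1}(p_{n+1}) = W^s_n \cap W^u_n = p_n$. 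Then $f^n(p_0) = p_n \in \Psi_{x_n}([-\eta_n,\eta_n]^2)$ for all $n$, so $x := p_0$ is shadowed by $\underline v$.

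For uniqueness, suppose $x,x'$ both shadow $\underline v$. I would track the coordinate discrepancy $(u_n,s_n) := \Psi_{x_n}^{-1}(f^n x) - \Psi_{x_n}^{-1}(f^n x') \in \R^2$. Since $f^n x, f^n x' \in \Psi_{x_n}([-\eta_n,\eta_n]^2)$ and $\eta_n \leq Q_\epsilon(x_n)$ is bounded above by a constant independent of $n$, the vectors $(u_n,s_n)$ are uniformly bounded. On the other hand, by Theorem~\ref{thm:closehyp} in its form between $\epsilon$-overlapping charts, the transition from chart $n$ to chart $n+1$ is in these coordinates a map $(u,v) \mapsto (A_n u + h_n^1(u,v),\, B_n v + h_n^2(u,v))$ with $|A_n| \leq e^{-\chi}$, $|B_n| \geq e^{\chi}$ and $\mathrm{Lip}(h_n^i) < \epsilon$, and the backward transition has the symmetric form with the two coordinates interchanged. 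Hence $|u_{n+1}| \leq e^{-\chi}|u_n| + \epsilon(|u_n| + |s_n|)$ and $|s_{n+1}| \geq e^{\chi}|s_n| - \epsilon(|u_n| + |s_n|)$, and symmetrically going backward. For $\epsilon$ small compared with $\chi$, the usual cone argument gives the dichotomy: either $|s_n| \geq |u_n|$ is preserved forward with $|s_{n+k}| \geq e^{k\chi/2}|s_n|$ for all $k\geq 0$, or $|u_n| \geq |s_n|$ is preserved backward with $|u_{n-k}| \geq e^{k\chi/2}|u_n|$ for all $k\geq 0$. Applied at $n = 0$, either alternative contradicts the uniform bound on $(u_n,s_n)$ unless $(u_0,s_0) = 0$; as $\Psi_{x_0}$ is injective, this forces $x = x'$.

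The part I expect to be most delicate is not these hyperbolic estimates themselves but the Pesin-chart bookkeeping underneath them: the sizes $Q_\epsilon(x_n)$ vary, and may shrink, along $\underline v$, so one must invoke the slow-variation estimates from \cite{LimaSarig2019} to keep $W^s_n$, $W^u_n$ and their intersection inside the domains where Theorem~\ref{thm:closehyp} applies, and consecutive double charts only $\epsilon$-overlap rather than match exactly, so all hyperbolicity has to be used in the overlapping-charts form (the corollary of Theorem~\ref{thm:closehyp} recalled just above). The smallness of $\epsilon$ relative to $\chi$ and $C_\varphi$ is exactly what lets the hyperbolic rates dominate the $O(\epsilon)$ nonlinear terms in the uniqueness dichotomy, and is also what makes the graph transform a contraction (Proposition~\ref{prop:contraction}), on which the existence of $W^s_n$, $W^u_n$ as limits relies in the first place.
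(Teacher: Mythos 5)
Your proof is correct and follows essentially the same route as the argument in \cite{LimaSarig2019} (and its model in \cite{Sarig_diffeo2013}): existence of the shadowed orbit as the unique transversal intersection $V^u[(v_i)_{i\le 0}]\cap V^s[(v_i)_{i\ge 0}]$ together with the graph-transform compatibility $f(p_n)=p_{n+1}$, and uniqueness via the hyperbolic estimate in overlapping Pesin charts ruling out a nonzero bounded discrepancy. The cone dichotomy you use for uniqueness is just a repackaging of the same mechanism Sarig uses to pin the shadowed point onto the stable/unstable manifold intersection, so there is no genuine difference of approach.
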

In fact,  $f^i(x) \in \Psi_{x_i}([-10^{-2}\eta_i,10^{-2} \eta_i])$, for the shadowed orbit of a  gpo $(\Psi^{p_i^u, p_i^s}_{x_i})_{i\in \Z}$ in Theorem \ref{thm:shadowing}. 
\subsection{The Markov partition}\label{sec:Markovpartition} 
A crucial step in the construction of the topological Markov shift in \cite{Sarig_diffeo2013} and \cite{LimaSarig2019} is to find a suitable countable set $\mathcal{A}$ of double charts for which the set of gpos are still "sufficiently rich", in particular for which  $\mu_{\Lambda}$-a.e.\ point in $\Lambda$ is shadowed by a gpo in $\mathcal{A}$. We refer to \cite{LimaSarig2019} for the details on the construction and properties of $\mathcal{A}$, and recall here only some consequences which are  relevant for our  application. Let $\epsilon$ be small enough that the results of the previous section hold. Denote by  $\mathcal{G}$ the directed graph with the set of vertices $\mathcal{A}$ and the set of edges $\{(v,w) \in \mathcal{A} \times \mathcal{A}: v\overset{\epsilon}{\rightarrow} w\}$. Denote as above by  
$\Sigma(\mathcal{G}) := \{ \underline{v} \in \mathcal{A}^{\Z} \, |\, v_i \overset{\epsilon}{\rightarrow} v_{i+1} \text{ for all } i \in \Z\}$ the set of infinite  paths on $\mathcal{G}$, and by 
$\sigma: \Sigma(\mathcal{G}) \to \Sigma(\mathcal{G})$ the left shift. 
Let $\pi:\Sigma(\mathcal{G}) \to \Lambda$ be defined by 
$$\pi(\underline{v}) := \text{ unique point whose }f\text{-orbit is shadowed by }\underline{v}.$$
Then, the following hold, cf.\ \cite[p.\ 23]{LimaSarig2019}: 
\begin{enumerate}
\item $\pi$ is H\"older continuous;
\item $f\circ \pi = \pi \circ \sigma$;
\item  $\mu_{\Lambda}(\Lambda \setminus \pi [\Sigma^\#(\mathcal{G})]) = 0$;
\item for all $x \in \Lambda, i \in \Z, \, \#\{v_i \, |\, \underline{v} \in \Sigma^{\#}(\mathcal{G}), \pi(\underline{v}) = s\} < \infty$.
\end{enumerate}

\begin{rem}\label{rem:smanifold_welldefined}
If $\underline{v}$ and $\underline{u}$ are two gpos with $v_0 = u_0$ and  $\pi(\underline{v}) = \pi(\underline{u})$, then 
$V^u[\underline{v}] = V^u[\underline{u}]$, and $V^s[\underline{v}] = V^s[\underline{u}]$, cf.\ \cite[Prop. 6.4]{Sarig_diffeo2013}. Hence, if the entry $v_0$ of $\underline{v}$ is clear from the context, we also denote the above manifolds by   
$V^s(\pi(\underline{v}))$ resp.\  $V^u(\pi(\underline{v}))$.
For two gpos $\underline{v}$ and $\underline{u}$ with $v_0 = u_0=v$, 
we write $\mathcal{Q}(\pi(\underline{v}); \pi(\underline{u}))$ for the admissible rectangle $\mathcal{Q}(V^u[\underline{v}], V^u[\underline{u}];V^s[\underline{v}], V^s[\underline{u}])$. 
\end{rem}
The next step is to consider the covering of $\pi[\Sigma^{\#}(\mathcal{G})]$, given by 
$$\mathcal{Z} := \{ Z(v) \, |\, v\in \mathcal{A}\}, \text{ where } Z(v) := \{ \pi(\underline{v}) \, |\, \underline{v} \in \Sigma^\#(\mathcal{G}), v_0 = v\}.$$
It is shown that for all $Z \in \mathcal{Z}$ , $\#\{ Z'\cap Z \neq \emptyset\}< \infty$.

For $x \in Z$,  the \textit{$s$-fibre of $x$ in $Z$} is defined  by 
$$W^s(x,Z) = V^s(x,Z) \cap Z,$$
where $V^s(x,Z) := V^s[(v_t)_{t\in \Z}]$ for some (and hence any) $\underline{v} \in \Sigma^\#(\mathcal{G})$ such that  $v_0 = v$ and $\pi(\underline{v}) = x$. 
Analogously the \textit{$u$-fibre $W^u(x,Z)$ of $x$ in $Z$} is defined. 

The Markov partition is defined as a refinement of $\mathcal{Z}$, similar to a construction due to Bowen and Sinai (cf.\  \cite{Bowen-book}): 
Enumerate $\mathcal{Z} = \{Z_i\, |\, i \in \N\}$ and define for each $Z_i, Z_j \in \mathcal{Z}$ with $Z_i \cap Z_j \neq \emptyset$, 
\begin{align*}
T^{us}_{ij} &:= \{ x \in Z_i \, :\, W^u(x,Z_i) \cap Z_j \neq \emptyset, \, W^s(x,Z_i) \cap Z_j \neq \emptyset \}, \\
T^{u\emptyset}_{ij} &:= \{ x \in Z_i \, :\, W^u(x,Z_i) \cap Z_j \neq \emptyset, \, W^s(x,Z_i) \cap Z_j = \emptyset \}, \\
T^{\emptyset s}_{ij} &:= \{ x \in Z_i \, :\, W^u(x,Z_i) \cap Z_j = \emptyset, \, W^s(x,Z_i) \cap Z_j \neq \emptyset \}, \\
T_{ij}^{\emptyset \emptyset} &:= \{ x \in Z_i \, :\, W^u(x,Z_i) \cap Z_j = \emptyset, \, W^s(x,Z_i) \cap Z_j = \emptyset \}.
\end{align*}
Let $\mathcal{T} := \{ T_{ij}^{\alpha \beta} \, |\, i,j \in \N, \alpha \in \{u, \emptyset\}, \beta \in \{s, \emptyset\}\}$. 
The partition $\mathcal{R}$ is the collection of sets of the form 
$R(x) := \bigcap \{ T \in \mathcal{T} \, |\, x\in T \}$ for some $x \in \bigcup_{i\geq 1} Z_i$. 

\begin{prop}\cite[Prop. 5.2]{LimaSarig2019}\label{zlocfinite}
The collection of sets $\mathcal{R}$ form a countable pairwise disjoint cover of $\pi[\Sigma^{\#}(\mathcal{G})]$.
It refines $\mathcal{Z}$, and each element of $\mathcal{Z}$ contains only finitely many elements in $\mathcal{R}$.
\end{prop}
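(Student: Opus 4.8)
# Proof Proposal for Proposition \ref{zlocfinite}

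The plan is to verify each assertion by carefully tracking how $\mathcal{R}$ is built from $\mathcal{Z}$ via the partition $\mathcal{T}$, using only the local finiteness of $\mathcal{Z}$ and the product structure of the sets $Z(v)$. The three claims to establish are: (a) $\mathcal{R}$ covers $\pi[\Sigma^{\#}(\mathcal{G})]$ and its elements are pairwise disjoint; (b) $\mathcal{R}$ refines $\mathcal{Z}$; (c) each $Z \in \mathcal{Z}$ contains only finitely many elements of $\mathcal{R}$, and in particular $\mathcal{R}$ is countable.

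First I would address the covering and disjointness. For the covering, note that every point $p \in \pi[\Sigma^{\#}(\mathcal{G})]$ lies in some $Z_i \in \mathcal{Z}$, hence in the set $T_{ii}^{us}$ (taking $j = i$, since $W^u(p,Z_i)$ and $W^s(p,Z_i)$ both meet $Z_i$, containing $p$ itself), so $R(p) := \bigcap\{T \in \mathcal{T} \mid p \in T\}$ is a nonempty intersection containing $p$; thus $p \in R(p) \in \mathcal{R}$. For disjointness: suppose $R(x) \cap R(y) \neq \emptyset$, say $z$ lies in both. For each $T \in \mathcal{T}$, the four sets $T_{ij}^{us}, T_{ij}^{u\emptyset}, T_{ij}^{\emptyset s}, T_{ij}^{\emptyset\emptyset}$ partition $Z_i$ (for fixed $i,j$), so membership of $z$ in a given "cell type" over the pair $(i,j)$ is determined. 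Since $z \in R(x)$, for every $T \in \mathcal{T}$ with $x \in T$ we have $z \in T$; running the argument with the partition property of the four $T_{ij}^{\alpha\beta}$ shows $x$ and $z$ lie in exactly the same members of $\mathcal{T}$, hence $R(x) = R(z)$; likewise $R(y) = R(z)$, so $R(x) = R(y)$. This is the standard Bowen–Sinai refinement argument and should go through verbatim as in \cite{Bowen-book} and \cite[Prop. 5.2]{Sarig_diffeo2013}.

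Next, refinement of $\mathcal{Z}$: if $x \in Z_i$, then for any $Z_j$ meeting $Z_i$ the point $x$ lies in one of $T_{ij}^{us}, T_{ij}^{u\emptyset}, T_{ij}^{\emptyset s}, T_{ij}^{\emptyset\emptyset}$, all of which are subsets of $Z_i$; hence $R(x) \subset Z_i$, and since the $R(x)$ are pairwise disjoint and cover, each $Z_i$ is a disjoint union of members of $\mathcal{R}$. The main point — and the step I expect to be the real content — is the finiteness claim (c). Here I would use that $\#\{Z' \in \mathcal{Z} \mid Z' \cap Z \neq \emptyset\} < \infty$ for each $Z \in \mathcal{Z}$ (the local finiteness of $\mathcal{Z}$ stated just before the proposition, which itself descends from item (4) on p.\ 23 of \cite{LimaSarig2019} and the finiteness $\#\{v_i \mid \underline{v}\in\Sigma^\#(\mathcal{G}), \pi(\underline{v}) = s\} < \infty$). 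Fix $Z_i$. Only those indices $j$ with $Z_j \cap Z_i \neq \emptyset$ contribute a nontrivial cut to $Z_i$: for $j$ with $Z_j \cap Z_i = \emptyset$ the sets $T_{ij}^{\alpha\beta}$ are either all of $Z_i$ or empty and impose no further subdivision. Let $J$ be the finite set of such indices. Then $R(x) \cap Z_i$ is a finite intersection (over $j \in J$, and over $\alpha,\beta$) of sets each of which is one of four pieces, so the number of distinct $R(x)$ inside $Z_i$ is bounded by $4^{|J|} < \infty$. Countability of $\mathcal{R}$ then follows since $\mathcal{Z}$ is countable and $\mathcal{R} = \bigcup_i \{R \in \mathcal{R} \mid R \subset Z_i\}$ is a countable union of finite sets.

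The only subtlety to handle with care is the bookkeeping in the disjointness step — making precise that "$x$ and $z$ belong to the same elements of $\mathcal{T}$" genuinely forces $R(x) = R(z)$, which relies on the fact that the complement within $Z_i$ of each $T_{ij}^{\alpha\beta}$ is again a union of the other three $T_{ij}^{\alpha'\beta'}$, so that $x \notin T$ with $x \in Z_i$ places $x$ in a specified union of the remaining cells. I expect no genuine obstacle beyond this routine partition-algebra argument; all the analytic input (product structure of $Z(v)$, local finiteness) is already in place from the preceding subsections.
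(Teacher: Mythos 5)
The paper cites this proposition from Lima--Sarig without reproducing the proof, so what follows assesses your proposal on its own terms. Your covering and refinement steps are fine, and the finiteness conclusion is reachable (though your bound of $4^{|J|}$ only counts the cells $T_{ij}^{\alpha\beta}$ with first index $i$; one should also account for the finitely many $T_{kl}^{\alpha\beta}$ with $k\neq i$, $Z_k\cap Z_i\neq\emptyset$, that meet $Z_i$ — the conclusion survives since local finiteness bounds those too).

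The genuine gap is in the disjointness step, and the subtlety you flagged is not the one that actually needs handling. You correctly observe that $z\in R(x)$ forces $z\in T$ for every $T$ containing $x$, and you then claim that the partition property of the four cells $T_{ij}^{\alpha\beta}$ over a fixed pair $(i,j)$ closes the loop. But that only works once you know $x\in Z_i$. The dangerous case is: $z\in R(x)$, $z\in T^{\alpha\beta}_{ij}\subseteq Z_i$, yet $x\notin Z_i$. Unwinding it: pick $Z_m\ni x$; then $z\in Z_m$ (via $T^{us}_{mm}$), so $z\in Z_m\cap Z_i$ and thus $z\in T^{us}_{mi}$; since $z\in R(x)$, $x$ must also lie in $T^{us}_{mi}$, i.e.\ $W^u(x,Z_m)\cap Z_i\neq\emptyset$ and $W^s(x,Z_m)\cap Z_i\neq\emptyset$. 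That is as far as partition algebra takes you; it does \emph{not} give $x\in Z_i$. To conclude one needs the identity $T^{us}_{mi}=Z_m\cap Z_i$, i.e.\ the geometric fact that a point of $Z_m$ whose $u$- and $s$-fibres in $Z_m$ both meet $Z_i$ already belongs to $Z_i$. This is the product/bracket structure of the $Z$'s established earlier in Lima--Sarig (via overlapping Pesin charts and the splicing of half-gpos followed by shadowing); it is a substantive input, not bookkeeping, and without invoking it the disjointness of $\mathcal{R}$ is unproved. The subtlety you named — that the complement of one cell inside $Z_i$ is the union of the other three — is immediate from the definitions and does not require care. The real content is the rectangle-closure of the $Z$'s, which your write-up acknowledges in passing ("product structure of $Z(v)$ ... already in place") but never actually uses in the argument.
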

The family $\mathcal{R}$ forms a {Markov partition} in the sense of Sinai, see \cite[Prop.~5.3]{LimaSarig2019}.
For $R, S \in \mathcal{R}$ we write $R \rightarrow S$, whenever there is $x \in \R$ such that  $f(x) \in S$. 
Let $\widehat{\mathcal{G}}$ be the graph with $\mathcal{R}$ as vertices and $\{(v,w) \in \mathcal{R}\times \mathcal{R} \, |\, v\rightarrow w\}$ as edges. 
Let  $\Sigma(\widehat{\mathcal{G}}) = \{ \underline{R} \in \mathcal{R}^{\Z}\, |\, R_i \rightarrow R_{i+1}, \, \forall i \in \Z\}$  be the set of infinite paths in $\widehat{\mathcal{G}}$, and 
$\Sigma^{\#}(\widehat{\mathcal{G}}) = \{ \underline{R} \in \Sigma(\widehat{\mathcal{G}}) \,  |\, (R_i)_{i\leq 0}$ and $(R_i)_{i\geq 0} \text{ contain constant subsequences} \}$.

For a finite path $R_m \rightarrow \cdots \rightarrow R_n$, $n,m \in \N$,  $n\geq m$, and any $l\in \N$, the sets
\begin{align*}
{}_l[R_m, \ldots, R_n]:= f^{-l}(R_m) \cap f^{-l-1}(R_{m+1}) \cap \cdots \cap f^{-l -(n-m)}(R_n)
\end{align*}
are nonempty, and it holds that for $\underline{R} \in \Sigma(\widehat{\mathcal{G}})$, there is $c>0$ and $\theta \in (0,1)$ such that
\begin{align}\label{eq:leq:theta}
\diam_{\Lambda}({}_{-t}[R_{-t}, \ldots, R_t]) \leq c \theta^t, \text{ for all  }t \in \N,
\end{align}
see \cite[p. 26]{LimaSarig2019}. 
Hence the map  $\widehat{\pi}: \Sigma(\widehat{\mathcal{G}}) \to \Lambda$, 
$$\widehat{\pi}(\underline{R}) :=  \text{ unique point in }  \bigcap_{t=0}^{\infty} \overline{{}_{-t}[R_{-t}, \ldots, R_t]}$$
is well defined. 

We mention further properties of the  sequences in $\Sigma(\widehat{\mathcal{G}})$. We write for a path $v_m \rightarrow \cdots \rightarrow v_n$, $l\in \Z$, $$Z_{l}(v_{m},\ldots,v_{n}) := \{\pi(\underline{u}) \, |\, u_i=v_i \text{ for } i=l,\ldots, l+n-m\}.$$
The next Lemma is a slightly stronger formulation of the statement of Lemma 5.4 in \cite{LimaSarig2019}. For a proof see \cite[Lemma 12.2]{Sarig_diffeo2013}.
\begin{lem}\label{lem:RinZ}
For every $\underline{R} \in \Sigma(\widehat{\mathcal{G}})$ there is a gpo $\underline{v} = (v_t)_{t\in\Z}$ in $\Sigma^{\#}(\mathcal{G})$ such that for all $t\in \Z$, $R_t \subset Z(v_t)$. Moreover, for any such gpo $
\underline{v}$ it holds that for all $t\in\Z$, ${}_{-t}[R_{-t},\ldots,R_{t}] 
\subset Z_{-t}(v_{-t},\ldots,v_{t})$.
\end{lem}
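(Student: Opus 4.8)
The plan is to construct the shadowing gpo $\underline{v}$ directly from a diagonal/compactness argument applied to finite central subwords of $\underline{R}$, exactly in the spirit of \cite[Lemma 12.2]{Sarig_diffeo2013}, and then verify the inclusion of the cylinder sets by an induction on the length of the window. First I would fix a point $x_t \in R_t$ for each $t$ with $f(x_t)$ still in $R_{t+1}$ whenever possible; more efficiently, by definition of $\Sigma^{\#}(\widehat{\mathcal G})$ the sequence $\underline R$ has a constant subsequence in both tails, so for each $N$ I can pick a point $y_N \in {}_{-N}[R_{-N},\ldots,R_N]$, which is nonempty by the remarks preceding \eqref{eq:leq:theta}. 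Each $y_N$ lies in $\pi[\Sigma^{\#}(\mathcal G)]$ (after a small perturbation within the Markov rectangle, or directly by the construction of $\mathcal R$ as a refinement of $\mathcal Z$), hence is $\pi(\underline{w}^{(N)})$ for some $\underline{w}^{(N)}\in\Sigma^\#(\mathcal G)$ whose $t$-th entry $w^{(N)}_t$ satisfies $R_t \subset Z(w^{(N)}_t)$ for $|t|\le N$ (this is where one uses that $R_t$, being contained in a single $Z\in\mathcal Z$, forces the symbol; property (4) on p.~23 of \cite{LimaSarig2019}, i.e.\ local finiteness of the fibres $\{v_i : \pi(\underline v)=s\}$, guarantees only finitely many choices at each coordinate).

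Next I would run the standard diagonal extraction: at coordinate $0$ only finitely many symbols $w^{(N)}_0$ occur, so pass to a subsequence along which $w^{(N)}_0$ is eventually constant, call it $v_0$; iterate over $t=\pm1,\pm2,\ldots$ to get a single sequence $\underline v=(v_t)_{t\in\Z}$ with $v_t\overset{\epsilon}{\to}v_{t+1}$ for all $t$ (edge relations survive the limit because $\mathcal G$'s edges are a closed condition for a fixed pair of vertices) and with $R_t\subset Z(v_t)$ for every $t$. To see $\underline v\in\Sigma^\#(\mathcal G)$ one uses that the two tails of $\underline R$ are eventually constant, so the corresponding symbols $v_t$ are forced to repeat infinitely often — alternatively this is exactly the content of \cite[Lemma 12.2]{Sarig_diffeo2013} and I would just cite it for this part.

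For the "moreover" clause, fix such a $\underline v$ and prove ${}_{-t}[R_{-t},\ldots,R_t]\subset Z_{-t}(v_{-t},\ldots,v_t)$ by induction on $t$. The base case $t=0$ is $R_0\subset Z(v_0)$. For the inductive step, take $x\in {}_{-t}[R_{-t},\ldots,R_t]$; then $f^{j}(x)\in R_{j}$ for $-t\le j\le t$, and by the induction hypothesis applied to the subword of length $2t-1$ one knows $x$ lies in a cylinder over $v_{-t+1},\ldots,v_{t-1}$; the Markov property of $\mathcal R$ (\cite[Prop.~5.3]{LimaSarig2019}) together with $R_{-t}\subset Z(v_{-t})$, $R_t\subset Z(v_t)$ and the way $Z(v)$ is cut out by shadowing lets one extend the cylinder to both ends, using that $x$ shadows via a gpo whose $0$-entry can be taken to be $v_0$ (Remark~\ref{rem:smanifold_welldefined}-type uniqueness of the local manifolds). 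Concretely: since $f^{-t}(x)\in R_{-t}\subset Z(v_{-t})$, there is $\underline u\in\Sigma^\#(\mathcal G)$ with $\pi(\underline u)=f^{-t}(x)$ and $u_0=v_{-t}$; compatibility of the graph transforms forces $u_j=v_{j-t}$ for the relevant range, whence $x\in Z_{-t}(v_{-t},\ldots,v_t)$.

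The main obstacle I expect is the bookkeeping in this last inductive step: making precise why the symbol at a given coordinate is \emph{forced} rather than merely \emph{available}. This is genuinely a property of the Bowen–Sinai refinement — $R_t$ sits inside a unique $Z(v_t)$, but a priori a point of ${}_{-t}[R_{-t},\ldots,R_t]$ could be shadowed by a gpo using a \emph{different} admissible symbol at coordinate $t$ that also happens to have its $Z$-set meet $R_t$. Resolving this requires the precise statement that the cylinders ${}_{-t}[\cdots]$ refine the $Z_{-t}(\cdots)$-cylinders, which is why the paper states this as "a slightly stronger formulation of Lemma 5.4 in \cite{LimaSarig2019}" and points to \cite[Lemma 12.2]{Sarig_diffeo2013}; in practice I would lean on that reference for the forcing and only reproduce the diagonal argument in detail here.
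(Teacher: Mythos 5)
The paper supplies no proof of this lemma: its entire argument is the sentence preceding the statement, identifying it as a slightly stronger form of \cite[Lemma 5.4]{LimaSarig2019} and pointing to \cite[Lemma 12.2]{Sarig_diffeo2013}. Your plan ultimately lands on the same reference for the hard step, so strategically you agree with the paper; you go further and try to reconstruct the mechanism, which is the right instinct, but two places in your sketch need repair.

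First, you twice assume $\underline R\in\Sigma^{\#}(\widehat{\mathcal G})$ while the lemma is stated for all of $\Sigma(\widehat{\mathcal G})$. For nonemptiness of ${}_{-N}[R_{-N},\ldots,R_N]$ this assumption is unnecessary — the paper records nonemptiness for arbitrary finite paths in $\widehat{\mathcal G}$. But to place the diagonal-limit gpo $\underline v$ in $\Sigma^{\#}(\mathcal G)$ you genuinely lean on recurrence of $\underline R$ (and note $\Sigma^{\#}$ asks for constant \emph{subsequences}, not eventually constant tails); for a non-recurrent $\underline R$ the diagonal extraction alone does not obviously produce a recurrent $\underline v$, so this step needs either a separate argument or a restriction of the hypothesis, and it is worth being explicit about which.

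Second, the concrete form of your inductive step would fail as written. Choosing \emph{some} $\underline u\in\Sigma^{\#}(\mathcal G)$ with $\pi(\underline u)=f^{-t}(x)$ and $u_0=v_{-t}$ does not determine $u_1,u_2,\ldots$: the shadowing gpo of a point is not unique (Theorem~\ref{thm:fullmeasurecoding}\,(5) gives only finiteness of fibres), so nothing "forces" $u_j=v_{j-t}$. What \cite[Lemma 12.2]{Sarig_diffeo2013} actually does is \emph{construct} a specific $\underline u$ with $\pi(\underline u)=x$ and $u_i=v_i$ for $|i|\le t$, by splicing a negative tail through $v_{-t}$ onto a positive tail through $v_t$ across the central block $v_{-t},\ldots,v_t$, using the Markov property of $\mathcal R$ and the well-definedness of the $s$/$u$-fibres (cf.\ Remark~\ref{rem:smanifold_welldefined}); it is a gluing argument, not a uniqueness-of-symbol argument. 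You correctly flag this as the crux and defer to the reference, which is fine and exactly what the paper does — just replace the "compatibility forces" phrasing by the explicit splicing.
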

\begin{lem}\label{lem:uniqueness}
Let $\underline{R} = (R_t)_{t\in \Z}$, ${\underline{S}} = ({S}_t)_{t\in \Z}  \in \Sigma^{\#}(\widehat{\mathcal{G}})$, and $\underline{v} = (v_t)_{t\in \Z}$, $\underline{w} = (w_t)_{t\in \Z}  \in \Sigma^{\#}(\mathcal{G})$  with $R_t \subset Z(v_t)$,  and
$S_t \subset Z(w_t)$ for all $t\in \Z$.
Assume that there is $k,l >0$ such that   
\begin{align*}
R_{-k} = S_{-k},\,R_{l}  = S_{l},\, v_{-k} = w_{-k},\, v_{l} = w_{l}.
\end{align*} 
If $R_0 \neq S_0$, then $\widehat{\pi}(\underline{R}) \neq \widehat{\pi}(\underline{S})$.
\end{lem}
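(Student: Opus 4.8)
The plan is to argue by contradiction. Suppose $\widehat{\pi}(\underline{R})=\widehat{\pi}(\underline{S})=:p$, and let us derive that $R_0=S_0$, contradicting the hypothesis $R_0\neq S_0$ together with the pairwise disjointness of $\mathcal{R}$ (Proposition~\ref{zlocfinite}). The first step is to lift everything to $(\Sigma(\mathcal{G}),\sigma)$. By Lemma~\ref{lem:RinZ}, with $\underline{v},\underline{w}\in\Sigma^{\#}(\mathcal{G})$ the gpos from the hypothesis, we have ${}_{-t}[R_{-t},\dots,R_t]\subset Z_{-t}(v_{-t},\dots,v_t)$ and ${}_{-t}[S_{-t},\dots,S_t]\subset Z_{-t}(w_{-t},\dots,w_t)$ for all $t$. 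Since $\underline{v}\in\Sigma^{\#}(\mathcal{G})$, the point $\pi(\underline{v})$ lies in $Z_{-t}(v_{-t},\dots,v_t)$ for every $t$, while $\widehat{\pi}(\underline{R})\in\overline{{}_{-t}[R_{-t},\dots,R_t]}\subset\overline{Z_{-t}(v_{-t},\dots,v_t)}$. The sets $Z_{-t}(v_{-t},\dots,v_t)$ have diameter $O(\theta^t)$ by the contraction of the graph transform (Proposition~\ref{prop:contraction}), just as in \eqref{eq:leq:theta}, so $p=\pi(\underline{v})$, and likewise $p=\pi(\underline{w})$. In particular $p\in\pi[\Sigma^{\#}(\mathcal{G})]$, hence $p$ belongs to a unique element $R(p)$ of $\mathcal{R}$, and $p\in Z(v_0)\cap Z(w_0)$.

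Next I would use the two matched indices to show that the local stable and unstable manifolds at $p$ are intrinsic, i.e.\ independent of the choice between $\underline{v}$ and $\underline{w}$. Applying $\sigma^{-k}$ to $\underline{v}$ and to $\underline{w}$ yields two gpos with the same zeroth entry $v_{-k}=w_{-k}$ and, since $\pi\circ\sigma^{-k}=f^{-k}\circ\pi$, the common image $f^{-k}(p)$; hence Remark~\ref{rem:smanifold_welldefined} gives $V^{u}[\sigma^{-k}\underline{v}]=V^{u}[\sigma^{-k}\underline{w}]$ and $V^{s}[\sigma^{-k}\underline{v}]=V^{s}[\sigma^{-k}\underline{w}]$. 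Because $f^{-k}(V^{u}[\underline{v}])\subset V^{u}[\sigma^{-k}\underline{v}]$ and the analogous inclusion holds for $\underline{w}$, the $u$-admissible manifolds $V^{u}[\underline{v}]\ni p$ and $V^{u}[\underline{w}]\ni p$ are both contained in $f^{k}(V^{u}[\sigma^{-k}\underline{v}])$; as admissible manifolds through a common point and contained in a common set, they agree as germs at $p$. Running the same argument at the index $l$ (using $v_l=w_l$) shows $V^{s}[\underline{v}]$ and $V^{s}[\underline{w}]$ agree as germs at $p$. Thus there are well-defined local unstable and stable curves through $p$, independent of the coding used.

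It then remains to feed this into the refinement defining $\mathcal{R}$. Recall that $R_0$ is cut out of $Z(v_0)$ by the sets $T^{\alpha\beta}_{ij}$, which record, for each of the finitely many $Z_j\in\mathcal{Z}$ meeting $Z(v_0)$, whether the $u$-fibre and the $s$-fibre of a point meet $Z_j$; similarly $S_0$ is cut out of $Z(w_0)$. Since the $u$- and $s$-fibres of $p$ in $Z(v_0)$, respectively in $Z(w_0)$, lie on the intrinsic local curves of $p$ from the previous paragraph, and since $p\in\pi[\Sigma^{\#}(\mathcal{G})]$, one can localize $p$ inside a single admissible rectangle containing (the relevant parts of) both $R_0$ and $S_0$, using the Markov property of $\mathcal{R}$ and Lemma~\ref{lem:admissible_square}; there the two families of conditions defining $R_0$ and $S_0$ are seen to coincide. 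Hence $R_0=R(p)=S_0$, the desired contradiction.

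The step I expect to be the main obstacle is precisely this last one. A priori $\widehat{\pi}(\underline{R})$ lies only in $\overline{R_0}$, so identifying $R_0$ with $R(p)$ — equivalently, checking that $p$ sees $R_0$ and $S_0$ with the same $T^{\alpha\beta}_{ij}$-data across the possibly different enclosing charts $Z(v_0)$ and $Z(w_0)$ — is exactly where the hypotheses $R_{-k}=S_{-k}$, $R_l=S_l$, $v_{-k}=w_{-k}$, $v_l=w_l$ are genuinely needed, through the intrinsic description of the invariant manifolds at $p$ and the compatibility of the $s$- and $u$-fibres with the Markov structure. Everything else is bookkeeping with Lemma~\ref{lem:RinZ}, Remark~\ref{rem:smanifold_welldefined}, and Proposition~\ref{prop:contraction}.
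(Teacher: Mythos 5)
The proposal has a genuine gap precisely at the step you flag yourself as ``the main obstacle,'' and that gap is not a bookkeeping matter but the whole content of the lemma. Your preliminary work (identifying $p$ with $\pi(\underline{v})$ and $\pi(\underline{w})$ via the contraction of $Z_{-t}(\cdot)$, and showing via Remark~\ref{rem:smanifold_welldefined} and the two matched indices that $V^u[\underline{v}]$, $V^u[\underline{w}]$ and $V^s[\underline{v}]$, $V^s[\underline{w}]$ agree as germs at $p$) is fine as far as it goes, but it does not get you to $R_0=S_0$. Two distinct problems remain. First, $\widehat{\pi}(\underline{R})$ is defined as the unique point of $\bigcap_t\overline{{}_{-t}[R_{-t},\ldots,R_t]}$, so $p$ a priori lies only in $\overline{R_0}\cap\overline{S_0}$; if $p$ is on a boundary, the disjointness of $\mathcal{R}$ (Proposition~\ref{zlocfinite}) gives no contradiction. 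Second, even granting germ-equality of the invariant manifolds through $p$, the refinement data $T^{\alpha\beta}_{ij}$ depend on the actual fibres $W^u(p,Z(v_0))=V^u(p,Z(v_0))\cap Z(v_0)$ and $W^u(p,Z(w_0))=V^u(p,Z(w_0))\cap Z(w_0)$, and on which further $Z\in\mathcal{Z}$ these sets meet; since $Z(v_0)$ and $Z(w_0)$ may be genuinely different sets, equality of germs does not control whether these fibres hit the same collection of $Z$'s. Your ``the two families of conditions defining $R_0$ and $S_0$ are seen to coincide'' is an assertion, not an argument.

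The paper's proof avoids the closure/fibre-matching issues entirely and runs quite differently. Rather than reasoning about $p$ itself, it manufactures two auxiliary points $z_R\in{}_{-k}[R_{-k},\ldots,R_l]$ and $z_S\in{}_{-k}[S_{-k},\ldots,S_l]$ as intersections of $u$- and $s$-fibres of chosen base points, using the Markov property of $\mathcal{R}$; these lie honestly in $R_0$ and $S_0$ (not merely their closures), so $R_0\neq S_0$ forces $z_R\neq z_S$. It then uses the intersecting-charts property to splice the gpos shadowing $z_R$, $\underline{v}$, and $z_S$ into a single $\epsilon$-gpo $\underline{c}$ (this is where $v_{-k}=w_{-k}$, $v_l=w_l$ are used), and shows that both $z_R$ and $z_S$ are shadowed by $\underline{c}$, so $z_R=z_S$ by Theorem~\ref{thm:shadowing}. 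That is the contradiction. To salvage your approach you would essentially have to prove, from scratch, that $p$ lands in the interior of a single $\mathcal{R}$-cell compatibly across the two codings — which is exactly the kind of local Markov-property control that the paper's splicing-and-shadowing argument is engineered to deliver.
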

\begin{proof}
The arguments for the proof can be found in the proof of Theorem 5.6 (5) in \cite{LimaSarig2019}, for the convenience of the reader we give a sketch here. 
First, given any two points $x_R\in {}_{-k}[R_{-k},\ldots, R_l]$ and $x_S \in{}_{-k}[S_{-k},\ldots, S_l]$, one considers the points 
$z_R\in Z(v_0)$ and $z_S\in Z(w_0)$ defined by the conditions  \begin{align*} \{f^{-k}(z_R)\} &= W^u(f^{-k}(x_S),Z(v_{-k})) \cap W^s(f^{-k}(x_R),Z(v_{-k})), \text{ and} \\ \{f^{l}(z_S)\} &= W^u(f^{l}(x_S),Z(v_{l})) \cap W^s(f^{l}(x_R),Z(v_{l})).\end{align*} One can then deduce from the properties of $\mathcal{R}$, see \cite{LimaSarig2019}, that in fact $z_R \in {}_{-k}[R_{-k},\ldots, R_l]$ and  $z_S \in {}_{-k}[S_{-k},\ldots, S_l]$. Since $R_0 \neq S_0$, this means in particular $z_R \neq z_S$. The points $z_R$ and $z_S$ are shadowed by gpos $\underline{\alpha}$ and $ \underline{\beta}$ in $\Sigma^{\#}(\mathcal{G})$ that satisfy
 $\alpha_{-k} = v_{-k} = w_{-k}$ and $\beta_l = v_l = w_l$.

Write $x:=\widehat{\pi}(\underline{R})$ and $y:=\widehat{\pi}(\underline{S})$. 
By Lemma \ref{lem:RinZ}, for every $n\in\N$, $x\in \overline{{}_{-n}[R_{-n}, \ldots, R_n]} \subset  \overline{Z_{-n}(v_{-n},\dots,v_n)}$. Since the  diameters of the sets $Z_{-n}(v_{-n},\dots,v_n)$ tend to zero as $n\to \infty$, it follows that $x= \pi(\underline{v})$, and hence in particular $x\in
Z_{-k}(v_{-k}, \ldots, v_l)$. Analogously, $y=\pi(\underline{w})\in
Z_{-k}(w_{-k}, \ldots, w_l)$.
This means also that $f^i(x) \in Z(v_i)$, $f^i(y) \in Z(w_i)$ for $-k\leq i \leq l$.
Therefore, if we assume by contradiction that $x=y$, then $Z(v_i)$ and $Z(w_i)$ intersect for $-k\leq i \leq l$. It is shown (intersecting  charts property in \cite{LimaSarig2019}) that then 
$Z(v_i) \cup Z(w_i) \in \Psi_{x_i}([-Q_{\epsilon}(x_i),Q_{\epsilon}(x_i)]^2)$, for $-k\leq i \leq l$, where $v_i :=\Psi_{x_i}^{p_i^u,p_i^s}$, $-k\leq i \leq l$. 
Define $\underline{c}$ by $c_i := \alpha_i$, if $i\leq -k$, $c_i := v_i$ if $-k\leq i\leq l$, and $c_i := \beta_i$ if $l\leq i$. We write $c_i=\Psi^{p_i^u,p^i_s}_{x_i}$.
By the construction of $z_R, z_S$, one can additionally show that $f^i(z_R), f^(z_S) \in \Psi_{x_i}([-Q_{\epsilon}(x_i),Q_{\epsilon}(x_i)]^2)$. It follows then from the proof of the Shadowing Theorem \ref{thm:shadowing} that $z_R$ and $z_S$ are both shadowed by $\underline{c}$.  It follows that $z_R=z_S$ which yields a contradiction.
\end{proof}

\begin{thm}\label{thm:hatpi}\cite[Theorem 5.6]{LimaSarig2019}
\begin{enumerate}
\item $\widehat{\pi} \circ T = f \circ \widehat{\pi}$,
\item $\widehat{\pi}:\Sigma(\widehat{\mathcal{G}}) \to \Lambda$ is H\"older continuous,
\item $\widehat{\pi}(\Sigma^{\#}(\widehat{\mathcal{G}}))$ has full $\mu_{\Lambda}$-measure. 
\end{enumerate}
\end{thm}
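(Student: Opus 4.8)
All three assertions follow from the construction of the Markov partition recalled above together with the diameter estimate \eqref{eq:leq:theta}; only (3) requires genuine work. For (1), write $T$ for the left shift on $\Sigma(\widehat{\mathcal{G}})$ and note ${}_{-t}[R_{-t},\dots,R_t]=\bigcap_{i=-t}^{t}f^{-i}(R_i)$, so $\widehat{\pi}(\underline R)$ is by definition the unique point lying in $\overline{\bigcap_{i=-t}^{t}f^{-i}(R_i)}$ for every $t\ge 0$. On the invariant set of points whose $f$-orbit avoids the discontinuity locus of $f$ — which is where all coded points lie — $f$ is a homeomorphism and commutes with closures; since applying $f$ to $\bigcap_{i=-t-1}^{t+1}f^{-i}(R_i)$ yields, after taking closures, a subset of $\overline{\bigcap_{i=-t}^{t}f^{-i}(R_{i+1})}=\overline{{}_{-t}[(T\underline R)_{-t},\dots,(T\underline R)_t]}$, the point $f(\widehat{\pi}(\underline R))$ lies in every cylinder closure defining $\widehat{\pi}(T\underline R)$ and hence equals it by uniqueness. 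This is routine index bookkeeping. For (2), if $\underline R$ and $\underline S$ agree on $\{-t,\dots,t\}$ then both $\widehat{\pi}(\underline R)$ and $\widehat{\pi}(\underline S)$ lie in $\overline{{}_{-t}[R_{-t},\dots,R_t]}$, of $\dist_{\Lambda}$-diameter at most $c\theta^{t}$ by \eqref{eq:leq:theta}; since agreement on that central block forces the shift-distance (cf.\ \eqref{metricTMS}) to be at most $e^{-t}$, one obtains $\dist_{\Lambda}(\widehat{\pi}(\underline R),\widehat{\pi}(\underline S))\le c\, d(\underline R,\underline S)^{-\log\theta}$, i.e.\ H\"older continuity.

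For (3), observe first that by property (3) of the map $\pi$ the set $\pi[\Sigma^{\#}(\mathcal{G})]$ has full $\mu_{\Lambda}$-measure, and it is $f$-invariant since $\Sigma^{\#}(\mathcal{G})$ is shift-invariant and $f\circ\pi=\pi\circ\sigma$; hence it suffices to prove $\pi[\Sigma^{\#}(\mathcal{G})]\subset\widehat{\pi}[\Sigma^{\#}(\widehat{\mathcal{G}})]$. Fix $x$ in this set. By Proposition \ref{zlocfinite}, $\mathcal{R}$ is a pairwise disjoint cover of $\pi[\Sigma^{\#}(\mathcal{G})]$, and $f^{n}(x)$ lies in that set for all $n\in\Z$, so there is a unique $R_n\in\mathcal{R}$ with $f^{n}(x)\in R_n$. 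The inclusions $f^{n}(x)\in R_n$ and $f(f^{n}(x))\in R_{n+1}$ witness the edges $R_n\to R_{n+1}$, whence $\underline R:=(R_n)_n\in\Sigma(\widehat{\mathcal{G}})$; and since $x\in\bigcap_{i=-t}^{t}f^{-i}(R_i)={}_{-t}[R_{-t},\dots,R_t]$ for every $t$, the uniqueness in the definition of $\widehat{\pi}$ gives $x=\widehat{\pi}(\underline R)$.

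It remains to verify $\underline R\in\Sigma^{\#}(\widehat{\mathcal{G}})$, i.e.\ that $(R_n)_{n\ge 0}$ and $(R_n)_{n\le 0}$ each contain a constant subsequence; this is the only non-formal point, and it is exactly where the local finiteness of $\mathcal{Z}$ and of its refinement $\mathcal{R}$ enters. Write $x=\pi(\underline v)$ with $\underline v\in\Sigma^{\#}(\mathcal{G})$ and choose a vertex $v\in\mathcal{A}$ and indices $n_k\uparrow+\infty$ with $v_{n_k}=v$, so that $f^{n_k}(x)=\pi(\sigma^{n_k}\underline v)\in Z(v)$. Each $R_{n_k}$ is contained in some $Z'\in\mathcal{Z}$ (since $\mathcal{R}$ refines $\mathcal{Z}$), and $Z'\cap Z(v)\ne\emptyset$ because both contain $f^{n_k}(x)$; only finitely many such $Z'$ exist, and each contains only finitely many elements of $\mathcal{R}$, so the $R_{n_k}$ take only finitely many values and a constant subsequence exists. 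The negative side is symmetric, so $x\in\widehat{\pi}[\Sigma^{\#}(\widehat{\mathcal{G}})]$, which proves (3).

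The main obstacle is thus entirely within (3), and more precisely in transferring the recurrence ("$\#$") property from the coding by $\mathcal{G}$ to the coding by $\widehat{\mathcal{G}}$, a step resting on the fact that each element of $\mathcal{Z}$ meets only finitely many others and contains only finitely many Markov rectangles. The genuinely hard companion statement — injectivity of $\widehat{\pi}$ on matching pairs of sequences — is the content of Lemma \ref{lem:uniqueness} above and does not enter the present theorem.
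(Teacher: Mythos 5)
This theorem is imported from \cite[Theorem 5.6]{LimaSarig2019} without proof in the present paper, so there is no in-text argument to compare against; what follows is an assessment of your reconstruction on its own merits.

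Your proof is essentially correct and follows the standard Sarig--Lima--Sarig route. Items (1) and (2) are straightforward consequences of the cylinder-set definition of $\widehat{\pi}$ and the diameter estimate \eqref{eq:leq:theta}, and your index bookkeeping checks out. The one place you gloss over in (1) is the exchange of $f$ with the closure: $\widehat{\pi}(\underline R)$ is defined as a point of $\bigcap_t\overline{{}_{-t}[R_{-t},\dots,R_t]}$, and a priori one must justify that $f$ is defined and continuous at that limit point before passing $f$ inside the closure. Your parenthetical ``$f$ is a homeomorphism on the invariant set where coded points lie'' signals the issue but does not resolve it for the closure. A cleaner way, which avoids this entirely, is to invoke Lemma~\ref{lem:RinZ}: pick a gpo $\underline v\in\Sigma^{\#}(\mathcal{G})$ with $R_t\subset Z(v_t)$ and ${}_{-t}[R_{-t},\dots,R_t]\subset Z_{-t}(v_{-t},\dots,v_t)$, observe the latter sets shrink to $\{\pi(\underline v)\}$, conclude $\widehat{\pi}(\underline R)=\pi(\underline v)$, and then $f\circ\widehat{\pi}(\underline R)=f\circ\pi(\underline v)=\pi(\sigma\underline v)=\widehat{\pi}(T\underline R)$ by the already-established equivariance of $\pi$.

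In (3) the structure is right and the key observation — that the local finiteness in Proposition~\ref{zlocfinite} is what transfers recurrence from $\Sigma^{\#}(\mathcal{G})$ to $\Sigma^{\#}(\widehat{\mathcal{G}})$ — is exactly the crux. One small simplification: you route through ``$R_{n_k}\subset Z'$ with $Z'\cap Z(v)\neq\emptyset$, finitely many such $Z'$''. In fact, since each $T^{\alpha\beta}_{ij}\subset Z_i$ by construction and $f^{n_k}(x)\in Z(v)$, the defining intersection $R_{n_k}=R(f^{n_k}(x))=\bigcap\{T\in\mathcal{T}:f^{n_k}(x)\in T\}$ is already contained in $Z(v)$ itself, and $Z(v)$ contains only finitely many elements of $\mathcal{R}$ by Proposition~\ref{zlocfinite}. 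So the constant subsequence falls out in one step, without invoking the finiteness of $\{Z':Z'\cap Z(v)\neq\emptyset\}$. None of this affects correctness; the argument as written is sound.
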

The topological Markov flow $\sigma_r:  \Sigma_r \to \Sigma_r$ in Theorem \ref{thm:fullmeasurecoding} is obtained from the topological Markov shift $(\Sigma(\widehat{\mathcal{G}}), \sigma)$ and roof function $r:\Sigma(\widehat{\mathcal{G}}) \to (0, \infty)$ defined by 
$$r(\underline{R}) := R_{\Lambda} (\widehat{\pi}(\underline{R})),$$
where $R_{\Lambda}: \Lambda \to (0, \infty)$ is the roof function on $\Lambda$ for $\varphi$. The mapping  
${\pi}_r:\Sigma_r(\widehat{\mathcal{G}}) \to M$ in Theorem \ref{thm:fullmeasurecoding} is given by 
\begin{align*}
{\pi}_r(\underline{x},t) := \varphi^t[\widehat{\pi}(\underline{x})].
\end{align*}

\section{Construction of the horseshoes}\label{sec:construction_horseshoes}
In this section we will show the existence of horseshoes as defined in the introduction and prove Theorem \ref{thm:main_coding}. We will derive Theorems \ref{thm:link_growth} and \ref{thm:link_growth_rel} as a consequence. We first discuss an approximation result for countable Markov shifts, and then return to the setting of \cite{LimaSarig2019}, which was outlined in Section~\ref{sec:Lima_Sarig_thm}.  
\subsection{An approximation  result for 
 countable Markov shifts}\label{sec:TMS}
Suppose \linebreak $(\Sigma= \Sigma(\mathcal{G}), T)$ is  a two-sided countable topological Markov shift (TMS)  with left shift $T:\Sigma \to \Sigma$, 
see Section \ref{sec:main_LimaSarig}. 
Denote by $\mathcal{A}$ the set of vertices of $\mathcal{G}$, and write $a\to b$, $a,b\in \mathcal{A}$, if there is a vertex in $\mathcal{G}$ from $a$ to $b$. 
A \textit{finite path} from a vertex $a\in \mathcal{A}$ to a vertex $b\in \mathcal{A}$ of \textit{length $n$} is an ordered tuple $\gamma = a_0a_1 \cdots a_{n}$  of elements in $\mathcal{A}$  with  $a_0 = a$, $a_n = b$, and  $a_i \rightarrow a_{i+1}$, for all $i\in\{0, \ldots, n-1\}$. We say that a path $\gamma = a_0a_1 \cdots a_{n}$ \textit{passes through} $c$ (at position $i$) if $a_i = c$ for some $i\in \{1, \ldots, n-1\}$. 
A path from $a$ to $a$ is called a \textit{loop based at }$a$. 
If $\gamma = a_0 \cdots a_l$ is a path from $a$ to $b$ of length $l$ and $\gamma'= a'_0 \cdots a'_{l'}$ is a path from $b$ to $c$ of length $l'$, we can compose them and get a path $\gamma\gamma'= a_0 \cdots a_{l-1} a_l a'_1 \cdots a'_{l'}$ from $a$ to $c$ of length $l+l'$. We define in the obvious way the composition of an ordered tuple of paths whenever neighbouring paths share endpoints. 
If $\gamma = a_0\cdots a_{n}$ is a loop,  we denote by $\overline{\gamma}$ the infinite path obtained by periodically repeating $\gamma$, i.e.,  $\overline{\gamma} := \underline{x} = (x_i)_{i\in \Z}$, where $x_{i+kn} = a_i$ for all $k\in \Z, i\in \{0, \ldots, n-1\}$.  
For any $\underline{x} = (x_i)_{i\in \Z}$ in $\Sigma(\mathcal{G})$, and $s,t  \in \Z$ with $s<t$,  we denote by  $\underline{x}|_{[s,t]}$ the finite path $x_s x_{s+1} \cdots x_t$ from $x_s$ to $x_t$.

The TMS $(\Sigma(\mathcal{G}),T)$ is \textit{topological transitive} if for any $a, b\in \mathcal{A}$ there exists a path from $a$ to $b$; it is \textit{topological mixing} if for any $a,b \in \mathcal{A}$ there is $n_0 \in \N$ such that for all $n\geq n_0$ there exists a path from $a$ to $b$ of length $n$. 
The following spectral decomposition theorem holds for topological transitive TMS, see for example \cite[Section 7.2]{Kitchens}. 
\begin{thm}\label{thm:SDT}
Let $(\Sigma(\mathcal{G}),T)$ be a topologically transitive TMS. There is $p\in \N$ and a disjoint partition 
$\Sigma(\mathcal{G}) = X_0 \cup X_1 \cup \cdots \cup X_{p-1}$ such that $T(X_i) = X_{i+1(\mathrm{mod}\, p)}$ and such that, for all $i\in \{0, \ldots, p-1\}$,   $T^p|_{X_i}$ is topologically conjugated to a topologically mixing TMS.
\end{thm}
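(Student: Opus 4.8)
The plan is to construct the period $p$ and the partition directly from the cycle structure of the graph $\mathcal{G}$, following the classical argument for finite (Perron--Frobenius) matrices but being careful that $\mathcal{A}$ may be countable; since the statement is standard I would cite \cite{Kitchens} for the full details and only sketch the construction here. First I would fix a base vertex $a_0\in\mathcal{A}$ and define, for each vertex $b\in\mathcal{A}$, the set $L(a_0,b)\subset\N$ of lengths of paths from $a_0$ to $b$ (nonempty by topological transitivity). Applying this with $b=a_0$ gives the set $L(a_0,a_0)$ of loop lengths based at $a_0$; this set is closed under addition, so its generated subgroup of $\Z$ is $p\Z$ for a well-defined $p:=\gcd L(a_0,a_0)\in\N$. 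The value of $p$ is independent of the choice of base vertex: given another base vertex $a_1$, transitivity supplies paths $a_0\to a_1$ of some length $m$ and $a_1\to a_0$ of some length $m'$, and concatenating with loops shows $\gcd L(a_0,a_0)=\gcd L(a_1,a_1)$.

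Next I would define the partition. For $b\in\mathcal{A}$, pick any path from $a_0$ to $b$ and let $\ell(b)\in\{0,1,\dots,p-1\}$ be its length mod $p$; this is well defined because any two paths from $a_0$ to $b$, when closed up through a fixed path $b\to a_0$, yield loops at $a_0$, whose lengths differ by a multiple of $p$. One checks that $b\to c$ in $\mathcal{G}$ forces $\ell(c)\equiv\ell(b)+1\ (\mathrm{mod}\ p)$. Now set, for $i\in\{0,\dots,p-1\}$,
\begin{align*}
X_i:=\{\underline{x}\in\Sigma(\mathcal{G})\,|\,\ell(x_0)=i\}.
\end{align*}
These are disjoint, clopen, cover $\Sigma(\mathcal{G})$, and by the compatibility relation $T(X_i)=X_{i+1\,(\mathrm{mod}\ p)}$. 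In particular $T^p$ preserves each $X_i$, and $T^p|_{X_i}$ is itself (topologically conjugate to) the two-sided TMS associated to the graph $\mathcal{G}^{(p)}_i$ whose vertices are $\{b:\ell(b)=i\}$ and whose edges record the existence of a path of length exactly $p$ in $\mathcal{G}$.

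It remains to verify that each $T^p|_{X_i}$ is topologically mixing, i.e.\ that for vertices $b,c$ with $\ell(b)=\ell(c)=i$ there is $n_0$ so that for all $n\ge n_0$ there is a $\mathcal{G}$-path from $b$ to $c$ of length $np$. Here I would use transitivity to fix a path $b\to c$ of some length $k$ with $k\equiv 0\ (\mathrm{mod}\ p)$, say $k=k_0p$, and a finite collection of loops at $c$ whose lengths $d_1p,\dots,d_rp$ have $\gcd(d_1,\dots,d_r)=1$ (such a collection exists because $\gcd L(c,c)=p$, and the gcd of a set of integers is already the gcd of a finite subset). By the numerical-semigroup (Chicken McNugget / Schur) lemma, every sufficiently large integer is a nonnegative combination of $d_1,\dots,d_r$; concatenating the $b\to c$ path with the appropriate loops realizes every large length $np$, giving mixing. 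The main obstacle — and the only place countability of $\mathcal{A}$ needs care — is precisely this last step: one must ensure that a \emph{finite} set of loops already achieves $\gcd=p$, which is fine since $\gcd$ of any nonempty integer set equals the $\gcd$ of some finite subset, so no compactness of $\mathcal{A}$ is needed. The rest is the standard bookkeeping that $\ell$ is well defined and $T$-equivariant, which I would state and refer to \cite[Section 7.2]{Kitchens} rather than expand in full.
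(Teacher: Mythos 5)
The paper does not prove this theorem; it is cited from \cite[Section 7.2]{Kitchens}, so there is no paper proof to compare against. Your approach is the classical one: define $p$ as the gcd of loop lengths at a base vertex, show $p$ is base-point independent, define the level function $\ell$ and the partition $X_i=\{\underline{x}:\ell(x_0)=i\}$, and verify mixing by the numerical semigroup lemma. All of those computations are correct, including the remark that the gcd of an arbitrary nonempty set of positive integers is already attained on a finite subset, which is precisely the point where countability of $\mathcal{A}$ requires care.

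There is, however, a genuine gap in the step identifying $T^p|_{X_i}$ with a TMS. The graph $\mathcal{G}^{(p)}_i$ you describe, with vertex set $\{b:\ell(b)=i\}$ and an edge $b\to c$ whenever \emph{some} $\mathcal{G}$-path of length $p$ from $b$ to $c$ exists, only yields a \emph{factor} of $T^p|_{X_i}$, not a conjugate. The natural intertwining map $\phi\colon X_i\to\Sigma(\mathcal{G}^{(p)}_i)$ given by $\phi((x_n)_{n\in\Z})=(x_{np})_{n\in\Z}$ is continuous, surjective, and equivariant, but it fails to be injective whenever two distinct $\mathcal{G}$-paths of length $p$ join some pair $b\to c$: two distinct orbits in $X_i$ then have the same image. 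Since the paper's notion of TMS allows at most one edge between any ordered pair of vertices, $\mathcal{G}^{(p)}_i$ cannot encode which intermediate path was taken, so $\phi$ loses information and is not a homeomorphism. The standard repair is the higher-block presentation: take the alphabet to be the set of $\mathcal{G}$-admissible words $(b_0,\dots,b_{p-1})$ with $\ell(b_0)=i$, with an edge $(b_0,\dots,b_{p-1})\to(c_0,\dots,c_{p-1})$ whenever $b_{p-1}\to c_0$ in $\mathcal{G}$. Then $(x_n)_{n\in\Z}\mapsto\bigl((x_{np},\dots,x_{np+p-1})\bigr)_{n\in\Z}$ is a homeomorphism conjugating $T^p|_{X_i}$ to the shift on this TMS, and your mixing argument for $\mathcal{G}$-paths of length $np$ between vertices of level $i$ transfers to it essentially verbatim. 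With that replacement the proof is complete.
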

In the following we consider a H\"older continuous function $\phi: \Sigma \to \R$, bounded away from $0$ and $\infty$.   
Define $\phi_n: \Sigma \to \R$ by  $$\phi_n(x) := \sum_{k=0}^{n-1} \phi(T^k(x)).$$
Denote by 
$P_{\mu}(\phi) := h_{\mu}(T) + \int \phi d\mu$ the \textit{pressure} with respect to a shift invariant probability measure $\mu$. 
In \cite{Sarig_thermo1999}, Sarig introduced the notion of Gurevich pressure of $\phi$ and proved a variational principle for it. The statement of the latter includes that the  Gurevich pressure is bounded from below by $P_{\mu}(\phi)$. We will need a variant of that statement.  
\begin{lem}\label{lem:loops}
Let $(\Sigma ,T)$ be a topological transitive TMS, $\mu$ a $T$-invariant ergodic  probability measure on $\Sigma$, and $\phi: \Sigma \to \R$ a H\"older continuous function, bounded away from $0$ and $\infty$. 
Then, for any $\epsilon$, $0<\epsilon< h_{\mu}(T)$, there is a vertex $a\in \mathcal{A}$ and an infinite set $\mathcal{M} \subset \N$ such that for all $m\in \mathcal{M}$  there are loops $\gamma_1, \ldots, \gamma_k$, $k\in \N$,  of length $m$, based at $a$, such that,  
\begin{enumerate}[(i)]
\item $k\geq e^{m(h_{\mu}(T)-\epsilon)}$;
\item  for all $l\in \N$ and $(i_1,\ldots, i_l) \in \{1,\ldots, k\}^l$,
$$|\phi_{lm}(\overline{\gamma_{i_1}\gamma_{i_2}\cdots \gamma_{i_l}}) - lm \int \phi d\mu| \leq lm\epsilon.$$
\end{enumerate}
In particular,  
$$\frac{1}{m} \log \sum_{i=1}^k e^{\phi_m(\overline{\gamma_i})} > P_{\mu}(\phi) - 2\epsilon.$$ 
\end{lem}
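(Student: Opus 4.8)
The plan is to combine the Shannon--McMillan--Breiman theorem and the Birkhoff ergodic theorem with the spectral decomposition of Theorem~\ref{thm:SDT} in order to produce, at an appropriately chosen "good" vertex $a$, enough long loops that simultaneously realize the entropy bound and keep the Birkhoff sums of $\phi$ close to $m\int\phi\,d\mu$. First I would pass to a topologically mixing component: by Theorem~\ref{thm:SDT} there is $p\in\N$ with $\Sigma = X_0\cup\cdots\cup X_{p-1}$, and on some $X_i$ the map $T^p$ is conjugate to a topologically mixing TMS. After this reduction (at the cost of replacing $m$ by a multiple of $p$, which only affects the constants and the choice of the infinite set $\mathcal{M}$) I may assume $(\Sigma,T)$ is topologically mixing.

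Next I would set up the measure-theoretic input. Fix $\delta>0$ small compared to $\epsilon$. By Shannon--McMillan--Breiman applied to $\mu$ and the generating partition by cylinders $[a]$, $a\in\mathcal{A}$, for $\mu$-a.e.\ $\underline{x}$ one has $-\frac1n\log\mu([x_0,\ldots,x_{n-1}])\to h_\mu(T)$; and by Birkhoff, $\frac1n\phi_n(\underline{x})\to\int\phi\,d\mu$. By Egorov's theorem these convergences are uniform on a set $G$ with $\mu(G)>1-\delta$, and since $\mu$ is a probability measure there is a vertex $a$ with $\mu([a]\cap G)>0$; moreover, by Poincar\'e recurrence (or directly, choosing $a$ so that $\mu$-a.e.\ point of $[a]\cap G$ returns to $[a]$), for a positive-measure set of points $\underline{x}\in[a]$ the orbit returns to $[a]$ at a sequence of times $n_j(\underline{x})$ with $n_j/j$ bounded and with $\frac1{n_j}\phi_{n_j}\to\int\phi\,d\mu$, $-\frac1{n_j}\log\mu([x_0,\ldots,x_{n_j-1}])\to h_\mu(T)$. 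For a suitable infinite set of lengths $m$ I then collect all the \emph{distinct} finite paths of length $m$ from $a$ to $a$ that arise as $\underline{x}|_{[0,m]}$ for such an $\underline{x}$; call these $\gamma_1,\ldots,\gamma_k$. Because each such cylinder has $\mu$-measure at most $e^{-m(h_\mu(T)-\delta)}$ while their union has measure bounded below (uniformly in $m$ along the chosen subsequence), a counting argument gives $k\ge e^{m(h_\mu(T)-\epsilon)}$, which is (i). Property (ii) for a single loop, $|\phi_m(\overline{\gamma_i}) - m\int\phi\,d\mu|\le m\epsilon$, follows from the uniform Birkhoff convergence on $G$ once $m$ is large, noting that replacing the genuine orbit of $\underline{x}$ by the periodic orbit $\overline{\gamma_i}$ changes $\phi_m$ by at most a bounded amount (here one uses that $\phi$ is H\"older and that the two orbits agree on the block $[0,m]$, so $|\phi_m(\overline{\gamma_i})-\phi_m(\underline{x})|$ is controlled independently of $m$).

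For the concatenation estimate in (ii), for $(i_1,\ldots,i_l)$ I would write $\phi_{lm}(\overline{\gamma_{i_1}\cdots\gamma_{i_l}}) = \sum_{r=1}^{l}\phi_m\big(T^{(r-1)m}(\overline{\gamma_{i_1}\cdots\gamma_{i_l}})\big)$ and compare each summand with $\phi_m(\overline{\gamma_{i_r}})$: the point $T^{(r-1)m}(\overline{\gamma_{i_1}\cdots\gamma_{i_l}})$ and $\overline{\gamma_{i_r}}$ have the same first $m$ symbols, so by H\"older continuity of $\phi$ the difference of the two $\phi_m$-values is bounded by a constant $C$ independent of $l,m,r$. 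Summing, $|\phi_{lm}(\overline{\gamma_{i_1}\cdots\gamma_{i_l}}) - \sum_r \phi_m(\overline{\gamma_{i_r}})| \le lC$, and then $|\sum_r\phi_m(\overline{\gamma_{i_r}}) - lm\int\phi\,d\mu|\le lm\epsilon'$ from the single-loop estimate; choosing $m$ large enough that $C/m + \epsilon' \le \epsilon$ gives (ii). Finally the last displayed inequality is immediate: by (ii) with $l=1$, $\phi_m(\overline{\gamma_i})\ge m\int\phi\,d\mu - m\epsilon$, so
\[
\frac1m\log\sum_{i=1}^k e^{\phi_m(\overline{\gamma_i})} \ge \frac1m\log\big(k\,e^{m\int\phi\,d\mu - m\epsilon}\big) \ge (h_\mu(T)-\epsilon) + \int\phi\,d\mu - \epsilon = P_\mu(\phi) - 2\epsilon.
\]

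\textbf{Main obstacle.} The routine parts are the H\"older/telescoping estimates; the delicate point is the simultaneous control: I need a single vertex $a$ and a single subsequence of lengths $m$ along which \emph{both} the exponential count of return loops is $\ge e^{m(h_\mu(T)-\epsilon)}$ \emph{and} every one of those loops has its Birkhoff sum $\epsilon$-close to $m\int\phi\,d\mu$. This forces the Egorov/uniformity step to be done carefully (the "good set" $G$ must carry almost all of both the entropy and the ergodic average), and one must check that restricting to orbits that return to $[a]$ at controlled times does not destroy the entropy lower bound --- this is where the Sarig-style variational-principle argument (counting first-return loops, essentially the lower bound in the definition of the Gurevich pressure) enters, and it is the technical heart of the lemma.
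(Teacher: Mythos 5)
Your overall strategy is essentially the same as the paper's: combine Birkhoff plus Egorov for uniform control of $\frac1n\phi_n$, an entropy‐counting argument for the cardinality estimate (i), a H\"older/telescoping estimate for the concatenation bound (ii), and the spectral decomposition (Theorem~\ref{thm:SDT}) to reduce to the topologically mixing case. Your H\"older estimate is in fact slightly sharper (you bound the error per concatenation by a constant $C$ independent of $m$; the paper settles for $n\epsilon/4$), and the final derivation of $P_\mu(\phi)-2\epsilon$ from (i), (ii) is identical. Two points, one of which is a genuine gap that you yourself flag as the "main obstacle."

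First, the genuine gap: your scheme produces loops by demanding that the same exponentially many orbits $\underline{x}\in G\cap[a]$ all satisfy $x_m=a$ for a \emph{fixed} $m$. Poincar\'e recurrence and Kac only give that \emph{each individual} $\underline{x}$ returns to $[a]$ at its own sequence of times $n_j(\underline{x})$; it does not follow that for some fixed $m$ a positive‐measure set of $\underline{x}\in G$ all return at time exactly $m$. To make this work you would need an additional argument, e.g.\ that $\mu([a]\cap T^{-m}[a])$ is bounded below along a subsequence (this does hold, since by ergodicity $\frac1N\sum_{m\le N}\mu([a]\cap T^{-m}[a])\to\mu([a])^2>0$, and intersecting with $G$ costs at most $\mu(G^c)<\delta$), and that restricting to those $\underline{x}$ does not kill the cylinder count. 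You gesture at this but leave it unresolved. The paper avoids the issue entirely by a cleaner route: it counts cylinders of length $n+1$ whose \emph{endpoints lie in a fixed finite set} $\{1,\ldots,l\}$ (this drops in naturally when one works with the truncated partition $\alpha_l$), and then \emph{appends} a connecting path of fixed length $s_0$, provided by topological mixing on a finite set of vertex pairs, to close each such path into a loop of length $n+s_0$; a pigeonhole over the $l$ possible base vertices finishes it. This decouples the entropy count from the return‐time issue and is worth adopting.

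Second, a point of rigor rather than a real error: you apply Shannon–McMillan–Breiman to the partition by length‐$1$ cylinders $\{[a]:a\in\mathcal{A}\}$, which is a \emph{countable} partition with potentially infinite static entropy $H(\alpha)$; SMB in that generality requires some care. The paper circumvents this by working throughout with the finite truncations $\alpha_l=\{[1],\ldots,[l],[>l]\}$ and using $h_\mu(T,\alpha_l)\uparrow h_\mu(T)$ together with a lower bound (from Rudolph, Ch.~5) on the count of $\alpha_l$‐cylinders that meet a positive‐measure set. Note that only the direction "$\mu(\text{cylinder})\le e^{-n(h-\delta)}$ eventually a.e." is needed, and this does follow from the finite truncations since $\mu(\alpha_0^{n-1}(\underline{x}))\le\mu((\alpha_l)_0^{n-1}(\underline{x}))$; so the conclusion you want is true, but quoting SMB for the countable partition outright is not justified without comment.
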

\begin{proof}
Our proof uses modifications of arguments in \cite{Sarig_thermo1999} and \cite{BuzziSarig}.
It is sufficient to show the lemma for one-sided shifts, cf.\ \cite[Theorem $3.1$ 
 and \S $6$]{DaonYair2013}.
Hence, consider 
$$\Sigma = \Sigma^+(\mathcal{G}) := \{\underline{x} = (x_i)_{i\in \N_0}\, |\, x_i \rightarrow x_{i+1} \text{ for all } i \in \N_0\} $$ with left shift $T:\Sigma \to \Sigma,\, (x_i)_{i\in \N_0} \mapsto  (x_{i+1})_{i\in \N_0}$.  

We first assume that $T$ is topologically mixing.
For $\epsilon_0 >0$, $n_0 \in \N$, let 
$$S_{\epsilon_0,n_0} := \left\{ x\in \Sigma  \mid \forall n\geq n_0 : \left|\phi_n(x) - n\left(\int \phi d\mu\right)\right| \leq n\epsilon_0\right\}.$$ 
Let $\epsilon>0$. By the ergodic theorem and Egorov's theorem we can choose $n_0 \in \N$ such that 
$\mu(S_{\epsilon/4, n_0}) \geq 1/2$. 
We write $S:=S_{\epsilon/4, n_0}$.

Choose a denumeration $\{1, 2, \ldots \}$ of $\mathcal{A}$ and consider, for $l\in \N$, the finite partition $\alpha_l = \{[1], \ldots, [l], [> l]\}$ in cylinder sets, where $[j] := \{\underline{x} \in \Sigma \, |\, x_0 = j\}$ and  $[> l] =\bigcup_{j=l+1}^{\infty}[j]$.
We have that $h_{\mu}(T,\alpha_l) \to h_{\mu}(T)$ for $l \to +\infty$, and hence for $l$ sufficiently large,   $$h_{\mu}(T,\alpha_l)\geq h_{\mu}(T) -\frac{\epsilon}{4}.$$
For $\beta = \alpha_l$, denote by $\beta_0^{n}$ the collection of cylinder sets of length $n+1$ with respect to this partition, that is, the collection of sets of the form  $[\underline{b}] = [b_0, \ldots, b_n] =\{\underline{x} \in \bigcap^n_{j=0} T^{-j}b_j \, |\, b_j \in \beta\}$. Let  $K^{n}_l := \beta_0^{n} \cap  \bigcup_{a,b\neq[> l]} \{a \cap T^{-n}b \}$. Consider the set $\hat{K}^n_l$ of those cylinder sets of $K^n_l$ that intersect $S$. 
We fix $l$ to be so large that the union of the cylinder sets of $\hat{K}_l^{n_0}$ (and hence also of those of  $\hat{K}_l^{n}$ for $ n\geq n_0$) cover a set of measure larger  than $\frac{1}{4}$. Since $\mu$ is ergodic, this implies by a formula for entropy in \cite[Ch.\ 5]{Rudolph} that 
$$h_{\mu}(T,\alpha_l)\leq \liminf_{n\to \infty} \frac{1}{n} \log \# \hat{K}_l^n.$$
Hence for $n\geq n_0$ sufficiently large,  
$$\#\hat{K}_l^n \geq e^{n(h_{\mu}(T) - \epsilon/2)}.$$
By the H\"older continuity of $\phi$ we can choose $n$ 
additionally so large that for any $[\underline{b}] \in \beta_{0}^n$ and any $x,y\in [\underline{b}]$, 
\begin{align}\label{eq:phi_n}
|\phi_n(x) -\phi_n(y) |< \frac{n\epsilon}{4}.
\end{align}
Hence, if $[\underline{b}] \in \hat{K}_l^n$, then for all $x\in [\underline{b}]$, 
\begin{align}\label{eq:Knl}
|\phi_n(x) -n\int \phi d\mu| \leq \frac{ n\epsilon}{2}.
\end{align}
Since $T$ is topological mixing, there is $s_0 = s_0(l)$ such that for any $s\geq s_0$, there is a path of length $s$ from any vertex $b\in \{1, \ldots, l\}$ to any vertex $a \in \{1, \ldots, l\}$.
We choose $n$ additionally so large that 
\begin{align}\label{eq:ns_0}
\frac{n\epsilon}{4} \geq s_0\max\{h_\mu(T), |\max_{x\in \Sigma} \phi(x) - \min_{x\in \Sigma} \phi(x)|\},
\end{align}
and such that 
\begin{align*}
e^{\frac{n\epsilon}{4}}\geq l.
\end{align*}
We get a collection $Y_{n+s_0}$ of pairwise distinct loops of length $n+s_0$, based at vertices in $\{1, \ldots, l\}$, with 
$$\# Y_{n+s_0} \geq e^{n(h_{\mu}(T) - \epsilon/2)} \geq e^{(n+s_0) (h_{\mu}(T) - 3\epsilon/4)},$$
and hence there is $a\in \{1,\ldots, l\}$ and a collection of pairwise distinct 
 loops $\gamma_1, \ldots, \gamma_k$ of length $n+s_0$ based at $a$  with 
\begin{align}\label{eq:for1}
k \geq \frac{1}{l} e^{(n+s_0)(h_{\mu}(T) - 3\epsilon/4)}\geq e^{(n+s_0)(h_{\mu}(T) - \epsilon)}.
\end{align}

By \eqref{eq:Knl}, 
\begin{align}\label{eq:for2}
\begin{split}
&|\phi_{n+s_0}(\overline{\gamma_i}) - (n+s_0) \int \phi d\mu| \\ &\leq |\phi_n(\overline{\gamma_i}) - n\int\phi d\mu| + s_0|\max_{x\in \Sigma} \phi(x) - \min_{x\in \Sigma} \phi(x)| \leq \frac{3n\epsilon}{4}.
\end{split}
\end{align}
Note that there is $a\in \{1, \ldots, l\}$, such that \eqref{eq:for1} and \eqref{eq:for2} hold for infinitely many $n$. This implies that for infinitely many $m=n+s_0$, assertion (i) hold, as well as assertion (ii), for the case $l=1$. 

Let $m=n+s_0$ be as above, and $\gamma_1, \ldots, \gamma_k, k\in \N$, a collection of loops of length $m$ and based at $a$ as above. Let $l\in \N$ and $(i_1, \ldots, i_l) \in \{1, \ldots, k\}^l$.
Then, by   \eqref{eq:phi_n} and \eqref{eq:for2}, 
\begin{align*}
|\phi_{lm}&(\overline{\gamma_{i_1}\cdots\gamma_{i_l}}) - lm\int \phi d\mu | \\ &=|\phi_m(\overline{\gamma_{i_1}\cdots\gamma_{i_l}}) + \sum_{j=1}^{l-1}(\phi_{(j+1)m}-\phi_{jm})(\overline{\gamma_{i_1}\cdots\gamma_{i_l}}) -lm\int \phi d\mu| \\ &= 
|\sum_{j=0}^{l-1} \phi_m(T^{jm}(\overline{\gamma_{i_1}\cdots\gamma_{i_{l}}})) -lm\int \phi d\mu| \\ &\leq |\sum_{j=1}^{l} \phi_m(\overline{\gamma_{i_{j}}}) -lm\int \phi d\mu| + \frac{lm \epsilon}{4} \leq  lm \epsilon.
\end{align*}

This finishes the proof in the case that $T$ is topologically mixing. Consider now the more general case of a topologically transitive TMS $(\Sigma,T)$, and choose $p\in \N$ from Theorem \ref{thm:SDT}. Let $\epsilon'>0$. By the result above for topologically mixing TMS and with $\epsilon = p\epsilon'>0$, there is $a\in \mathcal{A}$, an infinite set $\mathcal{M} \subset \N$ such that for all $m\in \mathcal{M}$,  there are $k\geq e^{m(h_{\mu}(T^p) -p\epsilon')}$ many pairwise distinct loops $\gamma_1, \ldots, \gamma_k$ of length $pm$,  based at  $a$,  such that,  for $l\in \N$ and $(i_1, \ldots, i_l)\in \{1, \ldots, k\}^l$,
$$|(\phi_{p})_{lm}(\overline{\gamma_{i_1}\gamma_{i_2}\cdots \gamma_{i_l}}) - lm\int\phi_p d\mu| \leq lm(p\epsilon').$$
Since $h_{\mu}(T^p) = ph_{\mu}(T)$ and  $\int \phi_p d\mu = p\int \phi d\mu$, the assertions of the lemma hold for $(T,\Sigma)$. 
\end{proof}

\subsection{Separation of orbits}\label{sec:separated}
We return to the setting in Section \ref{sec:Lima_Sarig_thm}, and let $\varphi:M \to M$ be a $C^{1+\alpha}$ flow induced by a non-vanishing vector field on a closed smooth $3$-maniold $M$.
Let $h= h_{\topo}(\varphi)$, and assume that $h>0$.  
We choose an ergodic probability measure of maximal entropy for $\varphi$, and apply 
 Theorem \ref{thm:fullmeasurecoding} to it. We obtain a topological Markov flow that satisfies the properties in Theorem \ref{thm:fullmeasurecoding}. We moreover fix the choices that were made in the construction of $(\Sigma_r,\sigma_r)$ as described in Section \ref{sec:Lima_Sarig_thm}.   
As explained in \cite{LimaSarig2019}, $\mu$ induces a probability measure of maximal entropy for $(\Sigma_r, \sigma_r)$. By the ergodic decomposition theorem,  $\sigma_r$ has an ergodic probability measure of maximal entropy.
This induces an ergodic shift invariant  probability measure $\mu_{\Sigma}$ on $\Sigma  = \Sigma(\widehat{\mathcal{G}})$, see \cite[p.31]{LimaSarig2019}. 
  The measure is supported on a topological transitive countable (possibly finite) subshift $\Sigma'$ in $\Sigma$, \mbox{see~\cite[\S 2]{AaronsonDenkerUrbanski}},  and we assume without loss of generality that $\Sigma'= \Sigma$. By Abramov's formula,  \cite{Abramov},  
${\mu}_{\Sigma}$ is an equilibrium measure for $\phi := -hr$, i.e., $P_{{\mu_\Sigma}}(\phi) = 0$. In the following write $\hat{h}:=h_{\mu_{
\Sigma}}(\sigma) = -\int \phi d\mu_{\Sigma}$ and fix $\delta$ with $0<\delta < \hat{h}$. 

Before we state the main result of this section, recall that, given $\kappa>0$, $N\in \N$, a subset $X\subset \Lambda$ is called  \textit{$(N,\kappa)$-separated} for $f:\Lambda \to \Lambda$  if for all $x,y\in X$,  $\sup_{0\leq i\leq N}d_{\Lambda}(f^i(x), f^i(y)) \geq  \kappa$. 
Note also that by Proposition \ref{zlocfinite}, for a subgraph $\mathcal{G}'$ of $\widehat{\mathcal{G}}$ with a finite set of vertices $\mathcal{A}'$, it holds that 
$\mathbf{w}(\mathcal{G}'):= \inf\{10^{-1}(p^u \wedge p^s)\, |\,  v=\Psi_x^{p^u,p^s}, v \supset S, S\in \mathcal{A}'\}$ is positive.  
Finally, let $C_{\Lambda}\geq 1$ be the constant in \eqref{C_Lambda}.
\begin{thm}\label{thm:separated}
There exist $R \in \mathcal{R}$, $v= \Psi_{x}^{p^u,p^s} \in \mathcal{A}$ with $R\subset Z(v)$, a finite subgraph $\mathcal{G}'\subset \widehat{\mathcal{G}}$ containing $R$,  $\kappa>0$ with  $\kappa < \mathbf{w}(\mathcal{G}')$,
 and an infinite subset $\mathcal{N} \subset \N$, such that the following holds. 
For all $N\in \mathcal{N}$, there exist an admissible rectangle $\mathcal{Q} = \mathcal{Q}(V^u_-, V^u_+; V^s_-, V^s_+)$ in $v$, $K\in \N$,  and $K$ loops $\Theta_1, \ldots, \Theta_K$ of length $N$ in ${\mathcal{G}}'$, based at $R$, such that,  
\begin{enumerate}[(i)]
\item $\dist(V^u_-, V^u_+), \dist(V^s_-, V^s_+) < \frac{1}{3C_{\Lambda}}\kappa$,
\item $x_i := \widehat{\pi}(\overline{\Theta_i}) \in {\mathcal{Q}}$, $\quad i=1, \ldots, K$,
\item $\{x_1, \ldots, x_K\}$ is $(N,\kappa)$-separated for the map $f:\Lambda \to \Lambda$,
\item $\phi^j(x_i) \notin \mathcal{Q}$  and  $\dist_{\Lambda}(\phi^j(x_i), \mathcal{Q}) > 3\kappa$, for all $i=1, \ldots, K$ and $0< j< N$,
\item $K\geq e^{N(\hat{h} - \delta)}$, 
\item $|\phi_{N}(\overline{\Theta_i}) - N\int\phi d\mu_{\Sigma}| \leq N \delta$, $\quad i=1, \ldots, K$.
\end{enumerate}
\end{thm}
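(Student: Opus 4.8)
\emph{Strategy, and reduction to a finite subgraph.} The plan is to produce the loops $\Theta_i$ by running the construction of Lemma~\ref{lem:loops} inside a fixed \emph{finite} transitive subgraph of $\widehat{\mathcal G}$ arranged to see almost all of the entropy and the $\phi$-average of $\mu_\Sigma$, and then to build the rectangle $\mathcal Q$ out of the local stable and unstable manifolds of the resulting periodic points. Recall $\phi=-hr$, that $\mu_\Sigma$ is an equilibrium measure for $\phi$ with $P_{\mu_\Sigma}(\phi)=0$, and $\hat h=h_{\mu_\Sigma}(\sigma)=-\int\phi\,d\mu_\Sigma$. Since $h=h_{\topo}(\varphi)$ is maximal, Abramov's formula forces the Gurevich pressure of $\phi$ to vanish, and by Sarig's thermodynamic formalism for countable Markov shifts~\cite{Sarig_thermo1999} it is approximated by the topological pressures $P_{\mathrm{top}}(\phi|_{\Sigma_{\mathcal G'}})$ over finite subgraphs $\mathcal G'$, with the corresponding finite--subsystem Gibbs states converging weak$^{*}$ to $\mu_\Sigma$; as $\phi$ is bounded and continuous we may fix a finite, topologically transitive $\mathcal G'\subset\widehat{\mathcal G}$, a vertex $R\in\mathcal R$ of $\mathcal G'$ and (by Lemma~\ref{lem:RinZ}) a double chart $v=\Psi_x^{p^u,p^s}\in\mathcal A$ with $R\subset Z(v)$. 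Since $\mathcal G'$ is finite, $\mathbf w(\mathcal G')>0$; fix $\kappa$ with $0<\kappa<\mathbf w(\mathcal G')$, a Markov cylinder $R'={}_{-m}[R_{-m},\dots,R_m]\subset R$ of positive $\mu_\Sigma$--measure and, by~\eqref{eq:leq:theta}, of $\dist_\Lambda$--diameter $<\kappa/(6C_\Lambda)$, and the finite union $\mathcal B$ of all Markov cylinders of $\mathcal G'$ of some fixed large depth that meet $B(R',5\kappa)$. All of $\kappa$, $\operatorname{diam}(R')$ and the depth of $\mathcal B$ are chosen, in this order and small/large enough, so that $\mu_\Sigma(\mathcal B)$ is as small as required below.

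\emph{The loops.} Let $\Sigma^{*}\subseteq\Sigma_{\mathcal G'}$ be the subshift of sequences staying outside $\mathcal B$ strictly between consecutive visits to the cylinder $R'$. It is topologically transitive, and it is obtained from $\Sigma_{\mathcal G'}$ by deleting only the orbits that first--return to the small neighbourhood $\mathcal B$ of the small set $R'$ "too early"; by standard open--system estimates its topological pressure for $\phi$, and the $\phi$--average of its ergodic equilibrium measure $\nu^{*}$, differ from those of $\Sigma_{\mathcal G'}$ by quantities tending to $0$ with $\mu_\Sigma(\mathcal B)$. Hence, with the parameters chosen appropriately, $h_{\nu^{*}}(\sigma)>\hat h-\delta/4$ and $|\int\phi\,d\nu^{*}-\int\phi\,d\mu_\Sigma|<\delta/4$. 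Apply Lemma~\ref{lem:loops} to $(\Sigma^{*},\sigma,\nu^{*},\phi|_{\Sigma^{*}})$ with $\epsilon=\delta/8$: for an infinite set $\mathcal N\subset\N$ of lengths $N$ one gets, after reading the loops back inside $\mathcal G'$ and discarding a subexponential fraction, loops $\Theta_1,\dots,\Theta_K$ of length $N$ in $\mathcal G'$ based at $R$ with $K\ge e^{N(\hat h-\delta)}$ (this is~(v)) and $|\phi_N(\overline{\Theta_i})-N\!\int\!\phi\,d\mu_\Sigma|\le N\delta$ (this is~(vi)). Put $x_i:=\widehat\pi(\overline{\Theta_i})$. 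By Theorem~\ref{thm:hatpi} these are $f$--periodic of period $N$; by construction $x_i\in R'$, and for each $0<j<N$ the shift $\sigma^{j}\overline{\Theta_i}$ lies in $\Sigma^{*}$ and misses the cylinder $R'$, so $f^{j}(x_i)=\widehat\pi(\sigma^{j}\overline{\Theta_i})\notin\mathcal B\supseteq B(R',5\kappa)$. Finally, since each $x_i\in Z(v)$, Remark~\ref{rem:smanifold_welldefined} equips it with well--defined $s$-- and $u$--admissible manifolds $V^{s}(x_i),V^{u}(x_i)$ in $v$, with $V^{s}(x_i)\cap V^{u}(x_i)=\{x_i\}$.

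\emph{The rectangle $\mathcal Q$ and the remaining properties.} Because all the $x_i$ lie in the small rectangle $R'$, its local product structure makes the $V^{u}(x_i)$ pairwise $\tfrac{1}{3C_\Lambda}\kappa$--close for $\dist$, and likewise the $V^{s}(x_i)$; let $V^{u}_{\pm},V^{s}_{\pm}$ be the $\preceq_u$-- resp.\ $\preceq_s$--extremal members and set $\mathcal Q:=\mathcal Q(V^{u}_{-},V^{u}_{+};V^{s}_{-},V^{s}_{+})$ as in Lemma~\ref{lem:admissible_square}. Then (i) holds by construction, (ii) since $x_i=V^{s}(x_i)\cap V^{u}(x_i)\in\mathcal Q$, and (iv) since, by~\eqref{C_Lambda}, $\mathcal Q$ lies within $\kappa$ of $R'$ while $f^{j}(x_i)\notin B(R',5\kappa)$ for $0<j<N$, so $f^{j}(x_i)\notin\mathcal Q$ and $\dist_\Lambda(f^{j}(x_i),\mathcal Q)>3\kappa$. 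For (iii): if $\{x_1,\dots,x_K\}$ were not $(N,\kappa)$--separated there would be $a\ne b$ with $\dist_\Lambda(f^{j}(x_a),f^{j}(x_b))<\kappa$ for all $j$; but each $f^{j}(x_a),f^{j}(x_b)$ lies in a double chart associated with a vertex of $\mathcal G'$, hence of size $\ge 10\,\mathbf w(\mathcal G')>10\kappa$, and the expansivity/uniqueness argument underlying Lemma~\ref{lem:uniqueness} and~\cite[Theorem~5.6]{LimaSarig2019} forces $x_a=x_b$, whence $\overline{\Theta_a}=\overline{\Theta_b}$, contradicting distinctness of the loops.

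\emph{Main difficulty.} The crux is the loop count: one must simultaneously keep the symbol alphabet finite, control the Birkhoff sums $\phi_N$, and impose the geometric constraints "$x_i\in R'$" and "the forward orbit of $x_i$ misses $B(R',5\kappa)$ for $0<j<N$", all while retaining $\ge e^{N(\hat h-\delta)}$ loops. This is exactly what forces the thermodynamic--formalism reduction to $\mathcal G'$ together with the choice of the small set $R'$ and small scale $\kappa$: restricting to orbits that first--return to $R'$ while avoiding its neighbourhood $\mathcal B$ costs only an exponential rate that vanishes as $\mu_\Sigma(\mathcal B)\to 0$, and so can be made $<\delta$ after $\mathcal G'$ (hence $\mathbf w(\mathcal G')$) has been fixed. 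Everything else is a direct application of the Lima--Sarig machinery recalled in Section~\ref{sec:Lima_Sarig_thm}.
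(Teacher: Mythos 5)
The proposal is not a valid proof: the key separation claims (iii) and (iv) are asserted via steps that do not go through, and the fundamental obstacle that the paper's construction is designed to overcome — the non-injectivity of $\widehat{\pi}$ — is not addressed at all.

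The most serious gap is in your argument for (iii). You write that closeness of $f^j(x_a)$ and $f^j(x_b)$ for all $j$ "forces $x_a=x_b$, whence $\overline{\Theta_a}=\overline{\Theta_b}$." The second implication is false: $\widehat{\pi}$ is not injective, so two distinct loops $\Theta_a\ne\Theta_b$ based at the same vertex $R$ can perfectly well shadow the same point, and this would immediately destroy the $(N,\kappa)$-separation. The paper has to work hard precisely to rule this out: it uses the finiteness bound of Theorem~\ref{thm:fullmeasurecoding}(5) to extract a bound $d$ on the number of recurrent $\widehat{\pi}$-preimages, inserts $d$ distinct marker blocks $\beta_1,\dots,\beta_d$ and padding $\alpha^{2s}$ into compound loops $\Theta_{\bi}$ so that no point can be shadowed by two of them (Claim on self-shadowing), and then further partitions the index set into the subfamilies $I^n(\underline{Z})$ so that any two retained loops admit gpos $\underline{v},\underline{w}$ agreeing at fixed positions $-k$ and $l$. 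That last step is exactly what makes Lemma~\ref{lem:uniqueness} applicable; in your argument you invoke that lemma but never arrange its hypotheses $R_{-k}=S_{-k}$, $R_l=S_l$, $v_{-k}=w_{-k}$, $v_l=w_l$ — basing the loops at $R$ gives you the first two, but nothing controls the gpos. Also, the first implication ("close for all $j$ forces $x_a=x_b$") is not literally what Lemma~\ref{lem:uniqueness} says, and the informal appeal to "expansivity" does not substitute: the system is only expansive after the coding machinery guarantees a unique shadowed orbit, which is exactly what is at stake.

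The reduction to a finite transitive subgraph with an avoided "hole" $\mathcal B$ is also not as cheap as claimed. In the countable Markov shift setting the assertion that removing sequences which return early to a small-measure set only drops the topological pressure by a quantity controlled by $\mu_\Sigma(\mathcal B)$, and that the resulting subshift $\Sigma^*$ stays topologically transitive, are substantive statements that would need proofs; they are not standard consequences of Sarig's thermodynamic formalism. The paper sidesteps all of this: it does not use a hole at all. Instead the geometric separation (iv) is obtained by proving a uniform lower bound $\xi>0$ for $\dist_\Lambda\bigl(\widehat\pi(\overline{\Theta_{\bi}}),f^\tau(\widehat\pi(\overline{\Theta_{\bi}}))\bigr)$ for $0<\tau<N$ (Claim~\ref{cl:xi}), by a compactness-plus-contradiction argument that hinges on $\widehat\pi(\Theta_{\underline{\bi}})$ never being a periodic point — which is again exactly what the markers $\beta_j$ guarantee. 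Your proposal drops the markers and the padding, which are the two structural ingredients that make both separation claims provable, and replaces them with an unproven open-system estimate and an invalid injectivity step.
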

\begin{proof}[Proof of Theorem \ref{thm:separated}]
By Lemma \ref{lem:loops}
with $\epsilon = \delta/8$, there is $R \in \mathcal{R}$ and an infinite set $\mathcal{M} \subset \N$   such that for all $m\in \mathcal{M}$, there is a finite collection  $\{\gamma_1, \ldots, \gamma_k\}, k \in \N$, of pairwise distinct loops based at $R$ of length $m$ such that 
\begin{align}\label{eq:cond1}
k\geq e^{m(\hat{h}-\delta/8)},
\end{align}
and 
\begin{align}\label{eq:concationations}
|\phi_{lm}(\overline{\gamma_{i_1}\gamma_{i_2}\cdots \gamma_{i_l}}) - lm \int \phi d\mu_{\Sigma}| \leq lm\delta/8,
\end{align} 
for all $l \in \N$ and $(i_1, \ldots, i_l) \in \{1, \ldots, k\}^l$.

We denote  by 
$\mathcal{D}$ the finite set of those $Z \in \mathcal{Z}$ such that $R \subset Z$, and let $D:= \# \mathcal{D}$.
By Theorem \ref{thm:fullmeasurecoding} (5), there is $d\in \N$ such that there are at most $d-1$ many $\underline{R} = (R_t)_{t\in \Z} \in \Sigma(\widehat{\mathcal{G}})$ in the same fibre of $\widehat{\pi}$  for which $R_t = R_0$ for infinitely positive and negative $t\in \N$.\footnote{In fact, one can choose $d= D^2+1$, see 
 \cite[p. 43]{LimaSarig2019}.}
In the following we will fix $m \in \mathcal{M}$ such that $k\in \N$ above satisfies 
 $$k\geq d+1,$$
and choose    
$d+1$ pairwise distinct loops 
$\alpha, \beta_1, \ldots, \beta_d$ among the loops $\gamma_1, \ldots, \gamma_k$.  Let $\mathcal{G}'$ be a finite subgraph in $\widehat{\mathcal{G}}$ that contains the loops $\gamma_1, \ldots, \gamma_k$. 

Fix $q \in \N$ so large that 
\begin{align}\label{eq:q_delta}
{q\delta} \geq 8\max\left\{ \hat{h}-\frac{\delta}{2},\, \,  \log D \right\}.
\end{align}
Let $s, n \in \N$. Then, for any $\bi= (\bi_1, \ldots, \bi_{ndq}) \in \{1,\ldots, k\}^{ndq}$, consider the loop $\Theta_\bi = \Theta_\bi(s,n)$ defined by 
\begin{align*}
\Theta_\bi = \alpha^s \prod_{i=0}^{n-1} \prod_{j=0}^{d-1} \Gamma_{\bi,i,j}\alpha^s,
\end{align*}
where
\begin{align*}
    \Gamma_{\bi,i,j}  = \gamma_{\bi_{idq + jq + 1}}\cdots \gamma_{\bi_{idq + jq + q}} \beta_{j+1}.
\end{align*}
The loops $\Theta_{\bi}$ are based at $R$ and are of length $(2s + nd(q+1))m$. 
We will find the loops of the theorem among the set $\left\{\Theta_{\bi}\, |\,  \bi \in \{1,\ldots, k\}^{ndq}\right\}$, for suitable choices of $s$ and $n$.

Before we proceed, define for a fixed $s \in \N$, 
and for $\underline{\bi} = (\bi_j)_{j\in \Z} \in \{1, \ldots, k\}^{\Z}$,  the path $\Theta_{\underline{\bi}}$ in $\Sigma(\widehat{\mathcal{G}})$ by 
\begin{align*}\Theta_{\underline{\bi}} = 
 \cdots \Gamma_{\underline{\bi},-2,d-1}\, \Gamma_{\underline{\bi},-1,0}\, &\Gamma_{\underline{\bi},-1,1}\cdots \Gamma_{\underline{\bi}, -1,d-1}\, \alpha^{2s}\, \\ &\Gamma_{\underline{\bi},0,0} \, \Gamma_{\underline{\bi}, 0,1} \cdots \Gamma_{\underline{\bi},0,d-1} \, \Gamma_{\underline{\bi},1,0} \cdots, \end{align*} 
 where $\Gamma_{\underline{\bi}, i,j} = \gamma_{{\bi}_{idq + jq + 1}} \cdots \gamma_{{\bi}_{idq + jq + q}}\beta_{j+1}$,  $i\in \Z$, $j\in \{0, \ldots, d-1\},$
 and the zeroth position is determined via  $\Theta_{\underline{\bi}}|_{[-sl,sl]} = \alpha^{2s}.$
 
 Note that since $\alpha \neq \beta_i$, for all $i=1,\ldots, d$,
 $\Theta_{\underline{\bi}}$ cannot be periodic. And since $\widehat{\pi}$ is finite-to-one,  $\widehat{\pi}(\Theta_{\underline{\bi}})$ cannot be a periodic point of $f$. Indeed, assume it was a periodic point of $f$, say of period $\mathfrak{p}$, then the preimage, a finite set, is invariant under $\sigma^{\mathfrak{p}}$ and hence would consist of periodic sequences. 
 
 Furthermore, for $\underline{\bi} =  (\bi_j)_{j\in \Z}\in \{1, \ldots, k\}^{\Z}$, define $\Gamma_{\underline{\bi}}$ in $\Sigma(\widehat{\mathcal{G}})$ by 
 $$\Gamma_{\underline{\bi}} = \cdots \Gamma_{\underline{\bi},-2,d-1}\, \Gamma_{\underline{\bi},-1,0}\,\Gamma_{\underline{\bi},-1,1}\cdots \Gamma_{\underline{\bi}, -1,d-1}\,  \Gamma_{\underline{\bi},0,0}\, \Gamma_{\underline{\bi}, 0,1} \cdots \Gamma_{\underline{\bi},0,d-1}\,  \Gamma_{\underline{\bi},1,0} \cdots, $$ 
 where the zeroth position is determined via 
 $\Gamma_{\underline{\bi}}|_{[0,qm+l]} = \Gamma_{\underline{\bi},0,1}$.

\begin{claim}\label{cl:xi}
There is $s\in \N$, $\xi = \xi(s) >0$, and $n_0 \in \N$, such that for all $n \geq n_0$, and all $\bi \in \{1, \ldots, k\}^{ndq}$,
\begin{align*}
\min_{0< \tau<N}\dist_{\Lambda}(\widehat{\pi}(\overline{\Theta_{\bi}}), f^{\tau}(\widehat{\pi}(\overline{\Theta_{\bi}}))) \geq \xi, 
\end{align*}
where $N = (2s + nd(q+1))m$ is the length of $\Theta_{\bi}$.  
\end{claim}

\begin{claimproof}
Assume the contrary. Then, there is a sequence $(s_i)_{i\in \N}$ with $s_i \to \infty$ such that for each $s = s_i$, there are sequences $(n_j)_{j\in \N}$ with $n_j \to \infty$, $(\bi_j)_{j\to \infty}$ in $\{1,\ldots, k\}^{n_jdq}$, and $(\tau_j)_{j\in \N}$ with $0 < \tau_j < N_j  = (2s + n_jd(q+1))m$ such that 
\begin{align}\label{eq:dist:xi}
\dist_{\Lambda}\left(\widehat{\pi}(\overline{\Theta_{\bi_j}}), f^{\tau_j}(\widehat{\pi}(\overline{\Theta_{\bi_j}})\right)  \to 0, \text{ as } j \to \infty,
\end{align}
where $\Theta_{\bi_j} = \Theta_{\bi_j}(s,n_j)$. Choose for each $s_i$ such a sequence $(n_j,\bi_j,\tau_j)_{j\in \N}$. 

We first show that for all $s = s_i$ above, necessarily  
\begin{equation}\label{limsupinfty}
\limsup_{j\to \infty}  \tau_j = + \infty, \text{ and } \limsup_{j\to \infty}( N_j -\tau_j) = +\infty.
\end{equation}
We assume that $\limsup \tau_j < +\infty$ and will derive a contradiction, the argument to exclude $\limsup (N_j - \tau_j) < +\infty$ is analogous. 
After passing to a subsequence of $(n_j,\bi_j, \tau_j)_{j\in \N}$, we may assume that $\tau_j \equiv \tau \neq 0$. 
After passing to a further subsequence if necessary, we can find $\underline{\bi} \in \{1, \ldots, k\}^{\Z}$ and a sequence  $(M_j)_{j\in \N}$ in $\N$ with $M_j \to \infty$ such that
\begin{align*}
\sigma^{\tau}(\Theta_{\underline{\bi}})|_{[-M_j,M_j]} = \sigma^{\tau}(\overline{\Theta_{\bi_j}})|_{[-M_j,M_j]}.  
\end{align*}
Since $\widehat{\pi}$ is H\"older continuous, $f^{\tau}(\widehat{\pi}(\overline{\Theta_{\bi_j}})) = \widehat{\pi}(\sigma^{\tau}(\overline{\Theta_{\bi_j}})) \to \widehat{\pi}(\sigma^{\tau}(\Theta_{\underline{\bi}}))$ and 
$\widehat{\pi}(\overline{\Theta_{\bi_j}}) \to \widehat{\pi}(\Theta_{\underline{{\bi}}})$.
Hence, by \eqref{eq:dist:xi},  $\widehat{\pi}(\sigma^{\tau}(\Theta_{\underline{\bi}}))= \widehat{\pi}(\Theta_{\underline{{\bi}}})$. This is a contradiction, since $\widehat{\pi}(\Theta_{\underline{{\bi}}})$ cannot be a periodic point of $f$.

Recall that for each $s_i$ we chose a sequence $(n_j, \bi_j, \tau_j)_{j\in \N}$. By taking a diagonal subsequence for $i\to \infty$, we obtain a sequence $(s_j, n_j, \bi_j, \tau_j)_{j\in \N}$, and by  \eqref{limsupinfty} we may  assume that
\begin{equation}\label{limsupinfty1}
\limsup_{j\to \infty}  \tau_j - s_j = + \infty, \text{ and } \limsup_{j\to \infty}( (N_j - s_j) -\tau_j) = +\infty.
\end{equation}
After passing to a subsequence we can find $\underline{\bi} \in \{1, \ldots, k\}^{\Z}$, a sequence  $(M_j)_{j\in \N}$ in $\N$ with $M_j \to \infty$, and $u \in \{0, \ldots, d(q + 1)m  -1\} $ such that
\begin{align*}
\sigma^{u}(\Gamma_{\underline{\bi}})|_{[-M_j,M_j]} = \sigma^{\tau_j}(\overline{\Theta_{\bi_j}})|_{[-M_j,M_j]}
\end{align*}
and 
\begin{align*}
\overline{\alpha}|_{[-M_j,M_j]} = \overline{\Theta_{\bi_j}}|_{[-M_j,M_j]}, \text{ for all } j \in \N.
\end{align*}
By the H\"older continuity of $\widehat{\pi}$, we have that 
$f^{\tau_j}(\widehat{\pi}(\overline{\Theta_{\bi_j}})) =  \widehat{\pi}(\sigma^{\tau_j}(\overline{\Theta_{\bi_j}}))  \to \widehat{\pi}(\sigma^{u}(\Gamma_{\underline{\bi}})) =:y$ and 
$\widehat{\pi}(\overline{\Theta_{\bi_j}})\to \widehat{\pi}(\overline{\alpha})=:z$. By \eqref{eq:dist:xi}, necessarily $y=z$. It follows that for $j=0,1,\ldots, d-1$, 
\begin{align*}
\widehat{\pi}(\sigma^{j(q+1)m + u}(\Gamma_{\underline{\bi}})) &= f^{j(q+1)m}(y) \\ &= f^{j(q+1)m}(z) =  \widehat{\pi}(\sigma^{j(q+1)m}(\overline{\alpha})) = \widehat{\pi}(\overline{\alpha}).
\end{align*}
On the other hand, since the loops $\alpha, \beta_1, \ldots, \beta_d$ are pairwise distinct, the elements  $\sigma^{j(q + 1)m}(\Gamma_{\underline{\bi}})$, $j=0,1, \ldots, d-1$, are also pairwise distinct. This contradicts the choice of $d$. 
\end{claimproof}

From now on we will fix $s \in \N$, $n_0 \in \N$, $\xi>0$ such that the assertions of Claim \ref{cl:xi} hold. To obtain a collection of loops $\Theta_{\bi}$ with the desired properties we will further restrict to a suitable subset. 

For each $n \in \N$, we define a partition 
$$\{1, \ldots, k\}^{ndq} = \bigcup_{\underline{Z} \in \mathcal{D}^{n+1}} I^n(\underline{Z})$$  into $D^{n+1}$ disjoint subsets $I^n(\underline{Z})$, $\underline{Z} \in \mathcal{D}^{n + 1}$, as follows.  
For any $\bi \in \{1, \ldots, k\}^{ndq}$, write $\overline{\Theta_\bi} = (A^{(\bi)}_t)_{t\in \Z}$ in $\Sigma^{\#}(\widehat{\mathcal{G}})$, and choose a gpo $\underline{a}^{(\bi)} = (a^{(\bi)}_t)_{t\in \Z}$ in $\Sigma^{\#}(\mathcal{G})$ with $A^{(\bi)}_t \subset Z(a^{(\bi)}_t)$, for all $t\in \Z$. This is possible by Lemma \ref{lem:RinZ}.
Given this, for any $\underline{Z} = (Z_0, \ldots, Z_{n}) \in \mathcal{D}^{n+1}$, 
define the set $I^n(\underline{Z})\subset \{1, \ldots, k\}^{ndq}$  as the set of those $\bi\in \{1, \ldots, k\}^{ndq}$ such that
$$a^{(\bi)}_{(s + jd(q + 1))m} = Z_j, \, \text{ for } j=1, \ldots, n; \quad  a^{(\bi)}_0 = Z_{0}.$$
This defines our partition.
\begin{claim}\label{cl:1*}
There exist $\underline{Z}^n\in \mathcal{D}^{n+1}, n\in \N$, and $n_1\geq n_0$ such that for all $n\geq n_1$,
$$\# I^n(\underline{Z}^n) \geq e^{N(\hat{h} -\delta/2)},$$
where $N= (2s + nd(q+1))m$. 
\end{claim}
\begin{claimproof}
For any $n\in \N$ there is $\underline{Z}^n\in \mathcal{D}^{n+1}$ such that
\begin{align*}
\# I^n(\underline{Z}^n) \geq \frac{1}{D^{n+1}} k^{ndq}\geq \frac{1}{D^{n+1}}e^{mndq(\hat{h} -\delta/8)}.
\end{align*}
Let $n_1 \geq n_0$  such that 
\begin{align}\label{eq:n0*}
n_1d\geq {2s}.
\end{align}
It follows with \eqref{eq:q_delta} that 
\begin{align*}
n_1d(q\delta/4 - (\hat{h}-\delta/2)) \geq 2s(\hat{h}-\delta/2), 
\end{align*}
which means that for all $n\geq n_1$,
\begin{align}\label{eq:ndq_2s}
ndq\delta/4 \geq (2s+ nd)(\hat{h}-\delta/2).
\end{align}
Hence, with \eqref{eq:q_delta}, for all $n\geq n_1$, 
\begin{align*}
\#I^n(\underline{Z}^n)  &\geq
e^{mndq(\hat{h}-\delta/4)} \\ &\geq e^{m(ndq + (2s + nd))(\hat{h}-\delta/2)} = e^{N(\hat{h}-\delta/2)}.
\end{align*}
\end{claimproof}

We now fix the family $\underline{Z}^n \in \mathcal{D}^{n+1}$, $n\in \N$, and $n_1\geq n_0$ such that the assertions of Claim \ref{cl:1*} hold.  
\begin{claim}\label{cl:kappa}
There is $\kappa' >0$, $n_2\geq n_1$ such that for all $n\geq n_2$, $\bi,\hat{\bi} \in I^n(\underline{Z}^n)$ with $\bi\neq \hat{\bi}$,    
\begin{align}\label{eq:kappa}
\sup_{\tau\in \N} \dist_{\Lambda}(f^{\tau}(\widehat{\pi}(\overline{\Theta_\bi})), f^{\tau}(\widehat{\pi}(\overline{\Theta_{\hat{\bi}}}))) \geq \kappa'.
\end{align}
\end{claim}
\begin{claimproof}
Assume the contrary and choose sequences $(n_j)_{j\in \N}$ in $\N$ with $n_j \to \infty$, $(\bi^{j})_{j\in \N}$ and $(\widehat{\bi}^{j})_{j\in \N}$ with $\bi^j \neq \widehat{\bi}^j \in I^{n_j}(\underline{Z}^{n_j})$, $j\in \N$, 
 such that for any sequence $(\tau_j)_{j\in \N}$ in $\N$, 
$\dist_{\Lambda}(f^{\tau_j}(\widehat{\pi}(\overline{\Theta_{\bi^j}})), f^{\tau_j}(\widehat{\pi}(\overline{\Theta_{\widehat{\bi}^j}})))\to 0.$

Given a sequence $(\tau_j)_{j\in \N}$, let  $\underline{R}^j = (R^j_t)_{t\in \Z}$ and $\widehat{\underline{R}}^j = (\widehat{R}^j_t)_{t\in\Z}$ in $\Sigma^{\#}(\widehat{\mathcal{G}})$ defined by 
$${\underline{R}}^j = \sigma^{\tau_j}(\overline{\Theta_{\bi^j}}), \text{ and } \widehat{\underline{R}}^j = \sigma^{\tau_j}(\overline{\Theta_{\widehat{\bi}^j}}).$$
Note that if we consider the gpos $\underline{v}^{(j)} := \sigma^{\tau_j}(\underline{a}^{(\bi^j)})$ and $\underline{\widehat{v}}^{(j)} := \sigma^{\tau_j}(\underline{{a}}^{(\widehat{\bi}_j)})$, then  
$R^{j}_t \subset Z(v^{(j)}_t)$ and $\widehat{R}^{j}_t \subset Z(\widehat{v}^{(j)}_t)$, for all $t \in \Z$. 

Take now a subsequence of $(n_j, \bi^j, \widehat{\bi}^j)$ and a  sequence $\tau_j$ such 
 that $R^j_0 = B$ and $\widehat{R}^j_0 = \widehat{B}$  with $B\neq \widehat{B}$ in $\mathcal{R}$, and $\underline{R}^j|_{[-u,m-u]} = \gamma_{r}$ and $\widehat{\underline{R}}^j|_{[-u,m-u]} = \gamma_{\widehat{r}}$, for some $r \neq \widehat{r} \in \{1, \ldots, k\}$, where the path $\gamma_r$  passes through $B$  and the path $\gamma_{\widehat{r}}$  passes through $\widehat{B}$ at the same position $u\in \{1, \ldots, m\}$. 

For any $M\in \N$, we can pass to a further subsequence of $(n_j, \bi^j, \widehat{\bi}^j, \tau_j)_{j\in \N}$ such that then 
\begin{align*}
 &R^{j}|_{[-M,M]} =  R^{j'}|_{[-M,M]} \text{ and }\widehat{R}^{j}|_{[-M,M]} =  \widehat{R}^{j'}|_{[-M,M]}, \text{ for all }  j,j'\in \N, \\
 &v^{j}|_{[-M,M]} = v^{j'}|_{[-M,M]} \,\,\, \text{ and } \, \widehat{v}^{j}|_{[-M,M]} = \widehat{v}^{j'}|_{[-M,M]}, \, \, \text{ for all }  j,j'\in \N. 
\end{align*}
By taking diagonal sequences, given a monotonic sequence $M_j \to \infty$, we can find a subsequence of $(n_j, \bi^j, \widehat{\bi}^j, \tau_j)_{j\in \N}$, still denoted the same, such that for all $j,j'\in \N$, $j'\geq j$, 
\begin{align}
&R^{j'}|_{[-M_j,M_j]} = R^j|_{[-M_j,M_j]} \text{ and } \widehat{R}^{j'}|_{[-M_j,M_j]} = \widehat{R}^j|_{[-M_j,M_j]}, \label{goodsequence1} \\
&v^{j'}|_{[-M_j,M_j]} = v^j|_{[-M_j,M_j]}\, \, \,  \text{ and } \, \, \widehat{v}^{j'}|_{[-M_j,M_j]} = \widehat{v}^j|_{[-M_j,M_j]}. \label{goodsequence2}
\end{align}
Conditions \eqref{goodsequence1} and \eqref{goodsequence2} properly define  $\underline{R} = (R_t)_{t\in \Z} \in  \Sigma(\widehat{\mathcal{G}})$ with  $R_t  := R_t^j$, if $t\in [-M_j,M_j]$, and properly define the gpo 
$\underline{v}= (v_t)_{t\in \Z}$ with $v_t := v_t^j$, if $t\in [-M_j,M_j]$. Analogously we obtain $\widehat{\underline{R}}$ and $\widehat{\underline{v}}$. 

Since $\bi_j,  \widehat{\bi}_j \in I^{n_j}(\underline{{Z}}^{n_j})$,  there is $t_0, t_1 \in \Z$, $t_0 < 0 < t_1$, such that
\begin{align*}
Z(v_{t_0}) = Z(\widehat{v}_{t_0}) \text{ and } Z(v_{t_1}) = Z(\widehat{v}_{t_1}). 
\end{align*}  
By Lemma \ref{lem:RinZ}, $y:= \widehat{\pi}(\underline{R}) \subset Z(v_0)$ and $\widehat{y} := \widehat{\pi}(\widehat{\underline{R}}) \subset Z(\widehat{v}_0)$.
Moreover, by Lemma \ref{lem:uniqueness}, $y \neq \widehat{y}$. 
On the other hand,  $f^{\tau_j}(\widehat{\pi}(\overline{\Theta_{\bi^j}}))= \widehat{\pi} (\sigma^{\tau_j}(\overline{\Theta_{\bi^j}}) ) = \widehat{\pi}(\underline{R}^j)\subset Z(v_0)$, for all $j\in \N$. Since $\widehat{\pi}$ is H\"older continuous, it follows that  $f^{\tau_j}(\widehat{\pi}(\overline{\Theta_{\bi^j}})) \to y$ as $j \to \infty$.  
Similarly, $f^{\tau_j}(\widehat{\pi}(\overline{\Theta_{\widehat{\bi}^j}})) \to \widehat{y}$ as $j \to \infty$.
Therefore, using our assumptions, we get that $y=\widehat{y}$, a contradiction. 
\end{claimproof}

In the following fix $\kappa>0$ with $$\kappa< \min\{\kappa',  \xi/4, \mathbf{w}(\mathcal{G}')\}.$$ 
Choose a suitable infinite subset $\mathcal{N}'' \in [n_2, \infty)$ of natural numbers and  $Z \in \mathcal{D}$  such that for all $n\in \mathcal{N}''$ the first coordinate $Z_{0}$ of  $\underline{Z}^n=(Z_0, \ldots, Z_{n}) $ is $Z \in \mathcal{D}$.
We write $Z= Z(v)$ and $v = \Psi_x^{p^u,p^s}$.

For any path $R_{-t}\cdots R_t$ in $\widehat{\mathcal{G}}$, $t\in \N$, there exist $C>0$ and $\theta \in (0,1)$ such that for any $x_-$ and $x_+$ in $\overline{{}_{-t}[R_{-t}, \ldots, R_t]}$, we have that  $\dist(V_-, V_+) < C\theta^t$, where $V_-$ and $V_+$ are the local stable manifolds of $x_-$ and $x_+$ or the local unstable manifolds of $x_-$ and $x_+$. This follow by Proposition \ref{prop:contraction} applied to a chain $v_{-t} \to \cdots \to v_{t}$, where $R_i \subset Z(v_i)$ for $i=-t, \ldots, t$.   
Choose $r$ such that 
\begin{align}\label{r}
C \theta^{(s + r + \lfloor r/q \rfloor )m} < \frac{\kappa}{3C_{\Lambda}}.
\end{align}
For $n$ sufficiently large and any $\bi^* \in \{1, \ldots, k\}^{2r}$ define $I^n(\underline{Z}^n, \bi^*)$ to be the set of  $\bi\in I^n(\underline{Z}^n)$ with 
\begin{align*}
\begin{split}
&(\bi_1, \ldots, \bi_{r}) = (\bi^*_1, \ldots, \bi^*_r) \text{  and }\\
&(\bi_{(2s + nd(q+1))m - r + 1}, \ldots, \bi_{(2s + nd(q+1))m}) = (\bi^*_{r+1}, \ldots, \bi^*_{2r}).
\end{split}
\end{align*} 
Let $n_3\geq n_2$ such that additionally 
$$ (2s + n_3d(q+1))m\delta \geq 4r \log k.$$
This and the estimate in Claim \ref{cl:1*} show that there exist $\bi^* \in  \{1, \ldots, k\}^{2r}$ and an infinite set  $\mathcal{N}'\subset \mathcal{N}'' \cap [n_3,\infty)$  such that for all $n\in \mathcal{N}'$ 
\begin{align}\label{eq:v}
\# I^n(\underline{Z}^n,\bi^*) \geq \frac{1}{k^{2r}}\# I^n(\underline{Z}^n)  \geq \frac{1}{k^{2r}}e^{N(\hat{h}-\delta/2)}\geq e^{N(\hat{h}-\delta)},
\end{align}
where $N = (2s + nd(q+1))m$.
By \eqref{r}, 
\begin{align}\begin{split}\label{distV}
\dist(V^s(\widehat{\pi}(\overline{\Theta_{\bi}})), V^s(\widehat{\pi}(\overline{\Theta_{\bi'}})))&< \frac{\kappa}{3C_{\Lambda}}, \\  \dist(V^u(\widehat{\pi}(\overline{\Theta_{\bi}})), V^u(\widehat{\pi}(\overline{\Theta_{\bi'}})))&< \frac{\kappa}{3C_{\Lambda}}, \end{split}\end{align} 
for any $\bi, \bi'\in I^n(\underline{Z}^n,\bi^*)$, $n\in \mathcal{N}'$. 

Finally, let $$\mathcal{N} = \left\{ N = (2s + nd(q+1))m \,|\, n \in \mathcal{N}'\right\}.$$

For $N \in \mathcal{N}$, $N= (2s + nd(q+1))m$, we choose a denumeration   of 
$$\left\{\Theta_\bi\, |\, \bi \in I^n(\underline{Z}^n, \bi^*)\right\} =: \left\{\Theta_1, \ldots, \Theta_K\right\}.$$
Since local stable (unstable) manifolds in $v$ are either identical or disjoint, there is $i,i',j,j'\in \{1,\ldots, K\}$ 
such that for all $1\leq l\leq K$, \begin{align}\begin{split}\label{orderV} V^u_-:=V^u(\widehat{\pi}(\overline{\Theta_i})) \preceq_u V^u(\widehat{\pi}(\overline{\Theta_{l}})) &\preceq_u V^u(\widehat{\pi} (\overline{\Theta_{i'}}))=:V^u_+, \\ 
V^s_-:=V^s(\widehat{\pi}(\overline{\Theta_j})) \preceq_s V^s(\widehat{\pi}(\overline{\Theta_{l}})) &\preceq_s V^s(\widehat{\pi}(\overline{\Theta_{j'
}}))=:V^s_+.\end{split}\end{align}
This yields an admissible rectangle $\mathcal{Q}:=\mathcal{Q}(V^u_-,V^u_+,V^s_-,V^s_+)$ in $v$.

Assertion (i) of the theorem follows from \eqref{distV}, assertion (ii) from \eqref{orderV}. The separation property (iii) follows from Claim \ref{cl:kappa}.  Inequality (v) holds with \eqref{eq:v}, and inequality  (vi) with \eqref{eq:concationations}.

It remains to verify that (iv) holds. Write $\Psi := \Psi_x^{p^u \wedge p^s}$. 
Since the local stable and unstable manfifolds are $1/2$-Lipschitz, $\Psi^{-1}(\mathcal{Q})$ is contained in a rectangle in $[-p^u\wedge p^s,p^u\wedge p^s]^2$. Moreover, by \eqref{distV}, the distance between any two points on a common edge  in $\Psi^{-1}(\mathcal{Q})$ is  at most $(1+1/2)\kappa/{3C_{\Lambda}}$. For any $z_1,z_2\in \mathcal{Q}$, 
one can estimate the Euclidean distance of $\hat{z}_1 = \Psi^{-1}(z_1)$ and $\hat{z}_2 = \Psi^{-1}(z_2)$ as  
\begin{align}\label{eq:diam}
\begin{split}
\dist(\hat{z}_1, \hat{z}_2) &\leq  \min_{*\in \{\pm\}} \left( \dist(\hat{z}_1,\Psi^{-1}(V^u_*)) +   \dist(\hat{z}_2,\Psi^{-1}(V^u_*))\right) \\ &+ 
(1+1/2)\frac{\kappa}{3C_{\Lambda}} 
 < \frac{\kappa}{3C_{\Lambda}} + \frac{2\kappa}{3C_{\Lambda}}  \leq \frac{\kappa}{C_{\Lambda}}.
\end{split}
\end{align}
It follows that $$\diam_{\Lambda}(\mathcal{Q}) \leq  C_{\Lambda}\diam(\Psi^{-1}(\mathcal{Q})) < \kappa< \frac{\xi}{4}.$$
Hence, this, together with (ii) and the choice of $\xi$, implies (iv).
\end{proof}

\subsection{The horseshoe}
We are now in the situation to detect a horseshoe over the section $\mathcal{Q}$.
We show that the subshifts found above are in fact also induced 
 by rectangles of Markov type for an iterate of the return map.  
We will then use the order relations of stable and unstable manifolds of orbits in the horseshoe to pass to a further subshift which simplifies the geometric picture a little.  

\subsubsection{The first horseshoe}
We keep $\delta$ with  $0<\delta < \hat{h}$  fixed, and choose  an infinite set $\mathcal{N} \subset \N$, $R\in \mathcal{R}$, $v = \Psi_x^{p^u,p^s}$ double chart,  
 a finite $\mathcal{G}'\subset \widetilde{\mathcal{G}}$, $\kappa$ with  $0< \kappa<\mathbf{w}(\mathcal{G}')$, and, for any fixed  $N\subset \mathcal{N}$, loops   $\Theta_1, \ldots, \Theta_K$  in ${\mathcal{G}}'$ of common length $N \in \mathcal{N}$, based at $R$, and an admissible rectangle $\mathcal{Q} = \mathcal{Q}(V_-^u,V_+^u; V_-^s,V_+^s)$ in $v$ such that the assertions of Theorem \ref{thm:separated} hold. 
As it follows from their construction, $V_{\pm}^u$ and $V_{\pm}^s$ can be chosen to be unstable and stable manifolds in $v$ associated to suitable $N$-periodic gpos, and we will assume this in the following. 

We first use an adaption of an argument that appears in the approach of Katok-Mendoza \cite{Hasselblatt-Katok} for the construction of a horseshoe.  
We say that an admissible rectangle in $v$ is an \textit{$u$-admissible rectangle} resp.\ an \textit{$s$-admissible rectangle in $\mathcal{Q}$} if it is contained in $\mathcal{Q}$ and  its $u$-sides resp.\ its $s$-sides are contained in the $u$-sides resp.\ in the $s$-sides of $\mathcal{Q}$. 
For any  $i\in\{1, \ldots, K\}$, we consider the following construction. Choose a gpo $\underline{v} = (v_i)_{i\in \Z}$ with $v_0, v_{\pm N} = v$ that shadows $\widehat{\pi}(\overline{\Theta_i})$. Let $\mathcal{Q}_i$ be the admissible rectangle in $v$ defined by $\mathcal{Q}_i = \mathcal{Q}(V_-^u, V_+^u, \mathcal{F}_s^N[V^s_-], \mathcal{F}_s^N[V^s_+])$, where the graph transform is taken with respect to the chain $v_{-N} \to  \cdots \to  v_0$. 
Note that the graph transform $\mathcal{F}_s^N$ either preserves the order $\prec_s$ of all local stable manifolds in $v$ or it reverses their order.  It follows that necessarily $\widehat{\pi}(\overline{\Theta_i})\in\mathcal{Q}_i$. 
By enlarging if necessarily $\mathcal{Q}$ to a admissible rectangle (still denoted by $\mathcal{Q}$) that  is bounded by admissible manifolds of the  form $\mathcal{F}^N_s[V^s_{\pm}]$, $\mathcal{F}^N_u[V^u_{\pm}]$ for chains as above, we can assure that $\diam_{\Lambda}(\mathcal{Q}) < 3 \kappa$ and that   
 $\mathcal{Q}_i$ are  $u$-admissible rectangles in $\mathcal{Q}$ and that $f^N(\mathcal{Q}_i)$ are $s$-admissible rectangles in $\mathcal{Q}$. 
By Theorem \ref{thm:separated} (i) and Proposition \ref{prop:contraction}, there is $\theta \in (0,1)$ such that 
\begin{align*}
\begin{split}
& \dist(\mathcal{F}_s^j[V^s_{-}], \mathcal{F}_s^j[V^s_{+}]) \leq (\kappa/C_{\Lambda}) \theta^j,  \quad 1\leq j \leq N;\\ &\dist(\mathcal{F}_u^j[V^u_{-}], \mathcal{F}_u^j[V^u_{+}]) \leq (\kappa/C_{\Lambda}) \theta^j, \quad 1\leq j \leq N,  
\end{split}
\end{align*}
where $\mathcal{F}^j_s$ is applied to the chain $v_j \to \cdots \to v_0$, and $\mathcal{F}^j_u$ is applied to the chain $v_0 \to \cdots \to v_j$. 
By an estimate similar to  \eqref{eq:diam} we have in particular that  
\begin{equation}\label{diamCC} 
\diam_{\Lambda}  \left(f^j \left(\mathcal{Q}_i \right) \right) \leq \kappa, \quad 1\leq j \leq N.
\end{equation}

The above implies that by the separation property (iii) of Theorem \ref{thm:separated}, the sets $\mathcal{Q}_i$, $i=1, \ldots, K$, are pairwise disjoint. 
By \eqref{diamCC} and Theorem \ref{thm:separated} (iv),  
\begin{equation}\label{fjCCQ}
f^j\left(\mathcal{Q}_i\right) \cap \mathcal{Q} = \emptyset, 
\end{equation}
 for all $i \in \{1, \ldots, K\}$,  $j \in \{1, \ldots, N-1\}$.

Let $(\Sigma^*,\sigma^*)$ denote the full topological Markov shift over the alphabet $\{\Theta_1, \ldots, \Theta_K\}$. 
We define a map  $\pi^*: \Sigma^* \to \mathcal{Q}$ as follows. 
For  $\underline{\mathfrak{a}} = (\mathfrak{a}_t)_{t\in \Z}\in \Sigma^*$ with  $\mathfrak{a}_t = \Theta_{i_t}$, we set   
\begin{equation}\label{def:pistar}
\pi^*(\underline{\mathfrak{a}}):= \bigcap_{t \in \Z} f^{-tN}\left(\mathcal{Q}_{i_t}\right).
\end{equation}
\begin{prop}\label{prop:KM_in_LS}
The map $\pi^*:\Sigma^* \to \mathcal{Q}$
is well defined, and satisfies
$$\pi^*(\underline{\mathfrak{a}}) = \widehat{\pi}(\underline{R}),$$
where with $\underline{\mathfrak{a}} = (\mathfrak{a}_t)_{t\in \Z}$ and $\mathfrak{a}_t = \Theta_{i_t}$, the sequence ${\underline{R}} \in \Sigma(\widehat{\mathcal{G}})$ is given by $(R_j)_{j\in \Z} :=  ( \cdots \Theta_{i_{-t}} \cdots \Theta_{i_0} \cdots \Theta_{i_t} \cdots )$ and such that the $0$th entry of $\underline{R}$ is the first element of $\Theta_{i_0}$.

In particular,  $$\pi^* \circ \sigma^* = f^N \circ \pi^*.$$
\end{prop}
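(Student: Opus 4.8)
The plan is to prove the three assertions contained in the statement in turn: that the intersection in \eqref{def:pistar} defines a single point, so that $\pi^*$ is well-defined; that this point coincides with the Lima--Sarig coding $\widehat{\pi}(\underline{R})$; and the equivariance $\pi^*\circ\sigma^*=f^N\circ\pi^*$, which is then an immediate formal consequence of well-definedness together with a reindexing of \eqref{def:pistar}. For well-definedness I would treat $\{\mathcal{Q}_i\}$ and $f^N$ as a genuine Markov-type configuration inside $\mathcal{Q}$ and run the usual horseshoe coding argument; for the identification I would exhibit a single generalized pseudo-orbit realizing both $\pi^*(\underline{\mathfrak{a}})$ and $\widehat{\pi}(\underline{R})$.

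For well-definedness, recall from the construction preceding the proposition that the $\mathcal{Q}_i$ are pairwise disjoint $u$-admissible rectangles in $\mathcal{Q}$ whose images $f^N(\mathcal{Q}_i)$ are $s$-admissible rectangles in $\mathcal{Q}$; in particular $f^N(\mathcal{Q}_i)$ crosses $\mathcal{Q}_j$ for every pair $i,j$. Applying Lemma~\ref{lem:admissible_square} inductively, for each finite block $(\mathfrak{a}_{-n},\dots,\mathfrak{a}_n)$ the set $W_n:=\bigcap_{t=-n}^{n}f^{-tN}(\mathcal{Q}_{i_t})$ is a nonempty admissible sub-rectangle of $\mathcal{Q}_{i_0}$, and the $W_n$ decrease with $n$. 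To control $\diam_{\Lambda}(W_n)$ I would note that the $s$-sides of $\bigcap_{t=0}^{n}f^{-tN}(\mathcal{Q}_{i_t})$ arise from the $s$-sides of $\mathcal{Q}$ by $n$ successive graph transforms, so by Proposition~\ref{prop:contraction} the $\dist$ between them is at most $(\kappa/C_{\Lambda})\theta^{\,nN}$ for some $\theta\in(0,1)$, and symmetrically for the $u$-sides of $\bigcap_{t=-n}^{0}f^{-tN}(\mathcal{Q}_{i_t})$; combining these through an estimate of the type \eqref{eq:diam} gives $\diam_{\Lambda}(W_n)\to 0$. Since the $W_n$ are nested nonempty compacta whose intersection equals the set in \eqref{def:pistar}, $\pi^*(\underline{\mathfrak{a}})$ is a single well-defined point, and $\pi^*\circ\sigma^*=f^N\circ\pi^*$ follows at once.

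For the identification, fix $\underline{\mathfrak{a}}=(\Theta_{i_t})_{t\in\Z}$ and the concatenation $\underline{R}$; since the $\Theta_i$ are loops based at $R$ we have $R_{tN}=R$ for all $t$ and $\underline{R}|_{[tN,(t+1)N]}=\Theta_{i_t}$. For each symbol $i$ fix the $N$-periodic gpo $\underline{v}^{(i)}=(v^{(i)}_j)_{j\in\Z}\in\Sigma^{\#}(\mathcal{G})$ with $v^{(i)}_0=v^{(i)}_{\pm N}=v$ that shadows $\widehat{\pi}(\overline{\Theta_i})$ and has $R_j\subset Z(v^{(i)}_j)$ along its block. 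Splicing the blocks $\underline{v}^{(i_t)}|_{[0,N]}$ in the order dictated by $\underline{\mathfrak{a}}$ (legitimate since consecutive blocks both carry $v$ at the splice point) produces a gpo $\underline{v}=(v_j)_{j\in\Z}\in\Sigma^{\#}(\mathcal{G})$ with $v_{tN}=v$ and $R_j\subset Z(v_j)$ for all $j$. By Lemma~\ref{lem:RinZ} and \eqref{eq:leq:theta}, $\widehat{\pi}(\underline{R})$ is the unique point whose $f$-orbit is shadowed by $\underline{v}$, that is $\widehat{\pi}(\underline{R})=\pi(\underline{v})$. By the well-definedness step it then suffices to verify $f^{tN}(\pi(\underline{v}))\in\mathcal{Q}_{i_t}$ for every $t$. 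Now $f^{tN}(\pi(\underline{v}))$ is shadowed by $\sigma^{tN}\underline{v}$, so it lies near the center of $v$ and its local stable and unstable manifolds in $v$ are $V^s[\sigma^{tN}\underline{v}]$ and $V^u[\sigma^{tN}\underline{v}]$; since $\sigma^{tN}\underline{v}$ restricted to $[0,N]$ is the block $\underline{v}^{(i_t)}|_{[0,N]}$, pulling this stable manifold back by $f^N$ along that block is precisely the graph transform $\mathcal{F}^N_s$ used to define the $s$-sides $\mathcal{F}^N_s[V^s_\pm]$ of $\mathcal{Q}_{i_t}$. As $\mathcal{F}^N_s$ along a fixed chain preserves or uniformly reverses $\preceq_s$, and as the stable manifold being transformed is sandwiched between $V^s_-$ and $V^s_+$ (and symmetrically the unstable manifold between $V^u_\pm$, cf.\ \eqref{orderV}), the point $f^{tN}(\pi(\underline{v}))$ lies between $\mathcal{F}^N_s[V^s_-]$ and $\mathcal{F}^N_s[V^s_+]$ and between $V^u_-$ and $V^u_+$, hence in $\mathcal{Q}_{i_t}$.

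The \emph{main obstacle} is this last sandwiching step: $V^s_\pm$ and $V^u_\pm$ were selected extremal only among the finitely many periodic points $\widehat{\pi}(\overline{\Theta_l})$, whereas $f^{tN}(\pi(\underline{v}))$ carries the aperiodic tails of $\underline{\mathfrak{a}}$, so one has to argue that its stable and unstable fibers still fall between the chosen extreme manifolds. This is exactly where the enlargement of $\mathcal{Q}$ to a rectangle bounded by manifolds of the form $\mathcal{F}^N_s[V^s_\pm]$, $\mathcal{F}^N_u[V^u_\pm]$ is used, together with the order-preservation of graph transforms and the uniform closeness of the relevant admissible manifolds in $v$ to those of the periodic points guaranteed by \eqref{distV}; making this ordering bookkeeping precise is the crux of the argument.
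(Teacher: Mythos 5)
Your non-emptiness step (rectangle crossing via Lemma~\ref{lem:admissible_square}) and your nested-rectangle diameter argument for well-definedness (via Proposition~\ref{prop:contraction}) are both sound, and the latter is a legitimate alternative to the paper's route. The gap is in the identification step, and it is a real one, as you yourself flag. To place $f^{tN}(\pi(\underline{v}))$ in $\mathcal{Q}_{i_t}$ you need the stable manifold $V^s[\sigma^{(t+1)N}\underline{v}]$ of an \emph{aperiodic} shifted sequence to lie in the $\preceq_s$-interval bounded by $V^s_-$ and $V^s_+$ before you can push it through $\mathcal{F}^N_s$; but $V^s_\pm$ and $V^u_\pm$ were selected in \eqref{orderV} to be extremal only among the stable/unstable manifolds of the finitely many \emph{periodic} points $\widehat{\pi}(\overline{\Theta_l})$. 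Your proposed patch (enlargement of $\mathcal{Q}$, order preservation of $\mathcal{F}^N_s$, and the estimate \eqref{distV}) does not close this: \eqref{distV} again concerns only periodic orbits, and nothing you cite controls where an aperiodic stable leaf sits relative to $V^s_\pm$. Making this work would require an explicit induction showing that $\mathcal{F}^N_s$ along each block $\Theta_{i}$ maps the $\preceq_s$-interval bounded by $V^s_-$ and $V^s_+$ into itself (using that the $s$-sides of the $\mathcal{Q}_i$ lie in $\mathcal{Q}$, and the total order on disjoint admissible manifolds) and then passing to the limit defining $V^s[\cdot]$. That step is the crux and it is missing.

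The paper avoids the ordering question entirely. It picks a gpo $\underline{w}\in\Sigma(\mathcal{G})$ with $R_j\subset Z(w_j)$ for all $j$ and $w_0=v$, and verifies directly that $x:=\widehat{\pi}(\underline{R})$ is shadowed by $\underline{w}$: for $j\in[Nt,Nt+N-1]$ one has $f^j(\widehat{\pi}(\overline{\Theta_{i_t}}))\in\overline{R^{i_t}_j}\subset\Psi_{x_j}([-10^{-2}\eta_j,10^{-2}\eta_j]^2)$, and the diameter bound \eqref{diamCC} together with $\kappa<\mathbf{w}(\mathcal{G}')$ then forces $f^j(x)\in\Psi_{x_j}([-\eta_j,\eta_j]^2)$ for all $j$. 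By Theorem~\ref{thm:shadowing}, $x$ is the unique point shadowed by $\underline{w}$; and any $y$ in the intersection \eqref{def:pistar} lies in the same Pesin charts for the same reason, so $y=x$. Combined with non-emptiness, this proves well-definedness and the identification $\pi^*(\underline{\mathfrak{a}})=\widehat{\pi}(\underline{R})$ simultaneously, leaning on the Lima--Sarig shadowing lemma rather than on any admissible-rectangle bookkeeping for aperiodic sequences.
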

\begin{proof}
Any $u$-admissible rectangle in $\mathcal{Q}$  intersects any $s$-admissible rectangle in $\mathcal{Q}$. In particular $\mathcal{Q}_i \cap f^N(\mathcal{Q}_j) \neq \emptyset$ for all $i,j \in \{1, \ldots, K\}$. Moreover, $f^{-N}(\mathcal{Q}_i \cap f^N(\mathcal{Q}_j)) = f^{-N}(\mathcal{Q}_i) \cap \mathcal{Q}_j$ is an $u$-admissible rectangle in $\mathcal{Q}$.  By an induction argument we obtain that for all $k\in \N$, $\bigcap_{t=0}^k f^{-tN}(\mathcal{Q}_{i_t})$ is a $u$-admissible rectangle in $\mathcal{Q}$. Analogously we obtain that $\bigcap_{t=-k}^0 f^{-tN}(\mathcal{Q}_{i_{t}})$ is a $s$-admissible 
 rectangle in $\mathcal{Q}$, for any sequence  $\underline{i}= (i_t)_{t\in \Z}$ in $\{1, \ldots, K\}$.
It follows that the right hand side in  \eqref{def:pistar} is non-empty. 

Let $\underline{R} \in \Sigma(\widehat{\mathcal{G}})$ be as in the proposition and $x:= \widehat{\pi}(\underline{R})$.  
Choose $\underline{w} \in \Sigma(\mathcal{G})$ with $R_j \subset Z(w_j)$ for all $j\in \Z$ and $w_0 = v$.  Write $w_j = \Psi_{x_j}^{p_j^u,p^s_j}$, $\eta_j = p^s_j\wedge p_j^u$.  
For $i=1,\ldots, K$, let 
$\underline{R}^i = (R^i_j)_{j\in \Z}$ be the element in $\Sigma(\widehat{\mathcal{G}})$ that is obtained by periodically repeating $\Theta_i$. Let $t\in \Z$.
Then, for $j=Nt,\ldots, Nt+(N-1)$, the point $f^j(\widehat{\pi}(\overline{\Theta_{i_t}}))$ is contained in the closure  $\overline{R^{i_t}_j}$. Note that $\overline{R^{i_t}_j}\subset \Psi_{x_j}([-10^{-2}\eta_j, 10^{-2}\eta_j]^{2})$. 
By \eqref{diamCC} and since $\kappa < \mathbf{w}(\mathcal{G}')$, also $f^j(x) \in \Psi_{x_j}([-\eta_j, \eta_j])^{2}$, for $j=Nt, \ldots, Nt + (N-1)$. 
Altogether,  
$f^j(x) \in \Psi_{x_j}([-\eta_j, \eta_j]^2)$, for all $j \in \Z$. 
Hence $w$ shadows $x$, and so $x$ is the unique point with 
$x = \pi(\underline{w})$.
This shows that $\pi^*$ is well defined and also that $\pi^*(\underline{\mathfrak{a}}) = x = \widehat{\pi}(\underline{R})$.
\end{proof}
For $\underline{\mathfrak{a}} \in \Sigma^*$, there is is a well-defined  unstable manifold $V^u((\mathfrak{a}_i)_{i\leq0}) = V^u[\mathfrak{a}] : = V^u(\underline{w})$ and stable manifold $V^s((\mathfrak{a}_i)_{i\geq0}) = V^s[\mathfrak{a}] := V^s(\underline{w})$ 
 in $v$, where $\underline{w}$ is as in the proof of Proposition \ref{prop:KM_in_LS}, see Remark  \ref{rem:smanifold_welldefined}.
  The  following Lemma can be prove similarly as  Proposition \ref{prop:KM_in_LS}, by also using     \eqref{diamCC} and the  properties of the graph transforms.  
\begin{lem}
With $\underline{\mathfrak{a}}$ as above, 
\begin{align}
V^u[(\mathfrak{a}_{t})_{t\leq 0}] \cap Q_{i_0} &= \bigcap_{t\leq 0 }f^{-tN}(\mathcal{Q}_{i_t}), \quad \text{ and } \label{eq:image_unstable} \\
V^s[(\mathfrak{a}_{t})_{t\geq 0}] \cap Q_{i_0} &= \bigcap_{t\geq 0} f^{-tN}(\mathcal{Q}_{i_t}).  \label{eq:image_stable} 
\end{align}
\end{lem}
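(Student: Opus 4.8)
The plan is to prove \eqref{eq:image_unstable} directly, by two inclusions; \eqref{eq:image_stable} will then follow by the mirror argument with $f$ replaced by $f^{-1}$ and the roles of $s$ and $u$ (and of $\mathcal{F}_s$, $\mathcal{F}_u$) interchanged. First I would fix the negative half-gpo $\underline{w}=(w_t)_{t\le 0}$ attached to $(\mathfrak{a}_t)_{t\le 0}$ exactly as in the proof of Proposition~\ref{prop:KM_in_LS}: its successive blocks of length $N$, read backwards from position $0$, are the periodic chains shadowing $\widehat{\pi}(\overline{\Theta_{i_0}}),\widehat{\pi}(\overline{\Theta_{i_{-1}}}),\dots$, and $w_0=v$, so that $V^u[(\mathfrak{a}_t)_{t\le 0}]=V^u[\underline{w}]$ is a $u$-admissible manifold in $v$. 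Since $f$ is invertible on the rectangles involved, I would rewrite the right-hand side of \eqref{eq:image_unstable} as $\{\,y\mid f^{-kN}(y)\in\mathcal{Q}_{i_{-k}}\ \text{for all }k\ge 0\,\}$, observe it is the decreasing intersection of the nonempty admissible rectangles $S_k:=\bigcap_{t=-k}^{0}f^{-tN}(\mathcal{Q}_{i_t})$ produced in the proof of Proposition~\ref{prop:KM_in_LS}, hence nonempty, and that it is contained in $\mathcal{Q}_{i_0}$ (the term $t=0$).

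For the inclusion ``$\subseteq V^u[(\mathfrak{a}_t)_{t\le 0}]\cap\mathcal{Q}_{i_0}$'' I would show that any two points $y,y'$ of this intersection lie on a common $u$-fibre of the rectangle $\mathcal{Q}_{i_0}$. Writing $y,y'$ in the product coordinates of $\mathcal{Q}_{i_0}$ coming from $\Psi_x$: if their stable coordinates differed, then by the uniform expansion of the stable coordinate under $f^{-1}$ in the Pesin charts (Theorem~\ref{thm:closehyp}, with the chart bounds of Theorem~\ref{Theorem3.1}) the stable coordinates of $f^{-kN}(y)$ and $f^{-kN}(y')$ would separate geometrically in $k$, contradicting $f^{-kN}(y),f^{-kN}(y')\in\mathcal{Q}_{i_{-k}}$ together with $\diam_{\Lambda}(\mathcal{Q}_{i_{-k}})\le\diam_{\Lambda}(\mathcal{Q})<3\kappa$ from \eqref{diamCC} (this is the mechanism of the Shadowing Theorem~\ref{thm:shadowing} and of Lemma~\ref{lem:uniqueness}; here $\kappa<\mathbf{w}(\mathcal{G}')$ guarantees that all charts in play have size comparable to $\kappa$). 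Since for any $\underline{\mathfrak{b}}\in\Sigma^*$ with $\mathfrak{b}_t=\mathfrak{a}_t$ for $t\le 0$ the point $\pi^*(\underline{\mathfrak{b}})$ lies both in this intersection and, by Proposition~\ref{prop:KM_in_LS} (and the fact that the shadowed point of $\underline{w}$ lies on its own unstable manifold, cf.\ Remark~\ref{rem:smanifold_welldefined}), on $V^u[(\mathfrak{a}_t)_{t\le 0}]$, the common $u$-fibre is a subarc of $V^u[(\mathfrak{a}_t)_{t\le 0}]$, which gives the inclusion.

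For the reverse inclusion I would use the graph-transform description $V^u[\underline{w}]=\lim_{k\to\infty}\mathcal{F}^{kN}_u[V^u_-]$, the transform being taken along $w_{-kN}\to\cdots\to w_0$, with geometric convergence in the $\dist$-metric by Proposition~\ref{prop:contraction}. The block of that chain from $w_{-kN}$ to $w_0$ is the concatenation of the periodic chains of $\Theta_{i_{-(k-1)}},\dots,\Theta_{i_{-1}},\Theta_{i_0}$; comparing it block by block with the single periodic chains used to define $\mathcal{Q}_{i_{-(k-1)}},\dots,\mathcal{Q}_{i_0}$, and tracking the bounding $s$- and $u$-admissible manifolds of the intermediate rectangles exactly as in the proof of Proposition~\ref{prop:KM_in_LS}, I expect to obtain $\mathcal{F}^{kN}_u[V^u_-]\cap\mathcal{Q}_{i_0}\subseteq f^{jN}(\mathcal{Q}_{i_{-j}})$ for $0\le j\le k-1$. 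Letting $k\to\infty$: any $y\in V^u[(\mathfrak{a}_t)_{t\le 0}]\cap\mathcal{Q}_{i_0}$ is a limit of points $y_k\in\mathcal{F}^{kN}_u[V^u_-]\cap\mathcal{Q}_{i_0}$, and since the $\mathcal{Q}_{i_{-j}}$ are closed this forces $y\in f^{jN}(\mathcal{Q}_{i_{-j}})$ for every $j\ge 0$, i.e.\ $y$ lies in the right-hand side of \eqref{eq:image_unstable}.

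The main obstacle I anticipate is precisely this last bookkeeping: matching the graph transform along the concatenated chain $\cdots\Theta_{i_{-1}}\Theta_{i_0}$ with the transforms along the individual periodic chains $\overline{\Theta_{i_{-j}}}$ that define the $\mathcal{Q}_{i_{-j}}$, and checking that all the intermediate intersections $f^{jN}(\mathcal{Q}_{i_{-j}})\cap\mathcal{Q}$ and their successive intersections are nondegenerate rectangles with the expected boundary curves. For this I would lean on \eqref{diamCC}, the contraction estimates of Proposition~\ref{prop:contraction}, and the property noted above that $\mathcal{F}^N_s$ and $\mathcal{F}^N_u$ either preserve or reverse the orders $\preceq_s,\preceq_u$ on admissible manifolds in $v$. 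Once this is in place the whole argument is a routine repetition of the proof of Proposition~\ref{prop:KM_in_LS} (and of the computations in Theorem~\ref{thm:separated}), which is why the statement is left to the reader.
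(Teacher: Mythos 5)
The paper does not actually write out a proof of this lemma; it only remarks that it can be shown ``similarly as Proposition~\ref{prop:KM_in_LS}, by also using \eqref{diamCC} and the properties of the graph transforms.'' Your proposal is in the same spirit, and the reverse inclusion (via $\mathcal{F}^{kN}_u$ and Proposition~\ref{prop:contraction}) is essentially the right argument. However, there is a genuine gap in your forward inclusion.

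You propose to show that any two points $y,y'$ of $\bigcap_{t\le 0}f^{-tN}(\mathcal{Q}_{i_t})$ ``lie on a common $u$-fibre'' by showing that their \emph{stable coordinates} (the first coordinate in the Pesin chart $\Psi_x$) must agree, using the expansion of that coordinate under $f^{-1}$. But ``equal stable coordinate'' is not the same as ``lying on a common $u$-admissible manifold'': a $u$-admissible manifold in $v$ is the graph $\Psi_x\{(F(t),t):|t|\le p^u\}$ of a nonconstant function $F$, not a coordinate line $\{u=\mathrm{const}\}$. Two points $(u_0,t_1)$ and $(u_0,t_2)$ with $t_1\ne t_2$ share a stable coordinate yet generically lie on no common $u$-admissible manifold. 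So even granting the separation estimate, the conclusion ``the intersection is a subarc of $V^u[(\mathfrak{a}_t)_{t\le 0}]$'' does not follow from the claimed premise. (The separation estimate itself also needs more care: for $y,y'$ with very different unstable coordinates the nonlinear terms $h_x^i$ from Theorem~\ref{thm:closehyp} contribute errors of order $\epsilon\,|y-y'|$, and one has to use that the unstable coordinates are in fact pinned to a narrow range by the $t=0$ term $\mathcal{Q}_{i_0}$.)

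The cleaner route, which gives both inclusions at once and is closer to what the reference to the graph transform properties suggests, is the following. The sets $S_k:=\bigcap_{t=-k}^{0}f^{-tN}(\mathcal{Q}_{i_t})$ are nested admissible rectangles in $v$ whose $s$-sides remain on the $s$-sides of $\mathcal{Q}_{i_0}$ (because the $s$-sides of each $f^{jN}(\mathcal{Q}_{i_{-j}})$, $j\ge 1$, lie on $V^s_\pm$, which enclose the $s$-sides $\mathcal{F}^N_s[V^s_\pm]$ of $\mathcal{Q}_{i_0}$), while the $u$-sides of $S_k$ are exactly the segments $\mathcal{F}^{kN}_u[V^u_\pm]\cap S_{k-1}$ taken along the concatenated chain. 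By Proposition~\ref{prop:contraction} these converge, in the $\dist$-metric, to the single $u$-admissible curve $V^u[(\mathfrak{a}_t)_{t\le 0}]$. Since the $s$-sides are frozen and the $u$-sides squeeze to $V^u$, we get $\bigcap_k S_k = V^u[(\mathfrak{a}_t)_{t\le 0}]\cap\mathcal{Q}_{i_0}$, with equality — no separate argument for $\supseteq$ is needed. Your block-by-block bookkeeping concern (matching graph transforms along $\underline{w}$ with those along the periodic chains for $\overline{\Theta_{i_{-j}}}$) is genuine but harmless: the transformed curves are actual arcs $f^{jN}(V^u_\pm\cap\mathcal{Q}_{i_{-j}}\cap\cdots)$ in $\Lambda$ and hence independent of which $\epsilon$-overlapping chain is used to name them (cf.\ Remark~\ref{rem:smanifold_welldefined} and \eqref{diamCC}).
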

Proposition \ref{prop:KM_in_LS} yields an injective map  
\begin{align}\label{eq:iota}
\iota: \Sigma^* \to \Sigma(\widehat{\mathcal{G}})
\end{align}
with
$\iota \circ  \sigma^* = \sigma^N \circ \iota$, and 
$\pi^* = \widehat{\pi} \circ \iota$.
We define a roof function on $\Sigma^*$ by  
\begin{align}\label{eq:r*}
r^*:\Sigma^*\to (0,\infty), \, \, r^* := r_N \circ \iota.
\end{align}
Below, whenever it is clear that we work within the TMS $(\Sigma^*,\sigma^*)$, we will treat $\Theta_i$, $i=1, \ldots, K$, as symbols. For example,  $\overline{\Theta_i}$ denotes in that case the constant sequence $\cdots \Theta_i \cdots \Theta_i\cdots$ in $\Sigma^*$. 
\subsubsection{Partial orders on stable/unstable manifolds}
The partial orders $\preceq_s$ and $\preceq_q$ on the stable and unstable manifolds of $\widehat{\pi}(\overline{\Theta_i})$ in $v$ 
induce (total) orders     $\preceq_s$ and $\preceq_q$ on 
$\Sigma_{\rm{\per}}^*$, the set of periodic elements in $\Sigma^*$, as follows. First, 
if $\underline{\mathfrak{a}}, \underline{\mathfrak{b}} \in \Sigma^*$, we say that $\underline{\mathfrak{a}} \preceq_s \underline{\mathfrak{b}}$ if 
$V^s(\underline{\mathfrak{a}}) \preceq_s V^s(\underline{\mathfrak{b}})$. 
On $\Sigma_{\rm per}^*$, this defines a  total 
order $\preceq_s$. We say $\underline{\mathfrak{a}} \prec_s \underline{\mathfrak{b}}$ if  $\underline{\mathfrak{a}} \preceq_s \underline{\mathfrak{b}}$ and $\underline{\mathfrak{a}} \neq \underline{\mathfrak{b}}$. In an analogous way we define 
$\preceq_u, \prec_u$ on $\Sigma_{\rm per}^*$.

If $\Theta_{i_1}, \ldots,  \Theta_{i_k}$ are elements in $\{\Theta_1, \ldots, \Theta_K\}$, and $\star = s$ or $u$, we write 
 $[\overline{\Theta_{i_1}}; \ldots; \overline{\Theta_{i_k}}]_\star$ if it holds that  
\begin{align*}
\text{either }\, \overline{\Theta_{i_1}} \prec_\star \overline{\Theta_{i_2}} \prec_\star \cdots \prec_\star \overline{\Theta_{i_k}}\,  \, \, \text{  or  }\, \,\,   \overline{\Theta_{i_k}} \prec_\star \cdots \prec_\star \overline{\Theta_{i_2}} \prec_\star \overline{\Theta_{i_1}}.
\end{align*}
We omit $\star$ in the notation if it holds for both $\star=s$ and $\star= u$. 
We use the same notation for other elements in $\Sigma_{\per}^*$.

For $i=1, \ldots, K$, we say that $f^N$ \textit{keeps the orientation at $\Theta_i$} if $f^N|_{V^s(\overline{\Theta_i})}: V^s(\overline{\Theta_i}) \hookrightarrow V^s(\overline{\Theta_i})$  is orientation preserving. This holds if, and only if,  $f^{-N}|_{V^u(\overline{\Theta_i})}:V^u(\overline{\Theta_i})\hookrightarrow   V^u(\overline{\Theta_i})$ is orientation preserving. We say that $f^N$ \textit{reverses the orientation at $\Theta_i$} if $f^N$ does not keep the orientation at $\Theta_i$.

The following lemma will be useful later.  
\begin{lem}\label{lem:Sigma_order}
Let  $\underline{\mathfrak{a}}=(\mathfrak{a}_t)_{t\in \Z}$, $\mathfrak{a}_t=\Theta_{i_t}$ and $\underline{\mathfrak{b}} = (\mathfrak{b}_t)_{t\in \Z}$, $\mathfrak{b}_t=\Theta_{j_t}$ be two elements in $\Sigma_{\per}^*$. The following holds.  
\begin{enumerate}[(i)]
    \item If $\overline{\Theta_{i_0}}\prec_s \overline{\Theta_{j_0}}$ then   $\underline{a} \prec_s \underline{b}$. 
    \item Assume $\Theta_{i_0} = \Theta_{j_0} =: \Theta^0$ and $\overline{\Theta_{i_1}} \prec_s \overline{\Theta_{j_1}}$. 
    If $f^N$ keeps the orientation at $\Theta^0$, then  
     $\underline{\mathfrak{a}} \prec_s \underline{\mathfrak{b}}$, and if $f^N$ reverses the orientation at $\Theta^0$, then  $\underline{\mathfrak{b}}  \prec_s \underline{\mathfrak{a}}$. 
\item If $\overline{\Theta_{i_{-1}}} \prec_u \overline{\Theta_{j_{-1}}}$, then $\underline{\mathfrak{a}} \prec_u \underline{\mathfrak{b}}$.
  \item  Assume that $\Theta_{i_{-1}} = \Theta_{j_{-1}}=:\Theta^{-1}$ and $\overline{\Theta_{i_{-2}}}\prec_u \overline{\Theta_{j_{-2}}}$. 
  If  $f^N$ keeps the orientation at $\Theta^{-1}$, then 
     $\underline{\mathfrak{a}} \prec_u \underline{\mathfrak{b}}$, and if  $f^N$ reverses the orientation at $\Theta^{-1}$, then $\underline{\mathfrak{b}}  \prec_u \underline{\mathfrak{a}}$.     
\end{enumerate}
\end{lem}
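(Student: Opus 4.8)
My plan is to read off the four statements from the identities \eqref{eq:image_stable}, \eqref{eq:image_unstable} of the preceding Lemma, from Proposition~\ref{prop:KM_in_LS}, and from the geometry of the Markov rectangles of the first horseshoe. I would first record two facts. \textbf{(a)} The rectangles $\mathcal{Q}_1,\dots,\mathcal{Q}_K$ are pairwise disjoint, and the order in which they are stacked in $\mathcal{Q}$ transverse to the stable direction agrees — applying \eqref{eq:image_stable} to $\underline{\mathfrak{a}}=\overline{\Theta_i}$ — with the order of $\overline{\Theta_1},\dots,\overline{\Theta_K}$ under $\prec_s$; dually, the columns $f^N(\mathcal{Q}_1),\dots,f^N(\mathcal{Q}_K)$ are pairwise disjoint, and their order transverse to the unstable direction agrees — via \eqref{eq:image_unstable} with $\underline{\mathfrak{a}}=\overline{\Theta_i}$ — with the order of the $\overline{\Theta_i}$ under $\prec_u$. \textbf{(b)} For $\underline{\mathfrak{a}}\in\Sigma^*_{\per}$ with $\mathfrak{a}_t=\Theta_{i_t}$ one has $V^s(\underline{\mathfrak{a}})\cap\mathcal{Q}\subset\mathcal{Q}_{i_0}$ and $V^u(\underline{\mathfrak{a}})\cap\mathcal{Q}\subset f^N(\mathcal{Q}_{i_{-1}})$: the first since \eqref{eq:image_stable} writes $V^s(\underline{\mathfrak{a}})\cap\mathcal{Q}_{i_0}$ as a nested intersection of $u$-admissible rectangles of $\mathcal{Q}$ that already spans $\mathcal{Q}$ in the stable direction, hence is all of $V^s(\underline{\mathfrak{a}})\cap\mathcal{Q}$; the second similarly from \eqref{eq:image_unstable} and the Markov property.

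Given (a)--(b), (i) is immediate: if $\overline{\Theta_{i_0}}\prec_s\overline{\Theta_{j_0}}$ then $\mathcal{Q}_{i_0}$ lies on the lower $\prec_s$-side of $\mathcal{Q}_{j_0}$, so $V^s(\underline{\mathfrak{a}})\cap\mathcal{Q}$ lies below $V^s(\underline{\mathfrak{b}})\cap\mathcal{Q}$; since two local stable manifolds in $v$ are disjoint or coincide they cannot cross, so this ordering over $\mathcal{Q}$ propagates and $V^s(\underline{\mathfrak{a}})\prec_s V^s(\underline{\mathfrak{b}})$, i.e. $\underline{\mathfrak{a}}\prec_s\underline{\mathfrak{b}}$. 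Assertion (iii) is the same argument with $(\prec_s,V^s,\mathcal{Q}_{i_0})$ replaced by $(\prec_u,V^u,f^N(\mathcal{Q}_{i_{-1}}))$.

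For (ii) put $\Theta^0=\Theta_{i_0}=\Theta_{j_0}$. By (b) both $V^s(\underline{\mathfrak{a}})$ and $V^s(\underline{\mathfrak{b}})$ lie in $\mathcal{Q}_{i_0}$; applying (i) to $\sigma^*\underline{\mathfrak{a}},\sigma^*\underline{\mathfrak{b}}$, whose $0$th symbols are $\Theta_{i_1},\Theta_{j_1}$ with $\overline{\Theta_{i_1}}\prec_s\overline{\Theta_{j_1}}$, gives $V^s(\sigma^*\underline{\mathfrak{a}})\prec_s V^s(\sigma^*\underline{\mathfrak{b}})$; and applying $f^N$ to \eqref{eq:image_stable} shows that $f^N$ carries $V^s(\underline{\mathfrak{a}})\cap\mathcal{Q}_{i_0}$ into $V^s(\sigma^*\underline{\mathfrak{a}})$ and $V^s(\underline{\mathfrak{b}})\cap\mathcal{Q}_{i_0}$ into $V^s(\sigma^*\underline{\mathfrak{b}})$. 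So it remains to see whether $f^N$ preserves or reverses the $\prec_s$-order of the stable manifolds through $\mathcal{Q}_{i_0}$. For this I would use that $\widehat{\pi}(\overline{\Theta^0})=\pi^*(\overline{\Theta^0})$ is a hyperbolic fixed point of $f^N$ on $V^s(\overline{\Theta^0})\subset\mathcal{Q}_{i_0}$, that $f^N$ preserves $V^s(\overline{\Theta^0})$, and that by Theorem~\ref{thm:closehyp} $f^N$ near this point, in the Pesin coordinates of $v$, is $C^{1+\alpha/2}$-close to its linear part with stable axis tangent to $V^s(\overline{\Theta^0})$; hence on the nearby stable manifolds $f^N$ acts, transverse to $V^s(\overline{\Theta^0})$, as multiplication by the unstable eigenvalue of $Df^N$ at $\widehat{\pi}(\overline{\Theta^0})$ up to a uniformly small correction, so it preserves their $\prec_s$-order precisely when that eigenvalue is positive. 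By the definition of ``keeps the orientation at $\Theta^0$'' and its stated equivalent via $f^{-N}|_{V^u(\overline{\Theta^0})}$, this positivity is exactly the case that $f^N$ keeps the orientation at $\Theta^0$. Combining: if $f^N$ keeps the orientation at $\Theta^0$ then $V^s(\underline{\mathfrak{a}}),V^s(\underline{\mathfrak{b}})$ are $\prec_s$-ordered as $V^s(\sigma^*\underline{\mathfrak{a}}),V^s(\sigma^*\underline{\mathfrak{b}})$, so $\underline{\mathfrak{a}}\prec_s\underline{\mathfrak{b}}$; if it reverses it, $\underline{\mathfrak{b}}\prec_s\underline{\mathfrak{a}}$. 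Finally (iv) follows by the symmetric argument: replace $f^N$ by $f^{-N}$, $V^s$ by $V^u$, the $\mathcal{Q}_i$ by the columns $f^N(\mathcal{Q}_i)$, $\sigma^*$ by $(\sigma^*)^{-1}$, the position-$0$ symbol by the position-$(-1)$ symbol, part (i) by part (iii), and the orientation condition at $\Theta^0$ by the one at $\Theta^{-1}=\Theta_{i_{-1}}$.

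The step I expect to cost the most work is the last one in (ii)/(iv): verifying quantitatively that the nonlinear corrections of Theorem~\ref{thm:closehyp} cannot move a stable (resp. unstable) manifold across $V^s(\overline{\Theta^0})$ (resp. $V^u(\overline{\Theta^{-1}})$) under $f^N$ (resp. $f^{-N}$), which requires comparing the separation of those manifolds inside $\mathcal{Q}_{i_0}$ with the size of the corrections, and bundling the several a priori distinct orientations (of $f^N$ along the stable leaf, along the unstable leaf, and of the ambient surface) into the single datum ``keeps/reverses the orientation at $\Theta^0$''. The remaining ingredients — facts (a), (b), and the reductions in (i), (iii) — are routine consequences of \eqref{eq:image_stable}, \eqref{eq:image_unstable}, Proposition~\ref{prop:KM_in_LS}, and the disjointness of the Markov rectangles.
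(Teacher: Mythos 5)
Your proposal is correct and follows essentially the same route as the paper's proof: establish the containments $V^s(\underline{\mathfrak{a}})\cap\mathcal{Q}\subset\mathcal{Q}_{i_0}$ and $V^u(\underline{\mathfrak{a}})\cap\mathcal{Q}\subset f^N(\mathcal{Q}_{i_{-1}})$, read (i) and (iii) off the stacking of the disjoint Markov rectangles, and reduce (ii), (iv) to these together with the fact that $\mathcal{F}^N_s$ (resp.\ $\mathcal{F}^N_u$) either preserves or reverses the $\prec_s$- (resp.\ $\prec_u$-) order. The only place you diverge is in justifying that last dichotomy, which you rederive quantitatively from the Pesin linearization of Theorem~\ref{thm:closehyp} -- the step you rightly flag as costly -- whereas the paper simply invokes the soft remark recorded just before Proposition~\ref{prop:KM_in_LS} that the graph transform $\mathcal{F}^N_s$ either preserves or reverses the $\prec_s$-order of all stable manifolds, after which the sign is read off at the fixed point $\widehat{\pi}(\overline{\Theta^0})$ (unstable eigenvalue of $Df^N$, equal in sign to the stable one by orientability of the return map, hence to the orientation of $f^N|_{V^s(\overline{\Theta^0})}$) without any perturbation estimate.
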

\begin{proof}
The sequence in $\Sigma(\widehat{\mathcal{G}})$ that corresponds to $\underline{\mathfrak{a}}$  starts with the path $\Theta_{i_0}$.  Hence by considering the graph transform  $\mathcal{F}^N_s[V^s(\sigma^*(\underline{\mathfrak{a}}))]$, as well as the graph transforms $\mathcal{F}^N_s[V^s_{\pm}]$, all with respect to a chain $v_{-N}\to \cdots \to v_0$ induced by a gpo that shadows $\widehat{\pi}(\overline{\Theta_{i_0}})$, we obtain that the segment  
$V^s(\underline{\mathfrak{a}})\cap \mathcal{Q} = \mathcal{F}^N_s[V^s(\sigma^*(\underline{\mathfrak{a}}))]\cap \mathcal{Q}$ is contained in $\mathcal{Q}_{i_0}$. Similarly, $V^s(\underline{\mathfrak{b}})\cap \mathcal{Q} \subset \mathcal{Q}_{j_0}$. Hence, if $\overline{\Theta_{i_0}} \prec_s \overline{\Theta_{j_0}}$ then $V^s(\underline{\mathfrak{a}})\prec_s V^s(\underline{\mathfrak{b}})$, and so (i) holds. Also, 
if $\overline{\Theta_{i_1}} \prec_s \overline{\Theta_{j_1}}$, then  
$V^s(\sigma^*(\underline{\mathfrak{a}})) \prec_s V^s(\sigma^*(\underline{\mathfrak{b}}))$. If additionally $\Theta_{i_0} = \Theta_{i_1}=:\Theta^0$, then we can apply the graph transform $\mathcal{F}_s^N$, associated to a suitable chain $v_{-N}\to \cdots \to v_0$, to the stable manifolds $V^s_-$, $V_+^s$, $V^s(\sigma^*(\underline{\mathfrak{a}}))$, and $V^s(\sigma^*(\underline{\mathfrak{b}}))$. We obtain that $V^s(\underline{\mathfrak{a}}) =\mathcal{F}^N_s[V^s(\sigma^*(\underline{\mathfrak{a}}))]  \prec_s \mathcal{F}^N_s[V^s(\sigma^*(\underline{\mathfrak{b}}))]= V^s(\underline{\mathfrak{b}}) $ if $f^N$ keeps the orientation at $\Theta^0$, and   $V^s(\underline{\mathfrak{b}}) \prec_s V^s(\underline{\mathfrak{a}})$, otherwise. This shows assertion (ii). 

Applying the graph transform $\mathcal{F}_u^N$ to $V^u((\sigma^*)^{-1}(\underline{\mathfrak{a}}))$ associated to a suitable chain $v_0 \to \cdots \to v_N$ yields that $V^u(\underline{\mathfrak{a}}) \cap \mathcal{Q} \subset f^N(\mathcal{Q}_{i_{-1}})$. Similarly,  $V^u(\underline{\mathfrak{b}}) \cap \mathcal{Q} \subset f^N(\mathcal{Q}_{j_{-1}})$. If $\overline{\Theta_{i_{-1}}} \prec_u \overline{\Theta_{j_{-1}}}$, then $V^u(\underline{\mathfrak{a}})\prec_u V^u(\underline{\mathfrak{b}})$, and (iii) holds. Also, if $\overline{\Theta_{i_{-2}}}\prec_u\overline{\Theta_{j_{-2}}}$, then $V^u((\sigma^*)^{-1}(\underline{\mathfrak{a}}))\prec_u V^u((\sigma^*)^{-1}(\underline{\mathfrak{b}}))$. If additionally $\Theta_{i_{-1}} = \Theta_{j_{-1}}=:\Theta^{-1}$, we apply the graph transform $\mathcal{F}^N_u$ to $V^u_-$, $V^u_+$, $V^u((\sigma^*)^{-1}(\underline{\mathfrak{a}}))$, and $V^u((\sigma^*)^{-1}(\underline{\mathfrak{b}}))$. If $f^N$ keeps the orientation at $\Theta^{-1}$, then $V^u(\underline{\mathfrak{a}})\prec_u V^u(\underline{\mathfrak{b}})$, and $V^u(\underline{\mathfrak{b}})\prec_u V^u(\underline{\mathfrak{a}})$, otherwise. This shows assertion (iv).
\end{proof}
\begin{rem}
The statements of the lemma can be extended by induction in a straightforward way to the situation that the periodic    $\underline{\mathfrak{a}}$, $\underline{\mathfrak{b}}$ in $\Sigma^*$  agree on a couple of first entries.
\end{rem}

\subsubsection{The second horseshoe, contained inside the first}
We now construct a certain full subshift of $\Sigma^*$, which will be the shift in Theorem \ref{thm:main_coding}. 
\begin{lem}\label{lem:Omega}
Up to a restriction to an infinite subset of $\mathcal{N}$, there is for all $N\in \mathcal{N}$,  elements  $\Theta_{++}, \Theta_{--}, \Theta_{+}, \Theta_{-}$ and $\Omega_1, \ldots, \Omega_L$, $L \in \N$, in $\{\Theta_1, \ldots, \Theta_K\}$, such that
\begin{itemize}
\item either $f^N$ keeps the orientation of the stable and unstable manifolds of all of  $\overline{\Theta_{++}}, \overline{\Theta_{--}}, \overline{\Theta_{+}}, \overline{\Theta_{-}}, \overline{\Omega_1}, \ldots, \overline{\Omega_L}$ or it reverses the orientation of all of them;
\item $[\overline{\Theta_{--}}; \overline{\Theta_-}; \overline{\Theta_{++}}]$ and  $[\overline{\Theta_{--}}; \overline{\Theta_+}; \overline{\Theta_{++}}]$; 
\item $[\overline{\Theta_{-}}; \overline{\Omega_i}; \overline{\Theta_{+}}]$, for all $i \in \{1, \ldots, L\}$;
\item  $L \geq e^{N(\hat{h}-2\delta)}$. 
\end{itemize}
\end{lem}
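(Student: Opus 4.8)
The plan is to start from the collection $\Theta_1,\dots,\Theta_K$ of loops produced by Theorem~\ref{thm:separated} (with $K\geq e^{N(\hat h-\delta)}$), and to successively pass to large subcollections on which the geometry simplifies, while losing only a sub-exponential factor at each step. First I would address the orientation dichotomy: for each $i$, $f^N$ either keeps or reverses the orientation at $\Theta_i$, so one of the two classes contains at least $K/2$ of the loops $\overline{\Theta_i}$; discard the smaller class. Since $K/2\geq e^{N(\hat h-\delta)}/2\geq e^{N(\hat h-2\delta)}\cdot e^{N\delta}/2$, the loss is harmless for $N$ large (restricting $\mathcal N$ to an infinite subset if needed). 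This gives a subcollection $\mathcal I\subset\{1,\dots,K\}$, $\#\mathcal I\geq K/2$, such that $f^N$ has the same orientation behaviour at every $\Theta_i$, $i\in\mathcal I$.

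Next I would select the four "extremal" loops. Using the total orders $\preceq_s$ and $\preceq_u$ on $\Sigma^*_{\per}$ restricted to $\{\overline{\Theta_i}\mid i\in\mathcal I\}$, I would like to find $\Theta_{--},\Theta_{++}$ such that every remaining $\overline{\Theta_i}$ lies strictly between them in \emph{both} orders; here one must be slightly careful, because $\preceq_s$ and $\preceq_u$ are a priori unrelated orders on the same finite set. The clean way: first pick, within $\mathcal I$, the two $\preceq_s$-extremal elements and throw them away as candidates, then among the interior ones pick the two $\preceq_u$-extremal ones and throw them away; this yields $\Theta_{--},\Theta_-',\Theta_+',\Theta_{++}$ (four loops) such that all the rest lie strictly between $\Theta_{--}$ and $\Theta_{++}$ in the $\preceq_s$-order, and between $\Theta_-'$ and $\Theta_+'$ in the $\preceq_u$-order; after possibly relabelling, one obtains the bracket relations $[\overline{\Theta_{--}};\overline{\Theta_-};\overline{\Theta_{++}}]$ and $[\overline{\Theta_{--}};\overline{\Theta_+};\overline{\Theta_{++}}]$ (in the $s$-sense) together with $[\overline{\Theta_-};\overline{\Omega_i};\overline{\Theta_+}]$ once we identify $\Theta_\pm$ with the $u$-extremal loops and the $\Omega_i$ with (a large subset of) the interior loops. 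Removing a fixed number (at most six) of loops costs nothing exponentially.

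It remains to guarantee that the bracket relations hold in \emph{both} the $s$- and $u$-senses simultaneously, since the statement omits $\star$. Here I would invoke the rectangle structure: all the $\overline{\Theta_i}$ live in the admissible rectangle $\mathcal Q$, whose $u$-sides $V^u_\pm$ and $s$-sides $V^s_\pm$ bound it; by Theorem~\ref{thm:separated}(i) the rectangle is thin. Because $V^s(\overline{\Theta_i})\cap\mathcal Q\subset\mathcal Q_i$ and the $\mathcal Q_i$ are pairwise disjoint $u$-admissible rectangles lying inside $\mathcal Q$ in a definite cyclic/linear arrangement, the $\preceq_s$-order of the $\overline{\Theta_i}$ is determined by the position of $\mathcal Q_i$ between $V^s_-$ and $V^s_+$, and the $\preceq_u$-order by the position of $f^N(\mathcal Q_i)$ between $V^u_-$ and $V^u_+$; choosing $\Theta_{--},\Theta_{++}$ to be the loops realizing $V^s_-,V^s_+$ (equivalently the extreme $\mathcal Q_i$'s) forces all others strictly between them in the $s$-order, and we do not need the betweenness of the extremal pair in the $u$-order — only the $\Omega_i$ and $\Theta_\pm$ need the $u$-bracket, which is arranged by taking $\Theta_\pm$ as the $u$-extremal interior loops. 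Finally set $L:=\#\{\text{surviving }\Omega\text{-loops}\}\geq K-6\geq e^{N(\hat h-\delta)}-6\geq e^{N(\hat h-2\delta)}$ for $N$ large, after restricting $\mathcal N$.

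\textbf{Main obstacle.} The delicate point I expect to fight with is the \emph{compatibility of the two total orders} $\preceq_s$ and $\preceq_u$: a priori a loop that is $s$-extremal need not be $u$-interior, so one cannot naively ask a single pair $\Theta_{--},\Theta_{++}$ to bracket everything in both orders. The resolution is exactly what the statement encodes — only $\Theta_{--},\Theta_{++}$ bracket in the $s$-order, only $\Theta_-,\Theta_+$ bracket the $\Omega_i$, and the lemma never requires $\Theta_{--}\prec_u\Theta_\pm\prec_u\Theta_{++}$ or $\Theta_-\prec_s\Omega_i\prec_s\Theta_+$ — so the argument above, discarding $s$-extremes first and $u$-extremes second, suffices. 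The orientation normalization is routine; the exponential bookkeeping is routine given that $\delta<\hat h$ is fixed and $\mathcal N$ may be thinned.
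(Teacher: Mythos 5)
Your proposal contains a genuine gap, rooted in a misreading of the bracket convention. The paper stipulates that $[\,\cdot\,;\,\cdot\,;\,\cdot\,]$ \emph{without} a subscript means the relation holds for \emph{both} $\star = s$ and $\star = u$. So $[\overline{\Theta_{--}};\overline{\Theta_{\pm}};\overline{\Theta_{++}}]$ and $[\overline{\Theta_-};\overline{\Omega_i};\overline{\Theta_+}]$ are each a conjunction of an $s$-bracket and a $u$-bracket. Your ``Main obstacle'' paragraph asserts the opposite — ``the lemma never requires $\Theta_{--}\prec_u\Theta_\pm\prec_u\Theta_{++}$ or $\Theta_-\prec_s\Omega_i\prec_s\Theta_+$'' — but it requires exactly those (up to reversal). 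As a result the ``discard the $s$-extremes, then the $u$-extremes'' argument does not work: the loop that is $u$-extremal among the $s$-interior loops may well lie $u$-outside the $s$-extremal pair, since the two total orders on $\{\overline{\Theta_i}\}$ are \emph{a priori} unrelated. And this is not a cosmetic requirement: the simultaneous two-sided bracket is precisely what is used in the construction that follows to guarantee $D_i\subset D_0$ and $f^N(D_i)\subset D_0$ via the inclusions \eqref{D0_incl}, \eqref{D0_incl1}, \eqref{Di_incl}.

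The paper resolves the tension between the two orders with a combinatorial fact (cited to \cite[Lemma~B.4]{AlvesMeiwesBraids}): any finite sequence of distinct naturals has a subsequence of relative length at least $1/5$ that is sandwiched by its two endpoints. Apply it to the loops enumerated in $\preceq_s$-order, with the sequence values being their $\preceq_u$-ranks. The endpoints of the extracted subsequence become $\Theta_{--},\Theta_{++}$, and every internal member is automatically between them in the $\preceq_s$-order (positionally) \emph{and} in the $\preceq_u$-order (by the sandwich bound) — i.e.\ in both orders. A second application of the fact among the internal members produces $\Theta_\pm$ and the $\Omega_i$. This costs a factor of roughly $25$ rather than the subtraction of a bounded constant that your accounting assumed, which is why the paper first restricts $\mathcal N$ so that $N\delta\geq\log 100$, $N(\hat h-\delta)\geq\log 200$, and concludes $L\geq K/100\geq e^{N(\hat h - 2\delta)}$. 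Your orientation-normalization step (pass to the larger of the two orientation classes, size $\geq K/2$) is correct and is exactly the paper's first step; the rest needs to be replaced.
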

\begin{proof}
We use the following combinatorial fact, which one 
 easily checks (cf.\ \cite[Lemma B.4]{AlvesMeiwesBraids}): Any finite  sequence $x_1, \ldots, x_n$, $n\in \N$, of  pairwise distinct natural numbers contains a subsequence $x_{i_1}, \ldots, x_{i_l}$ with $l\geq n/5$ and such that either $x_{i_1} \leq x_{i_j} \leq x_{i_l}$, or $x_{i_l} \leq x_{i_j} \leq x_{i_1}$ for all $1\leq j\leq l$.

After a restriction to an infinite subset of $\mathcal{N}$ if necessary, we can assume that for all $N\in \mathcal{N}$,    
\begin{align}\label{Ndelta}
N(\hat{h}-\delta) \geq \log 200, \, \, N\delta \geq \log 100 .
\end{align}
In a first step, we choose a subset $\{\Theta'_1, \ldots, \Theta'_{K'}\} \subset \{\Theta_1, \ldots, \Theta_K\}$ such that
$f^N$ keeps the orientation of the stable and unstable manifolds of all of  $\overline{\Theta'_1}, \ldots, \overline{\Theta'_{K'}}$ or it changes the orientation of all of them, and such that
$$K'\geq \frac{K}{2}.$$
By the fact above, there exist $\Theta_{--}$ and $\Theta_{++}$ in $\{\Theta'_1, \ldots, \Theta'_{K'}\}$ such that 
the number of $\Theta''\in\{\Theta'_1, \ldots, \Theta'_{K'}\} $
with 
$[\overline{\Theta_{--}};\overline{\Theta''};\overline{\Theta_{++}}]$ is at least $K'/5 -2$. Moreover, again by that fact, there exists $\Theta_-, \Theta_+$ and  $\Omega_1, \ldots, \Omega_L$, $L\in \N$, among those symbols $\Theta''$ above, such that $[\overline{\Theta_-};\overline{\Omega_i};\overline{\Theta_+}]$, and for which 
$L\geq (K'/{5}-2)/5 -2 \geq K/50 - 2$. 
Hence, by \eqref{Ndelta}, 
$L\geq K/100 \geq e^{N(\hat{h}-\delta) - \log 100} \geq e^{N(\hat{h}-2\delta)}$.
\end{proof} 
We fix $\Theta_{++}$, $\Theta_{--}$, $\Theta_+$, $\Theta_-$ and $\Omega_1, \ldots, \Omega_L$, $L\in \N$,  as in the lemma. 
Let $(\Sigma^!,\sigma^!)$ be the full shift in the symbols $\{\Omega_1, \ldots, \Omega_L\}$. We consider $\Sigma^!$ as a subset of $\Sigma^*$ and define  $\pi^! = \pi^*|_{\Sigma^!}:\Sigma^! \to \Lambda$.

\subsection{Fried surfaces and rectangles of Markov type}\label{sec:Fried}
In this section we will obtain the link $\mathcal{L}$ of our main theorems as periodic orbits in the first horseshoe $\im( \pi^*)$. 
The link $\mathcal{L}$ will consist of  a union of links $\mathcal{L}_i$, $i=0, \ldots, L$, each having three components that bound a piecewise embedded pair of pants ${F}_i$. These surfaces will be pairwise disjoint.  Surfaces of that type in horseshoes  were considered by Fried in order to construct global surfaces of sections, see e.g.\  \cite{Fried83}, and we refer to them as \textit{Fried surfaces}. See also the  recent works \cite{Hryniewicz_generic} and \cite{Contreras_generic} for the implications of the work of Fried for the construction of global surfaces of sections for Reeb flows.

We construct the family of surfaces $\mathfrak{F} = \{ F_0, F_1, \ldots, F_{L}\}$ as follows.  
Each surface $F_i$ will be piecewise embedded and will consists of a diamond shaped rectangle $D_i$ contained in $\mathcal{Q}$  and two rectangular regions $E^1_i$, $E^2_i$ parallel two the flow. $D_0$ will contain all the regions $D_i$, $i=1, \ldots, L$, and the latter will be pairwise disjoint, as will be the surfaces $F_i$, $i=1, \ldots, L$. For the construction see also Figure \ref{fig:D_i}.

Choose $\Theta_{++}, \Theta_{--}, \Theta_{+}, \Theta_{-}$ and $\Omega_1, \ldots, \Omega_L$, $L \in \N$, in $\{\Theta_1, \ldots, \Theta_K\}$ as in Lemma \ref{lem:Omega}. Specifically, $f^N$ either keeps the orientations of the stable and unstable manifolds of all of  $\overline{\Theta_{++}}, \overline{\Theta_{--}}, \overline{\Theta_{+}}, \overline{\Theta_{-}}, \overline{\Omega_1}, \ldots, \overline{\Omega_L}$ or it reverses all of them.

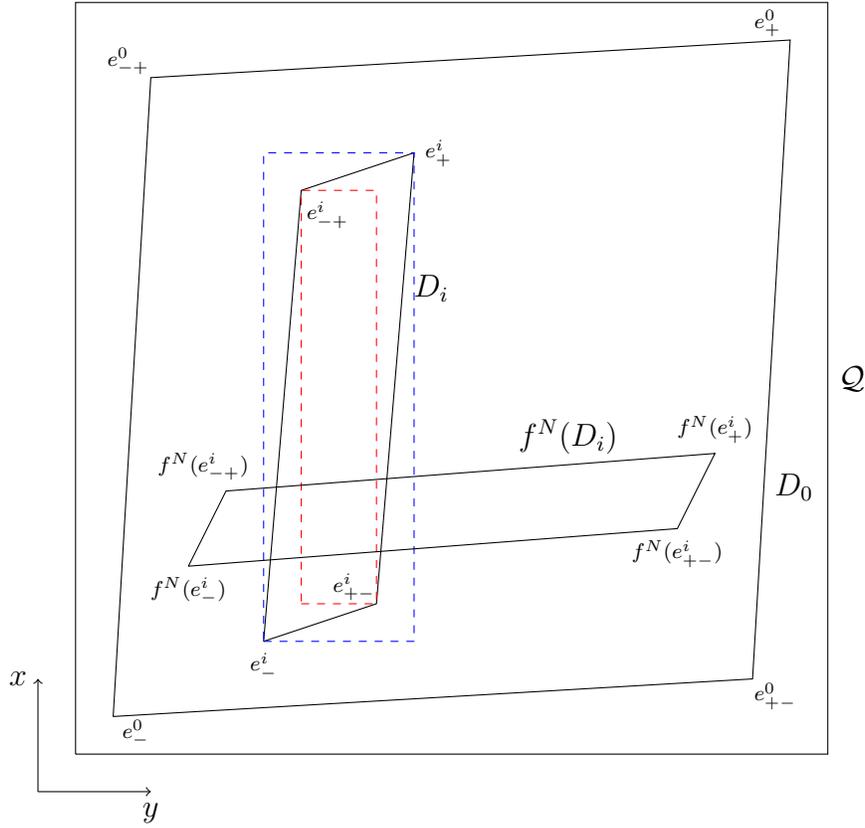
\begin{figure}
\begin{tikzpicture}
\draw[->]  (-0.5,-0.5) -- (-0.5,1) node[left]{$x$};
\draw[->]  (-0.5,-0.5) -- (1,-0.5) node[below]{$y$};
\draw (0,0)--(10,0) --(10,10) node[pos=0.5, right]{$\mathcal{Q}$}
--(0,10) -- (0,0);

\draw (0.5,0.5) node[below, shift={(0.3,0.1)}]{\tiny $e_-^0$} -- (9,1)node[below, shift={(0.3,0.1)}]{\tiny $e_{+-}^0$} -- (9.5,9.5) node[above, shift={(-0.3,-0.1)}]{\tiny $e_+^0$}node[pos=0.3,right]{$D_0$}
-- (1,9) node[above, shift={(-0.3,-0.1)}]{\tiny $e_{-+}^0$} -- (0.5,0.5);

\draw (2.5,1.5) node[below]{\tiny $e^i_-$} -- (4,2) node[above,shift={(-0.3,-0.1)}]{\tiny $e^{i}_{+-}$}-- (4.5,8) node[right]{ \tiny $e^{i}_{+}$}
node[pos=0.7,right]{$D_i$}-- (3,7.5) node[below, shift={(0.35,0)}]{ \tiny $e^{i}_{-+}$} -- (2.5,1.5);

 \color{blue}\draw[dashed](2.5,1.5) -- (4.5,1.5)  -- (4.5,8) -- (2.5,8) -- (2.5,1.5);
\color{red} \draw[dashed] (3,2) -- (4,2) -- (4,7.5) -- (3,7.5) -- (3,2);
\color{black}
\draw[-]
(1.5,2.5) node[below]{\tiny $f^N(e_-^i)$}-- (8,3) node[below]{\tiny $f^N(e_{+-}^i)$} --(8.5,4) node[above]{\tiny $f^N(e_+^i)$} -- (2,3.5) node[above, shift={(-0.3,0)}]{\tiny $f^N(e_{-+}^i)$} node[above,pos=0.3]{$f^N(D_i)$} -- (1.5,2.5);
\end{tikzpicture}
\caption{Schematic picture of $\mathcal{Q}$, $D_0$, and one region $D_i$ with its image under $f^N$. Here, we assume that  
$\Theta_- \prec_s \Omega_i \prec_s \Theta_+$ and $\Theta_- \prec_u \Omega_i \prec \Theta_+$, and that 
 $f^N$ keeps the orientation of the stable and unstable manifolds of all of 
the loops involved.
The two dashed rectangles are $\mathcal{Q}(e_-^i; e_+^i)$ and $\mathcal{Q}(e_{-+}^i;e_{+-}^i)$. The coordinates are those of the Pesin  chart, and note that in this picture,  $s$-admissible manifolds are  vertical and $u$-admissible manifolds  horizontal.}\label{fig:D_i}
\end{figure}

We start by defining $D_0$. 
We consider the following points in $\mathcal{Q}$: 
\begin{align*}
&e^0_- := {\pi}^*(\overline{\Theta_{--}}), \quad e^0_{+-} := \pi^*(\overline{\Theta_{++}\Theta_{--}}) \\ &e^0_+ := {\pi}^*(\overline{\Theta_{++}}), \quad e^0_{-+} := \pi^*(\overline{\Theta_{--}\Theta_{++}}).
\end{align*}
Let $v^0_-$ be a segment in $\mathcal{Q}(e^0_-;e^0_{-+})\subset \mathcal{Q}$ that connects the points $e^0_-$ and $e^0_{-+}$, and  
$v^0_+$ be a segment in $\mathcal{Q}(e^0_+;e^0_{+-})\subset \mathcal{Q}$ that connects the points  $e^0_+$ and $e^0_{+-}$.
Using the properties of stable and unstable manifolds, one easily checks that $$f^N(\mathcal{Q}(e^0_-;e^0_{-+})) = \mathcal{Q}(e^0_-;e^0_{+-}),\, \, f^N(\mathcal{Q}(e^0_+;e^0_{+-})) = \mathcal{Q}(e^0_+;e^0_{-+}).$$
Moreover, by  \eqref{fjCCQ},  for $j=1, \ldots, N-1$,  $$f^j(\mathcal{Q}(e^0_-;e^0_{-+})) \cap \mathcal{Q} = \emptyset \, \text{ and } f^j(\mathcal{Q}(e^0_+;e^0_{+-})) \cap \mathcal{Q} = \emptyset.$$  
Let $h^0_- := f^N(v^0_-) \in \mathcal{Q}(e^0_-; e^0_{+-})$ and  $h^0_+ := f^N(v^0_+) \in \mathcal{Q}(e^0_+; e^0_{-+})$. We can choose $v^0_{\pm}$  such that $v^0_{\pm}$ and $h^0_{\pm}$ do not intersect apart from their  endpoints, and denote by $D_0 \subset \mathcal{Q}$  the rectangle  bounded by $v^0_-$, $v^0_+$, $h^0_-$, $h^0_+$. 

It follows from Lemma \ref{lem:Sigma_order} that if $f^N$ keeps the orientations, then 
\begin{equation*}
[\overline{\Theta_{--}};\, \overline{\Theta_{--}\Theta_{++}};\, \overline{\Theta_{++}\Theta_{--}};\, \overline{\Theta_{++}}]_s, \text{ and }
[\overline{\Theta_{--}};\, 
\overline{\Theta_{++}\Theta_{--}};
\,\overline{\Theta_{--}\Theta_{++}};\, \overline{\Theta_{++}}]_u,
\end{equation*}
and if $f^N$ reverses the orientations, then 
\begin{equation*}
[\overline{\Theta_{--}\Theta_{++}};\, \overline{\Theta_{--}};\, 
\overline{\Theta_{++}};\, 
\overline{\Theta_{++}\Theta_{--}}]_s \text{ and } [\overline{\Theta_{--}\Theta_{++}};\,
\overline{\Theta_{++}};\,
\overline{\Theta_{--}};\, 
\overline{\Theta_{++}\Theta_{--}}]_u.
\end{equation*}
In the first case,  $\mathcal{Q}(e^0_{-+};e^0_{+-}) \subset \mathcal{Q}(e^0_-; e^0_+)$, and in the second case \linebreak 
$\mathcal{Q}(e^0_-; e^0_+) \subset \mathcal{Q}(e^0_{-+};e^0_{+-})$. 
Moreover, the sets 
$\mathcal{Q}(e^0_-;e^0_{-+})$, $\mathcal{Q}(e^0_-;e^0_{+-})$, $\mathcal{Q}(e^0_+;e^0_{+-})$, $\mathcal{Q}(e^0_+;e^0_{-+})$ 
are all contained in  $\overline{ \mathcal{Q}(e^0_-; e^0_+) \setminus \mathcal{Q}(e^0_{-+};e^0_{+-})}$ in the first case, 
and contained in $\overline{ \mathcal{Q}(e^0_{-+};e^0_{+-}) \setminus \mathcal{Q}(e^0_-; e^0_+)}$ in the second case. 
Hence, in the first case,   
\begin{equation}\label{D0_incl}
\mathcal{Q}(e^0_{-+};e^0_{+-})\subset D_0 \subset \mathcal{Q}(e^0_-; e^0_+), 
\end{equation}
and in the second case,  
\begin{equation}\label{D0_incl1}
\mathcal{Q}(e^0_-; e^0_+) \subset D_0 \subset \mathcal{Q}(e^0_{-+};e^0_{+-}).
\end{equation}
The region $E^0_-$ is given by $\bigcup_{z \in v^0_-}\{\varphi^t(z)\, |\, t \in [0, R_N(z)]\}$, the region $E^0_+$ by  $\bigcup_{z \in v^0_+}\{\varphi^t(z)\, |\, t \in [0, R_N(z)]\}$. Here, and below, $R= R_{\Lambda}$ denotes the roof function of $\Lambda$. 
The union $F_0 = D_0 \cup E^0_- \cup E^0_+$ is a piecewise embedded surface with three boundary components given by the image of loops $P^{0}_1(t):= \varphi^t(e^0_-), \, 0 \leq t \leq R_N(e^0_-)$, $P^{0}_2(t):= \varphi^t(e^0_+), \, 0 \leq t \leq R_N(e^0_+)$, and $P^0_3(t):= \varphi^t(e^0_{-+}),\, 0 \leq t \leq R_{2N}(e^0_{-+})$ (the image of the last loop coincides with the image of the $t\mapsto \varphi^t(e^0_{+-})$, $0 \leq t \leq R_{2N}(e^0_{+-})$). These three orbits from the link $\mathcal{L}_0$.

We proceed with the definition of the surfaces $F_i$, $i\in \{1, \ldots, L\}$. For any $i\in \{1, \ldots, L\}$, consider the following points in $\mathcal{Q}$:
\begin{align*}
&e^i_- := {\pi}^*(\overline{\Omega_i\Theta_{-}}), \quad e^i_{+-} := \pi^*(\overline{\Omega_i\Theta_{+}\Omega_i\Theta_{-}}), \\ &e^i_+ := {\pi}^*(\overline{\Omega_i\Theta_{+}}), \quad e^i_{-+} := \pi^*(\overline{\Omega_i\Theta_{-}\Omega_i\Theta_{+}}).
\end{align*}
Let $v^i_-$ be a segment in $\mathcal{Q}(e^i_-;e^i_{-+})\subset \mathcal{Q}$ that connects the points $e^i_-$ and $e^i_{-+}$, and  
$v^i_+$ be a segment in $\mathcal{Q}(e^i_+;e^i_{+-})\subset \mathcal{Q}$ that connects the points $e^i_+$ and $e^i_{+-}$.
Note that 
\begin{equation}
    f^{2N}(\mathcal{Q}(e^i_-;e^i_{-+})) = \mathcal{Q}(e^i_-;e^i_{+-}),
    \end{equation}
    \begin{equation}\label{FNei}
  f^N(\mathcal{Q}(e^i_-;e^i_{-+})) = \mathcal{Q}\left({\pi}^*(\overline{\Theta_{-}\Omega_i}); \pi^*(\overline{\Theta_{-}\Omega_i\Theta_{+}\Omega_i})\right),
  \end{equation}
  \begin{equation}\label{Fjei}
 f^j(\mathcal{Q}(e^i_-;e^i_{-+})) \cap \mathcal{Q} = \emptyset
  \end{equation}
  for $j\in \{1, \ldots, 2N-1\} \setminus \{N\}$, and, analogously,  
\begin{equation}
    f^{2N}(\mathcal{Q}(e^i_+;e^i_{+-})) = \mathcal{Q}(e^i_+;e^i_{-+}),
    \end{equation}
    \begin{equation}\label{FNei2}
  f^N(\mathcal{Q}(e^i_+;e^i_{+-})) = \mathcal{Q}\left({\pi}^*(\overline{\Theta_{+}\Omega_i}); \pi^*(\overline{\Theta_{+}\Omega_i\Theta_{-}\Omega_i})\right),
  \end{equation}
  \begin{equation}\label{Fjei2}
 f^j(\mathcal{Q}(e^i_+;e^i_{+-})) \cap \mathcal{Q} = \emptyset,
  \end{equation}
  for $j\in \{1, \ldots, 2N-1\} \setminus \{N\}$.

Let $h^i_- = f^{2N}(v^i_-) \subset \mathcal{Q}(e^i_-; e^i_{+-})$, $h^i_+ = f^{2N}(v^i_+) \subset \mathcal{Q}(e^i_-; e^i_{-+})$. We can assume that $v^i_{\pm}$ and  $h^i_{\pm}$ do not intersect apart from their endpoints, and let $D_i \subset \mathcal{Q}$ be the region bounded by $v^i_-$, $v^i_+$, $h^i_-$, $h^i_+$. 
Since $f^{2N}$ keeps the orientation of stable and unstable manifolds, one can easily see, similar to Lemma \ref{lem:Sigma_order}, that
\begin{align*}
&[\overline{\Omega_i\Theta_-};\,  \overline{\Omega_i\Theta_-\Omega_i\Theta_+};\, \overline{\Omega_i\Theta_+\Omega_i\Theta_-}; \, \overline{\Omega_i\Theta_+}]_s \quad \text{   and } \\
&[\overline{\Omega_i\Theta_-};\, \overline{\Omega_i\Theta_+\Omega_i\Theta_-}; \,  \overline{\Omega_i\Theta_-\Omega_i\Theta_+};\,  \overline{\Omega_i\Theta_+}]_u.
\end{align*}
Similar to  \eqref{D0_incl}, one checks that  
\begin{equation}\label{Di_incl}
\mathcal{Q}(e^i_{-+}; e^i_{+-}) \subset D_i \subset \mathcal{Q}(e^i_-; e^i_+).
\end{equation}
Note that 
\begin{align}\label{Di_Q}
\mathcal{Q}(e^i_-; e^i_+) \subset \mathcal{Q}_{i'},
\end{align}
where $\Omega_i = \Theta_{i'}$, 
and hence the sets 
\begin{equation}\label{Didisjoint}
D_i, \, i=1, \ldots, L,  \text{ are pairwise disjoint. } 
\end{equation}
Also,  $\mathcal{Q}(e^i_-; e^i_+)$ and $f^N(\mathcal{Q}(e_-^i,e_+^i))= \mathcal{Q}(\widehat{\pi}(\overline{\Theta_-\Omega_i});\widehat{\pi}(\overline{\Theta_+\Omega_i}))$ are  contained in the intersection 
 $\mathcal{Q}(e^0_{-+};e^0_{+-}) \cap \mathcal{Q}(e^0_-;e^0_+)$.  Hence by \eqref{D0_incl} and \eqref{D0_incl1},   
\begin{equation}\label{DisubsetDO}
D_i \subset D_0, 
\end{equation}
\begin{equation}\label{DisubsetDO1}
f^{N}(D_i) \subset D_0.
\end{equation}
Moreover, one verifies that  $f^N(D_i)$ crosses $D_j$, for all $i,j\in \{1,\ldots, L\}$. 

Define $E^i_-:= \bigcup_{z \in v^i_-}\{\varphi^t(z)\, |\, t \in [0, R_{2N}(z)]\}$, $E^i_+:=\bigcup_{z \in v^i_+}\{\varphi^t(z)\, |\, t \in [0, R_{2N}(z)]\}$. 
The union $F_i := D_i \cup E^i_- \cup E^i_+$ is a piecewise embedded surface with three boundary components given by the orbits $P^{i}_1(t) := \varphi^t(e^i_-)$, $0 \leq t \leq R_{2N}(e^i_-)$, $P^{i}_2(t) := \varphi^t(e^i_+)$, $0 \leq t \leq R_{2N}(e^i_+)$, and $P^i_3(t) := \varphi^t(e^i_{-+})$, $0 \leq t \leq R_{4N}(e^i_{-+})$. These orbits form the link  $\mathcal{L}_i$. 

Using that  $[\Theta_-;\Omega_i;\Theta_+]$ holds, it follows that  \begin{equation}\label{nointers}
\mathcal{Q}\left({\pi}^*(\overline{\Theta_{-}\Omega_i}); \pi^*(\overline{\Theta_{-}\Omega_i\Theta_{+}\Omega_i})\right) \cap 
\mathcal{Q}(e^i_-; e^i_+)= \emptyset,
\end{equation}
Hence, by \eqref{Di_incl}, and  \eqref{FNei},\eqref{Fjei},\eqref{FNei2},\eqref{Fjei2}, the surfaces $F_i$ are piecewise embedded. 
Since also, for $j\neq i$, 
\begin{align}\label{zz}
\begin{split}
\mathcal{Q}\left({\pi}^*(\overline{\Theta_{-}\Omega_i}); \pi^*(\overline{\Theta_{-}\Omega_i\Theta_{+}\Omega_i})\right) \cap 
\mathcal{Q}(e^j_-; e^j_+) &= \emptyset, \\
\mathcal{Q}\left({\pi}^*(\overline{\Theta_{+}\Omega_i}); \pi^*(\overline{\Theta_{+}\Omega_i\Theta_{-}\Omega_i})\right) \cap 
\mathcal{Q}(e^j_-; e^j_+) &= \emptyset,
\end{split}
\end{align}
the surfaces  $F_i$, $i=1, \ldots, L$, are pairwise disjoint. 
\begin{lem}\label{lem:Sigma!_inters}
Let $(i_j)_{j\in \Z}$ be any sequence with $i_j \in \{1, \ldots, L\}$, and consider $\mathfrak{a} = (\mathfrak{a}_j)_{j\in \Z}$ with  $\mathfrak{a}_j := \Omega_{i_j}$. Then the intersection $\bigcap_{j\in\Z} f^{-jN}(D_{i_j})$ defines a unique point and 
\begin{align}\label{eq:D_ij}\bigcap_{j\in\Z} f^{-jN}(D_{i_j}) = \pi^*((\mathfrak{a}_j)_{j\in \Z}).
\end{align}
\end{lem}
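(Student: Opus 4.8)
The plan is to reduce the statement to two facts about the truncated products $A_k:=\bigcap_{t=-k}^{k}f^{-tN}(D_{i_t})$, $k\in\N$: that each $A_k$ is nonempty, and that $\bigcap_{k\in\N}A_k$ is contained in a single point, namely $\pi^*(\mathfrak{a})$, where $\mathfrak{a}=(\Omega_{i_j})_{j\in\Z}$ is regarded as an element of $\Sigma^*$ by writing $\Omega_{i_j}=\Theta_{i_j'}$ with $i_j'\in\{1,\dots,K\}$. Since each $A_k$ is a closed subset of the compact rectangle $D_{i_0}$ and $A_{k+1}\subset A_k$, these two facts together force $\bigcap_{j\in\Z}f^{-jN}(D_{i_j})=\bigcap_{k\in\N}A_k$ to equal $\{\pi^*(\mathfrak{a})\}$, which is exactly the conclusion of the lemma.

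The inclusion $\bigcap_{k\in\N}A_k\subset\{\pi^*(\mathfrak{a})\}$ is the easy half. By \eqref{Di_incl} together with \eqref{Di_Q} one has $D_{i_t}\subset\mathcal{Q}(e^{i_t}_-;e^{i_t}_+)\subset\mathcal{Q}_{i_t'}$ for every $t$, hence $A_k\subset\bigcap_{t=-k}^{k}f^{-tN}(\mathcal{Q}_{i_t'})$; intersecting over all $k$ and applying Proposition \ref{prop:KM_in_LS} to the sequence $(\Theta_{i_j'})_{j\in\Z}\in\Sigma^*$ gives $\bigcap_{k\in\N}A_k\subset\bigcap_{j\in\Z}f^{-jN}(\mathcal{Q}_{i_j'})=\{\pi^*(\mathfrak{a})\}$, a single point.

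For the nonemptiness of the $A_k$ I would run the classical argument that extracts the full-shift coding of a horseshoe from a family of rectangles of Markov type --- the same argument already used in the first part of the proof of Proposition \ref{prop:KM_in_LS}, now with the role of ``$u$-admissible rectangle in $\mathcal{Q}$'' (resp.\ ``$s$-admissible rectangle in $\mathcal{Q}$'') taken over by ``sub-rectangle of $D_{i_0}$ running across it from $v^{i_0}_-$ to $v^{i_0}_+$'' (resp.\ ``running across it from $h^{i_0}_-$ to $h^{i_0}_+$''). Using that $f^N(D_i)$ crosses $D_j$ for all $i,j\in\{1,\dots,L\}$, with a single orientation convention throughout (which is possible because, by Lemma \ref{lem:Omega}, $f^N$ either preserves or reverses the orientations of the stable and unstable manifolds of all of $\overline{\Omega_1},\dots,\overline{\Omega_L}$ at once), an induction on $k$ shows that $\bigcap_{t=-k}^{0}f^{-tN}(D_{i_t})$ is a sub-rectangle of $D_{i_0}$ crossing it in one of these two transverse directions, while $\bigcap_{t=0}^{k}f^{-tN}(D_{i_t})$ crosses it in the other; two such transverse sub-rectangles of $D_{i_0}$ always meet, and their intersection is precisely $A_k$, so $A_k\neq\emptyset$. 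The one point that requires genuine care is this induction: through the iterates of $f^N$ one must keep track of which pair of boundary arcs of the intermediate rectangles plays the role of ``$h$-sides'' and which the role of ``$v$-sides'' in the crossing relation, so that the two truncated products do cross $D_{i_0}$ transversally. This bookkeeping is routine and entirely parallel to the construction of the first horseshoe, and the ``diamond'' shape of the $D_i$ is immaterial here, since the crossing relation only sees each $D_i$ as an abstract rectangle; I do not expect any further obstacle.
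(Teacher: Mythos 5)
Your high-level plan matches the paper's: reduce to the truncated intersections $A_k$, show each $A_k\neq\emptyset$, and show $\bigcap_{k}A_k\subset\{\pi^*(\mathfrak{a})\}$ by comparing $D_i$ with $\mathcal{Q}_{i'}$ and invoking Proposition~\ref{prop:KM_in_LS}. The second half is exactly the paper's argument. The first half is where there is a genuine gap.

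You propose to establish $A_k\neq\emptyset$ by iterating the crossing relation directly on the $D_i$, and you dismiss the diamond shape of the $D_i$ as immaterial on the grounds that the crossing relation only sees each $D_i$ as an abstract rectangle. That is precisely where the argument breaks: the paper's crossing relation is a purely combinatorial condition on the boundary arcs of topological rectangles, and it does not, by itself, survive iterated intersection. At the inductive step one needs $f^N(D_{i_0})$ to cross $B:=D_{i_1}\cap f^{-N}(D_{i_2})$ inside $D_{i_1}$; but the $v$-sides of $B$ are arcs $f^{-N}(v^{i_2}_{\pm})\cap D_{i_1}$, and the arc $f^N(h^{i_0}_{+})\cap D_{i_1}$, known only to run from $v^{i_1}_-$ to $v^{i_1}_+$, can a priori cross them several times, so $f^N(D_{i_0})\cap B$ need not be a single sub-rectangle and the induction does not close. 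What rules out such wiggling is not the combinatorics but the admissible-rectangle structure in which the $D_i$ are sandwiched, $\mathcal{Q}(e^{i}_{-+};e^{i}_{+-})\subset D_i\subset\mathcal{Q}(e^{i}_-;e^{i}_+)$, together with the fact that admissible manifolds are $1/2$-Lipschitz graphs, so each $u$-admissible manifold meets each $s$-admissible manifold exactly once. The paper in fact sidesteps the induction altogether: by the first inclusion in \eqref{Di_incl} it suffices to see that $\pi^*(\mathfrak{a})\in\mathcal{Q}(e^{i_0}_{-+};e^{i_0}_{+-})$, which one reads off from Lemma~\ref{lem:Sigma_order} together with $[\overline{\Theta_-};\overline{\Omega_i};\overline{\Theta_+}]$ of Lemma~\ref{lem:Omega} (these place $V^u(\pi^*(\mathfrak{a}))$ between $V^u(e^{i_0}_{-+})$ and $V^u(e^{i_0}_{+-})$, and likewise for $V^s$), and then $\sigma^*$-equivariance of $\pi^*$ gives $f^{jN}(\pi^*(\mathfrak{a}))\in D_{i_j}$ for all $j$. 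Either argue this way, or, if you keep the iterated-intersection route, carry the admissible-rectangle sandwiching through the induction rather than treating the $D_i$ as abstract rectangles.
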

\begin{proof}
For $i=1, \ldots, L$, let $\mathcal{Q}'_i = \mathcal{Q}_{i'}$, where $i'$ is such that $\Theta_{i'} = \Omega_i$. 
That the left hand side in \eqref{eq:D_ij} is non-empty follows from the first inclusion in \eqref{Di_incl} and then similarly as in the beginning of the proof of Proposition \ref{prop:KM_in_LS}.
By the second inclusion in  \eqref{Di_incl} and by \eqref{Di_Q}, we have that $D_i \subset \mathcal{Q}'_i$, and hence 
the equality \eqref{eq:D_ij} follows then from Proposition \ref{prop:KM_in_LS}.
\end{proof}
Below we will also consider the surfaces $\widehat{F}_i$, $i=1,\ldots, L$, that one obtains by the union of the rectangles $\widehat{D}_i := f^N(D_i)\subset D_0$ with the regions $\widehat{E}^i_{\pm} = \bigcup_{z \in v^i_{\pm}} \{\varphi^t(z)\, |\, t \in [R_N(z), R_{3N}(z)]\, \}$. The surfaces $\widehat{F}_i$, $i=1,\ldots, L$, are pairwise disjoint and have the same boundary links $\mathcal{L}_i$ as the surfaces $F_i$.

We put $\mathcal{L} :=\bigcup_{i=0}^L \mathcal{L}_i$. 
Consider $n \in \N$, and some $n$-tuple $i = (i_0, \ldots, i_{n-1})\in \{1, \ldots, L\}^n$. Keeping the notation from the introduction, let  $P_{i_{0}, \ldots, i_{n-1}}$ be the periodic orbit that admits the parametrization 
$$P_{i_0, \ldots, i_{n-1}}(t) = \varphi^t(p), \, \,  0\leq t \leq R_{nN}(p)=: \per(P_{i_0, \ldots, i_{n-1}}),$$
where $p:=\pi^*(\overline{\Omega_{i_{0}}\cdots \Omega_{i_{n-1}}})$.
Denote by $\rho_{i_0, \ldots, i_{n-1}}$ the free homotopy class of loops in $M \setminus \mathcal{L}$ that is represented by $P_{i_{0}, \ldots, i_{n-1}}$.
\begin{lem}\label{lem:unique}
\begin{enumerate}
    \item If $\rho_{i_0, \ldots, i_{n-1}} = \rho_{i'_0, \ldots, i'_{n'-1}}$, then $n = n'$ and \linebreak $(i_0, \ldots, i_{n-1})$ is a cyclic permutation of $(i'_0, \ldots, i'_{n'-1})$.
\item $P_{i_0, \ldots, i_{n-1}}$ is, up to reparametrization, the only periodic orbit of $\varphi$ in $\rho_{i_0, \ldots, i_{n-1}}$. 
\end{enumerate}
\end{lem}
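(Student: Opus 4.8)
The plan is to recover the cyclic word $(i_0,\ldots,i_{n-1})$ from the class $\rho_{i_0,\ldots,i_{n-1}}$ by intersecting loops with the Fried surfaces. Each $F_i$ ($0\le i\le L$) is a (piecewise embedded) surface with $\partial F_i=\mathcal L_i\subset\mathcal L$ and interior in $M\setminus\mathcal L$, so after deleting a tubular neighbourhood of $\mathcal L$ it becomes a properly embedded oriented surface in a compact oriented $3$-manifold, and for a loop $\gamma\subset M\setminus\mathcal L$ the algebraic intersection number $\gamma\cdot F_i\in\Z$ depends only on $[\gamma]\in H_1(M\setminus\mathcal L)$; in particular it is a free-homotopy invariant in $M\setminus\mathcal L$. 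Orient each $F_i$ so that the flow crosses $D_i$ positively. I first compute, for $P:=P_{i_0,\ldots,i_{n-1}}$ and $p:=\pi^*(\overline{\Omega_{i_0}\cdots\Omega_{i_{n-1}}})$,
\[
P\cdot F_0=n,\qquad P\cdot F_i=m_i:=\#\{k\in\Z/n:\,i_k=i\}\quad(1\le i\le L).
\]
Indeed, by \eqref{fjCCQ} and $D_0\subset\mathcal Q$, the $\varphi$-orbit of $p$ meets $\mathcal Q$, hence $D_0$, exactly at the $n$ points $q_k:=f^{kN}(p)$, and $q_k\in D_{i_k}$ by Lemma~\ref{lem:Sigma!_inters}; and if the diagonals $v^i_\pm$ of $D_i$ (and of $D_0$) are chosen inside the local unstable and stable manifolds of the corner orbits of $\mathcal L_i$, then the only periodic orbits meeting the interior of any $v^i_\pm$ are those corner orbits, which lie in $\mathcal L$. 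Thus $P$ is disjoint from all the flow-box regions $E^i_\pm,\widehat E^i_\pm,E^0_\pm$, all its intersections with the surfaces are transverse, and the count follows. Consequently, $\rho_{i_0,\ldots,i_{n-1}}=\rho_{i'_0,\ldots,i'_{n'-1}}$ forces $n=n'$ and equality of the two tuples as multisets.

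To promote this to equality as cyclic words, let $\gamma$ be any loop freely homotopic to $P$ in $M\setminus\mathcal L$, put it by a homotopy in $M\setminus\mathcal L$ into a position transverse to $F_0$ meeting it in $n$ points (the minimum, by the algebraic count and as realized by $P$), and cut $M$ along $F_0$. Since $\gamma\cdot F_0=n=\sum_{i\ge1}\gamma\cdot F_i$ and the $D_i$ ($i\ge1$) are disjoint subrectangles of $D_0=F_0\cap\mathcal Q$, each of the $n$ intersection points of $\gamma$ with $F_0$ lies in a unique $D_{j}$, and $\gamma$ becomes a cyclic concatenation $\alpha_0\ast\cdots\ast\alpha_{n-1}$ of arcs with endpoints on $D_0$ and interiors off $F_0$. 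Reading the labels $j$ of the initial endpoints of the $\alpha_k$ in cyclic order yields a cyclic word $(j_0,\ldots,j_{n-1})$, and it remains to identify it with $(i_0,\ldots,i_{n-1})$: each $\alpha_k$ is an arc from $D_{j_k}$ to $D_{j_{k+1}}$ whose interior avoids $F_0$, hence all $D_i$, and I would show it is homotopic rel $F_0$, within $M\setminus\mathcal L$, to the flow-transition arc of the first horseshoe from $D_{j_k}$ to $D_{j_{k+1}}$ --- using the Markov structure near $\mathcal Q$ (that $f^N(D_i)$ crosses each $D_j$, that the rectangles $D_i$ and $\widehat D_i=f^N(D_i)$ label the returns to $D_0$ by consecutive pairs $(i_{k-1},i_k)$ since $q_k\in D_{i_k}\cap\widehat D_{i_{k-1}}$, and that the associated flow boxes are embedded and pairwise disjoint) --- so that the cyclic concatenation of these transition arcs reconstructs the word. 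This last step, a tautness/rigidity statement for curves against the Fried surface $F_0$ (and where the distinct ``linking behaviour'' with $\bigcup_{i\ge1}\mathcal L_i$ of orbits with distinct intersection patterns is exploited), is the main obstacle; it plays here the role that recovering a curve from its cyclic sequence of returns plays for a genuine global surface of section.

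Part (2) then follows. Let $\gamma\in\Per(\varphi)$ with $[\gamma]=\rho_{i_0,\ldots,i_{n-1}}$ in $M\setminus\mathcal L$. By homotopy invariance and the above, $\gamma\cdot F_0=n$ and $\gamma\cdot F_i=m_i$; as $\gamma$ is a flow orbit avoiding the flow-box regions (by the choice of the $v^i_\pm$), all these intersections are transverse and of one sign, so $\gamma$ meets $D_0$ in exactly $n$ points, and since $\sum_{i\ge1}m_i=n$ and the $D_i$ are disjoint in $D_0$, each of them lies in some $D_i\subset\mathcal Q_{i'}$ with $i\ge1$. Tracing $\gamma$ backwards along its returns to $\mathcal Q$, \eqref{fjCCQ} and \eqref{DisubsetDO1} together with the connectedness of the periodic orbit $\gamma$ show that $\gamma$ cannot meet $\mathcal Q$ outside $D_0$ (a $\mathcal Q$-intersection off $D_0$ would force its $\mathcal Q$-predecessor, hence, wrapping around $\gamma$, every $\mathcal Q$-intersection, off $D_0$). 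Hence $\gamma\cap\mathcal Q$ is a single $f^N$-cycle $(q_k)_{k\in\Z/n}$ with $q_k\in D_{j_k}$ for some cyclic word $(j_0,\ldots,j_{n-1})$ and $f^N(D_{j_k})$ crossing each $D_{j'}$; by Lemma~\ref{lem:Sigma!_inters}, $q_0=\pi^*(\overline{\Omega_{j_0}\cdots\Omega_{j_{n-1}}})$, i.e.\ $\gamma=P_{j_0,\ldots,j_{n-1}}$. Applying part (1) to $\rho_{i_0,\ldots,i_{n-1}}=\rho_{j_0,\ldots,j_{n-1}}$ shows $(j_0,\ldots,j_{n-1})$ is a cyclic permutation of $(i_0,\ldots,i_{n-1})$, so $\gamma=P_{i_0,\ldots,i_{n-1}}$ up to reparametrization.
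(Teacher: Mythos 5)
Your part (2) is essentially the paper's argument: count intersections with $F_0$ and with $\bigcup_{i\ge 1}F_i$, conclude that all $n$ intersections of the periodic orbit with $D_0$ lie in $\bigcup_{i\ge 1}D_i$, use \eqref{fjCCQ} and \eqref{DisubsetDO1} to see that consecutive $D_0$-intersections are $f^N$-related, and finish with Lemma~\ref{lem:Sigma!_inters} and part (1). (The aside about ``wrapping around $\gamma$'' is not needed and is a bit circular as stated: for $x_k\in D_{i_k}$ one has directly that $f^j(x_k)\notin\mathcal{Q}$ for $0<j<N$ and $f^N(x_k)\in D_0\subset\mathcal{Q}$, so $f^N(x_k)$ is the next $D_0$-intersection of $\gamma$ after $x_k$, and there is nothing to wrap around.)

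Part (1), however, has a genuine gap, and you flag it yourself as ``the main obstacle''. You recover the multiset $\{i_0,\ldots,i_{n-1}\}$ from the homological intersection numbers $P\cdot F_i$, and then propose to recover the cyclic order by cutting along $F_0$ and showing that each arc of a loop homotopic to $P$ is homotopic rel $F_0$ in $M\setminus\mathcal{L}$ to a flow-transition arc of the horseshoe. That rigidity claim is strictly stronger than what is needed, is not proved, and is not what the paper uses. The paper's observation is much simpler and bypasses both the homology computation and the arc-rigidity: $F_1,\ldots,F_L$ are \emph{pairwise disjoint}, properly embedded (piecewise) surfaces in $M\setminus\mathcal{L}$, and the two orbits $P_{i_0,\ldots,i_{n-1}}$, $P_{i'_0,\ldots,i'_{n'-1}}$ meet them transversally with crossings of only one sign. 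For such curves the cyclic sequence of which $F_i$ is crossed is itself a free-homotopy invariant in $M\setminus\mathcal{L}$: pull the surfaces back along a homotopy annulus $H\colon S^1\times[0,1]\to M\setminus\mathcal{L}$ made transverse to $\bigcup F_i$; the preimages $H^{-1}(F_i)$ are pairwise disjoint properly embedded $1$-submanifolds of the annulus, every arc component must run from one boundary circle to the other (an arc with both endpoints on the same boundary circle would produce two crossings of opposite sign there, contradicting positivity), and disjoint boundary-to-boundary arcs in an annulus induce a cyclic-order-preserving, label-preserving bijection between the two sets of endpoints. This gives $n=n'$ and equality of the tuples up to cyclic permutation directly. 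Replace your ``promotion to cyclic words'' step with this argument and both parts are clean.
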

\begin{proof}
We can define a co-orientation of the surfaces $F_i$ via the vector field of the flow along $D_i$.  
Two loops that intersect the disjoint surfaces $F_1, \ldots, F_L$ transversally and positively but not in the same order, up to cyclic permutation, are not freely homotopic in $M\setminus \mathcal{L}$. The first assertion then follows.

To see that $(2)$ holds, let $\eta:[0,T] \to M$, $\eta(0)=\eta(T)$, be a loop that parametrizes some periodic orbit of the flow and has free homotopy class $[\eta]= \rho_{i_0, \ldots, i_{n-1}}$. If $\eta$ intersects the boundary of $D_0, \ldots, D_{L-1}$,  or $ D_L$, by a small perturbation of $\eta$,  this intersection becomes a (positive)  interior intersection  and no additional intersection will be created. It follows that $\eta$ intersects,  both, $\bigcup_{i=1}^L D_i\subset D_0$ and $D_0$,  exactly $n$ times. Hence there are no  intersection points of $\eta$ with   $D_0 \setminus \bigcup_{i=1}^L D_i$. 

We may assume, after possibly changing the parametrization, that $\eta$ intersects the rectangles $D_i$, $i=1, \ldots, L$, in the order $D_{i_0}, \ldots, D_{i_{n-1}}$, say in the points $x_{0}$, \ldots, $x_{{n-1}}$. 
Since $$f^j(D_i) \cap \mathcal{Q} = \emptyset , \text{ for } 1< j < N,$$
and 
$$f^N(D_i)  \subset D_0,$$
we necessarily have that $f^N(x_{0}) = x_{1}, f^{N}(x_{1}) = x_{2}, \ldots, f^N(x_{n-1}) = x_0$.  
By Lemma \ref{lem:Sigma!_inters}, the loop $\eta$ must be a parametrization of the orbit $P_{i_0, \ldots, i_{n-1}}$. 
\end{proof}
Fix a negative sequence  $\underline{\mathfrak{a}} = (\mathfrak{a}_t)_{t\leq 0}$, $\mathfrak{a}_t = \Omega_{j_t}$, and a positive sequence $\underline{\mathfrak{b}} = (\mathfrak{b}_t)_{t\geq 0}$, $\mathfrak{b}_t = \Omega_{j'_t}$. We   put  $l_{u} := V^u(\underline{\mathfrak{a}}) \cap D_{j_{0}}$ and 
$l_{s} := V^s(\sigma^*(\underline{\mathfrak{b}})) \cap \widehat{D}_{j'_0} = f^N(V^s(\underline{\mathfrak{b}})\cap D_{j'_0})$. 
Let $n \in \N$, $n\geq 3$, and consider  some $(n-2)$-tuple $(i_1, \ldots, i_{n-2})\in \{1, \ldots, L\}^{n-2}$.  Denote, as in the introduction, by  $\sigma_{i_1, \ldots, i_{n-2}}$ 
 the  homotopy class of paths in $M \setminus \mathcal{L}$ from $l_u$ to $l_s$ that is represented by the  chord $C_{i_1, \ldots, i_{n-2}}$ given by  $$C_{i_1, \ldots, i_{n-2}}(t) = \varphi^t(\pi^*(\underline{\mathfrak{c}})), \, \,  0\leq t \leq R_{nN}(\pi^*(\underline{\mathfrak{c}})),$$
where $\underline{\mathfrak{c}} = \underline{\mathfrak{c}}^{\underline{\mathfrak{a}},\underline{\mathfrak{b}};i_1, \ldots, i_{n-2}} = (\mathfrak{c}_j)_{j\in\N}$ is given by
$\mathfrak{c}_{j} = \mathfrak{a}_j$ if $j\leq 0$,  
 $\mathfrak{c}_j = \Omega_{i_j}$ if $1\leq j\leq n-2$, and  
$\mathfrak{c}_{j} = \mathfrak{b}_{j-(n-1)}$ if $j\geq n-1$.
\begin{lem}\label{lem:unique2}
\begin{enumerate}
    \item If $\sigma_{i_1, \ldots, i_{n-2}} = \sigma_{i'_1, \ldots, i'_{n'-2}}$, then $n = n'$ and \linebreak  $(i_1, \ldots, i_{n-2}) = (i'_1, \ldots, i'_{n'-2})$.
\item The chord $C_{i_1, \ldots, i_{n-2}}$ is, up to reparametrization, the only chord of $\varphi$ in the class $\sigma_{i_1, \ldots, i_{n-2}}$.
\item  
It is possible to extend 
$l_u$ and $l_s$ to embedded closed curves $\Lambda_u$ and $\Lambda_s$ in $M\setminus \mathcal{L}$ such that $C_{i_1, \ldots, i_{n-2}}$ is, up to reparametrization, the only chord in its homotopy class of paths in $M\setminus\mathcal{L}$ relative to $(\Lambda_u, \Lambda_s)$. 
\end{enumerate}
\end{lem}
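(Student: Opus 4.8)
The plan is to run the argument of Lemma~\ref{lem:unique} in the relative (chord) setting and then, as the one genuinely new ingredient, to carry out the extension to $\Lambda_u,\Lambda_s$ with enough care that no new chord enters the homotopy class. Write $C:=C_{i_1,\dots,i_{n-2}}$, $p=\pi^*(\underline{\mathfrak c})$, $T=R_{nN}(p)$. First I would check that $C$ is genuinely a chord from $l_u$ to $l_s$ in $M\setminus\mathcal L$: since the negative part of $\underline{\mathfrak c}$ is $\underline{\mathfrak a}$ and $\mathfrak c_0=\Omega_{j_0}$, Lemma~\ref{lem:Sigma!_inters} and \eqref{eq:image_unstable} give $p\in V^u(\underline{\mathfrak a})\cap D_{j_0}=l_u$; since $(\sigma^*)^{n-1}\underline{\mathfrak c}$ has positive part $\underline{\mathfrak b}$ and $0$-th entry $\Omega_{j'_0}$, one gets $f^{(n-1)N}(p)\in V^s(\underline{\mathfrak b})\cap D_{j'_0}$, hence $\varphi^T(p)=f^{nN}(p)=f^N\bigl(f^{(n-1)N}(p)\bigr)\in f^N(V^s(\underline{\mathfrak b})\cap D_{j'_0})=l_s$; and the $\varphi$-orbit of $p$ meets the sections only in their interiors, so it avoids $\mathcal L$ (the orbits constituting $\mathcal L$ run through the corner points $e^i_-,e^i_+,e^i_{+-},e^i_{-+}$, which correspond to periodic itineraries distinct from that of $p$; if needed one perturbs $\underline{\mathfrak a},\underline{\mathfrak b}$ away from those finitely many words). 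Then I would co-orient every $F_i$, $i=0,\dots,L$, by the flow along $D_i$, as in Lemma~\ref{lem:unique}: using \eqref{fjCCQ} (the orbit does not re-enter $\mathcal Q\supset D_0\supset\bigcup_{i=1}^LD_i$ strictly between consecutive times $R_{kN}$) and $f^N(D_i)\subset D_0$ from \eqref{DisubsetDO1}, the chord $C$ meets $\bigcup_{i=1}^LF_i$ transversally and positively at the interior points $f^N(p)\in D_{i_1},\dots,f^{(n-2)N}(p)\in D_{i_{n-2}},f^{(n-1)N}(p)\in D_{j'_0}$, in this linear order, and meets $F_0$ transversally and positively precisely at these $n-1$ points.

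For (1) and (2) I would argue as follows. Part (1) is immediate from the chord analogue of the ordering fact used in the proof of Lemma~\ref{lem:unique}(1): the linear order of positive crossings with the pairwise disjoint surfaces $F_1,\dots,F_L$ is an invariant of the homotopy class rel $(l_u,l_s)$ in $M\setminus\mathcal L$, and since the final letter $\Omega_{j'_0}$ is fixed by $\underline{\mathfrak b}$, the class $\sigma_{i_1,\dots,i_{n-2}}$ recovers both $n$ and $(i_1,\dots,i_{n-2})$. For (2), let $\gamma\colon[0,T']\to M\setminus\mathcal L$ be any $\varphi$-chord from $l_u$ to $l_s$ with $[\gamma]=\sigma_{i_1,\dots,i_{n-2}}$; by the same invariance $\gamma$ meets $F_1,\dots,F_L$ at interior points $y_1,\dots,y_{n-1}$ with $y_k\in D_{i_k}$ ($k\le n-2$), $y_{n-1}\in D_{j'_0}$, in this order, and positivity of $\varphi$-crossings makes these geometric counts sharp. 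The algebraic intersection number of $\gamma$ with the co-oriented surface $F_0$ equals that of $C$, namely $n-1$, and all crossings of $\gamma$ with $D_0$ are again positive, so $\gamma$ has exactly $n-1$ interior $D_0$-crossings; as $\bigcup_{i=1}^LD_i\subset D_0$ these must be the $y_k$, i.e.\ $\gamma$ never meets $D_0\setminus\bigcup_{i=1}^LD_i$ in its interior. From here the flow structure forces $\gamma=C$ exactly as in the proof of Lemma~\ref{lem:unique}(2): $D_0\subset\mathcal Q\subset\Lambda$, so $\gamma$ meets $D_0$ only at return times of $\Lambda$; by \eqref{fjCCQ} and \eqref{DisubsetDO1} consecutive interior $D_0$-crossings of $\gamma$ are related by $f^N$, giving $y_{k+1}=f^N(y_k)$, $f^N(\gamma(0))=y_1$, $\gamma(T')=f^N(y_{n-1})\in l_s$ and $T'=R_{nN}(\gamma(0))$; hence the $f^N$-itinerary of $\gamma(0)$ agrees with that of $\underline{\mathfrak c}$ ($\gamma(0)\in V^u(\underline{\mathfrak a})$ and $f^{(n-1)N}(\gamma(0))\in V^s(\underline{\mathfrak b})$ through \eqref{eq:image_unstable} and \eqref{eq:image_stable}, with the intermediate positions prescribed), and Lemma~\ref{lem:Sigma!_inters} with Proposition~\ref{prop:KM_in_LS} give $\gamma(0)=p$, so $\gamma=C$ up to reparametrization.

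For (3) I would close up $l_u$ to an embedded loop $\Lambda_u=l_u\cup a_u$ and $l_s$ to $\Lambda_s=l_s\cup a_s$ with the closing arcs $a_u,a_s$ required to (i) avoid $\mathcal L$, (ii) be disjoint from all of $F_1,\dots,F_L$, $\widehat F_1,\dots,\widehat F_L$ and from $F_0$, and (iii) leave the two endpoints of $l_u$ (resp.\ of $l_s$) on the positive (resp.\ negative) flow side of $D_0$. Such arcs exist: $F_{j_0}$ is compact and, by \eqref{Didisjoint}, \eqref{zz} and the pairwise disjointness of the $\widehat F_i$, disjoint from the $F_i$ with $i\neq j_0$ and from the $\widehat F_i$, so it has a tubular neighbourhood missing all of these, through which one routes $a_u$ on the flow-positive side of $D_0$ (keeping it off $F_0$ away from $l_u$), and $a_s$ is built analogously near $\widehat F_{j'_0}$. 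Then $\Lambda_u\cap\bigl(F_0\cup\bigcup_{i=1}^LF_i\bigr)=l_u$ and $\Lambda_s\cap\bigl(F_0\cup\bigcup_{i=1}^LF_i\bigr)=l_s$, and because the closing arcs leave $l_u,l_s$ on the correct flow side, the linear order of positive crossings with $F_1,\dots,F_L$ and the interior intersection count with $F_0$ remain invariants of the homotopy class rel $(\Lambda_u,\Lambda_s)$; hence any $\varphi$-chord $\gamma$ from $\Lambda_u$ to $\Lambda_s$ homotopic rel $(\Lambda_u,\Lambda_s)$ to $C$ carries the same interior data as in part (2), which forces $\gamma(0)$ onto $l_u$ and $\gamma(T')$ onto $l_s$, and therefore $\gamma=C$ up to reparametrization.

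I expect the main obstacle to be exactly the construction of $\Lambda_u,\Lambda_s$ in (3): one must thread the closing arcs past the whole (potentially intertwined) family $F_0,F_1,\dots,F_L,\widehat F_1,\dots,\widehat F_L$ while keeping them disjoint from those surfaces and on the prescribed side of $D_0$, so that enlarging the boundary curves does not enlarge the set of chords realizing the class. A secondary, recurring nuisance already present in (1)–(2) is that $l_u\subset F_{j_0}\cap F_0$ and $l_s\subset\widehat F_{j'_0}\cap F_0$, so the relevant surfaces are not in general position with respect to the chord endpoints; this is absorbed by sliding the ends of every chord slightly off the sections along the flow and counting only interior crossings, which is legitimate precisely because $\varphi$-chords cross the local sections positively.
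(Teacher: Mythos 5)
Your treatment of (1) and (2) matches the paper's in substance: both hinge on positive transversality of the flow to the sections, the pairwise-disjoint co-oriented surfaces $F_0,\dots,F_L$, the return relations $f^N(D_i)\subset D_0$, and \eqref{eq:image_unstable}, \eqref{eq:image_stable}, Lemma~\ref{lem:Sigma!_inters}, Proposition~\ref{prop:KM_in_LS}. The only cosmetic difference is bookkeeping: the paper first replaces $l_u,l_s$ by $\hat l_u=\varphi^{-\delta}(l_u)$, $\hat l_s=\varphi^{\delta}(l_s)$ so that all $n+1$ intersections of a chord with $D_0$ become interior transverse crossings, whereas you count $n-1$ interior crossings directly; both are fine once one handles the endpoint degeneracy, which you acknowledge.

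For (3) there is a genuine gap, and it lies exactly where you flagged the ``main obstacle.'' You propose $\Lambda_u = l_u\cup a_u$ with the closing arc $a_u$ \emph{disjoint from every surface} $F_0,\dots,F_L,\widehat F_1,\dots,\widehat F_L$ and lying on one flow side of $D_0$. That is not what the paper does, and the difference matters. With your choice, the pushed curve $\hat\Lambda_u = \varphi^{-\delta}(\Lambda_u)$ is disjoint from \emph{all} of the surfaces, so the algebraic intersection count of a chord with each of $F_0,F_1,\dots,F_L$, and the linear order of positive crossings, are invariant along the whole of $\hat\Lambda_u$. But then nothing distinguishes a chord that starts on $\varphi^{-\delta}(l_u)$ from one that starts on $\varphi^{-\delta}(a_u)$: both carry the same interior data $D_{i_1},\dots,D_{i_{n-2}},D_{j'_0}$, $n-1$ crossings of $D_0$, etc. Your sentence ``carries the same interior data as in part (2), which forces $\gamma(0)$ onto $l_u$'' is therefore a non sequitur; the uniqueness you need is precisely what is at stake, and an invariant that is constant on all of $\hat\Lambda_u$ cannot be used to locate $\gamma(0)$ on a subarc.

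What the paper does is different in a crucial way: $\Lambda_u$ is built so that $\Lambda_u\cap\widehat D_{j_{-1}}$ is the \emph{full} cross-segment $V^u[\underline{\mathfrak a}]\cap\widehat D_{j_{-1}}$ (strictly larger than $l_u$; $l_u$'s endpoints are interior to $\widehat D_{j_{-1}}$), and its endpoints lie on $\partial\widehat D_{j_{-1}} = f^N(v^{j_{-1}}_{\pm})\cup f^{3N}(v^{j_{-1}}_{\pm})$, so after pushing by $\varphi^{-\delta}$ they land on $E^{j_{-1}}_{\pm}\subset F_{j_{-1}}$. These are the two points $p_{\pm}$, at which $\hat\Lambda_u$ crosses $F_{j_{-1}}$ with the horizontal co-orientation pointing inward. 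The two crossings $p_\pm$ split $\hat\Lambda_u$ into the pushed segment and the closing arc, and the intersection count of a chord with $F_{j_{-1}}$ \emph{jumps} when its start crosses $p_{\pm}$: starting on the closing arc forces one extra positive crossing of $F_{j_{-1}}$ and hence of $D_{j_{-1}}\subset D_0$, contradicting the homotopy invariance of the $F_0$-count (this is case (ii) in the paper's trichotomy). Together with the counting that rules out starts in $\varphi^{-\delta}(D_0\setminus\bigcup D_i)$ and in $\varphi^{-\delta}(D_i)$, $i\neq j_{-1}$, this traps $\hat\eta(a)$ into $\hat l_u$. The analogous construction with $\widehat F_{j'_1}$, $q_\pm$ handles $\Lambda_s$. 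In short: the closing arc must \emph{not} be routed off all the surfaces --- it must be separated from the pushed $\hat l_u$ by two forced crossings with $F_{j_{-1}}$. Your construction removes precisely the barrier that the argument needs.
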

\begin{proof}
To make the argument more transparent, we choose a sufficiently small $\delta>0$, push $l_u$ slightly along the negative direction of the flow to some $\hat{l}_u:= \varphi^{-\delta}(l_u)$,  and push $l_s$ slightly along the positive direction of the flow to some $\hat{l}_s:=\varphi^{\delta}(l_s)$. 
This gives a  natural 
 bijection from  the set of chords from $l_u$ to $l_s$ to the set of chords from $\hat{l}_u$ to $\hat{l}_s$, and since these are homotopies in  $M\setminus \mathcal{L}$, this yields also a bijection  from  homotopy classes of paths from $l_u$ to $l_s$ to homotopy classes of paths from $\hat{l}_u$ to $\hat{l}_s$.
It is sufficient to show the corresponding statements for $\hat{l}_u$ and $\hat{l}_s$.
We write $\hat{\eta}$ for the image of a chord $\eta$ under the above bijection, and write $\hat{\sigma}$ for the image of a homotopy class $\sigma$. 
 Note that $\hat{l}_u$ and $\hat{l}_s$ do not intersect the surfaces $F_0,\ldots, F_L$, and hence any two paths from $\hat{l}_u$ to $\hat{l}_s$ that are homotopic relative $(\hat{l}_u,\hat{l}_s)$ and that intersect the surfaces $F_0, F_1, \ldots, F_L$ transversally and positively, have the same number of intersections with $F_0$, and intersect the disjoint surfaces $F_1,\ldots, F_L$ in the same order. 
 
Assertion $(1)$ follows as Lemma \ref{lem:unique} $(1)$, since any chord of the form $\hat{C}_{i_1,\ldots, i_{n-2}}$ intersects the surfaces $F_0,\ldots, F_L$ transversally.

To see that $(2)$ holds, let  $\hat{\eta}:[0,T] \to M$, $\hat{\eta}(0) \in \hat{l}_u$, $\hat{\eta}(T) \in \hat{l}_s$, be any chord of $\varphi$ from $\hat{l}_u$ to $\hat{l}_s$ that represents the homotopy class $\hat{\sigma}_{i_1,\ldots, i_{n-2}}$. 
First, one can reason as in the proof of Lemma \ref{lem:unique} $(2)$: the path $\hat{\eta}$ does not intersect $D_0\setminus \bigcup_{i=1}^L D_i$ and it intersects $\bigcup_{i=1}^L D_i \subset D_0$ exactly $n+1$ times.
  The path $\hat{\eta}$ intersects the rectangles $D_i$, $i=1,\ldots, L$, in the order $D_{j_{0}}, D_{i_1}, \ldots, D_{i_{n-2}},D_{j'_0}, D_{j'_1}$, say in $\hat{x}_{0}, \ldots, \hat{x}_{n}$. 
The corresponding chord $\eta:[a,b] \to M$ from $l_u$ to $l_s$ intersects the rectangles $D_i$, $i=1, \ldots, L$, in the same order, in points $x_{0}, \ldots, x_n$ with $x_{0} = \eta(a)$, $f^N(x_{0}) = x_1$, $f^N(x_1) = x_2, \ldots, f^N(x_{n-1}) = x_n = \eta(b)$.
With \eqref{eq:image_unstable} and \eqref{eq:image_stable}, 
 we get that 
\begin{align*}
\begin{split}
x_{0} \in & \left(V^u[\underline{\mathfrak{a}}]\cap D_{j_{0}}\right)\cap\bigcap_{j=1}^{n-2} f^{-jN} (D_{i_j}) \cap f^{-(n-1)N}(V^s[\underline{\mathfrak{b}}] \cap D_{j'_0})\\ &\subseteq 
\bigcap_{i\leq 0} f^{-iN}(\mathcal{Q}_{j_{i}}) \cap \bigcap_{j=1}^{n-2} f^{-jN} (\mathcal{Q}_{i_{j}}) \cap \bigcap_{i\geq 0}
f^{-(i+(n-1))N}(\mathcal{Q}_{j'_{i}}),
\end{split}
\end{align*}
and hence $\eta(a) = x_{0}=\pi^*(\underline{\mathfrak{c}})$, and the chord  $\eta$ coincides with $C_{i_1,\ldots, i_{n-2}}$. 

We show assertion $(3)$. Let $\Lambda_u\subset M$ be an embedded closed curve such that
\begin{itemize}
    \item $\Lambda_u \cap \widehat{D}_{j_{-1}} = V^u[\underline{\mathfrak{a}}]$;
    \item the curves $\varphi^{t}( \Lambda_u)$, $t\in[-\delta, 0]$, do not intersect $\mathcal{L}$, and 
     $\hat{\Lambda}_u := \varphi^{-\delta}(\Lambda_u)$ does not intersect the surfaces $F_i$, $i=0,\ldots, L, i\neq j_{-1}$, nor  $\widehat{F}_i$, $i=1,\ldots, L$, and intersects ${F}_{j_{-1}}$ only in the two points $p_{\pm}$ on the boundary of the segment $\varphi^{-\delta}(\Lambda_u \cap \widehat{D}_{j_{-1}})$.
    \end{itemize}
 Moreover, let $\Lambda_s\subset M$ be an embedded closed curve such that
 \begin{itemize}
    \item $\Lambda_s \cap {D}_{j'_{1}} = V^s[\underline{\mathfrak{b}}]$;
    \item the curves $\varphi^{t}( \Lambda_s)$, $t\in[0, \delta]$, do not intersect $\mathcal{L}$, and
    $\hat{\Lambda}_s := \varphi^{\delta}(\Lambda_s)$ does not intersect the surfaces $\widehat{F}_i$, $i= 1,\ldots, L$, $i\neq j'_{1}$, nor ${F}_i$, $i=0,\ldots, L$, and intersects $\widehat{F}_{j'_{1}}$ only in the two points $q_{\pm}$ on the boundary of the segment $\varphi^{\delta}(\Lambda_s \cap {D}_{j'_{1}})$.
    \end{itemize} 
Consider a chord $\hat{\eta}:[a,b] \to M$ of $\varphi$ from $\hat{\Lambda}_{u}$ to $\hat{\Lambda}_s$ that is homotopic to $\hat{C}_{i_1,\ldots, i_{n-2}}$ in $M\setminus \mathcal{L}$ relative to $(\hat{\Lambda}_u,\hat{\Lambda}_s)$. We claim that the endpoints of $\hat{\eta}$ must lie in $\hat{l}_u$ and $\hat{l}_s$. We show that the first endpoint lies in $\hat{l}_u$.
Note that the horizontal component of the co-orientation of $F_{j_{-1}}$ induced from the vector field along $D_{j_{-1}}$ points towards the inside of $\varphi^{-\delta}(\widehat{D}_{j_{-1}})$ at the two intersection points $p_{\pm} \in \hat{\Lambda}_u \cap 
 F_{j_{-1}}\subset \varphi^{-\delta}(\widehat{D}_{j_{-1}})$.  
This means that $\hat{\eta}$ (or, if necessary, a small perturbation of $\hat{\eta}$ that is positively transverse to the surfaces $F_0,\ldots, F_L$) must intersect $F_0$ exactly $n+1$ times, and 
\begin{enumerate}[(i)]
\item intersects $F_1,\ldots, F_L$ in the order $F_{j_0}, F_{i_1},\ldots, F_{i_{n-2}}, F_{j'_0}, F_{j'_1}$, if \linebreak  $\eta(a)\in (\hat{\Lambda}_u\cap \varphi^{-\delta}(\widehat{D}_{j_{-1}}))\setminus \{p_{\pm}\}$, 
\item intersects $F_1,\ldots, F_L$ in the order $F_{j_{-1}}, F_{j_0}, F_{i_1}, \ldots, F_{i_{n-2}}, F_{j'_0}, F_{j'_1}$, \linebreak otherwise. 
\end{enumerate}
If $\hat{\eta}(a) \notin \hat{\Lambda}_u \cap \varphi^{-\delta}(\widehat{D}_{j_{0}})$, then  $\hat{\eta}$ intersects also $\bigcup_{i=1}^L D_i\subset D_0$ exactly $n+2$ times, a contradiction. If $\hat{\eta}(a) \in \{p_{\pm}\}\subset \varphi^{-\delta}(D_0 \setminus \bigcup_{i=1}^L D_i)$, and similarly if $\hat{\eta}(a)$ lies in any other point of $\varphi^{-\delta}(D_0 \setminus \bigcup_{i=1}^L D_i)$, we also obtain more than $n+1$ intersection points of $\hat{\eta}$ with $D_0$. Finally, by (i), $\hat{\eta}(a)$ cannot lie in $\varphi^{-\delta}(D_i)\subset \varphi^{-\delta}(F_i)$ if $i= 1,\ldots, L, i\neq j_{-1}$. 
We conclude that $\hat{\eta}(a) \in \hat{l}_u$.
The argument that the second endpoint lies in $\hat{l}_s$ is symmetric, looking at intersections of $\hat{\eta}$  with the surfaces $F_0$ and $\widehat{F}_i$, $i=1,\ldots, L$, instead.  We can then  proceed as in the proof of $(2)$ to show that $\hat{\eta}$ coincides with $\hat{C}_{i_1,\ldots, i_{n-2}}$. Assertion $(3)$ then follows from the identification described above of chords resp.\  homotopy classes of paths from $\Lambda_u$ to $\Lambda_s$ in $M\setminus \mathcal{L}$ with chords resp.\  homotopy classes of paths from $\hat{\Lambda}_u$ to $\hat{\Lambda}_s$ in $M\setminus \mathcal{L}$.
\end{proof}

\subsection{Proof of Theorem \ref{thm:main_coding}, and Theorems \ref{thm:link_growth} and \ref{thm:link_growth_rel}}
We are now in the situation to finish the proof of Theorem \ref{thm:main_coding}.
\begin{proof}[Proof of Theorem \ref{thm:main_coding}]
Let $\varepsilon$ with $0<\varepsilon< h_{\topo}(\varphi)= h$. We then carry out the constructions in the previous subsections, where we choose $\delta>0$ in the beginning of Section \ref{sec:separated} as  $\delta := \frac{R_{\min}\varepsilon}{6}< \hat{h}= h \int r d\mu_{\Sigma}$, where $R_{\min} = \min_{x\in \Lambda} R_{\Lambda}(x)$.   This yields an infinite subset $\mathcal{N} \subset \N$ and for any $N\in \mathcal{N}$ a  Markov flow $(\Sigma^!_r,\sigma^!_r)$  with roof function $r^! = r_N \circ \iota|_{\Sigma^!}: \Sigma^{!} \to (0,\infty)$, where $\iota:\Sigma^* \to \Sigma(\widehat{\mathcal{G}})$ is given in \eqref{eq:iota}. Let $\pi^!_r : \Sigma_r^! \to M$ be the map 
 induced by $\pi^! : = \pi^*|_{\Sigma^!}$, where 
$\pi^* = \widehat{\pi} \circ \iota$.  We show that there is $N\in \mathcal{N}$ such that $(\Sigma_r^!, \sigma_r^!, \pi^!_r)$ provide a coding for $K:=\im (\pi^!_r)$ with the claimed properties. 

We specify $N\in \mathcal{N}$ later below. Let $D_0, \ldots, D_L$ be the rectangles and $\mathcal{L}$ the link as constructed above. We set $D:= D_0$ and consider the return map $f_D: \dom(D) \to D$.  Set $\hat{V} = \bigcup_{i=1}^L D_i$.  Note that by the construction, $\hat{V} \subset \dom(f_D)$ and $\hat{V} \subset \dom(f^{-1}_D)$, and that \begin{align}\label{f_D} 
f_{D}|_{\hat{V}}= f^N_{\Lambda}|_{\hat{V}},\, \, f^{-1}_D|_{\hat{V}} = f^{-N}_{\Lambda}|_{\hat{V}}.
\end{align}
Furthermore, the collection of rectangles $D_1, \ldots, D_L$ is of Markov type. 
We now verify that  $K$ is a  horseshoe over $D$. 
Property (i) is true by definition. Since $\pi^!$ is injective 
 and H\"older continuous, also $\pi^!_r$ is injective and  H\"older continuous, which is property (ii).  
Property (iii) follows directly from the construction of the coding.   
It holds that $\im( \pi^!_r(\cdot,0)) \subset \hat{V} \subset D$.
Moreover, for $1\leq i \leq L$ we have that  
$$f^j(D_i) \cap \mathcal{Q} = \emptyset , \text{ for } 1< j < N.  $$
Hence the image $\im (\pi^!_r(\cdot, t)) $ is not contained in $D\subset \mathcal{Q}$ for $t\neq 0$. This proves (iv). 
It follows from Lemma \ref{lem:Sigma!_inters}  
that the image of $\pi^!$ coincides with 
$\bigcap_{n\in \Z}f_{\Lambda}^{nN}(\hat{V})$. With \eqref{f_D} and with  
 $\hat{V}^j$ defined as in \eqref{hatVj}, we obtain that for $j\geq 0$, $\hat{V}^j 
= \bigcap_{k=0}^j f_{\Lambda}^{-kN}(\hat{V})$, and that for $j\leq 0$, $\hat{V}^j = \bigcap_{k=0}^{-j} f_{\Lambda}^{kN}(\hat{V})$. 
Hence $\bigcap_{k\in \Z} f^{kN}_{\Lambda}(\hat{V}) = \bigcap_{j\in \Z} \hat{V}^j$. 
Similarily, for any $\underline{\mathfrak{a}} = (\mathfrak{a}_j)_{j\in \Z} = (\Omega_{i_j})_{j\in \Z}$ in $\Sigma^!$, we have that 
$\bigcap_{j\in \Z}D^{j}_{i_j} = \bigcap_{j\in \Z} f_{\Lambda}^{-jN}(D_{i_j})$.
Property (v) then follows from Lemma \ref{lem:Sigma!_inters}. That $K$ is in fact hyperbolic follows from the fact that $\im(\pi) \in \NUH_{\chi}(f_{\Lambda})$ and from the continuous dependence of expansive and contracting vectors with respect to the symbolic metric (\hspace*{-3px}\cite[Lemma 5.7]{LimaSarig2019}).

By construction, the link  $\mathcal{L}$ intersects $D, D_1,  \ldots, D_L$ in their corners, which is item $(2)$ of the theorem.  Item $(3)$ follows directly from Lemma \ref{lem:unique}, and item $(4)$ and $(5)$ from Lemma \ref{lem:unique2}. 
It remains to show $(1)$, that is,  that $h_{\topo}(\sigma^!_r)\geq h_{\topo}(\varphi) - \varepsilon$. 

By Theorem \ref{thm:separated} (vi), for all $i\in \{1,\ldots, L\}$, $$hr_N(\overline{\Omega_i})\leq  N(\hat{h} + \delta).$$

Hence, with  Lemma \ref{lem:Omega}, 
\begin{equation}\label{delta'*}
\sum_{i=1}^L e^{-hr_N(\overline{\Omega_i})} \geq e^{N(\hat{h}-2\delta)}e^{-N(\hat{h} + \delta)} =  e^{-3\delta N}= e^{-\frac{N\varepsilon}{2}R_{\min}}.
\end{equation}
Since 
\begin{equation*}
\sum_{i=1}^L e^{-(h-\frac{\varepsilon}{2})r_N(\overline{\Omega_i})} \geq  \sum_{i=1}^L e^{-hr_N(\overline{\Omega_i})}e^{\frac{N\varepsilon}{2}R_{\min}}\geq 1,
\end{equation*}
there is, if $N$ is sufficiently large, $h'$ with $h\geq h'\geq h-\frac{\varepsilon}{2}$ such that
\begin{align}\label{eq:1}\sum_{i=1}^L e^{-h' r_N(\overline{\Omega_i})} = 1.
\end{align}
Define $p_i :=e^{-h' r_N(\overline{\Omega_i})}$, and let $\nu$ the probability measure on $\Sigma^!$ given by  
$\nu(\underline{\mathfrak{a}})$ = $p_{i}$, where $\mathfrak{a}_0 = \Omega_i$. In other words, we consider  $\Sigma^!$ as the Bernoulli shift $(p_1, \ldots, p_N)$. Its entropy is  $h_{\nu}(\sigma^!)=-\sum_{i=1}^L p_i \log p_i$. 
Note that 
\begin{align}\label{eq:hnu}
h_{\nu}(\sigma^!)\geq e^{-3\delta N} N(h-{\epsilon}/{2})R_{\min}.\end{align}
By H\"older continuity of the roof function $r:\Sigma \to (0, \infty)$, there is $C>0$, independent on $N$,  such that for all $\underline{\mathfrak{a}} = (\mathfrak{a}_j)_{j\in \Z}= (\Omega_{i_j})_{j\in \Z} \in \Sigma^!$, 
$$|r^{!}(\mathfrak{a}) - r_N(\overline{\Omega_{i_0}})| \leq C.$$
We will now specify a choice of $N\subset\mathcal{N}$: by \eqref{eq:hnu}, we can choose before $N\subset \mathcal{N}$ so large that \begin{align}\label{N:last}
p_{\max}C < h_{\nu}(\sigma^!)\frac{\epsilon}{2h'(h'-\epsilon/2)},
\end{align}
where $p_{\max} := \max\{p_i \, |\, 1\leq i\leq N\}$. 
We estimate 
\begin{align*}
\int_{\Sigma^!}r^!\, d\nu  &= \sum_{i=1}^L \int_{\underline{\mathfrak{a}} \in \Sigma^!, \mathfrak{a}_0 = \Omega_i}r^!(\underline{\mathfrak{a}}) \, d\nu \\
&\leq \sum_{i=1}^L p_i \left(r_N(\overline{\Omega_i})
+C\right)
\\
&\leq\sum_{i=1}^L p_i r_N(\overline{\Omega_i}) + p_{\max}C \\
&= \frac{1}{h'}h_{\nu}(\sigma^{!}) + p_{\max}C. 
\end{align*}
The Lebesgue measure $\lambda$ induces an  invariant measure $\nu \times \lambda$ on $\Sigma^!_r$ for the flow $\sigma^!_r$. 
By the variational principle and by Abramov's formula \cite{Abramov} we obtain with \eqref{N:last} that
\begin{align*}
h_{\topo}(\sigma^!_{r}) &\geq h_{\nu\times \lambda}(\sigma^!_{r})   \\ &= \frac{h_{\nu}(\sigma^!)}{\int_{\Sigma^!}r^! d\nu } \\
  &> 
    \frac{h'}{h_{\nu}(\sigma^!) +  p_{\max} C h'} h_{\nu}(\sigma^!) > h' - \varepsilon/2 >h- \varepsilon.
 \end{align*}
\end{proof}
\begin{proof}[Proof of Theorem \ref{thm:link_growth}]
Let $\varepsilon>0$. Let  $(\Sigma_r,\sigma_r)$ be the Markov flow, $\mathcal{L}$ the link, and  $D$ the local section from  Theorem \ref{thm:main_coding}.   In particular it holds that   
$h_{\topo}(\sigma_r) \geq h_{\topo}(\varphi)-\varepsilon$.
Since $\sigma_r$ is expansive with respect to the Bowen-Walters metric, $h_{\topo}(\sigma_r) = h_{\topo,\epsilon_0}(\sigma_r)$ for any  $\epsilon_0>0$ sufficiently small. 
Moreover, if, in addition, $T$ is  sufficiently large, the points $(\underline{\mathfrak{\mathfrak{a}}},0)\in \Sigma_r$ with $\underline{\mathfrak{a}} = \overline{\Omega_{i_1}\cdots\Omega_{i_n}} \in \Sigma$ periodic  and $r_n(\underline{\mathfrak{a}}) \leq T$, together with a number of at most  $\frac{\max_{\underline{\mathfrak{b}}\in \Sigma} r(\underline{\mathfrak{b}})}{\epsilon_0}$ of their shifts by the flow $\sigma_r$, form a $(T, \epsilon_0)$-spanning set in $\Sigma_r$ for the Bowen-Walters metric, and hence 
$$h_{\topo}(\sigma_r) \leq \limsup_{T\to +\infty} \frac{1}{T} \log \#\{(n,\underline{\mathfrak{a}})\in \N \times\Sigma\, |\, \underline{\mathfrak{a}} = \overline{\Omega_{i_1}\cdots \Omega_{i_n}}, \,  r_n({\underline{\mathfrak{a}}}) \leq T\}.$$
Since $\pi_r: \Sigma_r \to M$ is injective and H\"older continuous, Theorem~\ref{thm:main_coding}~$(3)$ implies that   
\begin{align*}\#&\{(n,\underline{\mathfrak{a}})\in \N \times\Sigma\, |\, \underline{\mathfrak{a}} = \overline{\Omega_{i_1}\cdots \Omega_{i_n}}, \,  r_n({\underline{\mathfrak{a}}}) \leq T\}\\&\leq \# \{\rho \in \mathcal{H}_{\mathcal{L},\mathrm{sing}} \, |\, \per(\rho) \leq T\}.\end{align*} 
Altogether,
$$H^{\infty}(\varphi, M \setminus \mathcal{L}) > h_{\topo}(\varphi) - \varepsilon.$$
\end{proof}
\begin{proof}[Proof of Theorem \ref{thm:link_growth_rel}]
Let $\varepsilon>0$. The proof is similar to the proof of Theorem \ref{thm:link_growth}, in addition we use  Theorem~\ref{thm:main_coding}~$(4)$ and~$(5)$. 
This yields, for any previously fixed sequences  $\underline{\mathfrak{a}}= (\mathfrak{a}_i)_{i\leq 0}$ and $\underline{\mathfrak{b}}= (\mathfrak{b}_i)_{i\geq 0}$ of symbols in $\{\Omega_1, \ldots, \Omega_L\}$, two loops $\Lambda_1 = \Lambda_u$ and $\Lambda_2 = \Lambda_s$ in $M$ such that, with the notation in the introduction,
\begin{align*}
    \#&\{(i_1, \ldots, i_{n-2}) \in \{1, \ldots, L\}^{n-2} \,   |\, r_{n}(\underline{\mathfrak{c}}^{\underline{\mathfrak{a}}, \underline{\mathfrak{b}}, i_1, \ldots, i_{n-2}} ) \leq T, n\in \N \}\\ &\leq \# \{\rho \in \mathcal{P} \text{ singular}\, |\, \mathrm{len}(\rho) \leq T\}.
\end{align*}
Similar as above, the points $\sigma(\underline{\mathfrak{c}}^{\underline{\mathfrak{a}}, \underline{\mathfrak{b}}, i_1, \ldots, i_{n-2}}, 0)$ with $r_n(\underline{\mathfrak{c}}^{\underline{\mathfrak{a}}, \underline{\mathfrak{b}}, i_1, \ldots, i_{n-2}}) \leq T$, $n\in \N$,  
together with a number of at most $\frac{\max_{\underline{\mathfrak{d}}} r(\underline{\mathfrak{d}})}{\epsilon_0}$ of their shifts,  form a $(T, \epsilon_0)$-spanning set, we obtain that
$$\mathrm{H}^{\infty}(\varphi, M \setminus \mathcal{L},\Lambda_1,\Lambda_2) > h_{\topo}(\varphi) - \varepsilon.$$
\end{proof}

\section{Growth of contact homology in a link complement}\label{sec:proofforc}
In this section we prove 
 Theorem \ref{thm:CH_recover}  as an application of Theorem \ref{thm:main_coding}.  
We begin by recalling some situations from \cite{AlvesPirnapasov} in which a transverse link $\mathcal{L}$ forces topological entropy. It is shown that  certain properties of the cylindrical contact homology in a complement of a transverse link ($\CH_{\mathcal{L}}$) yield the forcing property for the link  $\mathcal{L}$.
The homology  $\CH_{\mathcal{L}}$ was defined and studied by Momin in \cite{Momin}.
We recall some definitions, cf.\   \cite{AlvesPirnapasov, Momin}.
Let $(M,\xi)$ be a contact manifold, $\mathcal{L}$ a transverse link in $(M,\xi)$. 
We say that a loop $\gamma: S^1\to M$ is \textit{contractible in the complement of $\mathcal{L}$} if there is a continuous map $\mathfrak{h}: \overline{\D} \to M$ with $\mathfrak{h}(e^{2\pi it}) = \gamma(t)$ such that there is no  $x\in \D$ for which  $\mathfrak{h}(x)\in \mathcal{L}\subset M$. 
\begin{defn}\label{def:hypertight}
A supporting contact form  $\alpha$ for $(M,\xi)$ is called \textit{hypertight in the complement of $\mathcal{L}$} if 
\begin{itemize}
\item the components of $\mathcal{L}$ are closed Reeb orbits of $\varphi_{\alpha}$, \item 
 each closed Reeb orbit $\gamma$ of $\varphi_{\alpha}$ (not necessarily simple or in $M\setminus \mathcal{L}$) is contractible in the complement of $\mathcal{L}$.
 \end{itemize}
  \end{defn}
From now on, assume  that $\alpha$ is hypertight in the complement of $\mathcal{L}$.
\begin{defn}\label{PLC}
Let $\rho$ be a homotopy class of loops in $M\setminus \mathcal{L}$. 
The triple $(\alpha, \mathcal{L}, \rho)$ is said to satisfy the \textit{proper link class (PLC) condition} if 
\begin{itemize}
\item the loops  $\gamma:S^1 \to M\setminus \mathcal{L}$ with $[\gamma] = \rho$ are non-homotopic in the complement of $\mathcal{L}$ to a multiple $\beta:S^1 \to M$ of a component of $\mathcal{L}$, that is, there is no continuous map  $I:S^1 \times [0,1]$, $I(t,0) = \gamma(t)$, $I(t,1) = \beta(t)$ such that $\im I|_{S^1 \times (0,1)}\cap \mathcal{L} = \emptyset$. 
\item each closed Reeb orbit of $\alpha$ that represents $\rho$ is non-degenerate and simply covered. 
\end{itemize}
\end{defn}
Momin showed that if $(\alpha, \mathcal{L}, \rho)$ satisfies the PLC condition, then one can define the \textit{cylindrical contact homology} of $\alpha$ on the complement of $\mathcal{L}$ for orbits in $\rho$ ($\CH^{\rho}_{\mathcal{L}}(\alpha)$).
It is the homology of a chain complex which is  a $\Z_2$ graded vector space generated by Reeb orbits of $\alpha$ that represent the class $\rho$, with a $\Z_2$ grading given by the pairity of the Conley-Zehnder index of a Reeb orbit $\gamma$ with respect to any trivialization of $\xi$ over $\gamma$. The differential on the chain complex counts certain finite energy pseudoholomorphic cylinders in the symplectization of $\alpha$.  For the details of the construction, see \cite{HryiniewiczMominSalomao} and \cite{Momin}. If $\rho$ contains only one Reeb orbit, then $\CH^{\rho}_{\mathcal{L}}(\alpha)\neq 0$.

Let $\mathcal{H}^*_{\mathcal{L}}(\alpha)$ be the set of homotopy classes $\rho$ of loops in $M\setminus \mathcal{L}$ for which 
\begin{itemize}
    \item $(\alpha, \mathcal{L}, \rho)$ satisfies the PLC condition, 
    \item $\CH^{\rho}_{\mathcal{L}}(\alpha) \neq 0$. 
    \end{itemize}
    Let $N^*_{\alpha}(T):= \#\{\rho \in \mathcal{H}^*_{\mathcal{L}} \, |\, \per(\rho) \leq T\}  .$ 
The \textit{exponential homotopical growth rate of $\CH_{\mathcal{L}}(\alpha)$} is defined as 
$$\Gamma_{\mathcal{L}}(\alpha) :=\limsup_{T\to +\infty} \frac{1}{T} \log N^*_{\mathrm{\alpha_0}}(T).$$
Before we prove Theorem \ref{thm:CH_recover}, let us recall the following result.
\begin{thm}\label{thm:alvesPirna2}\cite[Thm.\ 1.5]{AlvesPirnapasov}
Let $(M,\xi)$ be 
     a closed contact manifold. Let $\alpha_0\in \mathcal{C}(\xi)$ 
 be hypertight in the complement of $\mathcal{L}$. 
Then, for any $\alpha  \in \mathcal{C}(\xi)$ with $\mathcal{L} \subset \Per(\varphi_{\alpha})$, 
\begin{align}\label{eq:htopGamma}
h_{\topo}(\varphi_{\alpha}) \geq \frac{\Gamma_{\mathcal{L}}(\alpha_0)}{\max f_{\alpha}},
\end{align}
where $f_{\alpha} :M \to (0,+\infty)$ is the function such that $\alpha = f_{\alpha}\alpha_0$.
\end{thm}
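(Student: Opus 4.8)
The plan is to deduce \eqref{eq:htopGamma} from two ingredients: a monotonicity/invariance property of Momin's cylindrical contact homology $\CH_{\mathcal{L}}$ under conformal rescalings of the contact form, giving for each relevant free homotopy class a Reeb orbit of $\varphi_\alpha$ with a controlled period, together with the soft upper bound $H^\infty\le h_{\topo}$ recalled in the introduction. Throughout write $\alpha=f_\alpha\alpha_0$ and $c_+:=\max f_\alpha$, $c_-:=\min f_\alpha$.

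First I would fix a class $\rho\in\mathcal{H}^*_{\mathcal{L}}(\alpha_0)$, so that $(\alpha_0,\mathcal{L},\rho)$ satisfies the PLC condition and $\CH^\rho_{\mathcal{L}}(\alpha_0)\neq0$, and prove that $\alpha$ carries a closed Reeb orbit in the class $\rho$ of $\alpha$-period at most $c_+\per(\rho)$, where $\per(\rho)$ denotes (as in the definition of $N^*_{\alpha_0}$) the supremum of the $\alpha_0$-periods of $\alpha_0$-Reeb orbits in $\rho$. Since $R_{c\alpha_0}=\tfrac1cR_{\alpha_0}$, the forms $c_\pm\alpha_0$ are again hypertight in the complement of $\mathcal{L}$, there are canonical identifications $\CH^\rho_{\mathcal{L}}(c_\pm\alpha_0)\cong\CH^\rho_{\mathcal{L}}(\alpha_0)$, and under $\alpha_0\mapsto c\alpha_0$ the period of every Reeb orbit in the class $\rho$ is multiplied by $c$. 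In particular every generator of the chain complex computing $\CH^\rho_{\mathcal{L}}(c_+\alpha_0)$ has period $\le c_+\per(\rho)$, so for every $\epsilon>0$ the action-$(<c_+\per(\rho)+\epsilon)$ subcomplex is the whole complex and hence $\CH^{\rho,\,<c_+\per(\rho)+\epsilon}_{\mathcal{L}}(c_+\alpha_0)=\CH^\rho_{\mathcal{L}}(c_+\alpha_0)\neq0$.

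Next, using $c_-\alpha_0\le\alpha\le c_+\alpha_0$ pointwise, I would take the standard ``piece of symplectization'' exact cobordisms $W$ from $(M,c_+\alpha_0)$ (convex end) to $(M,\alpha)$ (concave end) and $W'$ from $(M,\alpha)$ to $(M,c_-\alpha_0)$, and consider the induced maps on cylindrical contact homology in the complement of $\mathcal{L}$ and in the class $\rho$. Nonnegativity of the cobordism energy shows that $\Phi_W$ and $\Phi_{W'}$ do not increase the period filtration, and functoriality of cobordism maps identifies $\Phi_{W'}\circ\Phi_W$ with the monotone continuation map $\CH^\rho_{\mathcal{L}}(c_+\alpha_0)\to\CH^\rho_{\mathcal{L}}(c_-\alpha_0)$; the latter is the continuation map between two rescalings of the single hypertight form $\alpha_0$ and is therefore an isomorphism compatible with the identifications above. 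Hence $\Phi_W$ is injective, and restricting to the filtered subcomplex yields $\CH^{\rho,\,<c_+\per(\rho)+\epsilon}_{\mathcal{L}}(\alpha)\neq0$ for every $\epsilon>0$, so $\varphi_\alpha$ has a closed Reeb orbit in the class $\rho$, contained in $M\setminus\mathcal{L}$, of $\alpha$-period $\le c_+\per(\rho)=(\max f_\alpha)\per(\rho)$. The step I expect to be the main obstacle is precisely the well-definedness of $\Phi_W,\Phi_{W'}$: one must check SFT compactness for cylinders asymptotic to orbits in the class $\rho$ without assuming that $\alpha$ itself is hypertight, using only that $\rho$ is not a multiple of a component of $\mathcal{L}$ and that the PLC condition holds along the relevant forms --- exactly the analytic package set up by Momin and by Hryniewicz--Momin--Salom\~ao.

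Finally I would convert this into the entropy bound. Distinct classes $\rho\in\mathcal{H}^*_{\mathcal{L}}(\alpha_0)$ are distinct free homotopy classes of loops in $M\setminus\mathcal{L}$, and each one with $\per(\rho)\le T$ carries, by the previous step, a periodic orbit of $\varphi_\alpha$ lying in $M\setminus\mathcal{L}$ of period $\le(\max f_\alpha)T$. Therefore $N^*_{\alpha_0}(T)$ is at most the number of free homotopy classes of loops in $M\setminus\mathcal{L}$ carrying a periodic orbit of $\varphi_\alpha$ of period $\le(\max f_\alpha)T$, and applying the inequality $H^\infty(\varphi_\alpha,M\setminus\mathcal{L})\le h_{\topo}(\varphi_\alpha)$ (in the form recalled in the introduction's footnote, counting classes that carry some orbit and replacing the supremum by the infimum of periods) gives
\[
\Gamma_{\mathcal{L}}(\alpha_0)=\limsup_{T\to\infty}\frac1T\log N^*_{\alpha_0}(T)\le(\max f_\alpha)\,h_{\topo}(\varphi_\alpha),
\]
which is \eqref{eq:htopGamma}.
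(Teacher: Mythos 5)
This statement is quoted directly from \cite{AlvesPirnapasov} (their Theorem~1.5) and the present paper gives no proof of it, so there is no ``paper's own proof'' here to compare against --- it is imported as a black box. With that caveat, your proposal is a reasonable reconstruction of the kind of argument one expects, and the final entropy deduction from the orbit-existence-with-period-bound step is correct: you bound $N^*_{\alpha_0}(T)$ by the number of classes in $M\setminus\mathcal{L}$ carrying a $\varphi_\alpha$-orbit of period at most $(\max f_\alpha)T$, then invoke the footnoted version of $H^\infty\le h_{\topo}$ (the one with ``carries some orbit'' and $\inf$ in place of $\sup$), which after the change of variable $T'=(\max f_\alpha)T$ yields exactly $\Gamma_{\mathcal{L}}(\alpha_0)\le(\max f_\alpha)h_{\topo}(\varphi_\alpha)$.

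The genuine gap, as you yourself flag, is the middle step. Your plan requires $\CH^{\rho}_{\mathcal{L}}(\alpha)$ (or at least period-filtered versions and cobordism maps $\Phi_W,\Phi_{W'}$ into and out of it) to be defined for an \emph{arbitrary} $\alpha\in\mathcal{C}(\xi)$ with $\mathcal{L}\subset\Per(\varphi_\alpha)$. But $(\alpha,\mathcal{L},\rho)$ need not satisfy PLC: $\alpha$ may have degenerate or multiply-covered orbits in $\rho$, and $\alpha$ need not be hypertight in $M\setminus\mathcal{L}$, so contractible orbits of $\alpha$ can bubble off in the SFT limit and break cylinders into buildings with more than two punctures. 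This is not a routine functoriality check; it is the technical heart of the matter. The way \cite{AlvesPirnapasov} (building on Momin and Hryniewicz--Momin--Salom\~ao) handle it is to avoid defining $\CH^{\rho}_{\mathcal{L}}(\alpha)$ at all: one works directly with SFT compactness for the family of cobordisms, uses positivity of intersection with the $J$-holomorphic cylinders over the components of $\mathcal{L}$ (this is where $\mathcal{L}\subset\Per(\varphi_\alpha)$ is essential, a hypothesis your sketch does not visibly use) to keep curves away from $\mathcal{L}$, and extracts a nontrivial $\alpha$-Reeb orbit in the class $\rho$ from the breaking, with the $(\max f_\alpha)$-period bound coming from the Stokes/energy estimate you quote. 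So the architecture of your argument is right, and the period-to-entropy conversion at the end is fine, but the statement ``functoriality of cobordism maps identifies $\Phi_{W'}\circ\Phi_W$ with the monotone continuation map'' presupposes objects that may not exist for the given $\alpha$; replacing this by a direct compactness/bubbling analysis is the missing idea.

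Two smaller points worth noting: you implicitly use that $\per(\rho)<\infty$ for $\rho\in\mathcal{H}^*_{\mathcal{L}}(\alpha_0)$, which follows from nondegeneracy plus Arzel\`a--Ascoli but should be said; and the canonical identification $\CH^{\rho}_{\mathcal{L}}(c\alpha_0)\cong\CH^{\rho}_{\mathcal{L}}(\alpha_0)$ is fine since constant rescaling only reparametrizes the Reeb flow, but one should be explicit that the identification scales the action filtration by $c$, which is exactly what you use.
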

\begin{proof}[Proof of Theorem \ref{thm:CH_recover}]
Let $\alpha$ be a  contact form, hypertight in 
 the complement of some link $\mathcal{L}_0$  of Reeb orbits of $\varphi_{\alpha}$. Assume that $h_{\topo}(\varphi_{\alpha}) >0$. Let $\varepsilon>0$. Choose a link $\mathcal{L}= \mathcal{L}(\varepsilon)$  as in Theorem \ref{thm:main_coding}. 
Clearly $\alpha$ is still hypertight in the complement of $\mathcal{L}'= \mathcal{L} \cup \mathcal{L}_0$.
The periodic orbits in the horseshoe $K$ are hyperbolic, in particular, non-degenerate. Any orbit  $P_{(i_0,\ldots, i_{n-1})}$ for which $(i_0,\ldots, i_{n-1})$ cannot be written as a repetition of $k$-tuples, with $k|n$ (in particular for which  $i_0,\ldots, i_{n-1}$ are not all identical)   represents a homotopy class that satisfies the PLC condition. (We must additionly exclude   those finitely many orbits  $P_{i_0,\ldots, i_{n-1}}$ that happen to be  components of $\mathcal{L}_0$.)
It follows  from Theorem \ref{thm:main_coding} (1) and (3), that  $\Gamma_{\mathcal{L}'}(\alpha) > h_{\topo}(\varphi_{\alpha}) 
-\varepsilon$. The result then follows from Theorem \ref{thm:alvesPirna2} with $\alpha = \alpha_0$.   
\end{proof}
Finally, we remark that if there exists a global surface of section for the Reeb flow of  a contact form $\alpha$,  then there is also a link $\mathcal{L}_0$   such that the contact form is hypertight in the complement of $\mathcal{L}_0$.  And hence it follows from the work \cite{Colin_broken} on the existence of  broken book decompositions, and the recent results of \cite{Irie_equi}  and \cite{Hryniewicz_generic}, or also   \cite{Contreras_generic}, that the condition to be a contact form that is hypertight in the complement of some link of Reeb orbits holds on a $C^{\infty}$-open and dense set. 
A global surface of section is a  compact surface  
$S$ (with or without boundary)  and an immersion $\iota: S \to M$, such that 
\begin{itemize}
\item $i(\partial S)$ is a (possibly empty) link $\mathcal{L}_b$ whose components are periodic orbits of $\varphi_{\alpha}$; \item $\iota^{-1}(\iota(\partial S)) = \partial S$, and $\iota$ defines an embedding $S\setminus \partial S \to M\setminus \iota (\partial S)$; 
\item for any $p \in M$, there $s<0<t$ such that $\varphi_{\alpha}^s(p), \varphi_{\alpha}^t(p) \in \iota(S)$.
\end{itemize}
The following link $\mathcal{L}_0$ satisfies the claimed property. If $\mathcal{L}_b = \emptyset$, then $\varphi_\alpha$ has no contractible orbits at all, hence we can choose  $\mathcal{L}_0  = \emptyset$;  if  $S$ is a disk, we can choose $\mathcal{L}_0$ to be the union of $\mathcal{L}_b$ and one additional closed orbit;  and if $S$ is not a disk, we can put   
 $\mathcal{L}_0 = \mathcal{L}_b$, see e.g.\ \cite[Lemma 6.6]{Momin}.


\bibliographystyle{plain}
\bibliography{biblio}

\begin{thebibliography}{10}

\bibitem{AaronsonDenkerUrbanski}
Jon Aaronson, Manfred Denker, and Mariusz Urbanski.
\newblock Ergodic theory for {M}arkov fibred systems and parabolic rational
  maps.
\newblock {\em Trans. Amer. Math. Soc.}, 337(2):495--548, 1993.

\bibitem{AASS}
Alberto Abbondandolo, Marcelo R.~R. Alves, Murat Sa\u{g}lam, and Felix Schlenk.
\newblock Entropy collapse versus entropy rigidity for {R}eeb and {F}insler
  flows.
\newblock {\em Selecta Math. (N.S.)}, 29(5):Paper No. 67, 2023.

\bibitem{Abramov}
L.~M. Abramov.
\newblock On the entropy of a flow.
\newblock {\em Dokl. Akad. Nauk SSSR}, 128:873--875, 1959.

\bibitem{Alves1}
Marcelo R.~R. Alves.
\newblock Cylindrical contact homology and topological entropy.
\newblock {\em Geom. Topol.}, 20(6):3519--3569, 2016.

\bibitem{MMLL}
Marcelo R.~R. Alves, Lucas Dahinden, Matthias Meiwes, and Louis Merlin.
\newblock {$C^0$}-robustness of topological entropy for geodesic flows.
\newblock {\em J. Fixed Point Theory Appl.}, 24(2):Paper No. 42, 43, 2022.

\bibitem{AlvesPirnapasov}
Marcelo R.~R. Alves and Abror Pirnapasov.
\newblock Reeb orbits that force topological entropy.
\newblock {\em Ergodic Theory Dynam. Systems}, 42(10):3025--3068, 2022.

\bibitem{Alves-thesis}
Marcelo~R.R. Alves.
\newblock {\em Growth rate of {L}egendrian contact homology and dynamics of
  {R}eeb flows.}
\newblock 2014.
\newblock Thesis (Ph.D.)--Universit\'e Libre de Bruxelles.

\bibitem{AlvesColinHonda2017}
Marcelo~R.R. Alves, Vincent Colin, and Ko~Honda.
\newblock Topological entropy for {R}eeb vector fields in dimension three via
  open book decompositions.
\newblock {\em J. \'{E}c. polytech. Math.}, 6:119--148, 2019.

\bibitem{ADMP}
Marcelo~R.R. Alves, Lucas Dahinden, Matthias Meiwes, and Abror Pirnapasov.
\newblock ${C}^0$-stability of topological entropy for {R}eeb flows in
  dimension $3$.
\newblock Preprint arXiv:2311.12001 2023.

\bibitem{AlvesMeiwes2018}
Marcelo~R.R. Alves and Matthias Meiwes.
\newblock Dynamically exotic contact spheres of dimensions $\geq 7$.
\newblock {\em Comment. Math. Helv.}, 94(3):569--622, 2019.

\bibitem{AlvesMeiwesBraids}
Marcelo~R.R. Alves and Matthias Meiwes.
\newblock Braid stability and the {H}ofer metric.
\newblock {\em Ann. Henri Lebesgue}, page arXiv:2112.11351 - accepted for
  publication, 2021.

\bibitem{Bowen-onedimensional}
Rufus Bowen.
\newblock One-dimensional hyperbolic sets for flows.
\newblock {\em J. Differential Equations}, 12:173--179, 1972.

\bibitem{Bowen-book}
Rufus Bowen.
\newblock {\em Equilibrium states and the ergodic theory of {A}nosov
  diffeomorphisms}.
\newblock Lecture Notes in Mathematics, Vol. 470. Springer-Verlag, Berlin-New
  York, 1975.

\bibitem{Bowen_fundgroup}
Rufus Bowen.
\newblock Entropy and the fundamental group.
\newblock In {\em The structure of attractors in dynamical systems ({P}roc.
  {C}onf., {N}orth {D}akota {S}tate {U}niv., {F}argo, {N}.{D}., 1977)}, volume
  668 of {\em Lecture Notes in Math}, pages pp 21--29. Springer, Berlin, 1978.

\bibitem{Bowen-Walters}
Rufus Bowen and Peter Walters.
\newblock Expansive one-parameter flows.
\newblock {\em J. Differential Equations}, 12:180--193, 1972.

\bibitem{Boyland}
Philip Boyland.
\newblock Topological methods in surface dynamics.
\newblock {\em Topology Appl.}, 58(3):223--298, 1994.

\bibitem{BuzziSarig}
J\'{e}r\^{o}me Buzzi and Omri Sarig.
\newblock Uniqueness of equilibrium measures for countable {M}arkov shifts and
  multidimensional piecewise expanding maps.
\newblock {\em Ergodic Theory Dynam. Systems}, 23(5):1383--1400, 2003.

\bibitem{CGG_barcode}
Erman \c{C}ineli, Viktor~L. Ginzburg, and Ba\c{s}ak~Z. G\"urel.
\newblock Topological entropy of {H}amiltonian diffeomorphisms: a persistence
  homology and {F}loer theory perspective.
\newblock {\em Preprint arXiv:2111.03983}, 2021.

\bibitem{CGG_subexp_barcode}
Erman \c{C}ineli, Viktor~L. Ginzburg, and Ba\c{s}ak~Z. G\"urel.
\newblock On the growth of the {F}loer barcode.
\newblock {\em Preprint arXiv:2207.03613}, 2022.

\bibitem{Hryniewicz_generic}
Vincent Colin, Pierre Dehornoy, Umberto Hryniewicz, and Ana Rechtman.
\newblock Generic properties of $3$-dimensional {R}eeb flows: {B}irkhoff
  sections and entropy.
\newblock {\em Preprint arXiv:2202.01506}, 2022.

\bibitem{Colin_broken}
Vincent Colin, Pierre Dehornoy, and Ana Rechtman.
\newblock On the existence of supporting broken book decompositions for contact
  forms in dimension 3.
\newblock {\em Invent. Math.}, 231(3):1489--1539, 2023.

\bibitem{Contreras_generic}
Gonzalo Contreras and Marco Mazzucchelli.
\newblock Existence of {B}irkhoff sections for {K}upka{\textendash}{S}male
  {R}eeb flows of closed contact 3-manifolds.
\newblock {\em Geometric and Functional Analysis}, 32(5):951--979, 2022.

\bibitem{DaonYair2013}
Yair Daon.
\newblock Bernoullicity of equilibrium measures on countable {M}arkov shifts.
\newblock {\em Discrete Contin. Dyn. Syst.}, 33(9):4003--4015, 2013.

\bibitem{beomjun_barcode}
Elijah Fender, Sangjin Lee, and Beomjun Sohn.
\newblock Barcode entropy for {R}eeb flows on contact manifolds with
  {L}iouville fillings.
\newblock {\em Preprint arXiv:2305.04770}, 2023.

\bibitem{FranksHandel1988}
John~M. Franks and Michael Handel.
\newblock Entropy and exponential growth of {$\pi_1$} in dimension two.
\newblock {\em Proc. Amer. Math. Soc.}, 102(3):753--760, 1988.

\bibitem{Fried83}
David Fried.
\newblock Transitive {A}nosov flows and pseudo-{A}nosov maps.
\newblock {\em Topology}, 22(3):299--303, 1983.

\bibitem{Gelfert2016}
Katrin Gelfert.
\newblock Horseshoes for diffeomorphisms preserving hyperbolic measures.
\newblock {\em Math. Z.}, 283(3-4):685--701, 2016.

\bibitem{Mazzucchelli_barcode}
Viktor~L. Ginzburg, Ba\c{s}ak~Z. G\"{u}rel, and Marco Mazzucchelli.
\newblock Barcode entropy of geodesic flows.
\newblock {\em Preprint arXiv:2212.00943}, 2022.

\bibitem{Hall1994}
Toby Hall.
\newblock The creation of horseshoes.
\newblock {\em Nonlinearity}, 7(3):861--924, 1994.

\bibitem{HryiniewiczMominSalomao}
Umberto Hryniewicz, Al~Momin, and Pedro A.~S. Salom\~{a}o.
\newblock A {P}oincar\'{e}-{B}irkhoff theorem for tight {R}eeb flows on
  {$S^3$}.
\newblock {\em Invent. Math.}, 199(2):333--422, 2015.

\bibitem{RechtmanHurder}
Steven Hurder and Ana Rechtman.
\newblock Kuperberg dreams.
\newblock {\em arXiv:2112.04281}, 2021.

\bibitem{HutchingsBraids}
Michael Hutchings.
\newblock Braid stability for periodic orbits of area-preserving surface
  diffeomorphisms.
\newblock {\em Preprint arXiv:2303.07133}, 2023.

\bibitem{Irie_equi}
Kei Irie.
\newblock Equidistributed periodic orbits of {$C^\infty$}-generic
  three-dimensional {R}eeb flows.
\newblock {\em J. Symplectic Geom.}, 19(3):531--566, 2021.

\bibitem{KatokLyapunov_exp}
Anatole Katok.
\newblock Lyapunov exponents, entropy and periodic orbits for diffeomorphisms.
\newblock {\em Inst. Hautes \'{E}tudes Sci. Publ. Math.}, (51):137--173, 1980.

\bibitem{Hasselblatt-Katok}
Anatole Katok and Boris Hasselblatt.
\newblock {\em Introduction to the modern theory of dynamical systems},
  volume~54 of {\em Encyclopedia of Mathematics and its Applications}.
\newblock Cambridge University Press, Cambridge, 1995.
\newblock With a supplementary chapter by Katok and Leonardo Mendoza.

\bibitem{Kitchens}
Bruce~P. Kitchens.
\newblock {\em Symbolic dynamics}.
\newblock Universitext. Springer-Verlag, Berlin, 1998.
\newblock One-sided, two-sided and countable state Markov shifts.

\bibitem{LianYoung2012}
Zeng Lian and Lai-Sang Young.
\newblock Lyapunov exponents, periodic orbits, and horseshoes for semiflows on
  {H}ilbert spaces.
\newblock {\em J. Amer. Math. Soc.}, 25(3):637--665, 2012.

\bibitem{LimaSarig2019}
Yuri Lima and Omri~M. Sarig.
\newblock Symbolic dynamics for three-dimensional flows with positive
  topological entropy.
\newblock {\em J. Eur. Math. Soc. (JEMS)}, 21(1):199--256, 2019.

\bibitem{LuzzattoSalas}
Stefano Luzzatto and Fernando~J. S\'{a}nchez-Salas.
\newblock Uniform hyperbolic approximations of measures with non-zero
  {L}yapunov exponents.
\newblock {\em Proc. Amer. Math. Soc.}, 141(9):3157--3169, 2013.

\bibitem{MacariniSchlenk2011}
Leonardo Macarini and Felix Schlenk.
\newblock Positive topological entropy of {R}eeb flows on spherizations.
\newblock {\em Math. Proc. Cambridge Philos. Soc.}, 151(1):103--128, 2011.

\bibitem{Barcode_Lagr}
Matthias Meiwes.
\newblock On the barcode entropy of {L}agrangian submanifolds.
\newblock {\em Preprint arXiv:2401.07034}, 2024.

\bibitem{Momin}
Al~Momin.
\newblock Contact homology of orbit complements and implied existence.
\newblock {\em Journal of Modern Dynamics}, 5:409--472, 2010.

\bibitem{Rudolph}
Daniel~J. Rudolph.
\newblock {\em Fundamentals of measurable dynamics}.
\newblock Oxford Science Publications. The Clarendon Press, Oxford University
  Press, New York, 1990.
\newblock Ergodic theory on Lebesgue spaces.

\bibitem{Sarig_thermo1999}
Omri~M. Sarig.
\newblock Thermodynamic formalism for countable {M}arkov shifts.
\newblock {\em Ergodic Theory Dynam. Systems}, 19(6):1565--1593, 1999.

\bibitem{Sarig_diffeo2013}
Omri~M. Sarig.
\newblock Symbolic dynamics for surface diffeomorphisms with positive entropy.
\newblock {\em J. Amer. Math. Soc.}, 26(2):341--426, 2013.

\end{thebibliography}

\end{document}